\definecolor{mygreen}{rgb}{0,0.7,0.3}
\definecolor{myblue}{rgb}{0,0.50,1.20}
\definecolor{orange}{rgb}{2.55,1.65,0}
\definecolor{fillred}{rgb}{1,0.9,0.9}
\definecolor{fillgreen}{rgb}{0.9,1,0.9}
\definecolor{refkey}{rgb}{0,0.7,0.3}
\definecolor{labelkey}{rgb}{1,0,0}
\crefname{thm}{Theorem}{Theorems}
\crefname{theorem}{Theorem}{Theorems}
\crefname{cor}{Corollary}{Corollaries}
\crefname{corollary}{Corollary}{Corollaries}
\crefname{lem}{Lemma}{Lemmas}
\crefname{lemma}{Lemma}{Lemmas}
\crefname{prop}{Proposition}{Propositions}
\crefname{proposition}{Proposition}{Propositions}
\crefname{dfn}{Definition}{Definitions}
\crefname{definition}{Definition}{Definitions}
\crefname{ex}{Example}{Examples}
\crefname{example}{Example}{Examples}
\crefname{claim}{Claim}{Claims}
\crefname{conj}{Conjecture}{Conjectures}
\crefname{conv}{Notation}{Notations}
\crefname{rem}{Remark}{Remarks}
\crefname{remark}{Remark}{Remarks}
\crefname{prob}{Problem}{Problems}
\crefname{figure}{Figure}{Figures}
\crefname{table}{Table}{Tables}
\crefname{section}{Section}{Sections}
\crefname{subsection}{Section}{Sections}
\crefname{appendix}{Appendix}{Appendices}
\crefname{lemdef}{Lemma-Definition}{Lemma-Definitions}
\crefname{conv}{Convention}{Conventions}
\crefname{introthm}{Theorem}{Theorems}
\crefname{introcor}{Corollary}{Corollaries}
\crefname{introconj}{Conjecture}{Conjectures}
\newtheorem{thm}{Theorem}
\newtheorem{lem}[thm]{Lemma}
\newtheorem{prop}[thm]{Proposition}
\newtheorem{cor}[thm]{Corollary}
\newtheorem{conj}[thm]{Conjecture}
\newtheorem{introthm}{Theorem}
\newtheorem{introconj}[introthm]{Conjecture}
\newtheorem{introprop}[introthm]{Proposition}
\theoremstyle{definition}
\newtheorem{dfn}[thm]{Definition}
\newtheorem{rem}[thm]{Remark}
\newtheorem{ex}[thm]{Example}
\newtheorem{lemdef}[thm]{Lemma-Definition}
\numberwithin{figure}{section}
\numberwithin{equation}{section}
\numberwithin{thm}{section}
\newcommand{\bZ}{\mathbb{Z}}
\newcommand{\bR}{\mathbb{R}}
\newcommand{\bC}{\mathbb{C}}
\newcommand{\bM}{\mathbb{M}}
\newcommand{\bN}{\mathbb{N}}
\newcommand{\bP}{\mathbb{P}}
\newcommand{\bG}{\mathbb{G}}
\newcommand{\bE}{\mathbb{E}}
\newcommand{\bs}{{\boldsymbol{s}}}
\newcommand{\sfs}{\mathsf{s}}
\newcommand{\sfe}{\mathsf{e}}
\newcommand{\sff}{\mathsf{f}}
\newcommand{\Exch}{\mathrm{Exch}}
\newcommand{\bExch}{\bE \mathrm{xch}}
\newcommand{\uf}{\mathrm{uf}}
\newcommand{\f}{\mathrm{f}}
\DeclareMathOperator{\rank}{\mathrm{rank}}
\newcommand{\A}{\mathcal{A}}
\newcommand{\cC}{\mathcal{C}}
\newcommand{\cF}{\mathcal{F}}
\newcommand{\cO}{\mathcal{O}}
\newcommand{\X}{\mathcal{X}}
\newcommand{\scS}{\mathscr{S}}
\newcommand{\SK}[1]{\mathscr{S}_{\mathfrak{sl}_{3},#1}^{A}}
\newcommand{\Skein}[1]{\mathscr{S}_{\mathfrak{sl}_3,#1}}
\newcommand{\Bweb}[1]{\mathsf{BWeb}_{\mathfrak{sl}_{3},{#1}}}
\newcommand{\Eweb}[1]{\mathsf{EWeb}_{\mathfrak{sl}_{3},{#1}}}
\newcommand{\Cweb}[1]{\mathsf{CWeb}_{\mathfrak{sl}_{3},{#1}}}
\newcommand{\gr}{\mathrm{gr}}
\newcommand{\interior}{\mathrm{int}}
\newcommand{\CA}{\mathscr{A}}
\newcommand{\UCA}{\mathscr{U}}
\newcommand{\bD}{{\boldsymbol{\Delta}}}
\DeclareMathOperator{\Hom}{\mathrm{Hom}}
\DeclareMathOperator{\coker}{\mathrm{coker}}
\DeclareMathOperator{\Spec}{\mathrm{Spec}}
\newcommand{\oset}[3][0ex]{%
  \mathrel{\mathop{#3}\limits^{
    \vbox to#1{\kern-2\ex@
    \hbox{$\scriptstyle#2$}\vss}}}}
\newcommand{\overbar}[1]{\oset{#1}{-\!\!\!-\!\!\!-}}
\newcommand\qarrow[2]{\draw[->,shorten >=2pt,shorten <=2pt] (#1) -- (#2) [thick];} 
\newcommand\qdrarrow[2]{\draw[->,dashed,shorten >=2pt,shorten <=2pt,bend right=0.5cm] (#1) to (#2) [thick];} 
\newcommand\qdlarrow[2]{\draw[->,dashed,shorten >=2pt,shorten <=2pt,bend left=0.5cm] (#1) to (#2) [thick];} 
\newcommand\qarrowbr[2]{\draw[->,shorten >=2pt,shorten <=2pt,bend right=0.5cm] (#1) to (#2) [thick];} 
\def\centerarc(#1)(#2:#3:#4)
\tikzset{
  mid arrow/.style={postaction={decorate,decoration={
        markings,
        mark=at position .5 with {\arrow[#1]{stealth}}
      }}},
}
\newcommand{\quiverplus}[3]{
\begin{scope}[>=latex]
{\color{mygreen}
    \path(#1) coordinate(x1);
    \path(#2) coordinate(x2);
    \path(#3) coordinate(x3);
    \foreach \l in {1,2}
    {
        \draw($(x1)!0.333*\l!(x2)$) circle(2pt) coordinate(x12\l);
        \draw($(x2)!0.333*\l!(x3)$) circle(2pt) coordinate(x23\l);
        \draw($(x3)!0.333*\l!(x1)$) circle(2pt) coordinate(x31\l);
    }
    \path($(x1)!0.5!(x2)$) coordinate(H);
    \draw($(x3)!0.667!(H)$) circle(2pt) coordinate(G);
    \qarrow{x121}{G}
    \qarrow{x231}{G}
    \qarrow{x311}{G}
    \qarrow{G}{x122}
    \qarrow{G}{x232}
    \qarrow{G}{x312}
    \qarrow{x312}{x121}
    \qarrow{x122}{x231}
    \qarrow{x232}{x311}
}
\end{scope}
}
\newcommand{\quiverminus}[3]{
\begin{scope}[>=latex]
{\color{mygreen}
    \path(#1) coordinate(x1);
    \path(#2) coordinate(x2);
    \path(#3) coordinate(x3);
    \foreach \l in {1,2}
    {
        \draw($(x1)!0.333*\l!(x2)$) circle(2pt) coordinate(x12\l);
        \draw($(x2)!0.333*\l!(x3)$) circle(2pt) coordinate(x23\l);
        \draw($(x3)!0.333*\l!(x1)$) circle(2pt) coordinate(x31\l);
    }
    \path($(x1)!0.5!(x2)$) coordinate(H);
    \draw($(x3)!0.667!(H)$) circle(2pt) coordinate(G);
    \path($(x1)!0.5!(x2)$) coordinate(z12);
    \path($(x2)!0.5!(x3)$) coordinate(z23);
    \path($(x3)!0.5!(x1)$) coordinate(z31);
    \path($(G)!0.7!(z12)$) coordinate(y12);
    \path($(G)!0.7!(z23)$) coordinate(y23);
    \path($(G)!0.7!(z31)$) coordinate(y31);
    \qarrow{x122}{G}
    \qarrow{x232}{G}
    \qarrow{x312}{G}
    \qarrow{G}{x121}
    \qarrow{G}{x231}
    \qarrow{G}{x311}
    \draw[<-,shorten >=2pt,shorten <=2pt] (x232) ..controls (y23) and (y12).. (x121) [thick];
    \draw[<-,shorten >=2pt,shorten <=2pt] (x312) ..controls (y31) and (y23).. (x231) [thick];
    \draw[<-,shorten >=2pt,shorten <=2pt] (x122) ..controls (y12) and (y31).. (x311) [thick];
}
\end{scope}
}
\newcommand{\quiversquare}[4]{
{\color{mygreen}
    \path(#1) coordinate(x1);
    \path(#2) coordinate(x2);
    \path(#3) coordinate(x3);
	\path(#4) coordinate(x4);
    \foreach \l in {1,2}
    {
        \draw($(x1)!0.333*\l!(x2)$) circle(2pt) coordinate(x12\l);
        \draw($(x2)!0.333*\l!(x3)$) circle(2pt) coordinate(x23\l);
        \draw($(x3)!0.333*\l!(x4)$) circle(2pt) coordinate(x34\l);
		\draw($(x4)!0.333*\l!(x1)$) circle(2pt) coordinate(x41\l);
		\draw($(x1)!0.333*\l!(x3)$) circle(2pt) coordinate(y13\l);	
	    \draw($(x2)!0.333*\l!(x4)$) circle(2pt) coordinate(y24\l);
	}
}
}
\newcommand{\CoG}[3]{
    \path(#1) coordinate(x1);
    \path(#2) coordinate(x2);
    \path(#3) coordinate(x3);
    \path($(x1)!0.5!(x2)$) coordinate(H);
    \path($(x3)!0.667!(H)$) circle(2pt) coordinate(G);}
\newcommand{\triv}[3]{
    \CoG{#1}{#2}{#3}
    \draw[red,thick,->-] (#1) -- (G);
    \draw[red,thick,->-] (#2) -- (G);
    \draw[red,thick,->-] (#3) -- (G);
}   
\newcommand{\trivop}[3]{
    \CoG{#1}{#2}{#3}
    \draw[red,thick,-<-] (#1) -- (G);
    \draw[red,thick,-<-] (#2) -- (G);
    \draw[red,thick,-<-] (#3) -- (G);
}   
\newcommand{\Hweb}[4]{
\path($(#1)!0.5!(#3)$) coordinate(P);
\CoG{#1}{#2}{P}
    \draw[red,thick,->-] (#1) -- (G);
    \draw[red,thick,->-] (#2) -- (G);
    \path(G) coordinate (G');
\CoG{#3}{#4}{P}
    \draw[red,thick,-<-] (#3) -- (G);
    \draw[red,thick,-<-] (#4) -- (G);
\draw[red,thick,->-] (G) -- (G');
}
\newcommand{\bline}[3]{
    \path (#1)++(0,-#3) coordinate(m1);
    \path (#2)++(0,-#3) coordinate(m2);
    \filldraw[gray!30] (m1) -- (#1) -- (#2) -- (m2) --cycle;
    \draw[thick] (#1) -- (#2);
}
\newcommand{\hweb}[1]{
    \begin{scope}[rotate=#1]
        \coordinate (P1) at (135:10);
        \coordinate (P2) at (225:10);
        \coordinate (P3) at (315:10);
        \coordinate (P4) at (45:10);
        \coordinate (H1) at (90:5);
        \coordinate (H3) at (270:5);
        \draw[thick, red, ->-={.5}{scale=.7}] (H1) -- (P1);
        \draw[thick, red, ->-={.5}{scale=.7}] (H1) -- (P4);
        \draw[thick, red, ->-={.5}{scale=.7}] (H1) -- (H3);
        \draw[thick, red, -<-={.5}{scale=.7}] (H3) -- (P2);
        \draw[thick, red, -<-={.5}{scale=.7}] (H3) -- (P3);
    \end{scope}
}
\newcommand{\bwebneg}[1]{
    \begin{scope}[rotate=#1]
        \coordinate (P1) at (135:10);
        \coordinate (P2) at (225:10);
        \draw[thick, red, ->-={.7}{scale=.7}] (P2) -- (P1);
    \end{scope}
}
\newcommand{\bwebpos}[1]{
    \begin{scope}[rotate=#1]
        \coordinate (P1) at (135:10);
        \coordinate (P2) at (225:10);
        \draw[thick, red, ->-={.7}{scale=.7}] (P1) -- (P2);
    \end{scope}
}
\newcommand{\twebplus}[1]{
    \begin{scope}[rotate=#1]
        \coordinate (P1) at (135:10);
        \coordinate (P2) at (225:10);
        \coordinate (P4) at (45:10);
        \coordinate (T1) at (135:2);
        \draw[thick, red, ->-={.7}{scale=.7}] (P1) -- (T1);
        \draw[thick, red, ->-={.7}{scale=.7}] (P2) -- (T1);
        \draw[thick, red, ->-={.7}{scale=.7}] (P4) -- (T1);
    \end{scope}
}
\newcommand{\twebminus}[1]{
    \begin{scope}[rotate=#1]
        \coordinate (P1) at (135:10);
        \coordinate (P2) at (225:10);
        \coordinate (P4) at (45:10);
        \coordinate (T1) at (135:2);
        \draw[thick, red, -<-={.7}{scale=.7}] (P1) -- (T1);
        \draw[thick, red, -<-={.7}{scale=.7}] (P2) -- (T1);
        \draw[thick, red, -<-={.7}{scale=.7}] (P4) -- (T1);
    \end{scope}
}
\newcommand{\dweb}[1]{
    \begin{scope}[rotate=#1]
        \coordinate (P1) at (135:10);
        \coordinate (P3) at (315:10);
        \draw[thick, red, ->-={.4}{scale=.7}] (P3) -- (P1);
    \end{scope}
}
\newcommand{\dwebup}[1]{
    \begin{scope}[rotate=#1]
        \coordinate (P1) at (135:10);
        \coordinate (P2) at (225:10);
        \coordinate (P3) at (315:10);
        \coordinate (P4) at (45:10);
        \draw[thick, red, ->-={.4}{scale=.7}, rounded corners] (P3) -- ($(P2)!.65!(P4)$) -- (P1);
    \end{scope}
}
\newcommand{\dwebdown}[1]{
    \begin{scope}[rotate=#1]
        \coordinate (P1) at (135:10);
        \coordinate (P2) at (225:10);
        \coordinate (P3) at (315:10);
        \coordinate (P4) at (45:10);
        \draw[thick, red, ->-={.4}{scale=.7}, rounded corners] (P3) -- ($(P2)!.35!(P4)$) -- (P1);
    \end{scope}
}
\newcommand{\Quadweb}[2]{
    \mathord{\ 
    \tikz[baseline=-.6ex, scale=.07]{
        \coordinate (P1) at (135:10);
        \coordinate (P2) at (225:10);
        \coordinate (P3) at (315:10);
        \coordinate (P4) at (45:10);
        \draw[gray] (P1) -- (P2) -- (P3) -- (P4) -- (P1) -- cycle;
        #1
        \node[draw, fill=black, circle, inner sep=1] at (135:10) {};
        \node[draw, fill=black, circle, inner sep=1] at (225:10) {};
        \node[draw, fill=black, circle, inner sep=1] at (315:10) {};
        \node[draw, fill=black, circle, inner sep=1] at (45:10) {};
        \path (225:10) -- (315:10) node [pos=.5, below=3pt]{#2};
    }
\ }
}
\newcommand{\ttwebplus}{
    \begin{scope}
        \coordinate (P1) at (210:10);
        \coordinate (P2) at (330:10);
        \coordinate (P3) at (90:10);
        \coordinate (T1) at (0,0);
        \draw[thick, red, ->-={.7}{scale=.7}] (P1) -- (T1);
        \draw[thick, red, ->-={.7}{scale=.7}] (P2) -- (T1);
        \draw[thick, red, ->-={.7}{scale=.7}] (P3) -- (T1);
    \end{scope}
}
\newcommand{\ttwebminus}{
    \begin{scope}
        \coordinate (P1) at (210:10);
        \coordinate (P2) at (330:10);
        \coordinate (P3) at (90:10);
        \coordinate (T1) at (0,0);
        \draw[thick, red, -<-={.7}{scale=.7}] (P1) -- (T1);
        \draw[thick, red, -<-={.7}{scale=.7}] (P2) -- (T1);
        \draw[thick, red, -<-={.7}{scale=.7}] (P3) -- (T1);
    \end{scope}
}
\newcommand{\ttwebminusdown}{
    \begin{scope}
        \coordinate (P1) at (210:10);
        \coordinate (P2) at (330:10);
        \coordinate (P3) at (90:10);
        \coordinate (T1) at (2,4);
        \draw[thick, red, -<-={.7}{scale=.7}] (P1) -- (T1);
        \draw[thick, red, -<-={.7}{scale=.7}] (P2) -- (T1);
        \draw[thick, red, -<-={.7}{scale=.7}] (P3) -- (T1);
    \end{scope}
}
\newcommand{\tbwebpos}[1]{
    \begin{scope}[rotate=#1]
        \coordinate (P1) at (210:10);
        \coordinate (P2) at (330:10);
        \coordinate (P3) at (90:10);
        \coordinate (T1) at (0,0);
        \draw[thick, red, ->-={.7}{scale=.7}] (P1) -- (P2);
    \end{scope}
}
\newcommand{\tbwebneg}[1]{
    \begin{scope}[rotate=#1]
        \coordinate (P1) at (210:10);
        \coordinate (P2) at (330:10);
        \coordinate (P3) at (90:10);
        \coordinate (T1) at (0,0);
        \draw[thick, red, -<-={.3}{scale=.7}] (P1) -- (P2);
    \end{scope}
}
\newcommand{\Triweb}[2]{
    \mathord{\ 
    \tikz[baseline=-.6ex, scale=.07]{
        \coordinate (P1) at (210:10);
        \coordinate (P2) at (330:10);
        \coordinate (P3) at (90:10);
        \draw[gray] (P1) -- (P2) -- (P3) -- (P1) -- cycle;
        #1
        \node[draw, fill=black, circle, inner sep=1] at (210:10) {};
        \node[draw, fill=black, circle, inner sep=1] at (330:10) {};
        \node[draw, fill=black, circle, inner sep=1] at (90:10) {};
        \path (225:10) -- (315:10) node [pos=.5, below=3pt]{#2};
    }
\ }
}
\tikzset{->-/.style 2 args={
	postaction={decorate},
	decoration={markings, mark=at position #1 with {\arrow[thick, #2]{>}}} 
    },
    ->-/.default={0.5}{}
}
\tikzset{-<-/.style 2 args={
	postaction={decorate},
	decoration={markings, mark=at position #1 with {\arrow[thick, #2]{<}}} 
    },
    -<-/.default={0.5}{}
}
\tikzset{
	overarc/.style={
		white, double=red, double distance=1.2pt, line width=2.4pt
	}
}
\title[Skein and cluster algebras for $\mathfrak{sl}_3$]{Skein and cluster algebras of unpunctured surfaces for $\mathfrak{sl}_3$}
\author[Tsukasa Ishibashi]{Tsukasa Ishibashi}
\address{Tsukasa Ishibashi, Mathematical Institute, Tohoku University, 
6-3 Aoba, Aramaki, Aoba-ku, Sendai, Miyagi 980-8578, Japan.}
\email{tsukasa.ishibashi.a6@tohoku.ac.jp}
\urladdr{https://sites.google.com/view/tsukasa-ishibashi/home}
\author[Wataru Yuasa]{Wataru Yuasa}
\address{Wataru Yuasa\\
Osaka Central Advanced Mathematical Institute\\
Osaka Metropolitan University\\
3-3-138 Sugimoto-cho, Sumiyoshi-ku, Osaka 558-8585, JAPAN;\\
Research Institute for Mathematical Sciences\\
Kyoto University\\
Kitashirakawa Oiwake-cho, Sakyo-ku, Kyoto 606-8502, Japan}
\email{wyuasa@kurims.kyoto-u.ac.jp}
\urladdr{https://wataruyuasa.github.io/math/}
\subjclass[2020]{13F60, 57K31 (Primary), 57K20 (Secondary)}
\keywords{Cluster algebra; Skein algebra; Positivity}
\begin{document}

\begin{abstract}
For an unpunctured marked surface $\Sigma$, we consider a skein algebra $\mathscr{S}_{\mathfrak{sl}_{3},\Sigma}^{q}$ consisting of $\mathfrak{sl}_3$-webs on $\Sigma$ with the boundary skein relations at marked points. 
We construct a quantum cluster algebra $\mathscr{A}^q_{\mathfrak{sl}_3,\Sigma}$ inside the skew-field $\mathrm{Frac}\mathscr{S}_{\mathfrak{sl}_{3},\Sigma}^{q}$ of fractions, which quantizes the cluster $K_2$-structure on the moduli space $\mathcal{A}_{SL_3,\Sigma}$ of decorated $SL_3$-local systems on $\Sigma$. 
We show that the cluster algebra $\mathscr{A}^q_{\mathfrak{sl}_3,\Sigma}$ contains the boundary-localized skein algebra $\mathscr{S}_{\mathfrak{sl}_{3},\Sigma}^{q}[\partial^{-1}]$ as a subalgebra, and their natural structures, such as gradings and certain group actions, agree with each other. We also give an algorithm to compute the Laurent expressions of a given $\mathfrak{sl}_3$-web in certain clusters and discuss the positivity of coefficients. In particular, we show that the bracelets and the bangles along an oriented simple loop in $\Sigma$ have Laurent expressions with positive coefficients, hence give rise to quantum GS-universally positive Laurent polynomials.
\end{abstract}
\maketitle

\setcounter{tocdepth}{1}
\tableofcontents


\section{Introduction}\label{sec:intro}
Quantizations of the $SL_2(\bC)$-character variety $\Hom(\pi_1(\Sigma),SL_2(\bC)) \sslash SL_2(\bC)$ for a surface $\Sigma$ have been studied in several different ways. One is via the \emph{Kauffman bracket skein algebra}, first defined in \cite{BFK99} for a closed surface; \cite{PS19, BonahonWong11} for a surface with boundary; \cite{RogerYang14} for a punctured surface; \cite{Muller16} for an unpunctured marked surface.
In each of these works the Kauffman bracket skein relation provides a non-commutative deformation of the trace identity among the $SL_2(\bC)$-matrices, and therefore the skein algebra can be regarded as a deformation quantization of the $SL_2(\bC)$-character variety (or its suitable variants).
More generally, connections between the $SL_2(\bC)$-character variety and the Kauffman bracket skein algebra for a $3$-manifold has been studied in \cite{Bullock97,PS00}.

Another one is via the theory of \emph{cluster algebras} initiated by Fomin--Zelevinsky \cite{FZ-CA1}, whose quantum counterpart has been introduced by Berenstein--Zelevinsky \cite{BZ}. Cluster algebras are commutative rings $\CA_\sfs$ associated witha combinatorial data $\sfs$, a mutation class of \emph{seeds}. For a marked surface $\Sigma$ (\emph{i.e.}, an oriented compact surface with boundary together with a finite set of marked points), the moduli space $\A_{SL_2,\Sigma}$ of twisted decorated $SL_2$-local systems has a canonical cluster $K_2$-structure \cite{FG03} encoded in a mutation class $\sfs=\sfs(\mathfrak{sl}_2,\Sigma)$, via which we can identify a subring of the ring of regular functions on an open subspace $\A^\times_{SL_2,\Sigma}$ with the associated cluster algebra $\CA_{\mathfrak{sl}_2,\Sigma}$.
See \cref{rem:geometry of moduli} for more information.
Forgetting the decoration, we get the moduli space of twisted $SL_2(\bC)$-representations, which is identified with the $SL_2(\bC)$-character stack by fixing a spin structure on $\Sigma$. 

There is a general theory of \emph{quantum cluster algebra} introduced by Berenstein--Zelevinsky \cite{BZ}, which provides non-commutative deformations $\CA_{\sfs_q}$ of a given cluster algebra $\CA_\sfs$. The deformation depends on additional data called the \emph{compatibility matrices}, which give rise to a mutation class $\sfs_q$ of \emph{quantum seeds}. Such a deformation exists if the original mutation class $\sfs$ of seeds possesses full-rank exchange matrices. There is an accompanying algebra $\UCA_{\sfs_q}$ called the \emph{quantum upper cluster algebra}, which is an intersection of (typically infinitely many) quantum tori. 
We always have an inclusion $\CA_{\sfs_q} \subset \UCA_{\sfs_q}$ by the so-called \emph{quantum Laurent phenomenon}.

For the mutation class $\sfs(\mathfrak{sl}_2,\Sigma)$, the full-rank condition forces $\Sigma$ to be \emph{unpunctured}, by which we mean it has no punctures. 
In this case, a suitable choice of a mutation class $\sfs_q(\mathfrak{sl}_2,\Sigma)$ quantizing $\sfs(\mathfrak{sl}_2,\Sigma)$ has been made by Muller \cite{Muller16}, for which we have (upper) cluster algebras $\CA^q_{\mathfrak{sl}_2,\Sigma} \subset \UCA^q_{\mathfrak{sl}_2,\Sigma}$. Upon this choice, he showed that the two quantization schemes via skein and cluster algebras give the same result. 
More precisely, he introduced a skein algebra $\mathscr{S}^A_{\mathfrak{sl}_2,\Sigma}$ on an unpunctured marked surface by imposing certain \emph{boundary skein relations} and obtained the following comparison result:

\begin{introthm}[Muller \cite{Muller16}]\label{introthm:Muller}
For any (triangulable) unpunctured marked surface $\Sigma$, we have 
\begin{align*}
    \CA^q_{\mathfrak{sl}_2,\Sigma} \subset \mathscr{S}^q_{\mathfrak{sl}_2,\Sigma}[\partial^{-1}] \subset \UCA^q_{\mathfrak{sl}_2,\Sigma},
\end{align*}
where $\mathscr{S}^q_{\mathfrak{sl}_2,\Sigma}[\partial^{-1}]$ denotes the localized skein algebra at boundary intervals. Moreover if $\Sigma$ has at least two marked points, these inclusions are isomorphisms. We also have the following comparison of their natural structures:
\begin{itemize}
    \item the inclusions are $MC(\Sigma)$-equivariant;
    \item the bar-involution on $\UCA^q_{\mathfrak{sl}_2,\Sigma}$ restricts to the mirror-reflection on $\mathscr{S}^q_{\mathfrak{sl}_2,\Sigma}[\partial^{-1}]$;
    \item the ensemble grading (\emph{a.k.a.} the universal grading) on $\UCA^q_{\mathfrak{sl}_2,\Sigma}$ restricts to the endpoint grading on $\mathscr{S}^q_{\mathfrak{sl}_2,\Sigma}[\partial^{-1}]$.
\end{itemize}
\end{introthm}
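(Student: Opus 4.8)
The plan is to fix an ideal triangulation $\bD$ of $\Sigma$ and use it as the bridge between the two constructions: its arcs furnish simultaneously the cluster variables of an initial quantum seed in $\sfs_q(\mathfrak{sl}_2,\Sigma)$ and a distinguished family of simple diagrams in the skein algebra, with the boundary segments playing the role of the frozen variables that become invertible after localizing at $\partial$. The two technical facts on which everything rests are: (i) the edges of $\bD$ pairwise $q$-commute in $\mathscr{S}^q_{\mathfrak{sl}_2,\Sigma}[\partial^{-1}]$ with commutation exponents matching the compatibility data of the seed, so that they generate a copy of the quantum torus of $\bD$; and (ii) a single flip of $\bD$ is reproduced, on the nose, by the Kauffman bracket resolution of the two crossing diagonals of the flipped quadrilateral, the boundary skein relations supplying exactly the frozen contributions in the quantum exchange relation.

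Granting (i)--(ii), the first inclusion $\CA_{\sfs_q(\mathfrak{sl}_2,\Sigma)} \subset \mathscr{S}^q_{\mathfrak{sl}_2,\Sigma}[\partial^{-1}]$ follows by induction on mutation distance from $\bD$: every cluster variable is reached from the arcs of $\bD$ by finitely many flips, each realized skein-theoretically by (ii), so all cluster variables and hence all of $\CA_{\sfs_q}$ lie in the localized skein algebra. For the reverse containment into $\UCA_{\sfs_q(\mathfrak{sl}_2,\Sigma)}$ I would invoke the quantum Laurent phenomenon characterization of the upper cluster algebra as the intersection of the quantum tori of all seeds: it suffices to expand each element of the reduced-multicurve basis of $\mathscr{S}^q_{\mathfrak{sl}_2,\Sigma}[\partial^{-1}]$ as a quantum Laurent polynomial in the edges of an arbitrary triangulation. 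Putting a reduced multicurve in minimal position with respect to $\bD$ and resolving its crossings with the triangulation edges yields finitely many monomials with exponents bounded by geometric intersection numbers, hence a Laurent expression; since this holds for every $\bD$, the element lies in $\UCA_{\sfs_q}$.

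To promote the sandwich to equalities when $\Sigma$ has at least two marked points, the decisive input is that these surface seeds are locally acyclic, whence $\CA_{\sfs_q} = \UCA_{\sfs_q}$ and the middle term is squeezed between equal ends; the hypothesis on marked points is what guarantees an acyclic cover of $\Sigma$ by elementary pieces on which the identity $\CA = \UCA$ is transparent. The structural comparisons are then verified on the edge generators of a single triangulation and propagated by naturality. The mapping class group acts on both sides by relabeling triangulations, so the isomorphism is $MC(\Sigma)$-equivariant by construction; the bar-involution $q \mapsto q^{-1}$ and the orientation-reversing mirror symmetry of diagrams agree on each edge variable and reverse products identically; and the ensemble grading, read from $\coker$ of the exchange matrix, matches the endpoint grading because a flip alters the boundary-endpoint multiset of an arc exactly as the corresponding mutation alters its ensemble degree.

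The main obstacle I anticipate is the exactness demanded in (i)--(ii): matching the powers of $q$ so that the skein resolution reproduces the quantum exchange relation precisely rather than up to an ambiguous unit, which forces a careful and canonical choice of the quantization $\sfs_q(\mathfrak{sl}_2,\Sigma)$ together with a fixed ordering convention for products of curves. A secondary difficulty is establishing $\CA_{\sfs_q} = \UCA_{\sfs_q}$ through local acyclicity, since that step requires exhibiting explicit acyclic covers and controlling the localized skein algebras of the covering pieces; it is here, and not in the two inclusions, that the ``at least two marked points'' hypothesis genuinely enters.
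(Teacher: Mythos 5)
Your proposal reproduces, step for step, the four-part strategy of Muller that the paper itself only cites and outlines in the introduction: realizing the quantum seed of a triangulation by its arcs (with boundary segments as invertible frozens), identifying flips with Kauffman-bracket resolutions to get $\CA_{\sfs_q}\subset\mathscr{S}^q[\partial^{-1}]$, expanding multicurves in each triangulation's quantum torus to get the inclusion into $\UCA_{\sfs_q}$, and closing the sandwich via local acyclicity when there are at least two marked points. This is essentially the same approach as the paper's (i.e., Muller's) argument, and the structural comparisons are handled the same way, so nothing further is needed.
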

Let us briefly mention Muller's strategy. 
\begin{description}
    \item[Step 1] For a given ideal triangulation of $\Sigma$, he constructed a quantum cluster inside the localized skein algebra $\mathscr{S}^q_{\mathfrak{sl}_2,\Sigma}[\partial^{-1}]$. They are shown to be mutation-equivalent to each other by identifying the quantum exchange relations with the skein relations, and hence generate the algebras $\CA^q_{\mathfrak{sl}_2,\Sigma}$ and $\UCA^q_{\mathfrak{sl}_2,\Sigma}$ in the skew-field $\mathrm{Frac}\mathscr{S}^q_{\mathfrak{sl}_2,\Sigma}$ of fractions. 
    \item[Step 2] Since all the quantum clusters $\CA^q_{\mathfrak{sl}_2,\Sigma}$ are associated with ideal triangulations and they are realized inside $\mathscr{S}^q_{\mathfrak{sl}_2,\Sigma}[\partial^{-1}]$, we immediately get the inclusion $\CA^q_{\mathfrak{sl}_2,\Sigma} \subset \mathscr{S}^q_{\mathfrak{sl}_2,\Sigma}[\partial^{-1}]$. 
    \item[Step 3] Then he also gave a way to express any element of the localized skein algebra as a quantum Laurent polynomial in each quantum cluster, which leads to the inclusion $\mathscr{S}^q_{\mathfrak{sl}_2,\Sigma}[\partial^{-1}] \subset \UCA^q_{\mathfrak{sl}_2,\Sigma}$.
    \item[Step 4] Finally he proved the coincidence $\CA^q_{\mathfrak{sl}_2,\Sigma}=\UCA^q_{\mathfrak{sl}_2,\Sigma}$ when $\Sigma$ has at least two marked points.
\end{description}

Here is a comment on the localizations: one can as well consider the quantum (upper) cluster algebras with frozen variables not being invertible, in which case they are expected to coincide with $\mathscr{S}_{\mathfrak{sl}_2,\Sigma}^q$. It amounts to consider a partial compactification of the cluster $K_2$-variety.

\subsection{Comparison of the skein and cluster algebras for \texorpdfstring{$\mathfrak{sl}_3$}{sl3}}
Our far-reaching goal is to find higher-rank analogues of the Muller's result for semisimple Lie algebras $\mathfrak{g}$ other than $\mathfrak{sl}_2$. For a simply-connected semisimple algebraic group $G$ with Lie algebra $\mathfrak{g}$, the moduli space $\A_{G,\Sigma}$ of twisted decorated $G$-local systems \cite{FG03} has  a canonical $K_2$-structure, which is encoded in a mutation class $\sfs(\mathfrak{g},\Sigma)$ constructed in \cite{FG03} for $\mathfrak{sl}_n$; \cite{Le16} for classical Lie algebras; \cite{GS19} in general.
On the other hand, the higher-rank analogues of the skein theory has been studied by Kuperberg~\cite{Kuperberg96} for rank two Lie algebras, Murakami-Ohtsuki-Yamada~\cite{MOY98}, Sikora~\cite{Sikora05} and Morrison~\cite{Morrison07} for $\mathfrak{sl}_n$.

Our aim in this paper is to establish the $\mathfrak{sl}_3$-case via a specialization ($a=1$) of the skein algebra $\SK{\Sigma}$ introduced by Frohman--Sikora \cite{FrohmanSikora20}.
The skein algebra $\SK{\Sigma}$ is spanned by certain $\mathfrak{sl}_3$-webs on an unpunctured marked surface $\Sigma$, subject to certain boundary skein relations as well as the usual $\mathfrak{sl}_3$-skein relations (see \cref{def:skeinrel} and \cref{def:bskeinrel}).
Following Muller's strategy, we first construct quantum clusters associated with \emph{decorated triangulations} as specific collections of elements in the $\mathfrak{sl}_3$-skein algebra, which we call \emph{web clusters}. A web cluster is defined to be a collection of \emph{elementary webs} which $q$-commute with each other, with the prescribed cardinality (see \cref{elementary} and \cref{webcluster}). 
Then we show that these quantum clusters are mutation-equivalent to each other by identifying the quantum exchange relations relating these quantum clusters with the skein relations. Thus they generate a canonical mutation class $\sfs_q(\mathfrak{sl}_3,\Sigma)$, which defines the quantum (upper) cluster algebras $\CA^q_{\mathfrak{sl}_3,\Sigma}$ and $\UCA^q_{\mathfrak{sl}_3,\Sigma}$ in the skew-field $\mathrm{Frac}\mathscr{S}^q_{\mathfrak{sl}_3,\Sigma}$ of fractions. 
Then we obtain the following $\mathfrak{sl}_3$-analogue of \cref{introthm:Muller}:

\begin{introthm}[Comparison of skein and cluster algebras:
\cref{subsec:S_in_A}]\label{introthm:comparison}  
For a connected (triangulable) unpunctured marked surface $\Sigma$ with at least two marked points, we have 
\begin{align*}
 \mathscr{S}^q_{\mathfrak{sl}_3,\Sigma}[\partial^{-1}] \subset \CA^q_{\mathfrak{sl}_3,\Sigma} \subset \UCA^q_{\mathfrak{sl}_3,\Sigma}, 
\end{align*}
where $\mathscr{S}^q_{\mathfrak{sl}_3,\Sigma}[\partial^{-1}]$ denotes the $\mathfrak{sl}_3$-skein algebra localized at boundary webs (\cref{def:localized_skein_algebras}). 
Moreover,
\begin{itemize}
    \item the inclusions are $MC(\Sigma) \times \mathrm{Out}(SL_3)$-equivariant;
    \item the bar-involution on $\UCA^q_{\mathfrak{sl}_3,\Sigma}$ restricts to the mirror-reflection on $\mathscr{S}^q_{\mathfrak{sl}_3,\Sigma}[\partial^{-1}]$;
    \item the ensemble grading on $\UCA^q_{\mathfrak{sl}_3,\Sigma}$ restricts to the endpoint grading on $\mathscr{S}^q_{\mathfrak{sl}_3,\Sigma}[\partial^{-1}]$.
\end{itemize}
\end{introthm}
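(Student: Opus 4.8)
The plan is to follow Muller's strategy, adapted to the richer combinatorics of $\mathfrak{sl}_3$-webs. Since the web clusters associated with decorated triangulations have already been shown to be mutation-equivalent quantum seeds generating $\sfs_q(\mathfrak{sl}_3,\Sigma)$ inside $\mathrm{Frac}\,\mathscr{S}^q_{\mathfrak{sl}_3,\Sigma}$, the second inclusion $\CA_{\sfs_q(\mathfrak{sl}_3,\Sigma)} \subset \UCA_{\sfs_q(\mathfrak{sl}_3,\Sigma)}$ is nothing but the quantum Laurent phenomenon, valid because the exchange matrices are of full rank. The whole content therefore lies in the first inclusion and in the compatibility of the auxiliary structures.

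For $\mathscr{S}^q_{\mathfrak{sl}_3,\Sigma}[\partial^{-1}] \subset \CA_{\sfs_q(\mathfrak{sl}_3,\Sigma)}$, I would fix one decorated triangulation $T$ and its web cluster $\mathbf{c}_T$ (see \cref{elementary} and \cref{webcluster}), whose entries are by construction initial cluster variables and frozen variables of $\sfs_q(\mathfrak{sl}_3,\Sigma)$. It suffices to prove that every $\mathfrak{sl}_3$-web lies in the $\bZ[q^{\pm 1/2}]$-subalgebra generated by these elementary webs together with the inverses of the boundary (frozen) webs, because a product of cluster variables from a single seed is a cluster monomial and hence lies in $\CA$. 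To establish this generation statement I would isotope a given web $W$ transverse to the edges of $T$, cut along the edges, and reduce each resulting piece inside a triangle to a $\bZ[q^{\pm 1/2}]$-linear combination of products of elementary triangle webs by repeatedly applying the $\mathfrak{sl}_3$-skein relations together with the boundary skein relations; the gluing data along each edge is recorded by the corresponding edge webs. Crucially, only the frozen boundary webs need to be inverted in this process, so the resulting expression is a polynomial in the mutable elementary webs — placing $W$ in $\CA$ rather than merely in $\UCA$.

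The three compatibility statements would then be checked on the generators and propagated to the whole subalgebra. For $MC(\Sigma) \times \mathrm{Out}(SL_3)$-equivariance, a mapping class carries $T$ to another decorated triangulation and hence $\mathbf{c}_T$ to another web cluster, while the nontrivial element of $\mathrm{Out}(SL_3)$ acts by the diagram automorphism, realized by reversing the orientations of webs; in both cases the image is again a web cluster of $\sfs_q(\mathfrak{sl}_3,\Sigma)$, so the action permutes seeds and the inclusions intertwine the two actions. For the bar-involution, I would verify that the $\bZ$-linear bar map ($q^{1/2}\mapsto q^{-1/2}$) fixing the cluster monomials agrees with the mirror-reflection of webs on each elementary web; since both are determined by their values on a generating set and their effect on $q^{1/2}$, they coincide on $\mathscr{S}^q_{\mathfrak{sl}_3,\Sigma}[\partial^{-1}]$. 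For the gradings, I would compare the ensemble grading of each elementary web (read off from the kernel of the exchange matrix of $\mathbf{c}_T$) with its endpoint grading (the multidegree recording endpoints on the boundary intervals) and check they match; since both gradings are additive under multiplication, matching on generators yields matching throughout.

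The main obstacle is the generation statement in the second paragraph: one must show that an arbitrary $\mathfrak{sl}_3$-web, after cutting along $T$, can be reduced by the skein and boundary relations to a polynomial in the elementary webs of the single cluster $\mathbf{c}_T$, using only inverses of frozen webs. This is a genuinely $\mathfrak{sl}_3$-phenomenon with no $\mathfrak{sl}_2$ counterpart, since each triangle and edge now carries several elementary webs and the reduction must disentangle the internal trivalent structure (e.g.\ resolving squares, bigons, and the $H$- and honeycomb-configurations) while keeping track of the induced states on the edges. Controlling this reduction — and in particular ensuring that no mutable elementary web ever needs to be inverted, which is precisely what separates membership in $\CA$ from membership in $\UCA$ — is where the bulk of the diagrammatic work will go.
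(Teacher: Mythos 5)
Your treatment of the second inclusion (quantum Laurent phenomenon), of the group equivariance, and of the bar-involution is in line with the paper. The grading comparison is sketchier than the paper's (which constructs an explicit isomorphism $\coker p^*\cong L(3)$ and checks mutation-invariance of the map $\sff_i^{(\omega)}\mapsto \mathrm{gr}(e_i^{(\omega)})$ before matching generators), but the idea is the same.

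The genuine gap is in your argument for the first inclusion $\mathscr{S}^q_{\mathfrak{sl}_3,\Sigma}[\partial^{-1}]\subset\CA_{\sfs_q(\mathfrak{sl}_3,\Sigma)}$. Cutting a web $W$ along a \emph{fixed} decorated triangulation forces you to invert the elementary webs $e_{ij}$ sitting on the \emph{interior} edges that $W$ crosses: the cutting trick (\cref{lem:cuttingweb}) expresses $W\cdot[e_{ij}e_{ji}]$, not $W$ itself, as a sum of resolved terms, and $[e_{ij}e_{ji}]^{-1}$ involves unfrozen cluster variables whenever $E_{ij}$ is an interior edge. This is exactly the content of \cref{thm:elementary-web-expansion-T}, which lands in $\mathscr{S}^q[\Delta^{-1}]$, hence only in $\UCA$; your claim that ``only the frozen boundary webs need to be inverted in this process'' is false for any web meeting an interior edge (already for a simple closed curve), and you offer no mechanism to avoid it --- you correctly flag this as the main obstacle but leave it unresolved. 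The paper takes a different route: it first proves (\cref{thm:localized-generators}, built on the refined Frohman--Sikora generating set of \cref{lem:refineFS} and the sticking trick of \cref{lem:stickto}, which pushes webs onto \emph{boundary} intervals so that only frozen webs are ever inverted) that $\mathscr{S}^q_{\mathfrak{sl}_3,\Sigma}[\partial^{-1}]$ is generated by oriented simple arcs and triads; each such generator is then literally a cluster variable in the web cluster of a decorated triangulation chosen \emph{adapted to that generator}, so no inversion of unfrozen variables occurs at all. Note also that the hypothesis of at least two marked points, which your argument never uses, enters precisely in the sticking-trick step (one needs two distinct boundary intervals $E$ and $E'$). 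To repair your proposal you would essentially have to import this generating-set-plus-adapted-triangulation idea, or find another way to certify membership in $\CA$ rather than $\UCA$.
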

The (partially conjectural) correspondence of some notions is summarized below.

\bigskip 

\begin{tabular}{|l|l|}
\hline
  Quantum cluster algebra $\CA^q_{\mathfrak{sl}_3,\Sigma}$ & Skein algebra $\SK{\Sigma}$ \\ \hline
  clusters  & web clusters \\
  cluser variables & elementary webs \\
  quantum exchange relations & skein relations \\
  (a $\bZ_q$-basis) & graphical basis \\
  bar-involution & mirror-reflection \\
  ensemble grading & endpoint grading \\
  \hline
\end{tabular}

\bigskip

We remark that the classical counterpart ($q^{\frac{1}{2}}=1$) of the correspondence of exchange and skein relations has been discovered by Fomin--Pylyavskyy in their work \cite{FP14,FP16} on the cluster structures of certain algebras related to $SL_3$-invariants. See \cref{rem:FP} for a relation to our setting. They have already shown that the language of webs provides a powerful tool to describe the combinatorics of mutations, and made a sequence of insightful conjectures. Our notion of basis webs (resp. elementary webs) is a quantum version of the ``web invariants'' (resp. ``indecomposable webs'') in \cite{FP16}. 
See below for relevant conjectures in our quantum setting. 

In \cref{introthm:comparison}, the most non-trivial inclusion is the first one. 
In the $\mathfrak{sl}_3$-case (or more higher cases), a crucial difficulty arises from the fact that the mutation class $\sfs(\mathfrak{sl}_3,\Sigma)$ is typically of \emph{infinite mutation type}, meaning that it possesses infinitely many non-isomorphic quivers. In particular, the (quantum) clusters and their mutations do not necessarily come from geometric objects such as decorated triangulations and their flips. 
Hence Step~2 in the Muller's argument does not follow immediately. Rather, we establish the converse inclusion $\mathscr{S}^q_{\mathfrak{sl}_3,\Sigma}[\partial^{-1}] \subset \CA^q_{\mathfrak{sl}_3,\Sigma}$ by a new method. We first give a good generating set of the skein algebra $\mathscr{S}^q_{\mathfrak{sl}_3,\Sigma}[\partial^{-1}]$ refining the one given by Frohman--Sikora \cite{FrohmanSikora20}, and then use a \lq\lq sticking trick'' of $\mathfrak{sl}_3$-webs to boundary intervals (\cref{lem:stickto}) to write each generator as a quantum polynomial of known cluster variables.

As in the $\mathfrak{sl}_2$-case, we expect the following:

\begin{introconj}\label{introconj:comparison}
$\mathscr{S}^q_{\mathfrak{sl}_3,\Sigma}[\partial^{-1}] =\CA^q_{\mathfrak{sl}_3,\Sigma} = \UCA^q_{\mathfrak{sl}_3,\Sigma}$.
\end{introconj}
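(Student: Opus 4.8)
The plan is to reduce \cref{introconj:comparison} to the single containment
$\UCA_{\sfs_q(\mathfrak{sl}_3,\Sigma)} \subseteq \mathscr{S}^q_{\mathfrak{sl}_3,\Sigma}[\partial^{-1}]$. Indeed, \cref{introthm:comparison} already provides $\mathscr{S}^q_{\mathfrak{sl}_3,\Sigma}[\partial^{-1}] \subseteq \CA_{\sfs_q(\mathfrak{sl}_3,\Sigma)} \subseteq \UCA_{\sfs_q(\mathfrak{sl}_3,\Sigma)}$, so this reverse inclusion of the outer terms immediately collapses the whole chain into the desired triple equality. Note that this is the opposite situation to the one recalled after \cref{introthm:Muller}: Muller's easy direction was $\CA \subseteq \mathscr{S}$ (valid because the $\mathfrak{sl}_2$-surface algebras are of finite mutation type), whereas here the hard direction $\mathscr{S} \subseteq \CA$ is already in hand and the remaining task is an upper-bound statement on $\UCA$.

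The main route I would pursue for $\UCA_{\sfs_q(\mathfrak{sl}_3,\Sigma)} \subseteq \mathscr{S}^q_{\mathfrak{sl}_3,\Sigma}[\partial^{-1}]$ is a triangularity argument showing that the graphical basis of $\mathfrak{sl}_3$-webs, a $\bZ_q$-basis of $\mathscr{S}^q_{\mathfrak{sl}_3,\Sigma}[\partial^{-1}]$, already spans $\UCA_{\sfs_q(\mathfrak{sl}_3,\Sigma)}$. Fix a quantum cluster attached to a decorated triangulation. Using the web-expansion algorithm of the paper together with the sticking trick of \cref{lem:stickto}, I would compute the quantum Laurent expansion of each basis web in this cluster and, with respect to a dominance order on Laurent monomials refined by the shared ensemble/endpoint grading of \cref{introthm:comparison}, attach to each web a distinguished leading monomial; the key technical input is that the assignment sending each web to its leading monomial is injective. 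Granting this, any $u \in \UCA_{\sfs_q(\mathfrak{sl}_3,\Sigma)}$ — which in the chosen cluster is an honest quantum Laurent polynomial — can be reduced against the basis one leading monomial at a time: match the top monomial of $u$ with the unique web carrying it, subtract the corresponding $\bZ_q$-multiple of that web (which again lies in $\UCA_{\sfs_q(\mathfrak{sl}_3,\Sigma)}$), and iterate. Since $u$ has finite Laurent support and each subtraction strictly lowers it in the dominance order, the process terminates and writes $u$ as a finite $\bZ_q$-combination of webs, that is, an element of $\mathscr{S}^q_{\mathfrak{sl}_3,\Sigma}[\partial^{-1}]$. The positivity of the bracelet and bangle expansions established in the paper would be used to control the supports that appear.

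A complementary and likely easier sub-problem is the equality $\CA_{\sfs_q(\mathfrak{sl}_3,\Sigma)} = \UCA_{\sfs_q(\mathfrak{sl}_3,\Sigma)}$, which would let one work with cluster variables in place of abstract upper-cluster elements. Here I would transplant Muller's theory of locally acyclic cluster algebras: it suffices to exhibit a decomposition of $\Sigma$ into elementary pieces (triangles, together with the bigons and quadrilaterals produced by cutting along arcs) such that the $\mathfrak{sl}_3$-quiver attached to each piece, after freezing the cut arcs and localizing at the boundary webs of \cref{def:localized_skein_algebras}, is acyclic, and then to propagate acyclicity by a gluing induction on the number of marked points and the genus of $\Sigma$. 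Because local acyclicity and the resulting coincidence $\CA = \UCA$ depend only on the exchange matrices and not on the compatibility data, the argument should pass to the quantum setting.

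The hard part will be the infinite mutation type of $\sfs_q(\mathfrak{sl}_3,\Sigma)$, which is exactly what breaks the direct $\mathfrak{sl}_2$-argument: general quantum clusters and their mutations need not come from decorated triangulations and their flips, so a priori there is no web attached to a general cluster variable and one cannot simply read a web off a mutated cluster. Both steps above feel this obstruction — the triangularity/spanning statement must handle upper-cluster elements only indirectly tied to webs, and local acyclicity must be verified without a global acyclic model. I expect the cleanest resolution to be the production of a single canonical basis, of theta-function or bracelet type, that is simultaneously the graphical web basis of $\mathscr{S}^q_{\mathfrak{sl}_3,\Sigma}[\partial^{-1}]$ and a $\bZ_q$-basis of $\UCA_{\sfs_q(\mathfrak{sl}_3,\Sigma)}$; establishing that such a basis admits both descriptions — in particular the distinctness of leading monomials invoked above — is where I anticipate the essential difficulty to lie.
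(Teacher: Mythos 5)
The statement you are proving is stated in the paper as a \emph{conjecture}, and the paper does not prove it: the only result in that direction is \cref{thm:S=U}, which derives the triple equality conditionally from the covering conjecture (\cref{conj:covering}). The paper's conditional route is quite different from yours: it localizes the skein algebra and the cluster algebra at the Ore sets $\mathrm{mon}(\Delta;E)$ attached to ideal cell decompositions of deficiency one, observes that each such cluster localization has acyclic type $A_1\times\cdots\times A_1\times D_4$ so that $\CA=\UCA$ there and both coincide with $\Skein{\Sigma}^q[(\Delta;E)^{-1}]$, proves $\UCA_{\sfs_q}=\bigcap_E\UCA_{\sfs_q[(\Delta;E)^{-1}]}$ via the quantum upper bound theorem, and then needs only the purely skein-theoretic covering statement $\Skein{\Sigma}^q[\partial^{-1}]=\bigcap_E\Skein{\Sigma}^q[(\Delta;E)^{-1}]$ to close the loop. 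Your proposal does not follow this route and does not constitute a proof.

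Concretely, both of your main steps have essential gaps. For the triangularity argument, you need two facts that are neither proved in the paper nor obviously true: (i) the map sending a basis web to its leading Laurent monomial in a fixed cluster is injective, and (ii) \emph{every} monomial that can occur as the leading term of an element of $\UCA_{\sfs_q(\mathfrak{sl}_3,\Sigma)}$ is realized as the leading term of some web. Without (ii) the reduction process can get stuck on a top monomial matched by no web, and this surjectivity is exactly the hard content of the inclusion $\UCA\subseteq\Skein{\Sigma}^q[\partial^{-1}]$ repackaged; moreover the paper develops no dominance order or leading-term theory for the expansions it produces (the sticking trick of \cref{lem:stickto} yields signed, not positive, expansions, and the positivity results apply only to elevation-preserving webs, not to arbitrary elements of the graphical basis). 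For the local acyclicity route, the paper explicitly states that it does not know whether the Banff algorithm is solvable in the $\mathfrak{sl}_3$ case; your proposed decomposition into elementary pieces with acyclic quivers after freezing is precisely the open problem, not a reduction of it. Your own closing paragraph correctly identifies where the difficulty lies, but identifying the difficulty is not the same as resolving it: as written, the proposal establishes neither $\UCA\subseteq\Skein{\Sigma}^q[\partial^{-1}]$ nor $\CA=\UCA$, and so does not prove \cref{introconj:comparison}.
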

In particular, we expect a one-to-one correspondence between the quantum clusters in $\CA^q_{\mathfrak{sl}_3,\Sigma}$ and the web clusters in $\mathscr{S}^q_{\mathfrak{sl}_3,\Sigma}$. The classical counterpart $q^{\frac{1}{2}}=1$ of this conjecture is proved by Ishibashi--Oya--Shen \cite{IOS}. 
A further expectation is the following:

\begin{introconj}
\label{introconj:basis}
The graphical basis $\Bweb{\Sigma}$ contains all the cluster monomials. 
\end{introconj}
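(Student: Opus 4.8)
The plan is to reduce the conjecture to a diagrammatic statement about superpositions of $\mathfrak{sl}_3$-webs. By \cref{introthm:comparison}, every cluster variable of every seed in the mutation class $\sfs_q(\mathfrak{sl}_3,\Sigma)$ is, up to a Laurent monomial in the frozen boundary webs, one of the elementary webs generating a web cluster, and each elementary web is by construction an element of the graphical basis $\Bweb{\Sigma}$. A cluster monomial is a product, taken within a single seed, of such cluster variables raised to nonnegative integer powers (the frozen factors being units after localization). Hence the conjecture follows once one shows: if $E_1,\dots,E_n$ are the elementary webs constituting one web cluster and $a_1,\dots,a_n\in\bZ_{\ge 0}$, then the ordered product $E_1^{a_1}\cdots E_n^{a_n}$ in $\mathscr{S}^q_{\mathfrak{sl}_3,\Sigma}[\partial^{-1}]$ equals $q^{N}W$ for a single basis web $W\in\Bweb{\Sigma}$ and a suitable power $q^{N}$ of the quantum parameter.

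The geometric heart is that any superposition --- allowing parallel copies --- of webs lying in a common web cluster is already \emph{reduced} in the sense defining $\Bweb{\Sigma}$: it contains no contractible loops, internal bigons, or internal squares, and no forbidden configurations at the marked boundary intervals. Granting this, resolving the crossings created by stacking the $E_i$ requires no web relation that alters the isotopy type of the underlying trivalent graph, so the product collapses to a single non-elliptic web times an overall framing factor $q^{N}$ that records the $q$-commutation of the $E_i$ and the height-ordering of strands. I would first verify reducedness for a web cluster attached to a decorated triangulation, checking it triangle-by-triangle and across each internal edge from the explicit list of elementary webs supported there; this is a finite local computation.

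The main obstacle is precisely the phenomenon emphasized after \cref{introthm:comparison}: the class $\sfs_q(\mathfrak{sl}_3,\Sigma)$ is generically of infinite mutation type, so most web clusters arise from no decorated triangulation, and there is no uniform surface model in which to draw all their elementary webs at once. Reducedness of superpositions therefore cannot be checked triangle-by-triangle in general, and one must instead prove that reducedness is \emph{preserved under mutation}. Concretely, I would analyze how an elementary web and its mutated replacement --- related by a quantum exchange relation identified with a skein relation --- interact with the unchanged elementary webs, and show that non-ellipticity of the old superpositions forces non-ellipticity of the new ones. Controlling this along arbitrary mutation sequences, where the combinatorial type of the cluster is unbounded, is where I expect the genuine difficulty to lie.

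A complementary route, likely necessary for clusters far from any triangulation, is to characterize $\Bweb{\Sigma}$ intrinsically and match cluster monomials to that characterization. One would aim to show that $\Bweb{\Sigma}$ is an \emph{atomic} positive basis: its elements are exactly the nonzero members of $\mathscr{S}^q_{\mathfrak{sl}_3,\Sigma}[\partial^{-1}]$ whose Laurent expansion in every quantum cluster has nonnegative coefficients and which are not the sum of two such elements. The positivity results of this paper, together with the conjectural positivity of cluster monomials, would then exhibit each cluster monomial as atomic and hence as an element of $\Bweb{\Sigma}$. Here the difficulty migrates to proving that the graphical basis coincides with the atomic basis and that cluster monomials are indecomposable --- structural statements comparable to the theta-basis constructions of Gross--Hacking--Keel--Kontsevich --- but this approach has the virtue of being insensitive to the infinite mutation type.
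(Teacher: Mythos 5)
This statement is left as an open conjecture in the paper---no proof is given, and the authors explicitly remark that finding a canonical basis containing all cluster monomials is a central open problem. Your proposal is a strategy outline rather than a proof, and it does not close the conjecture; moreover its very first reduction rests on a misreading of \cref{introthm:comparison}. That theorem gives the inclusions $\mathscr{S}^q_{\mathfrak{sl}_3,\Sigma}[\partial^{-1}] \subset \CA_{\sfs_q(\mathfrak{sl}_3,\Sigma)} \subset \UCA_{\sfs_q(\mathfrak{sl}_3,\Sigma)}$, i.e.\ that every web is a polynomial in cluster variables---not that every cluster variable is a web. The paper only realizes as elementary webs those cluster variables attached to vertices of the surface subgraph (decorated cell decompositions), plus all of them in the triangle and quadrilateral cases. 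The assertion that \emph{every} quantum cluster is a web cluster and every cluster variable is an elementary web is precisely the expectation stated after \cref{introconj:comparison}, which is itself conjectural; since $\sfs_q(\mathfrak{sl}_3,\Sigma)$ is of infinite mutation type, almost all seeds lie outside the surface subgraph and nothing in the paper identifies their variables with webs. So your reduction of the conjecture to ``superpositions of elementary webs in a common web cluster are non-elliptic'' already presupposes an unproven statement at least as hard as the conjecture you are trying to prove.

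Even granting that reduction, the two routes you sketch both terminate in acknowledged open problems: (i) that non-ellipticity of the superposition $[E_1^{a_1}\cdots E_n^{a_n}]$ is preserved under arbitrary mutation sequences, which you correctly flag as inaccessible by triangle-by-triangle checking once the cluster leaves the surface subgraph; and (ii) that $\Bweb{\Sigma}$ coincides with an atomic positive basis and that cluster monomials are atomic, neither of which is established here (the paper's positivity results, \cref{introthm:positivity}, concern elevation-preserving webs expanded in triangulation clusters, not positivity of cluster monomials in all clusters, and give no atomicity). The proposal is a reasonable map of the difficulties---and your identification of mutation-invariance of reducedness as the crux is a sensible observation---but as written it contains no step that advances beyond what the paper already conjectures.
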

Indeed, it is one of the central problems in cluster algebra to find a \lq\lq canonical'' basis which contains all the cluster monomials. See \cite{Qin} for a recent review on bases of quantum cluster algebras. 

The classical counterpart of \cref{introconj:basis} and the correspondence between the quantum clusters and web clusters have conjectured in \cite[Section 9]{FP16}, where they gave a series of more detailed statements. They also conjecture that the cluster monomials in the disk case are exactly those give rise to ``forest diagrams'' after the \emph{arborization algorithm} \cite[Conjecture 10.6]{FP16}. 
We remark that the quantum setting may contain an extra difficulty on elevations, and thus our conjectures do not follow immediately from the classical ones: a tip of such a difficulty appears in the proof of \cref{introthm:comparison} (see \cref{rem:classical-proof}), and in the elevation-preserving condition for \cref{introthm:positivity}. 

A natural strategy to prove \cref{introconj:comparison} is to generalize Step~4 in the Muller's strategy by showing the \emph{local acyclicity} (\cite{Muller_acyclic}) of the cluster algebra $\CA_{\mathfrak{sl}_3,\Sigma}$. 
In the $\mathfrak{sl}_2$-case, the local acyclicity has been proved by solving the \emph{Banff algorithm}, which we do not know whether also solvable for the $\mathfrak{sl}_3$-case. As a slight variant of the Banff algorithm, we formulate the following:

\begin{introprop}[\cref{thm:S=U}]\label{introthm:S=U}
The covering conjecture (\cref{conj:covering}) implies the equalities in \cref{introconj:comparison}.
\end{introprop}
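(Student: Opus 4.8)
The plan is to promote the chain of inclusions $\mathscr{S}^q_{\mathfrak{sl}_3,\Sigma}[\partial^{-1}] \subset \CA_{\sfs_q(\mathfrak{sl}_3,\Sigma)} \subset \UCA_{\sfs_q(\mathfrak{sl}_3,\Sigma)}$ from \cref{introthm:comparison} to a chain of equalities by establishing the single reverse inclusion $\UCA_{\sfs_q(\mathfrak{sl}_3,\Sigma)} \subset \mathscr{S}^q_{\mathfrak{sl}_3,\Sigma}[\partial^{-1}]$, since this simultaneously forces $\CA_{\sfs_q(\mathfrak{sl}_3,\Sigma)}=\UCA_{\sfs_q(\mathfrak{sl}_3,\Sigma)}$ and $\mathscr{S}^q_{\mathfrak{sl}_3,\Sigma}[\partial^{-1}]=\CA_{\sfs_q(\mathfrak{sl}_3,\Sigma)}$. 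The essential tool is the sheaf-theoretic behaviour of quantum upper cluster algebras under \emph{cluster localizations} in the sense of Muller: for a cover of the mutation class by localizations $\{\CA_{\sfs_q(\mathfrak{sl}_3,\Sigma)}[f_i^{-1}]\}_i$ one has the intersection formula $\UCA_{\sfs_q(\mathfrak{sl}_3,\Sigma)} = \bigcap_i \UCA_{\sfs_q(\mathfrak{sl}_3,\Sigma)}[f_i^{-1}]$ taken inside $\mathrm{Frac}\,\mathscr{S}^q_{\mathfrak{sl}_3,\Sigma}$. Thus an element of the upper cluster algebra is determined by its images in the localized pieces, and the task reduces to recognizing each such image as a localized web.

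The covering conjecture (\cref{conj:covering}) is precisely what supplies a usable cover. From it I would extract a finite family of localizations --- geometrically, the ones obtained by cutting $\Sigma$ along a collection of ideal arcs into elementary pieces, ultimately triangles --- together with the local equalities $\mathscr{S}^q_i[\partial^{-1}] = \CA_{\sfs_q,i} = \UCA_{\sfs_q,i}$ on each piece. On a triangle the skein algebra is an explicit quantum torus, so these local equalities can be checked by hand; the substantive content of the conjecture is that such a cutting family genuinely \emph{covers} $\CA_{\sfs_q(\mathfrak{sl}_3,\Sigma)}$ in Muller's sense, this being the higher-rank replacement for solving the Banff algorithm. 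Granting it, each local upper cluster algebra $\UCA_{\sfs_q,i}$ is identified with the localized skein algebra of the corresponding cut piece.

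It remains to glue. Here I would use that the cutting homomorphisms assemble the localized skein algebras of the pieces into $\mathscr{S}^q_{\mathfrak{sl}_3,\Sigma}[\partial^{-1}]$ compatibly with the localizations appearing in the intersection formula: a fraction that becomes an honest localized web after each cut is already a global localized web on $\Sigma$, because a web is determined by its restrictions to the pieces of the cut. Feeding this skein-side gluing into the intersection formula, every element of $\UCA_{\sfs_q(\mathfrak{sl}_3,\Sigma)}$ lies in each $\mathscr{S}^q_i[\partial^{-1}]$ and hence in $\mathscr{S}^q_{\mathfrak{sl}_3,\Sigma}[\partial^{-1}]$, which yields the desired inclusion and thereby all three equalities.

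The main obstacle, once the covering conjecture is granted, is the precise matching between the \emph{algebraic} localizations $\CA_{\sfs_q(\mathfrak{sl}_3,\Sigma)}[f_i^{-1}]$ entering the intersection formula and the \emph{topological} cutting of $\Sigma$: one must verify that inverting the cluster variables $f_i$ agrees with inverting the boundary webs created by the cuts, and that the noncommutative skein multiplication degenerates across a cut in a way that matches the cluster localization. Controlling this compatibility --- so that no element of $\mathrm{Frac}\,\mathscr{S}^q_{\mathfrak{sl}_3,\Sigma}$ survives in the intersection without being represented by a genuine web --- is where the quantum parameter and the infinite mutation type of $\sfs(\mathfrak{sl}_3,\Sigma)$ make the argument delicate.
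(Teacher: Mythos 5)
Your high-level skeleton is the right one: the paper also proves the single reverse inclusion $\UCA_{\sfs_q(\mathfrak{sl}_3,\Sigma)}\subset\Skein{\Sigma}^q[\partial^{-1}]$ by writing $\UCA_{\sfs_q}$ as an intersection of localized upper cluster algebras, identifying each localized piece with a localized skein algebra, and then invoking the covering conjecture to collapse the intersection to $\Skein{\Sigma}^q[\partial^{-1}]$. But you have misread what the covering conjecture actually provides, and this creates a real gap. The algebras $\Skein{\Sigma}^q[(\Delta;E)^{-1}]$ appearing in \cref{conj:covering} are \emph{not} skein algebras of pieces obtained by cutting $\Sigma$ into triangles: they are Ore localizations of the skein algebra of the whole surface $\Sigma$, obtained by inverting the elementary webs along all edges of $\Delta$ except the one interior edge $E$. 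On the cluster side this corresponds to \emph{freezing} those cluster variables, and the resulting mutation class $\sfs_q[(\Delta;E)^{-1}]$ has acyclic type $A_1\times\cdots\times A_1\times D_4$, the $D_4$ factor living on the quadrilateral $Q_E$ — not on triangles, whose skein algebra is in any case not a quantum torus. The local equality $\CA_{\sfs_q[(\Delta;E)^{-1}]}=\Skein{\Sigma}^q[(\Delta;E)^{-1}]=\UCA_{\sfs_q[(\Delta;E)^{-1}]}$ is established by acyclicity plus the explicit identification of the $D_4$ cluster variables with elementary webs on $Q_E$ (\cref{prop:comparison_T and Q} and \cref{subsec:quadrilateral_skein}), not by a hand computation on a triangle.

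Consequently your ``gluing'' step is circular in one direction and unavailable in the other. The assertion that an element lying in every $\Skein{\Sigma}^q[(\Delta;E)^{-1}]$ is already in $\Skein{\Sigma}^q[\partial^{-1}]$ is \emph{precisely} the statement of \cref{conj:covering}; it is the hypothesis to be assumed, not something to be re-derived from a web-restriction argument, and there are no cutting homomorphisms to subsurfaces in this framework whose compatibility one could check. What does require proof — and what you gloss over by appealing to a general ``sheaf-theoretic'' covering formula of Muller — is the cluster-side identity $\UCA_{\sfs_q}=\bigcap_{E\in e_{\mathrm{int}}(\Delta)}\UCA_{\sfs_q[(\Delta;E)^{-1}]}$. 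Muller's formula needs the family to be a cover in his sense, which is exactly the kind of statement (local acyclicity/Banff) that is open here; the paper instead proves this identity directly from the quantum upper bound theorem (\cref{thm:q-Laurent}), by observing that every unfrozen edge direction $k\in I^{\mathrm{edge}}(\Delta)$ contributes its quantum torus $T_{\mu_k(\bD)}$ to at least one of the localized upper bounds as $E$ ranges over the interior edges. Your argument as written therefore rests on two unproved inputs (the Muller-type cover and the cut-and-glue compatibility) where the paper needs only the stated conjecture.
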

Indeed, the covering conjecture is on a quantum analogue of the covering of the moduli spaces up to codimension $2$ considered in \cite{Shen20,IOS}.

\subsection{Quantum Laurent positivity of webs}
From the inclusion $\mathscr{S}^q_{\mathfrak{sl}_3,\Sigma}[\partial^{-1}] \subset \UCA^q_{\mathfrak{sl}_3,\Sigma}$ given by \cref{introthm:comparison},  
each web $x \in \mathscr{S}^q_{\mathfrak{sl}_3,\Sigma}[\partial^{-1}]$ gives rise to a \emph{quantum universally Laurent polynomial} in an \underline{arbitrary} quantum cluster. Namely, we know that it is represented as a quantum Laurent polynomial of quantum cluster variables and $q$ in any quantum cluster in an abstract way. Such an element is called a \emph{quantum universally positive Laurent polynomial} if its Laurent expression in each quantum cluster has positive integral coefficients. The search of webs with such positivity is motivated by the Fock--Goncharov duality conjecture: see \cref{subsec:future_direction} below. 

We first remark that the \lq\lq sticking trick'' to boundary intervals used in the inclusion $\mathscr{S}^q_{\mathfrak{sl}_3,\Sigma}[\partial^{-1}] \subset \CA^q_{\mathfrak{sl}_3,\Sigma}$ always involves negative signs: see \cref{lem:stickto}. Therefore the positivity nature is not clear from this construction. As another way of expansion of webs in a given web cluster, we will give a direct inclusion $\mathscr{S}^q_{\mathfrak{sl}_3,\Sigma}[\partial^{-1}] \subset \UCA^q_{\mathfrak{sl}_3,\Sigma}$. 
Such an expansion of a web $x \in \mathscr{S}^q_{\mathfrak{sl}_3,\Sigma}$ in a web cluster $C$ is obtained by multiplying an appropriate product of webs in $C$ and then
successively applying the $\mathfrak{sl}_3$-skein relations to resolve the intersections. Most of the relations used here have a manifest positivity, while the one \eqref{rel:bigon} causes a problem. Indeed, one is forced to use the relation \eqref{rel:bigon} in some situation during the process getting the cluster expansion of a web. In order to avoid the usage of \eqref{rel:bigon}, we only consider the \emph{elevation-preserving webs} (\cref{def:elevation-preserving web}) with respect to an ideal triangulation. 
For instance, the \emph{$n$-bracelet}\footnote{Our ``diagrammatic'' bracelet here is different from the bracelet in the literature, which is defined to be a certain Chebyshev polynomial of a loop. While the latter is invariant under the mirror-reflection, the former is not. They coincide when $q^{\frac{1}{2}}=1$.} (resp. the \emph{$n$-bangle}) along an oriented simple loop $\gamma$ in $\Sigma$, obtained by replacing the embedding of $\gamma$ with an embedding of the graph shown in the left (resp. right) in \cref{fig:bracelet}, is an elevation-preserving web for any ideal triangulation. 

An element of $\UCA^q_{\mathfrak{sl}_3,\Sigma}$ is called a \emph{quantum GS-universally positive Laurent polynomial} (after Goncharov--Shen) if it is represented as a positive Laurent polynomial in the quantum cluster associated with any decorated triangulation. 
The following is our result in this paper:

\begin{introthm}[Quantum Laurent positivity of webs: \cref{thm:positivity_cluster}]\label{introthm:positivity}
Any elevation-preserving web with respect to $\Delta$ is expressed as a positive Laurent polynomial in the quantum cluster associated with a decorated triangulation $\bD=(\Delta,\bs_\Delta)$ with the underlying triangulation $\Delta$. In particular, the bracelets and the bangles along an oriented simple loop in $\Sigma$ are quantum GS-universally positive Laurent polynomials.
\end{introthm}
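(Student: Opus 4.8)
The plan is to prove positivity via the explicit ``multiply-and-resolve'' construction underlying the direct inclusion $\mathscr{S}^q_{\mathfrak{sl}_3,\Sigma}[\partial^{-1}] \subset \UCA_{\sfs_q(\mathfrak{sl}_3,\Sigma)}$, keeping track of signs at every step. Fix the decorated triangulation $\bD=(\Delta,\bs_\Delta)$ and let $C = \{w_1,\dots,w_N\}$ be its web cluster, consisting of the elementary webs supported near the edges and triangles of $\Delta$. Given an elevation-preserving web $x$, the idea is to choose a product $P = \prod_i w_i^{a_i}$ of cluster variables ($a_i \in \bZ_{\geq 0}$) so that the skein product $x \cdot P$, computed by superimposing $x$ over $P$ and resolving all crossings via the $\mathfrak{sl}_3$-skein relations, becomes a \emph{manifestly positive} combination of cluster monomials. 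Since $P$ is itself a cluster monomial, the identity $x = (x\cdot P)\,P^{-1}$ then exhibits $x$ as a Laurent polynomial in $C$ with non-negative integral coefficients in $q^{\pm 1/2}$.

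First I would isotope $x$ into general position relative to $\Delta$, so that it meets every edge transversally with the strand height order dictated by the elevation-preserving hypothesis, and reduce the expansion to a local problem triangle by triangle. The multiplier $P$ is taken to be a sufficiently high power of the edge-webs, chosen so that after forming $x \cdot P$ each arc of $x$ inside a triangle is confronted with the cluster webs it must be resolved against. The bulk of the resolution then proceeds by the trivalent-vertex, square, and crossing relations, all of which expand with non-negative coefficients; consequently every such rewriting step preserves positivity, and the only danger is the single sign-producing relation \eqref{rel:bigon}.

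The crux, and the step I expect to be the main obstacle, is to guarantee that \eqref{rel:bigon} is never triggered during the resolution. A bigon arises precisely when two strands of $x$ crossing a common edge are forced to reverse their relative height as they traverse an adjacent region, and the definition of an elevation-preserving web (\cref{def:elevation-preserving web}) is tailored to forbid exactly this reversal. I would therefore carry out an exhaustive local analysis: enumerate the intersection patterns of an elevation-preserving web with a single triangle of $\Delta$ (together with the superimposed cluster webs), and check case by case that each pattern resolves to completion using only the positive relations. Ensuring that this enumeration is complete, and that the stacking order imposed by $P$ is compatible with the elevation data so that no bigon is ever created, is where the genuine work lies.

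Finally, for the ``in particular'' statement it suffices to verify that the $n$-bracelet and the $n$-bangle along an oriented simple loop $\gamma$ are elevation-preserving with respect to \emph{every} ideal triangulation $\Delta$, as anticipated before the statement. Because these webs are obtained by replacing an embedded copy of $\gamma$ by the stacked graphs of \cref{fig:bracelet}, their strands meet each edge of any $\Delta$ in coherently ordered parallel families, so no height reversal can occur and the elevation-preserving property holds irrespective of $\Delta$. Applying the first part of the theorem to the web cluster of each decorated triangulation $\bD$ then yields a positive Laurent expansion in the cluster of every decorated triangulation, which is precisely the assertion that the bracelets and bangles are quantum GS-universally positive Laurent polynomials.
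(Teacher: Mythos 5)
Your proposal follows essentially the same route as the paper: the authors likewise multiply the elevation-preserving web by edge webs, resolve everything with the cutting trick (\cref{lem:cuttingweb}) together with an exhaustive enumeration of the local concatenation patterns in triangles and biangles, and use the fact that the elevation-preserving condition confines all braiding to the biangles of the split triangulation $\Delta^{\mathrm{split}}$, so the sign-producing relation \eqref{rel:bigon} is never invoked. The only details you would still need to supply are that the multiplier must also contain the face triads $t^{\bs_\Delta(T)}$ --- the triangle-by-triangle expansion produces the opposite triads $t^{-\bs_\Delta(T)}$, which are converted into the cluster $C_\bD$ via the positive relation $t^{+}_{123}t^{-}_{123}=A^{3/2}[e_{21}e_{13}e_{32}]+A^{-3/2}[e_{12}e_{23}e_{31}]$ --- and that for the bracelet the strands genuinely do cross in one biangle, so the point is not that no height reversal occurs but that those crossings form an elevation-preserving braid in the sense of the definition.
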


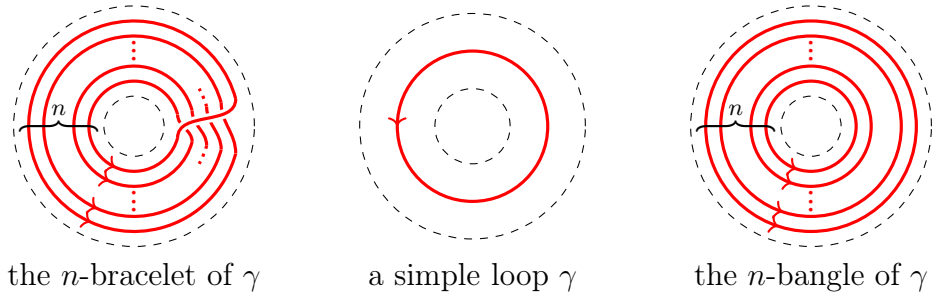
\begin{figure}
    \begin{tikzpicture}[scale=.1]
        \begin{scope}
            \draw[dashed] (0,0) circle [radius=5];
            \draw[dashed] (0,0) circle [radius=15];
            \draw[very thick, red, ->-] (0,0) circle [radius=10];
            \node at (0,-15) [below=5pt] {a simple loop $\gamma$};
        \end{scope}
        \begin{scope}[xshift=-45cm]
            \draw[dashed] (0,0) circle [radius=4];
            \draw[dashed] (0,0) circle [radius=16];
            \draw[very thick, red, ->-={.7}{red}] (15:14) arc (15:345:14);
            \draw[very thick, red, ->-={.7}{red}] (15:12) arc (15:345:12);
            \draw[very thick, dotted, red] (15:10) arc (15:30:10);
            \draw[very thick, dotted, red] (-15:10) arc (-15:-30:10);
            \draw[very thick, red, ->-={.7}{red}] (15:8) arc (15:345:8);
            \draw[very thick, red, ->-={.7}{red}] (15:6) arc (15:345:6);
            \draw[red, overarc, very thick] (-15:8) to[out=north, in=south] (15:6);
            \draw[red, overarc, very thick] (-15:10) to[out=north, in=south] (15:8);
            \draw[red, overarc, very thick] (-15:12) to[out=north, in=south] (15:10);
            \draw[red, overarc, very thick] (-15:14) to[out=north, in=south] (15:12);
            \draw[red, overarc, very thick] (-15:6) to[out=north, in=south] (15:14);
            \node at (180:10) [xscale=2.5, rotate=90] {$\}$};
            \node at (180:10) [above]{\scriptsize $n$};
            \node at (0,10) [scale=.3, red]{$\bullet$};
            \node at (0,10) [scale=.3, red, above=1pt]{$\bullet$};
            \node at (0,10) [scale=.3, red, below=1pt]{$\bullet$};
            \node at (0,-10) [scale=.3, red]{$\bullet$};
            \node at (0,-10) [scale=.3, red, above=1pt]{$\bullet$};
            \node at (0,-10) [scale=.3, red, below=1pt]{$\bullet$};
            \node at (0,-15) [below=5pt] {the $n$-bracelet of $\gamma$};
        \end{scope}
        \begin{scope}[xshift=45cm]
            \draw[dashed] (0,0) circle [radius=4];
            \draw[dashed] (0,0) circle [radius=16];
            \draw[very thick, red, ->-={.7}{red}] (0,0) circle [radius=14];
            \draw[very thick, red, ->-={.7}{red}] (0,0) circle [radius=12];
            \draw[very thick, red, ->-={.7}{red}] (0,0) circle [radius=8];
            \draw[very thick, red, ->-={.7}{red}] (0,0) circle [radius=6];
            \node at (180:10) [xscale=2.5, rotate=90] {$\}$};
            \node at (180:10) [above]{\scriptsize $n$};
            \node at (0,10) [scale=.3, red]{$\bullet$};
            \node at (0,10) [scale=.3, red, above=1pt]{$\bullet$};
            \node at (0,10) [scale=.3, red, below=1pt]{$\bullet$};
            \node at (0,-10) [scale=.3, red]{$\bullet$};
            \node at (0,-10) [scale=.3, red, above=1pt]{$\bullet$};
            \node at (0,-10) [scale=.3, red, below=1pt]{$\bullet$};
            \node at (0,-15) [below=5pt] {the $n$-bangle of $\gamma$};
        \end{scope}
    \end{tikzpicture}
    \caption{The middle shows a tubular neighborhood of an oriented simple loop $\gamma$. The $n$-bracelet (resp. $n$-bangle) along $\gamma$ is obtained by replacing it with the graph shown in the left (right).}
    \label{fig:bracelet}
\end{figure}

\subsection{Related works}\label{subsec:future_direction}

The theory of cluster ensembles \cite{FG09} produces a pair $(\A_\sfs,\X_\sfs)$ of positive schemes from a given mutation class $\sfs$ (see \cref{sec:FG}). Thanks to their positivity nature, we can form their $\bP$-valued points $(\A_\sfs(\bP),\X_\sfs(\bP))$ for any semifield $\bP$. 
The \emph{Fock--Goncharov duality conjecture} \cite[Section 4]{FG09} asks a construction of duality maps
\begin{align*}
    \A_\sfs(\bZ^T) \to \cO(\X_{\sfs^\vee}), \quad 
    \X_\sfs(\bZ^T) \to \cO(\A_{\sfs^\vee})
\end{align*}
which satisfy certain axioms, where $\sfs^\vee$ denotes the Langlands dual mutation class of $\sfs$ (we have $\sfs^\vee=\sfs$ for $\sfs=\sfs(\mathfrak{sl}_n,\Sigma)$). 
In particular, each $\bZ^T$-point gives rise to a universally positive Laurent polynomial on the dual side. 
One may ask their quantum aspects, by replacing $\cO(\X_{\sfs^\vee})$ with the Fock--Goncharov's quantized algebra $\cO_q(\X_{\sfs^\vee})$ \cite{FG08}, and replacing $\cO(\A_{\sfs^\vee})=\UCA_{\sfs^\vee}$ with $\cO_q(\A_{\sfs^\vee}):=\UCA_{\sfs^\vee_q}$ whenever the full-rank condition holds, though it involves a choice of a mutation class $\sfs^\vee_q$ of quantum seeds. 

For the mutation class $\sfs=\sfs(\mathfrak{sl}_2,\Sigma)$, such quantum duality maps are constructed via the skein theory. 
The classical duality maps are constructed by Fock--Goncharov \cite{FG03} when $\Sigma$ has empty boundary.
In the quantum setting, in the $\cO_q(\X_{\sfs^\vee})$-side, when $\Sigma$ has empty boundary, first Bonahon--Wong \cite{BonahonWong11} made a progress by defining a \emph{quantum trace map} from the ``stated'' Kauffman bracket skein algebra on $\Sigma$ to the ``square-root'' of the quantized algebra $\cO_q(\X_{\sfs^\vee})$. This has been upgraded to a quantum duality map $\A_\sfs(\bZ^T) \to \cO_q(\X_{\sfs^\vee})$ by Allegretti--Kim \cite{AllegrettiKim17}, by composing with a skein realization of each integral $\A$-lamination. On the $\cO_q(\A_{\sfs^\vee})$-side, when $\Sigma$ has no punctures, a quantum duality map $\X_\sfs(\bZ^T) \to \cO_q(\A_{\sfs^\vee})$ is worked out by Musiker--Schiffler--Williams \cite{MSW11,MSW13}, and finally established by Thurston \cite{ThurstonD14} by adding the loop elements to the Muller's work we mentioned above.
There are also related works in the skein theory side.
L\^{e}~\cite{TTQLe18, TTQLe19} gave an inclusion of the Muller's skein algebra ($\cO_q(\A_{\sfs^\vee})$-side) into 
the stated skein algebra ($\cO_q(\X_{\sfs^\vee})$-side), and obtained the explicit formula of the quantum trace map for certain simple loops.
Costantino--L\^{e}~\cite{CL} proved that the stated skein algebra of a biangle is isomorphic to the quantized coordinate ring of $\mathrm{SL}_{2}(\bC)$ and studied the correspondence of some algebraic structures.

The duality maps for the mutation class $\sfs=\sfs(\mathfrak{sl}_3,\Sigma)$ are recently intensively studied. Douglas--Sun \cite{DS20,DS20b} developed a theory on the bounded $\mathfrak{sl}_3$-laminations in terms of $\mathfrak{sl}_3$-webs (without endpoints on marked points) by defining their tropical $\A$-coordinates, based on the ideas of Xie \cite{Xie} from the viewpoint of Fock--Goncharov duality. 
Another coordinate systems of $\mathfrak{sl}_3$-webs are considered by Frohman--Sikora \cite{FrohmanSikora20}.
A stated $\mathfrak{sl}_{3}$-skein algebra was also defined by Higgins~\cite{Higgins20}.
Then a quantum duality map $\A_\sfs(\bZ^T) \to \cO_q(\X_{\sfs^\vee})$ has been established by Kim \cite{Kim20}, using the Douglas--Sun coordinates. 

Now our work in this paper can be regarded as a first step to constructing a duality map on the $\cO_q(\A_{\sfs^\vee})$-side. The tropical $\X$-coordinates of unbounded $\mathfrak{sl}_3$-laminations (an $\mathfrak{sl}_3$-analogue of the lamination shear coordinates) is introduced in \cite{IK}. For an unpunctured surface, a one-to-one correspondence between the integral $\mathfrak{sl}_3$-laminations and our basis webs is given there, proposing a conjectural construction of quantum duality map $\X_\bs(\bZ^T) \to \cO_q(\A_{\bs^\vee})$.

For the mutation class $\sfs=\sfs(\mathfrak{sp}_4,\Sigma)$, the comparison of quantum cluster and skein algebras as in this paper is carried out in \cite{IYsp4}. A study on the bounded $\mathfrak{sp}_4$-laminations is also on-going \cite{ISY}.

\subsection*{Organization of the paper}
In \cref{sect:skein}, we define the skein algebra $\SK{\Sigma}$ and investigate its basic structures. Expansion formulae and the positivity results are proved purely in terms of the skein theory in \cref{sec:expansion}.

In \cref{sect:qcluster}, we recall the general framework of the quantum cluster algebra. Here we partially use the terminology from the theory of cluster varieties \cite{FG09}, which is reviewed in \cref{sec:FG}. 
We also review the construction of the mutation class $\sfs(\mathfrak{sl}_3,\Sigma)$ related to the moduli space $\A_{SL_3,\Sigma}$. 

In \cref{sect:correspondance}, we construct the mutation class $\sfs_q(\mathfrak{sl}_3,\Sigma)$ of quantum seeds by realizing some of the quantum seeds in the skein algebra $\SK{\Sigma}$. Utilizing the results in \cref{sec:expansion}, we prove \cref{introthm:comparison,introthm:S=U,introthm:positivity} in \cref{sec:comparison}.

\bigskip

\noindent \textbf{Acknowledgements}
We are grateful to Greg Muller, Adam Sikora and Zhe Sun for valuable comments and insightful questions on the first version of this paper. 
T. I. is supported by JSPS KAKENHI Grant Number~JP20K22304.
W. Y. is supported by JSPS KAKENHI Grant Numbers~JP19J00252 and JP19K14528.
\bigskip

\subsection*{Notation on marked surfaces and their triangulations}\label{subsec:notation_marked_surface}

A \emph{marked surface} $(\Sigma,\bM)$ is a compact oriented surface $\Sigma$ with boundary equipped with a fixed non-empty finite set $\mathbb{M} \subset \Sigma$ of \emph{marked points}. 
When the choice of $\bM$ is clear from the context, we simply denote a marked surface by $\Sigma$. 
A marked point is called a \emph{puncture} if it lies in the interior of $\Sigma$, and a \emph{special point} otherwise. 
In this paper, we assume that there are no punctures, and hence $\mathbb{M} \subset \partial \Sigma$. We say that such a marked surface is \emph{unpunctured}. 
Moreover, assume the following conditions:
\begin{enumerate}
\item Each boundary component has at least one marked point.
\item 
$n(\Sigma):=-2\chi(\Sigma)+|\mathbb{M}|>0$.
\end{enumerate}
These conditions ensure that the marked surface $\Sigma$ has an ideal triangulation, that is, the isotopy class of a collection $\Delta$ of simple arcs connecting marked points whose interiors are mutually disjoint, which decomposes $\Sigma$ into triangles. 
The number $n(\Sigma)$ gives the number of triangles of any ideal triangulation $\Delta$. 
We call a connected component of the punctured boundary $\partial^\times \Sigma:=\partial\Sigma\setminus \mathbb{M}$ a \emph{boundary interval}, and denote by $\mathbb{B}$ the set of boundary intervals. Each boundary interval belongs to any ideal triangulation $\Delta$. We call an edge of $\Delta$ an \emph{interior edge} if it is not a boundary interval. Denote the set of edges (resp. interior edges, triangles) of $\Delta$ by $e(\Delta)$ (resp. $e_\interior(\Delta)$, $t(\Delta)$). 

It is sometimes useful to equip $\Delta$ with two distinguished points on the interior of each edge and one point in the interior of each triangle: see \cref{fig:sl3_triangulation}. The set of such points is denoted by $I(\Delta)=I_{\mathfrak{sl}_3}(\Delta)$.\footnote{This is the set of vertices of the $3$-triangulation \cite{FG03} associated with $\Delta$.} We refer to an ideal triangulation equipped with such collection of points as an \emph{$\mathfrak{sl}_3$-triangulation}. 
Let $I^{\mathrm{edge}}(\Delta)$ (resp. $I^{\mathrm{tri}}(\Delta)$) denote the set of points on edges (resp. faces of triangles) so that $I(\Delta)=I^{\mathrm{edge}}(\Delta) \sqcup I^{\mathrm{tri}}(\Delta)$, where we have a canonical identification $I^{\mathrm{tri}}(\Delta)=t(\Delta)$. 
For $k \in I^\mathrm{edge}(\Delta)$, let $k^\mathrm{op}$ denote the other point on the same edge. 
Let $I(\Delta)_\f \subset I^\mathrm{edge}(\Delta)$ be the subset consisting of the points on the boundary of $\Sigma$, and let $I(\Delta)_\uf:=I(\Delta) \setminus I(\Delta)_\f$.

More generally, we can consider an \emph{ideal cell decomposition} of $\Sigma$, which is a decomposition of $\Sigma$ into a union of polygons. When it is obtained from an ideal triangulation by removing $k$ interior edges, it is said to be \emph{of deficiency $k$}. 
In this paper, we only use an ideal cell decomposition of deficiency $0$ or $1$. The ideal cell decomposition of deficiency $1$ obtained from an ideal triangulation $\Delta$ by removing one interior edge $E$ is denoted by $(\Delta;E)$.

A \emph{decorated triangulation} $\bD=(\Delta,\bs_\Delta)$ will be an object to which we can concretely associate a web cluster in $\SK{\Sigma}$ (\cref{webcluster}) and
a quantum cluster in $\CA^q_{\mathfrak{sl}_3,\Sigma}$, respectively.
It consists of:
\begin{itemize}
    \item An ideal triangulation $\Delta$ of $\Sigma$;
    \item A function $\bs_\Delta: t(\Delta) \to \{+,-\}$.
\end{itemize}
In relation with the cluster theory on the moduli space $\A_{SL_3,\Sigma}$, 
the signature $\bs_\Delta(T)$ for $T \in t(\Delta)$ corresponds to the two possible choices of reduced words of the longest element of the Weyl group $W(\mathfrak{sl}_3)$. See \cref{rem:geometry of moduli} for a detail.

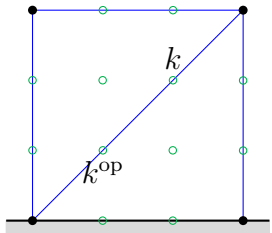
\begin{figure}[ht]
\begin{tikzpicture}[scale=0.7]
\draw[blue] (2,2) -- (-2,2) -- (-2,-2) -- (2,-2) --cycle;
\draw[blue] (2,2) -- (-2,-2);
\filldraw[gray!30] (-2.5,-2) -- (2.5,-2) -- (2.5,-2.3) -- (-2.5,-2.3) --cycle;
\draw[thick] (-2.5,-2) -- (2.5,-2);
\foreach \x in {45,135,225,315}
\path(\x:2.8284) node [fill, circle, inner sep=1.2pt]{};
{\color{mygreen}
\quiversquare{2,2}{-2,2}{-2,-2}{2,-2}
}
\draw(y131) node[above]{$k$};
\draw(y132) node[below]{$k^\mathrm{op}$};
\end{tikzpicture}
    \caption{A local picture of an $\mathfrak{sl}_3$-triangulation. By convention, a portion of $\partial \Sigma$ is drawn by a thick line together with a gray region indicating the ``outer side'' of $\Sigma$.}
    \label{fig:sl3_triangulation}
\end{figure}

\section{Skein algebras of unpunctured marked surfaces for \texorpdfstring{$\mathfrak{sl}_3$}{sl3}}\label{sect:skein}
A skein algebra of a connected compact oriented surface $\Sigma$ is the quotient of the algebra of links in the thickened surface $\Sigma\times[0,1]$ defined by certain skein relations.
Skein relations are obtained from representations of quantum groups associated with simple Lie algebras.
For $\mathfrak{sl}_2$, the skein relation is known as the Kauffman bracket skein relation, and the skein algebra is called the Kauffman bracket skein algebra.
Muller~\cite{Muller16} introduced the boundary Kauffman bracket skein relation for tangle diagrams on an unpunctured marked surface $(\Sigma,\bM)$ and defined the Kauffman bracket skein algebra of $(\Sigma,\bM)$.
In this section, we will introduce a skein algebra $\SK{\Sigma}$ of an unpunctured marked surface $(\Sigma,\bM)$ for $\mathfrak{sl}_3$, and observe the ``cluster'' structure of the skein algebra $\SK{T}$ of a triangle. See \cref{subsec:quadrilateral_skein} for a quadrilateral.
We will see that the skein algebra $\SK{\Sigma}$ has an Ore localization $\SK{\Sigma}[\Delta^{-1}]$ on each ideal triangulation, and is closely related to a quantum cluster algebra quantizing the mutation class $\sfs(\mathfrak{sl}_3,\Sigma)$.
Moreover, certain tangled trivalent graphs are expressed as positive Laurent polynomials in ``elementary webs'' in $\SK{\Sigma}[\Delta^{-1}]$.

\subsection{Skein algebras of unpunctured marked surfaces for \texorpdfstring{$\mathfrak{sl}_3$}{sl3}}
Let $\bN$ be the set of non-negative integers and $\bZ_{A}:=\bZ[A^{1/2},A^{-1/2}]$ the Laurent polynomial ring in a variable $A^{1/2}$.
In this subsection, there is no need to consider the conditions (1) and (2) for an unpunctured marked surface $(\Sigma,\bM)$ in \cref{sec:intro}.
\subsubsection{The boundary $\mathfrak{sl}_3$-skein relation}
The skein algebra $\SK{\Sigma}$ treats tangled trivalent graphs with endpoints in $\bM$, and its skein relations are defined by adding \emph{boundary $\mathfrak{sl}_3$-skein relations} to the $\mathfrak{sl}_3$-skein relations introduced in Kuperberg~\cite{Kuperberg96}.

A \emph{tangled trivalent graph} $G$ on $(\Sigma,\bM)$ is an immersion of an oriented uni-trivalent graph into $\Sigma$ satisfying the following conditions (1) -- (7):
\begin{enumerate}
	\item the valency of a vertex of the underlying graph is $1$ or $3$,
	\item the univalent vertices of $G$ are contained in $\bM$,
	\item the trivalent vertices of $G$ are distinct points in $\Sigma\setminus\partial\Sigma$,
	\item all intersection points of $G$ in $\Sigma\setminus\partial\Sigma$ are transverse double points of edges,
	\item an intersection point $p\in\Sigma\setminus\partial\Sigma$ of $G$ has \emph{over-/under-passing information} (we call such $p$ an \emph{internal crossings}),
	\item for an intersection point $p\in \bM$ of $G$, the set of univalent vertices on $p$ has a strict total order, which we call the \emph{elevation at $p$},
	\item the orientation of edges incident to a trivalent vertex is a sink
		\ \tikz[baseline=-.6ex, scale=.1]{
			\draw[dashed, fill=white] (0,0) circle [radius=5];
			\draw[very thick,red, -<-] (0:0) -- (90:5); 
			\draw[very thick, red, -<-] (0:0) -- (210:5); 
			\draw[very thick, red, -<-] (0:0) -- (-30:5);
		\ } 
		or a source
		\ \tikz[baseline=-.6ex, scale=.1]{
			\draw[dashed, fill=white] (0,0) circle [radius=5];
			\draw[very thick, red, ->-] (0:0) -- (90:5); 
			\draw[very thick, red, ->-] (0:0) -- (210:5); 
			\draw[very thick, red, ->-] (0:0) -- (-30:5);
		\ }. 
		
\end{enumerate}
We denote the number of sinks in $G$ by $t_{+}(G)$, and sources by $t_{-}(G)$. 
The over-/under-passing information is indicated as
\ \tikz[baseline=-.6ex, scale=.1]{
	\draw[dashed, fill=white] (0,0) circle [radius=5];
	\draw[very thick, red] (-45:5) -- (135:5);
	\draw[red, overarc] (-135:5) -- (45:5);
\ }.
Two consecutive ordered univalent vertices $v_{1}<v_{2}$, whose half-edges $e_1$ and $e_2$ are incident to $p\in \bM$, are indicated as
\ \tikz[baseline=-.6ex, scale=.1, yshift=-4cm]{
	\coordinate (P) at (0,0);
	\draw[red, very thick] (P) -- (135:7);
	\fill[white] (P) circle [radius=2cm];
	\draw[red, very thick] (P) -- (45:7);
	\draw[dashed] (7,0) arc (0:180:7cm);
	\draw[gray, line width=2pt] (-7,0) -- (7,0);
	\draw[fill=black] (P) circle [radius=20pt];
	\node at (135:7) [left]{$e_{1}$};
	\node at (45:7) [right]{$e_{2}$};
\ } or
\ \tikz[baseline=-.6ex, scale=.1, yshift=-4cm]{
	\coordinate (P) at (0,0);
	\draw[red, very thick] (P) -- (45:7);
	\fill[white] (P) circle [radius=2cm];
	\draw[red, very thick] (P) -- (135:7);
	\draw[dashed] (7,0) arc (0:180:7cm);
	\draw[gray, line width=2pt] (-7,0) -- (7,0);
	\draw[fill=black] (P) circle [radius=20pt];
	\node at (45:7) [right]{$e_{1}$};
	\node at (135:7) [left]{$e_{2}$};
\ }.

We define skein relations for the tangled trivalent graphs on $(\Sigma,\bM)$.
\begin{dfn}[$\mathfrak{sl}_3$-skein relations~\cite{Kuperberg96}]\label{def:skeinrel}
	\begin{align}
		\mathord{
			\ \tikz[baseline=-.6ex, scale=.1]{
				\draw[dashed, fill=white] (0,0) circle [radius=7];
				\draw[red, very thick, ->-={.8}{red}] (-45:7) -- (135:7);
				\draw[overarc, ->-={.8}{red}] (-135:7) -- (45:7);
				}
		\ }
		&=A^{2}\mathord{
			\ \tikz[baseline=-.6ex, scale=.1]{
				\draw[dashed, fill=white] (0,0) circle [radius=7];
				\draw[very thick, red, ->-] (-45:7) to[out=north west, in=south] (3,0) to[out=north, in=south west] (45:7);
				\draw[very thick, red, ->-] (-135:7) to[out=north east, in=south] (-3,0) to[out=north, in=south east] (135:7);
			}
		\ }
		+A^{-1}\mathord{
			\ \tikz[baseline=-.6ex, scale=.1]{
				\draw[dashed, fill=white] (0,0) circle [radius=7];
				\draw[very thick, red, ->-] (-45:7) -- (0,-3);
				\draw[very thick, red, ->-] (-135:7) -- (0,-3);
				\draw[very thick, red, -<-] (45:7) -- (0,3);
				\draw[very thick, red, -<-] (135:7) -- (0,3);
				\draw[very thick, red, -<-] (0,-3) -- (0,3);
			}
		\ },\label{rel:plus}\\
		\mathord{
			\ \tikz[baseline=-.6ex, scale=.1]{
				\draw[dashed, fill=white] (0,0) circle [radius=7];
				\draw[very thick, red, ->-={.8}{red}] (-135:7) -- (45:7);
				\draw[overarc, ->-={.8}{red}] (-45:7) -- (135:7);
			}
		\ }
		&=A^{-2}\mathord{
			\ \tikz[baseline=-.6ex, scale=.1]{
				\draw[dashed, fill=white] (0,0) circle [radius=7];
				\draw[very thick, red, ->-] (-45:7) to[out=north west, in=south] (3,0) to[out=north, in=south west] (45:7);
				\draw[very thick, red, ->-] (-135:7) to[out=north east, in=south] (-3,0) to[out=north, in=south east] (135:7);
			}
		\ }
		+A\mathord{
			\ \tikz[baseline=-.6ex, scale=.1]{
				\draw[dashed, fill=white] (0,0) circle [radius=7];
				\draw[very thick, red, ->-] (-45:7) -- (0,-3);
				\draw[very thick, red, ->-] (-135:7) -- (0,-3);
				\draw[very thick, red, -<-] (45:7) -- (0,3);
				\draw[very thick, red, -<-] (135:7) -- (0,3);
				\draw[very thick, red, -<-] (0,-3) -- (0,3);
			}
		\ },\label{rel:minus}\\
		\mathord{
			\ \tikz[baseline=-.6ex, scale=.1]{
				\draw[dashed, fill=white] (0,0) circle [radius=7];
				\draw[very thick, red, -<-={.6}{}] (-45:7) -- (-45:3);
				\draw[very thick, red, ->-={.6}{}] (-135:7) -- (-135:3);
				\draw[very thick, red, ->-={.6}{}] (45:7) -- (45:3);
				\draw[very thick, red, -<-={.6}{}] (135:7) -- (135:3);
				\draw[very thick, red, -<-] (45:3) -- (135:3);
				\draw[very thick, red, ->-] (-45:3) -- (-135:3);
				\draw[very thick, red, -<-] (45:3) -- (-45:3);
				\draw[very thick, red, ->-] (135:3) -- (-135:3);
			}
		\ }
		&=\mathord{
			\ \tikz[baseline=-.6ex, scale=.1]{
				\draw[dashed, fill=white] (0,0) circle [radius=7];
				\draw[very thick, red, -<-] (-45:7) to[out=north west, in=south] (3,0) to[out=north, in=south west] (45:7);
				\draw[very thick, red, ->-] (-135:7) to[out=north east, in=south] (-3,0) to[out=north, in=south east] (135:7);
			}
		\ }
		+\mathord{
			\ \tikz[baseline=-.6ex, rotate=90, scale=.1]{
				\draw[dashed, fill=white] (0,0) circle [radius=7];
				\draw[very thick, red, ->-] (-45:7) to[out=north west, in=south] (3,0) to[out=north, in=south west] (45:7);
				\draw[very thick, red, -<-] (-135:7) to[out=north east, in=south] (-3,0) to[out=north, in=south east] (135:7);
			}
		\ },\label{rel:fourgon}\\
		\mathord{
			\ \tikz[baseline=-.6ex, scale=.1]{
				\draw[dashed, fill=white] (0,0) circle [radius=7];
				\draw[very thick, red, ->-] (0,-7) -- (0,-3);
				\draw[very thick, red, ->-] (0,3) -- (0,7);
				\draw[very thick, red, -<-] (0,-3) to[out=east, in=south] (3,0) to[out=north, in=east] (0,3);
				\draw[very thick, red, -<-] (0,-3) to[out=west, in=south] (-3,0) to[out=north, in=west] (0,3);
			}
		\ }
		&=-(A^3+A^{-3})\mathord{
			\ \tikz[baseline=-.6ex, scale=.1]{
				\draw[dashed, fill=white] (0,0) circle [radius=7];
				\draw[very thick, red, ->-] (0,-7) -- (0,7);
			}
		\ },\label{rel:bigon}\\
		\mathord{
			\ \tikz[baseline=-.6ex, scale=.1]{
				\draw[dashed, fill=white] (0,0) circle [radius=7];
				\draw[very thick, red, ->-] (0,0) circle [radius=3];
			}
		\ }
		&=(A^{6}+1+A^{-6})
		\mathord{
			\ \tikz[baseline=-.6ex, scale=.1]{
				\draw[dashed, fill=white] (0,0) circle [radius=7];
			}
		\ }
		=\mathord{
			\ \tikz[baseline=-.6ex, scale=.1]{
				\draw[dashed, fill=white] (0,0) circle [radius=7];
				\draw[very thick, red, -<-] (0,0) circle [radius=3];
			}
		\ }.\label{rel:circle}
	\end{align}
\end{dfn}

\begin{dfn}[boundary $\mathfrak{sl}_3$-skein relations~\cite{FrohmanSikora20}]\label{def:bskeinrel}
	\begin{align}
		\mathord{
			\ \tikz[baseline=-.6ex, scale=.1, yshift=-4cm]{
				\coordinate (P) at (0,0);
				\draw[very thick, red, ->-] (P) -- (135:10);
				\fill[white] (P) circle [radius=2cm];
				\draw[very thick, red, ->-] (P) -- (45:10);
				\draw[dashed] (10,0) arc (0:180:10cm);
				\bline{-10,0}{10,0}{2}
				\draw[fill=black] (P) circle [radius=20pt];
			\ }
		}
		&=A^{2}
		\mathord{
			\ \tikz[baseline=-.6ex, scale=.1, yshift=-4cm]{
				\coordinate (P) at (0,0);
				\draw[very thick, red, ->-] (P) -- (45:10);
				\fill[white] (P) circle [radius=2cm];
				\draw[very thick, red, ->-] (P) -- (135:10);
				\draw[dashed] (10,0) arc (0:180:10cm);
				\bline{-10,0}{10,0}{2}
				\draw[fill=black] (P) circle [radius=20pt];
			\ }
		}
		&\mathord{
			\ \tikz[baseline=-.6ex, scale=.1, yshift=-4cm]{
				\coordinate (P) at (0,0);
				\draw[very thick, red, -<-] (P) -- (135:10);
				\fill[white] (P) circle [radius=2cm];
				\draw[very thick, red, -<-] (P) -- (45:10);
				\draw[dashed] (10,0) arc (0:180:10cm);
				\bline{-10,0}{10,0}{2}
				\draw[fill=black] (P) circle [radius=20pt];
			\ }
		}
		&=A^{2}
		\mathord{
			\ \tikz[baseline=-.6ex, scale=.1, yshift=-4cm]{
				\coordinate (P) at (0,0);
				\draw[very thick, red, -<-] (P) -- (45:10);
				\fill[white] (P) circle [radius=2cm];
				\draw[very thick, red, -<-] (P) -- (135:10);
				\draw[dashed] (10,0) arc (0:180:10cm);
				\bline{-10,0}{10,0}{2}
				\draw[fill=black] (P) circle [radius=20pt];
			\ }
		}\label{rel:parallel}\\
		\mathord{
			\ \tikz[baseline=-.6ex, scale=.1, yshift=-4cm]{
				\coordinate (P) at (0,0);
				\draw[very thick, red, -<-] (P) -- (135:10);
				\fill[white] (P) circle [radius=2cm];
				\draw[very thick, red, ->-] (P) -- (45:10);
				\draw[dashed] (10,0) arc (0:180:10cm);
				\bline{-10,0}{10,0}{2}
				\draw[fill=black] (P) circle [radius=20pt];
			\ }
		}
		&=A
		\mathord{
			\ \tikz[baseline=-.6ex, scale=.1, yshift=-4cm]{
				\coordinate (P) at (0,0);
				\draw[very thick, red, ->-] (P) -- (45:10);
				\fill[white] (P) circle [radius=2cm];
				\draw[very thick, red, -<-] (P) -- (135:10);
				\draw[dashed] (10,0) arc (0:180:10cm);
				\bline{-10,0}{10,0}{2}
				\draw[fill=black] (P) circle [radius=20pt];
			\ }
		}
		&\mathord{
			\ \tikz[baseline=-.6ex, scale=.1, yshift=-4cm]{
				\coordinate (P) at (0,0);
				\draw[very thick, red, ->-] (P) -- (135:10);
				\fill[white] (P) circle [radius=2cm];
				\draw[very thick, red, -<-] (P) -- (45:10);
				\draw[dashed] (10,0) arc (0:180:10cm);
				\bline{-10,0}{10,0}{2}
				\draw[fill=black] (P) circle [radius=20pt];
			\ }
		}
		&=A
		\mathord{
			\ \tikz[baseline=-.6ex, scale=.1, yshift=-4cm]{
				\coordinate (P) at (0,0);
				\draw[very thick, red, -<-] (P) -- (45:10);
				\fill[white] (P) circle [radius=2cm];
				\draw[very thick, red, ->-] (P) -- (135:10);
				\draw[dashed] (10,0) arc (0:180:10cm);
				\bline{-10,0}{10,0}{2}
				\draw[fill=black] (P) circle [radius=20pt];
			\ }
		}\label{rel:antiparallel}\\
		\mathord{
			\ \tikz[baseline=-.6ex, scale=.1, yshift=-4cm]{
				\coordinate (P) at (0,0);
				\node[inner sep=0, outer sep=0, circle, fill=black] (R) at (45:7) {};
				\node[inner sep=0, outer sep=0, circle, fill=black] (L) at (135:7) {};
				\draw[very thick, red, ->-] (L) -- (R);
				\draw[very thick, red, -<-={.7}{}] (P) -- (L);
				\draw[very thick, red, -<-={.7}{}] (R) -- (45:10);
				\draw[very thick, red, ->-={.7}{}] (L) -- (135:10);
				\fill[white] (P) circle [radius=2cm];
				\draw[very thick, red, ->-={.7}{}] (P) -- (R);
				\draw[dashed] (10,0) arc (0:180:10cm);
				\bline{-10,0}{10,0}{2}
				\draw[fill=black] (P) circle [radius=20pt];
			\ }
		}
		&=
		\mathord{
			\ \tikz[baseline=-.6ex, scale=.1, yshift=-4cm]{
				\coordinate (P) at (0,0) {};
				\draw[very thick, red, ->-] (P) -- (135:10);
				\fill[white] (P) circle [radius=2cm];
				\draw[very thick, red, -<-] (P) -- (45:10);
				\draw[dashed] (10,0) arc (0:180:10cm);
				\bline{-10,0}{10,0}{2}
				\draw[fill=black] (P) circle [radius=20pt];
			\ }
		}
		&\mathord{
			\ \tikz[baseline=-.6ex, scale=.1, yshift=-4cm]{
				\coordinate (P) at (0,0);
				\node[inner sep=0, outer sep=0, circle, fill=black] (R) at (45:7) {};
				\node[inner sep=0, outer sep=0, circle, fill=black] (L) at (135:7) {};
				\draw[very thick, red, -<-] (L) -- (R);
				\draw[very thick, red, ->-={.7}{}] (P) -- (L);
				\draw[very thick, red, ->-={.7}{}] (R) -- (45:10);
				\draw[very thick, red, -<-={.7}{}] (L) -- (135:10);
				\fill[white] (P) circle [radius=2cm];
				\draw[very thick, red, -<-={.7}{}] (P) -- (R);
				\draw[dashed] (10,0) arc (0:180:10cm);
				\bline{-10,0}{10,0}{2}
				\draw[fill=black] (P) circle [radius=20pt];
			\ }
		}
		&=
		\mathord{
			\ \tikz[baseline=-.6ex, scale=.1, yshift=-4cm]{
				\coordinate (P) at (0,0);
				\draw[very thick, red, -<-] (P) -- (135:10);
				\fill[white] (P) circle [radius=2cm];
				\draw[very thick, red, ->-] (P) -- (45:10);
				\draw[dashed] (10,0) arc (0:180:10cm);
				\bline{-10,0}{10,0}{2}
				\draw[fill=black] (P) circle [radius=20pt];
			\ }
		}\label{rel:pbigon}\\
		\mathord{
			\ \tikz[baseline=-.6ex, scale=.1, yshift=-4cm]{
				\coordinate (P) at (0,0);
				\node[inner sep=0, outer sep=0, circle, fill=black] (C) at (90:7) {};
				\draw[very thick, red, ->-] (P) to[out=north west, in=west] (C);
				\fill[white] (P) circle [radius=2cm];
				\draw[very thick, red, ->-] (P) to[out=north east, in=east] (C);
				\draw[very thick, red, -<-={.7}{}] (C) -- (90:10);
				\draw[dashed] (10,0) arc (0:180:10cm);
				\bline{-10,0}{10,0}{2}
				\draw[fill=black] (P) circle [radius=20pt];
			\ }
		}
		&=0
		&\mathord{
			\ \tikz[baseline=-.6ex, scale=.1, yshift=-4cm]{
				\coordinate (P) at (0,0);
				\node[inner sep=0, outer sep=0, circle, fill=black] (C) at (90:7) {};
				\draw[very thick, red, -<-] (P) to[out=north west, in=west] (C);
				\fill[white] (P) circle [radius=2cm];
				\draw[very thick, red, -<-] (P) to[out=north east, in=east] (C);
				\draw[very thick, red, ->-={.7}{}] (C) -- (90:10);
				\draw[dashed] (10,0) arc (0:180:10cm);
				\bline{-10,0}{10,0}{2}
				\draw[fill=black] (P) circle [radius=20pt];
			\ }
		}
		&=0\label{rel:degbigon}\\
		\mathord{
			\ \tikz[baseline=-.6ex, scale=.1, yshift=-4cm]{
				\coordinate (P) at (0,0);
				\node[inner sep=0, outer sep=0, circle, fill=black] (C) at (90:7) {};
				\draw[very thick, red, -<-] (P) to[out=north west, in=west] (C);
				\fill[white] (P) circle [radius=2cm];
				\draw[red, very thick] (P) to[out=north east, in=east] (C);
				\draw[dashed] (10,0) arc (0:180:10cm);
				\bline{-10,0}{10,0}{2}
				\draw[fill=black] (P) circle [radius=20pt];
			\ }
		}
		&=0
		&\mathord{
			\ \tikz[baseline=-.6ex, scale=.1, yshift=-4cm]{
				\coordinate (P) at (0,0) {};
				\node[inner sep=0, outer sep=0, circle, fill=black] (C) at (90:7) {};
				\draw[very thick, red, ->-] (P) to[out=north west, in=west] (C);
				\fill[white] (P) circle [radius=2cm];
				\draw[red, very thick] (P) to[out=north east, in=east] (C);
				\draw[dashed] (10,0) arc (0:180:10cm);
				\bline{-10,0}{10,0}{2}
				\draw[fill=black] (P) circle [radius=20pt];
			\ }
		}
		&=0\label{rel:pcircle}
	\end{align}
\end{dfn}

It is easy to see that the $\mathfrak{sl}_3$-skein relations, boundary $\mathfrak{sl}_3$-skein relations, and the boundary fixing isotopy realize the following \emph{Reidemeister moves} (R1'), (R2), (R3), (R4) and (bR).
\begin{dfn}[Reidemeister moves]
	\begin{align*}
		\mathord{
			\ \tikz[baseline=-.6ex, scale=.1]{
				\draw[dashed] (0,0) circle [radius=7];
				\draw[red, very thick] (3,-2) to[out=south, in=east] (2,-3) to[out=west, in=south] (-1,0) to[out=north, in=west] (2,3) to[out=east, in=north] (3,2);
				\draw[overarc] (0,-7) to[out=north, in=west] (2,-1) to[out=east, in=north] (3,-2);
				\draw[overarc] (3,2) to[out=south, in=east] (2,1) to[out=west, in=south] (0,7);
			}
		\ }
		&\mathord{
			\tikz[baseline=-.6ex, scale=.1]{
				\draw[<->] (0,0)--(10,0);
			}
		} 
		\mathord{
			\ \tikz[baseline=-.6ex, scale=.1]{
				\draw[dashed] (0,0) circle [radius=7];
				\draw[red, very thick] (90:7) to (-90:7);
			}
		\ }\tag{R1'}\\
		\mathord{
			\ \tikz[baseline=-.6ex, scale=.1]{
				\draw[dashed] (0,0) circle [radius=7];
				\draw[red, very thick] (135:7) to[out=south east, in=west] (0,-2) to[out=east, in=south west](45:7);
				\draw[overarc] (-135:7) to[out=north east, in=west] (0,2) to[out=east, in=north west] (-45:7);
			}
		\ }
		&\mathord{
			\tikz[baseline=-.6ex, scale=.1]{
				\draw[<->] (0,0)--(10,0);
			}
		} 
		\mathord{
			\ \tikz[baseline=-.6ex, scale=.1]{
				\draw[dashed] (0,0) circle [radius=7];
				\draw[red, very thick] (135:7) to[out=south east, in=west](0,2) to[out=east, in=south west](45:7);
				\draw[red, very thick] (-135:7) to[out=north east, in=west](0,-2) to[out=east, in=north west] (-45:7);
			}
		\ }\tag{R2}\\
		\mathord{
			\ \tikz[baseline=-.6ex, scale=.1]{
				\draw[dashed] (0,0) circle [radius=7];
				\draw[red, very thick] (-135:7) -- (45:7);
				\draw[overarc] (135:7) -- (-45:7);
				\draw[overarc] (180:7) to[out=east, in=west](0,5) to[out=east, in=west] (0:7);
			}
		\ }
		&\mathord{
			\tikz[baseline=-.6ex, scale=.1]{
				\draw[<->] (0,0)--(10,0);
			}
		} 
		\mathord{
			\ \tikz[baseline=-.6ex, scale=.1]{
				\draw[dashed] (0,0) circle [radius=7];
				\draw[red, very thick] (-135:7) -- (45:7);
				\draw[overarc] (135:7) -- (-45:7);
				\draw[overarc] (180:7) to[out=east, in=west] (0,-5) to[out=east, in=west] (0:7);
			}
		\ }\tag{R3}\\
		\mathord{
			\ \tikz[baseline=-.6ex, scale=.1]{
				\draw[dashed] (0,0) circle [radius=7];
				\draw[red, very thick] (0:0) -- (90:7); 
				\draw[red, very thick] (0:0) -- (210:7); 
				\draw[red, very thick] (0:0) -- (-30:7);
				\draw[overarc] (180:7) to[out=east, in=west] (0,5) to[out=east, in=west] (0:7);
			}
		\ }
		\mathord{
			\tikz[baseline=-.6ex, scale=.1]{
				\draw[<->] (0,0)--(10,0);
			}
		} 
		\mathord{
			\ \tikz[baseline=-.6ex, scale=.1]{
				\draw[dashed] (0,0) circle [radius=7];
				\draw[red, very thick] (0:0) -- (90:7); 
				\draw[red, very thick] (0:0) -- (210:7); 
				\draw[red, very thick] (0:0) -- (-30:7);
				\draw[overarc] (180:7) to[out=east, in=west] (0,-5) to[out=east, in=west](0:7);
			}
		\ },\quad
		&\mathord{
			\ \tikz[baseline=-.6ex, scale=.1]{
				\draw[dashed] (0,0) circle [radius=7];
				\draw[overarc] (180:7) to[out=east, in=west] (0,5) to[out=east, in=west] (0:7);
				\draw[overarc] (0:0) -- (90:7); 
				\draw[red, very thick] (0:0) -- (210:7); 
				\draw[red, very thick] (0:0) -- (-30:7);
			}
		\ }
		\mathord{
			\tikz[baseline=-.6ex, scale=.1]{
				\draw [<->] (0,0)--(10,0);
			}
		} 
		\mathord{
			\ \tikz[baseline=-.6ex, scale=.1]{
				\draw[dashed] (0,0) circle [radius=7];
				\draw[overarc] (180:7) to[out=east, in=west] (0,-5) to[out=east, in=west](0:7);
				\draw[red, very thick] (0:0) -- (90:7); 
				\draw[overarc] (0:0) -- (210:7); 
				\draw[overarc] (0:0) -- (-30:7);
			}
		\ }\tag{R4}\\
		\mathord{
			\ \tikz[baseline=-.6ex, scale=.1, yshift=-2cm]{
				\coordinate (P) at (0,0);
				\draw[very thick, red, rounded corners] ($(P)+(135:2)$) -- (135:4) -- (60:8);
				\draw[very thick, red, rounded corners, overarc] (P) -- (45:4) -- (120:8);
				\draw[dashed] (8,0) arc (0:180:8cm);
				\bline{-8,0}{8,0}{2}
				\draw[fill=black] (P) circle [radius=20pt];
			\ }
		}\ 
		&\mathord{
			\tikz[baseline=-.6ex, scale=.1]{
				\draw [<->] (0,0)--(10,0);
			}
		} 
		\mathord{
			\ \tikz[baseline=-.6ex, scale=.1, yshift=-2cm]{
				\coordinate (P) at (0,0);
				\draw[very thick, red] (P) -- (120:8);
				\draw[very thick, red] ($(P)+(60:2)$) -- (60:8);
				\draw[dashed] (8,0) arc (0:180:8cm);
				\bline{-8,0}{8,0}{2}
				\draw[fill=black] (P) circle [radius=20pt];
			\ }
		}\tag{bR}
	\end{align*}
\end{dfn}

\begin{dfn}[the $\mathfrak{sl}_3$-skein algebra of an unpunctured marked surface~\cite{FrohmanSikora20}]
	The \emph{$\mathfrak{sl}_3$-skein algebra $\SK{\Sigma}$ of an unpunctured marked surface $(\Sigma,\bM)$} is defined to be the quotient module of the free $\bZ_{A}$-module spanned by tangled trivalent graphs in $(\Sigma,\bM)$ by the $\mathfrak{sl}_3$-skein relations  (\cref{def:skeinrel}), the boundary $\mathfrak{sl}_3$-skein relations (\cref{def:bskeinrel}), and isotopy in $\Sigma$ relative to $\partial \Sigma$.
	It is equipped with a multiplication defined by the superposition of tangled trivalent graphs. 
	The product $G_{1}G_{2}$ of two tangled trivalent graphs $G_{1}$ and $G_{2}$ in generic position is defined by superposing $G_{1}$ on $G_{2}$, that is, $G_{1}$ is over-passing $G_{2}$ in all intersection points. 
	We call an element in $\SK{\Sigma}$ an \emph{$\mathfrak{sl}_3$-web} or simply a \emph{web}.
\end{dfn}

\begin{rem}
	In the right-hand sides of the boundary skein relations at $p\in\bM$, 
	the sign of an exponent of $A$ only depends on the orientation from the arc with a higher elevation to the lower one with respect to the orientation of $\Sigma$.
	The absolute value of an exponent depends on whether two arcs have a parallel direction or anti-parallel.
\end{rem}

It is useful to slightly extend the definition of the $\mathfrak{sl}_3$-webs, allowing them to have univalent vertices with the same elevation.

\begin{dfn}[simultaneous crossings]\label{def:simulcrossing}
	An $\mathfrak{sl}_3$-web with the \emph{simultaneous crossing} at $p \in \bM$ is recursively defined by the following skein relations:
	\begin{align}
		A^{-l-\frac{k}{2}}\mathord{
			\ \tikz[baseline=-.6ex, scale=.1, yshift=-2cm]{
				\coordinate (P) at (0,0);
				\draw[line width=2.6, red] ($(P)+(135:2)$) -- (135:7);
				\draw[very thick, red, ->-={.7}{}] (P) -- (45:7);
				\draw[dashed] (7,0) arc (0:180:7cm);
				\bline{-7,0}{7,0}{2}
				\draw[fill=black] (P) circle [radius=20pt];
				\node at (135:7) [above, red]{\scriptsize $(k,l)$};
			\ }
		}
		&=
		\mathord{
			\ \tikz[baseline=-.6ex, scale=.1, yshift=-2cm]{
				\coordinate (P) at (0,0);
				\draw[very thick, red, ->-={.7}{}] (P) -- (45:7);
				\draw[line width=2.6, red] (P) -- (135:7);
				\draw[dashed] (7,0) arc (0:180:7cm);
				\bline{-7,0}{7,0}{2}
				\draw[fill=black] (P) circle [radius=20pt];
				\node at (135:7) [above, red]{\scriptsize $(k,l)$};
			\ }
		}
		=
		A^{l+\frac{k}{2}}\mathord{
			\ \tikz[baseline=-.6ex, scale=.1, yshift=-2cm]{
				\coordinate (P) at (0,0);
				\draw[very thick, red, ->-={.7}{}] ($(P)+(45:2)$) -- (45:7);
				\draw[line width=2.6, red] (P) -- (135:7);
				\draw[dashed] (7,0) arc (0:180:7cm);
				\bline{-7,0}{7,0}{2}
				\draw[fill=black] (P) circle [radius=20pt];
				\node at (135:7) [above, red]{\scriptsize $(k,l)$};
			\ }
		},\notag\\
		A^{-k-\frac{l}{2}}\mathord{
			\ \tikz[baseline=-.6ex, scale=.1, yshift=-2cm]{
				\coordinate (P) at (0,0);
				\draw[line width=2.6, red] ($(P)+(135:2)$) -- (135:7);
				\draw[very thick, red, -<-={.7}{}] (P) -- (45:7);
				\draw[dashed] (7,0) arc (0:180:7cm);
				\bline{-7,0}{7,0}{2}
				\draw[fill=black] (P) circle [radius=20pt];
				\node at (135:7) [above, red]{\scriptsize $(k,l)$};
			\ }
		}
		&=
		\mathord{
			\ \tikz[baseline=-.6ex, scale=.1, yshift=-2cm]{
				\coordinate (P) at (0,0);
				\draw[very thick, red, -<-={.7}{}] (P) -- (45:7);
				\draw[line width=2.6, red] (P) -- (135:7);
				\draw[dashed] (7,0) arc (0:180:7cm);
				\bline{-7,0}{7,0}{2}
				\draw[fill=black] (P) circle [radius=20pt];
				\node at (135:7) [above, red]{\scriptsize $(k,l)$};
			\ }
		}
		=
		A^{k+\frac{l}{2}}\mathord{
			\ \tikz[baseline=-.6ex, scale=.1, yshift=-2cm]{
				\coordinate (P) at (0,0) {};
				\draw[very thick, red, -<-={.7}{}] ($(P)+(45:2)$) -- (45:7);
				\draw[line width=2.6, red] (P) -- (135:7);
				\draw[dashed] (7,0) arc (0:180:7cm);
				\bline{-7,0}{7,0}{2}
				\draw[fill=black] (P) circle [radius=20pt];
				\node at (135:7) [above, red]{\scriptsize $(k,l)$};
			\ }
		},\label{rel:simultaneous}
	\end{align}
	where the thickened edge labeled by $(k,l)$ is a collection of $k+l$ half-edges with simultaneous crossing whose endpoint degree is $(k,l)$.  
	From the above skein relations, we obtain the \emph{boundary twist relations} for two adjacent half-edges with a simultaneous crossing:
	\begin{align}
		&\mathord{
			\ \tikz[baseline=-.6ex, scale=.1, yshift=-2cm]{
				\coordinate (P) at (0,0);
				\draw[very thick, red, rounded corners, ->-={.9}{red}] (P) -- (45:4) -- (120:8);
				\draw[very thick, red, rounded corners, overarc, ->-={.9}{red}] (P) -- (135:4) -- (60:8);
				\draw[dashed] (8,0) arc (0:180:8cm);
				\bline{-8,0}{8,0}{2}
				\draw[fill=black] (P) circle [radius=20pt];
			\ }
		}\ 
		=A^{2}\mathord{
			\ \tikz[baseline=-.6ex, scale=.1, yshift=-2cm]{
				\coordinate (P) at (0,0);
				\draw[very thick, red, ->-={.7}{}] (P) -- (120:8);
				\draw[very thick, red, ->-={.7}{}] (P) -- (60:8);
				\draw[dashed] (8,0) arc (0:180:8cm);
				\bline{-8,0}{8,0}{2}
				\draw[fill=black] (P) circle [radius=20pt];
			\ }
		},
		&&\mathord{
			\ \tikz[baseline=-.6ex, scale=.1, yshift=-2cm]{
				\coordinate (P) at (0,0);
				\draw[very thick, red, rounded corners, -<-={.9}{red}] (P) -- (45:4) -- (120:8);
				\draw[very thick, red, rounded corners, overarc, -<-={.9}{red}] (P) -- (135:4) -- (60:8);
				\draw[dashed] (8,0) arc (0:180:8cm);
				\bline{-8,0}{8,0}{2}
				\draw[fill=black] (P) circle [radius=20pt];
			\ }
		}\ 
		=A^{2}\mathord{
			\ \tikz[baseline=-.6ex, scale=.1, yshift=-2cm]{
				\coordinate (P) at (0,0);
				\draw[very thick, red, -<-={.7}{}] (P) -- (120:8);
				\draw[very thick, red, -<-={.7}{}] (P) -- (60:8);
				\draw[dashed] (8,0) arc (0:180:8cm);
				\bline{-8,0}{8,0}{2}
				\draw[fill=black] (P) circle [radius=20pt];
			\ }
		},\label{rel:simultwist}\\
		&\mathord{
			\ \tikz[baseline=-.6ex, scale=.1, yshift=-2cm]{
				\coordinate (P) at (0,0);
				\draw[very thick, red, rounded corners, -<-={.9}{red}] (P) -- (45:4) -- (120:8);
				\draw[very thick, red, rounded corners,overarc, ->-={.9}{red}] (P) -- (135:4) -- (60:8);
				\draw[dashed] (8,0) arc (0:180:8cm);
				\bline{-8,0}{8,0}{2}
				\draw[fill=black] (P) circle [radius=20pt];
			\ }
		}\ 
		=A\mathord{
			\ \tikz[baseline=-.6ex, scale=.1, yshift=-2cm]{
				\coordinate (P) at (0,0);
				\draw[very thick, red, -<-={.7}{}] (P) -- (120:8);
				\draw[very thick, red, ->-={.7}{}] (P) -- (60:8);
				\draw[dashed] (8,0) arc (0:180:8cm);
				\bline{-8,0}{8,0}{2}
				\draw[fill=black] (P) circle [radius=20pt];
			\ }
		},
		&&\mathord{
			\ \tikz[baseline=-.6ex, scale=.1, yshift=-2cm]{
				\coordinate (P) at (0,0);
				\draw[very thick, red, rounded corners, ->-={.9}{red}] (P) -- (45:4) -- (120:8);
				\draw[very thick, red, rounded corners,overarc, -<-={.9}{red}] (P) -- (135:4) -- (60:8);
				\draw[dashed] (8,0) arc (0:180:8cm);
				\bline{-8,0}{8,0}{2}
				\draw[fill=black] (P) circle [radius=20pt];
			\ }
		}\ 
		=A\mathord{
			\ \tikz[baseline=-.6ex, scale=.1, yshift=-2cm]{
				\coordinate (P) at (0,0);
				\draw[very thick, red, ->-={.7}{}] (P) -- (120:8);
				\draw[very thick, red, -<-={.7}{}] (P) -- (60:8);
				\draw[dashed] (8,0) arc (0:180:8cm);
				\bline{-8,0}{8,0}{2}
				\draw[fill=black] (P) circle [radius=20pt];
			\ }
		}.\label{rel:antisimultwist}
	\end{align}
	From the above relations, we can obtain more general formula:
	\begin{align}
		\mathord{
			\ \tikz[baseline=-.6ex, scale=.1, yshift=-2cm]{
				\coordinate (P) at (0,0);
				\draw[line width=2.6, red] (P) -- (30:7);
				\draw[line width=2.6, red] (P) -- (90:7);
				\draw[line width=2.6, red] (P) -- (150:7);
				\draw[dashed] (7,0) arc (0:180:7cm);
				\bline{-7,0}{7,0}{2}
				\draw[fill=black] (P) circle [radius=20pt];
				\node at (30:7) [right, red]{\scriptsize $(e,f)$};
				\node at (90:7) [above, red]{\scriptsize $(c,d)$};
				\node at (150:7) [left, red]{\scriptsize $(a,b)$};
			\ }
		}
		=A^{ac+bd-ec-fd}A^{(ad+bc-ec-fd)/2}
		\mathord{
			\ \tikz[baseline=-.6ex, scale=.1, yshift=-2cm]{
				\coordinate (P) at (0,0);
				\draw[line width=2.6, red] (P) -- (30:7);
				\draw[line width=2.6, red] ($(P)+(90:2)$) -- (90:7);
				\draw[line width=2.6, red] (P) -- (150:7);
				\draw[dashed] (7,0) arc (0:180:7cm);
				\bline{-7,0}{7,0}{2}
				\draw[fill=black] (P) circle [radius=20pt];
				\node at (30:7) [right, red]{\scriptsize $(e,f)$};
				\node at (90:7) [above, red]{\scriptsize $(c,d)$};
				\node at (150:7) [left, red]{\scriptsize $(a,b)$};
			\ }
		}.
	\end{align}

	For any tangled trivalent graph $G$ with no interior crossings, the \emph{Weyl ordering} $[G]$ is the $\mathfrak{sl}_{3}$-web obtained by replacing all the crossings on $\bM$ with the simultaneous crossings. 
	One can represent the Weyl ordering $[G]$ of $G$ by a \emph{flat trivalent graph}, \emph{i.e.}, a uni-trivalent graph such that its univalent vertices lie in $\bM$ and the other part is embedded into $\Sigma\setminus\partial\Sigma$.
\end{dfn}

\begin{dfn}[basis webs]\label{def:basisweb}
	Let $[G]$ be a flat trivalent graph.
	A polygon $P$ in $\Sigma\setminus [G]$ is an \emph{elliptic face} if $P$ is one of the shaded faces shown in \cref{fig:elliptic}.
	A flat trivalent graph $[G]$ is \emph{elliptic} if $\Sigma\setminus[G]$ has an elliptic face.
	A \emph{basis web} is an $\mathfrak{sl}_{3}$-web represented by non-elliptic flat trivalent graph $[G]$ on $(\Sigma,\bM)$.
	We call the set $\Bweb{\Sigma}$ of basis webs the \emph{graphical basis}.
\end{dfn}

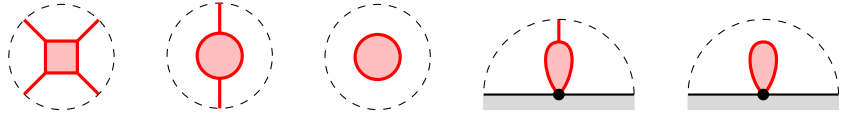
\begin{figure}
	\begin{tikzpicture}[scale=.1]
		\draw[dashed, fill=white] (0,0) circle [radius=7];
		\draw[very thick, red] (-45:7) -- (-45:3);
		\draw[very thick, red] (-135:7) -- (-135:3);
		\draw[very thick, red] (45:7) -- (45:3);
		\draw[very thick, red] (135:7) -- (135:3);
		\draw[very thick, red, fill=pink] (45:3) -- (135:3) -- (225:3) -- (315:3) -- cycle;
	\end{tikzpicture}
	\hspace{1em}
	\begin{tikzpicture}[scale=.1]
		\draw[dashed, fill=white] (0,0) circle [radius=7];
		\draw[very thick, red] (0,-7) -- (0,-3);
		\draw[very thick, red] (0,3) -- (0,7);
		\draw[very thick, red, fill=pink] (0,0) circle [radius=3];
	\end{tikzpicture}
	\hspace{1em}
	\begin{tikzpicture}[scale=.1]
		\draw[dashed, fill=white] (0,0) circle [radius=7];
		\draw[very thick, red, fill=pink] (0,0) circle [radius=3];
	\end{tikzpicture}
	\hspace{1em}
	\begin{tikzpicture}[scale=.1]
		\coordinate (P) at (0,0) {};
		\coordinate (C) at (90:7) {};
		\draw[very thick, red, fill=pink] (P) to[out=north west, in=west] (C) to[out=east, in=north east] (P);
		\draw[very thick, red] (C) -- (90:10);
		\draw[dashed] (10,0) arc (0:180:10cm);
		\bline{-10,0}{10,0}{2}
		\draw[fill=black] (P) circle [radius=20pt];
	\end{tikzpicture}
	\hspace{1em}
	\begin{tikzpicture}[scale=.1]
		\coordinate (P) at (0,0) {};
		\coordinate (C) at (90:7) {};
		\draw[very thick, red, fill=pink] (P) to[out=north west, in=west] (C) to[out=east, in=north east] (P);
		\draw[dashed] (10,0) arc (0:180:10cm);
		\bline{-10,0}{10,0}{2}
		\draw[fill=black] (P) circle [radius=20pt];
	\end{tikzpicture}
	\caption{elliptic faces}
	\label{fig:elliptic}
\end{figure}

We will see that $\Bweb{\Sigma}$ is indeed a free $\bZ_A$-basis of $\SK{\Sigma}$ soon below. 
The following notion provides a useful tool to study the webs.

\begin{dfn}[cut-paths~\cite{Kuperberg96}]\label{def:cut-path}
	Let $G$ be a flat trivalent graph on $(\Sigma,\bM)$ and $p,q\in\partial^{\times}\Sigma$ distinct points.
	\begin{enumerate}
		\item A \emph{cut-path} from $p$ to $q$ of $G$ is a properly embedded oriented interval from $p$ to $q$ which transversely intersects with edges of $G$.
		An \emph{identity move} of a cut-path $\alpha$ with respect to $G$ is a deformation of $\alpha$ into another cut-path $\alpha'$ such that $\alpha$ and $\alpha'$ bound only one biangle which cuts out a subgraph of $G$ consisting of an identity braid between two edges of the biangle. In a similar way, an \emph{$H$-move} is defined, for which the biangle cuts out an $H$-web. See \cref{fig:H-move}.
		\item The \emph{weight $\mathrm{wt}_{\alpha}(G)=(k,l)\in\bN\times\bN$} of $G$ for a cut-path $\alpha$ is defined by
			\begin{align*}
				k:=\#\{p\in G\cap\alpha\mid (G,\alpha)_{p}=1\} \quad \mbox{and} \quad l:=\#\{p\in G\cap\alpha\mid (G,\alpha)_{p}=-1\},
			\end{align*}
		where $(G,\alpha)_{p}$ is the local intersection number of $G$ and $\alpha$ at $p$: see \cref{fig:cutpath}. Let $|\mathrm{wt}_{\alpha}(G)|:=k+l$ denote the total number of intersection points of $\alpha$ and $G$.
		\item A cut-path $\alpha$ from $p$ to $q$ is said to be \emph{minimal} for $G$ if $\mathrm{wt}_{\alpha}(G)$ is minimal in the set of cut-paths homotopic to $\alpha$ rel.~to endpoints, with respect to the partial order on the weight lattice of $\mathfrak{sl}_3$ given by
			\[
				(k,l)\succeq (k+1,l-2),\quad (k,l)\succeq (k-2,l+1).
			\]
		\item A cut-path $\alpha$ from $p$ to $q$ of $G$ is \emph{non-convex to the left (resp. right) side} if $\mathrm{wt}_{\alpha}(G)\preceq\mathrm{wt}_{\beta}(G)$ for any cut-path $\beta\subset \Sigma\setminus\alpha$ from $p$ to $q$ of $G$ such that $\alpha\cup\beta$ bounds a biangle, and $\beta$ lies in the left side (resp. right side) of $\alpha$.
	\end{enumerate}
\end{dfn}

Kuperberg proved some lemmas about cut-paths described below.
\begin{lem}[Kuperberg~{\cite[Lemma~6.5, 6.6,]{Kuperberg96}}]\label{lem:Kuperberg-lemma}
	Let $\Sigma$ be an unpunctured marked surface, $p,q\in\partial^{\times}\Sigma$ distinct points, and $G$ a non-elliptic flat trivalent graph.
	\begin{enumerate}
		\item If $\alpha$ and $\beta$ are homotopic (rel.~to endpoints) cut-paths from $p$ to $q$ of $G$ and $\alpha$ is minimal, then $\mathrm{wt}_{\alpha}(G)\preceq \mathrm{wt}_{\beta}(G)$. If $\beta$ is also minimal, then $\alpha$ is related to $\beta$ by a finite sequence of $H$-moves and identity moves.
		\item If a cut-path $\alpha$ from $p$ to $q$ of $G$ is non-convex to the left side (resp. right side), there exists a unique class $\alpha_{L}$ (resp. $\alpha_{R}$) of cut-paths from $p$ to $q$ under identity moves such that any cut-path $\beta$ from $p$ to $q$ with $\mathrm{wt}_{\beta}(G)=\mathrm{wt}_{\alpha}(G)$ in the left side (resp. right side) of $\alpha$ lies between $\alpha_{L}$ (resp. $\alpha_{R}$) and $\alpha$.
	\end{enumerate}
\end{lem}

We call the above cut-path $\alpha_{L}$ (resp.~$\alpha_{R}$) the \emph{left (resp. right) core} of $\alpha$: see \cref{fig:cutpath}.

\begin{figure}
	\begin{tikzpicture}[scale=.1]
		\coordinate (P) at (0,23);
		\coordinate (A3) at (-8,16);
		\coordinate (A2) at (-8,12);
		\coordinate (A1) at (-8,4);
		\coordinate (B3) at (8,16);
		\coordinate (B2) at (8,12);
		\coordinate (B1) at (8,4);
		\coordinate (Q) at (0,-3);
		\draw[very thick, red] (A1) -- (B1);
		\draw[very thick, red] (A2) -- (B2);
		\draw[very thick, red] (A3) -- (B3);
		\draw[rounded corners, ->-] (Q) -- ($(A1)+(0,-2)$) -- ($(A3)+(0,2)$) -- (P);
		\draw[rounded corners, ->-] (Q) -- ($(B1)+(0,-2)$) -- ($(B3)+(0,2)$) -- (P);
		\draw (P) -- ($(P)+(0,3)$);
		\draw (Q) -- ($(Q)+(0,-3)$);
		\node at (0,8) [rotate=90, red]{$\cdots$};
		\node at (-8,8) [left]{$\alpha$};
		\node at (8,8) [right]{$\alpha'$};
	\end{tikzpicture}
	\begin{tikzpicture}[scale=.1]
		\coordinate (P) at (0,23);
		\coordinate (A3) at (-8,16);
		\coordinate (A1) at (-8,4);
		\coordinate (B3) at (8,16);
		\coordinate (B1) at (8,4);
		\coordinate (Q) at (0,-3);
		\draw[very thick, red] (A1) -- (B1);
		\draw[very thick, red] (A3) -- (B3);
		\draw[very thick, red] ($(A1)!.5!(B1)$) -- ($(A3)!.5!(B3)$);
		\draw[rounded corners, ->-] (Q) -- ($(A1)+(0,-2)$) -- ($(A3)+(0,2)$) -- (P);
		\draw[rounded corners, ->-] (Q) -- ($(B1)+(0,-2)$) -- ($(B3)+(0,2)$) -- (P);
		\draw (P) -- ($(P)+(0,3)$);
		\draw (Q) -- ($(Q)+(0,-3)$);
		\node at (-8,8) [left]{$\alpha$};
		\node at (8,8) [right]{$\alpha'$};
	\end{tikzpicture}
	\caption{An identity move (left) and an $H$-move (right). The other parts of $\alpha$ and $\alpha'$ are identical.}
	\label{fig:H-move}
\end{figure}
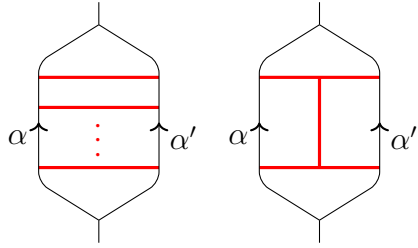

\begin{figure}
	\begin{tikzpicture}[scale=.2]
		\coordinate (A) at (60:5);
		\coordinate (B) at (180:5);
		\coordinate (C) at (300:5);
		\foreach \i in {1,2,3,4,5,6}{
			\coordinate (A\i) at ($(A)+(60*\i:5)$);
			\coordinate (B\i) at ($(B)+(60*\i:5)$);
			\coordinate (C\i) at ($(C)+(60*\i:5)$);
		}
		\foreach \i in {1,2,3,4,5,6}{
			\coordinate (L\i) at ($(-15,5*\i-15)$);
		}
		\draw[very thick, red, ->-] (A1) -- (A2);
		\draw[very thick, red, -<-] (A2) -- (A3);
		\draw[very thick, red, ->-] (A3) -- (A4);
		\draw[very thick, red, -<-] (A4) -- (A5);
		\draw[very thick, red, ->-] (A5) -- (A6);
		\draw[very thick, red, -<-] (A6) -- (A1);
		\draw[very thick, red, ->-] (B1) -- (B2);
		\draw[very thick, red, -<-] (B2) -- (B3);
		\draw[very thick, red, ->-] (B3) -- (B4);
		\draw[very thick, red, -<-] (B4) -- (B5);
		\draw[very thick, red, ->-] (B5) -- (B6);
		\draw[very thick, red, ->-] (C3) -- (C4);
		\draw[very thick, red, -<-] (C4) -- (C5);
		\draw[very thick, red, ->-] (C5) -- (C6);
		\draw[very thick, red, -<-] (C6) -- (C1);
		\draw[very thick, red, ->-] (A1) -- (15,10);
		\draw[very thick, red, -<-] (A6) -- (15,5);
		\draw[very thick, red, -<-] (C6) -- (15,-5);
		\draw[very thick, red, ->-] (C5) -- (15,-10);
		\draw[very thick, red, -<-] (A2) -- ($(A2)+(-10,3)$);
		\draw[very thick, red, -<-] (B2) -- (L4);
		\draw[very thick, red, ->-] (B3) -- (L3);
		\draw[very thick, red, -<-] (B4) -- (L2);
		\draw[very thick, red, -<-] (C4) -- (L1);
		\draw[very thick, red, ->-] ($(A2)+(-10,3)$) -- (L5);
		\draw[very thick, red, ->-] ($(A2)+(-10,3)$) -- (L6);
		\coordinate (Q) at ($(A2)+(0,7)$);
		\coordinate (P) at ($(C4)+(0,-5)$);
		\draw[->-={.9}{}, rounded corners] (P) -- ($(B)+(+1,-8)$) -- ($(B)+(+1,0)$) -- ($(A)+(-1,0)$) -- ($(A)+(-1,+5)$) -- (Q);
		\draw[->-={.9}{}, rounded corners] (P) -- ($(B)+(-1,-8)$) -- ($(B)+(-1,10)$) -- (Q);
		\draw[->-={.2}{}, rounded corners] (P) -- (C) -- ($(A)+(+1,0)$) -- ($(A)+(+1,5)$) -- (Q);
		\fill (P) circle [radius=10pt];
		\fill (Q) circle [radius=10pt];
		\node at (P) [right]{$p$};
		\node at (Q) [right]{$q$};
		\node at ($(A)+(-2,1)$) {$\alpha$};
		\node at ($(B)+(-1,8)$) [left]{$\alpha_{L}$};
		\node at ($(C)$) [right]{$\beta$};
	\end{tikzpicture}
	\caption{The curves $\alpha$, $\alpha_L$, and $\beta$ are homotopic cut-paths of $G$ such that $\mathrm{wt}_{\alpha}(G)=(2,2)$, $\mathrm{wt}_{\alpha_{L}}(G)=(2,2)$, and $\mathrm{wt}_{\beta}(G)=(0,3)$. The cut-path $\alpha$ is non-convex to the left side but convex to the right side, and $\alpha_L$ is the left core of $\alpha$. The cut-paths $\alpha$ and $\alpha_{L}$ are related by an \emph{$H$-move}. The cut-path $\beta$ is minimal.}
	\label{fig:cutpath}
\end{figure}
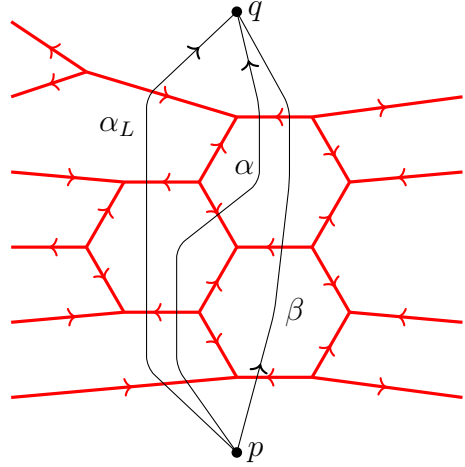

\begin{prop}
	The skein algebra $\SK{\Sigma}$ is a free $\bZ_{A}$-module generated by $\Bweb{\Sigma}$.  
\end{prop}
\begin{proof}
	Let us consider a neighborhood of a special point $p\in\bM$.
	For a given tangled trivalent graph $G$ with simultaneous crossings, we can expand univalent vertices at $p\in\bM$ as follows:  
	\begin{align*}
		\mathord{
			\ \tikz[baseline=-.6ex, scale=.1, yshift=-4cm]{
				\coordinate (P) at (0,0);
				\draw[very thick, red] (P) -- (160:10);
				\draw[very thick, red] (P) -- (140:10);
				\draw[very thick, red] (P) -- (40:10);
				\draw[very thick, red] (P) -- (20:10);
				\draw[dashed] (10,0) arc (0:180:10cm);
				\bline{-10,0}{10,0}{2}
				\draw[fill=black] (P) circle [radius=20pt];
				\node at (90:5) [red]{$\cdots$};
			\ }
		}
		\mathord{
			\ \tikz[baseline=-.6ex, scale=.05]{
			\draw[->, decorate, decoration={zigzag,segment length=1.5mm, post length=1.5mm}] (-5,0) -- (5,0);
			}
		}
		\mathord{
			\ \tikz[baseline=-.6ex, scale=.1, yshift=-4cm]{
				\coordinate (P) at (0,0) {};
				\draw[very thick, red] ($(P)+(-5,0)$) -- (160:10);
				\draw[very thick, red] ($(P)+(-3,0)$) -- (140:10);
				\draw[very thick, red] ($(P)+(3,0)$) -- (40:10);
				\draw[very thick, red] ($(P)+(5,0)$) -- (20:10);
				\draw[dashed] (10,0) arc (0:180:10cm);
				\bline{-10,0}{10,0}{2}
				\draw[line width=3pt] (-7,0) -- (7,0);
				\node at (90:5) [red]{$\cdots$};
			\ }
		}.
	\end{align*}
	The special point $p$ is replaced by an interval $I_p$ containing all expanded univalent vertices at $p$.
	For any tangled trivalent graph, one can obtain a tangled trivalent graph with simultaneous crossings by \eqref{rel:simultaneous} and expand it by applying the above deformation.
	The boundary skein relations \eqref{rel:pbigon}, \eqref{rel:degbigon}, and \eqref{rel:pcircle} are described as
	\begin{align}
		\mathord{
			\ \tikz[baseline=-.6ex, scale=.1, yshift=-4cm]{
				\draw[very thick, red, ->-] (3,0) -- (3,5);
				\draw[very thick, red, -<-] (-3,0) -- (-3,5);
				\draw[very thick, red, -<-] (3,5) -- (3,10);
				\draw[very thick, red, ->-] (-3,5) -- (-3,10);
				\draw[very thick, red, ->-] (-3,5) -- (3,5);
				\bline{-10,0}{10,0}{2}
				\draw[line width=3pt] (-7,0) -- (7,0);
			\ }
		}
		&=
		\mathord{
			\ \tikz[baseline=-.6ex, scale=.1, yshift=-4cm]{
				\draw[very thick, red, -<-] (3,0) -- (3,10);
				\draw[very thick, red, ->-] (-3,0) -- (-3,10);
				\bline{-10,0}{10,0}{2}
				\draw[line width=3pt] (-7,0) -- (7,0);
			\ }
		},&
		\mathord{
			\ \tikz[baseline=-.6ex, scale=.1, yshift=-4cm]{
				\draw[very thick, red, -<-] (3,0) -- (3,5);
				\draw[very thick, red, ->-] (-3,0) -- (-3,5);
				\draw[very thick, red, ->-] (3,5) -- (3,10);
				\draw[very thick, red, -<-] (-3,5) -- (-3,10);
				\draw[very thick, red, -<-] (-3,5) -- (3,5);
				\bline{-10,0}{10,0}{2}
				\draw[line width=3pt] (-7,0) -- (7,0);
			\ }
		}
		&=
		\mathord{
			\ \tikz[baseline=-.6ex, scale=.1, yshift=-4cm]{
				\draw[very thick, red, ->-] (3,0) -- (3,10);
				\draw[very thick, red, -<-] (-3,0) -- (-3,10);
				\bline{-10,0}{10,0}{2}
				\draw[line width=3pt] (-7,0) -- (7,0);
			\ }
		},\label{rel:H_equivalence}\\
		\mathord{
			\ \tikz[baseline=-.6ex, scale=.1, yshift=-4cm]{
				\draw[very thick, red, ->-] (3,0) -- (0,5);
				\draw[very thick, red, ->-] (-3,0) -- (0,5);
				\draw[very thick, red, -<-] (0,5) -- (0,10);
				\bline{-10,0}{10,0}{2}
				\draw[line width=3pt] (-7,0) -- (7,0);
			\ }
		}
		&=0,&
		\mathord{
			\ \tikz[baseline=-.6ex, scale=.1, yshift=-4cm]{
				\draw[very thick, red, -<-] (3,0) -- (0,5);
				\draw[very thick, red, -<-] (-3,0) -- (0,5);
				\draw[very thick, red, ->-] (0,5) -- (0,10);
				\bline{-10,0}{10,0}{2}
				\draw[line width=3pt] (-7,0) -- (7,0);
			\ }
		}
		&=0,\label{rel:Y_elimination}\\
		\mathord{
			\ \tikz[baseline=-.6ex, scale=.1, yshift=-4cm]{
				\draw[very thick, red, ->-] (3,0) to[out=north, in=east] (0,5) to[out=west,in=north] (-3,0);
				\bline{-10,0}{10,0}{2}
				\draw[line width=3pt] (-7,0) -- (7,0);
			\ }
		}
		&=0,&
		\mathord{
			\ \tikz[baseline=-.6ex, scale=.1, yshift=-4cm]{
				\draw[very thick, red, -<-] (3,0) to[out=north, in=east] (0,5) to[out=west,in=north] (-3,0);
				\bline{-10,0}{10,0}{2}
				\draw[line width=3pt] (-7,0) -- (7,0);
			\ }
		}
		&=0.\label{rel:U_elimination}
	\end{align}
	Let us give an orientation, induced from the orientation of $\Sigma$, to the union of the intervals $\{I_p\mid p\in\bM\}$.
	For a map $f\colon\bM\to\bN\times\bN$, we define $B_{\mathrm{irr}}(f)$ to be the set of boundary-fixing isotopy classes of embeddings of trivalent graphs satisfying the following conditions:
	\begin{itemize}
		\item $G\in B_{\mathrm{irr}}(f)$ has distinct univalent vertices lying in $\cup_{p\in\bM}I_p$ and $\gr(G)=f$,
		\item For each interval $I_p$, outgoing univalent vertices follow incoming univalent vertices with respect to the orientation on $I_p$,
		\item $G$ is non-elliptic, and $I_p$ is a cut-path of $G$ which is non-convex to the left side for all $p\in\bM$. 
	\end{itemize}
	One can use the confluence theory of Sikora--Westbury~\cite{SikoraWestbury07}.
	It shows that $\cup_{f}B_{\mathrm{irr}}(f)$ gives a basis of $\SK{\Sigma}$ as a $\bZ_{A}$-module.
	To apply \cite[Theorem~2.3]{SikoraWestbury07}, we consider ``reductions'' from the left-hand side of the first equation in \eqref{rel:H_equivalence}, \eqref{rel:Y_elimination}, \eqref{rel:U_elimination} to their right-hand sides, and a reduction of pairs of parallel arcs or loops that are oriented inconsistently with bounding rectangles or annuli (see ``British highways'' in \cite[Corollary~5]{FrohmanSikora20}).
	Adding these reductions to the reductions of $\mathfrak{sl}_3$-webs in \cite[Chapter~5]{SikoraWestbury07}, we conclude that $B_{\mathrm{irr}}(f)$ gives a basis of $\SK{\Sigma}$ by shrinking $I_p$ to $p$ for all $p\in\bM$.
\end{proof}

\begin{rem}[Relation to the Frohman--Sikora's skein algebra]\label{rem:relation_FS}
	From the above relations \eqref{rel:H_equivalence}, \eqref{rel:Y_elimination}, and \eqref{rel:U_elimination}, one can see that our skein algebra is identified with the one in Frohman--Sikora~\cite{FrohmanSikora20} with $A=q^{-1/3}$ and $a=1$. 
	The variable $a$ is a coefficient related to \eqref{rel:H_equivalence}. 
	We also remark that there is a difference in the basis webs. Our basis webs are a modification of Frohman--Sikora's basis webs by $A^{n/2}$. 
\end{rem}
 The above remark and the result of Frohman--Sikora say the following:
\begin{thm}[Frohman--Sikora~{\cite[Theorem~7]{FrohmanSikora20}}]
	The skein algebra $\SK{\Sigma}$ is finitely generated.
\end{thm}

\subsubsection{Basic structures on the skein algebra $\SK{\Sigma}$}\label{subsub:basic_structures}
The skein algebra $\SK{\Sigma}$ has the following basic structures, which will be compared with the corresponding structures on the quantum cluster algebra in \cref{sect:correspondance}.

\paragraph{\textbf{The mirror-reflection}}
The \emph{mirror-reflection} $G^{\dagger}$ of a tangled trivalent graph $G$ is defined by reversing the ordering of the univalent vertices on each special point and exchanging the over-/under-passing information at each internal crossing.
The mirror-reflection is extended to an anti-involution  $\dagger:\SK{\Sigma} \to \SK{\Sigma}$ by $\bZ$-linearly and by setting $(A^{\pm 1/2})^{\dagger}:=A^{\mp 1/2}$. 

\paragraph{\textbf{Some group actions}}
The group $MC(\Sigma) \times \mathrm{Out}(SL_3)$ acts on $\SK{\Sigma}$ from the right as follows.
\begin{itemize}
    \item Each mapping class $\phi$ acts on $\SK{\Sigma}$ by sending each web $[G]$ to $[\phi^{-1}(G)]$. It is well-defined since $\phi$ preserves the set $\bM$ and respects the defining relations.
    \item The Dynkin involution $\ast \in \mathrm{Out}(SL_3)$ acts on $\SK{\Sigma}$ as an $\bZ_{A}$-algebra involution by reversing the orientation of each edge of a basis web.
\end{itemize}

\paragraph{\textbf{The endpoint grading}}
The skein algebra has the following grading. 

\begin{dfn}[the endpoint grading]\label{def:egrading}
The \emph{endpoint degree} 
	\begin{align*}
	    \mathrm{gr}=(\mathrm{gr}_p)_{p \in \bM} \colon\Bweb{\Sigma}\to(\bN\times\bN)^{\bM}
	\end{align*}
is defined as follows.
For a basis web $[G]\in\Bweb{\Sigma}$ and $p\in\bM$, the first (resp. second) entry of $\mathrm{gr}_{p}(G) \in \bN \times \bN$ is the number of incoming (resp. outgoing) edges of $G$ incident to $p$. The resulting grading on the skein algebra is called the \emph{endpoint grading}. 
\end{dfn}
We define a non-negative bi-grading by the sum
\[
	\vec{\gr}(G)=\sum_{p\in\bM}\gr_{p}(G)
\]
and an augmentation map $\varepsilon_{\gr}(G)=k-l$
for $\vec{\gr}(G)=(k,l)$. 
Note that the skein relations (\cref{def:skeinrel}) and the boundary skein relations (\cref{def:bskeinrel}) are homogeneous with respect to $\mathrm{gr}$. 
Hence the skein algebra $\SK{\Sigma}=\bigoplus_{(k,l)\in\bN\times\bN}\left(\SK{\Sigma}\right)_{(k,l)}$ is a bi-graded algebra with respect to $\vec{\gr}$. 
The augmentation map $\varepsilon_{\gr}$ defines a $\bZ$-valued grading. 
\begin{rem}
	\begin{enumerate}
		\item The endpoint degree at $p\in\bM$ is the weight of a minimal cut-path surrounding $p$.
		\item We have $\varepsilon_{\gr}(G)=3(t_{-}(G)-t_{+}(G))$ for $G\in\Bweb{\Sigma}$.
	\end{enumerate}
\end{rem}
For $q \in \bN$, consider the lattice
\[
		L(q):=\ker \left((\bZ \times \bZ)^\bM \xrightarrow{\mathrm{aug}} \bZ \xrightarrow{\mathrm{mod}_q}\bZ/q\bZ\right).
\]
Here $\mathrm{aug}((k_p,l_p)):=\sum_{p \in \bM}(k_p-l_p)$. 
Then by (2) in the remark above, we have $\mathrm{gr}(G) \in L(3)$ for any $G \in \Bweb{\Sigma}$.

\subsubsection{Elementary webs and web clusters}

Let $\Bweb{\partial^{\times}\Sigma} \subset \Bweb{\Sigma}$ denote the set of \emph{boundary webs} on $\Sigma$, that is, $\mathfrak{sl}_{3}$-webs consisting of oriented boundary intervals of $\Sigma$. The following notions are expected to be skein theoretic incarnations of some concepts in the quantum cluster algebra.

\begin{dfn}[elementary webs]\label{elementary}
	A basis web $G\in\Bweb{\Sigma}$ is called an \emph{elementary web} if there are no basis webs $G_{1}$ and $G_{2}$ such that $G=A^{k}G_{1}G_{2}$ for some $k \in \bZ$.
	We denote the set of elementary webs by $\Eweb{\Sigma} \subset \Bweb{\Sigma}$.
\end{dfn}

\begin{dfn}[web clusters]\label{webcluster}
	A subset $C\subset\Eweb{\Sigma}$ is called a \emph{web cluster} if $C$ is a maximal $A$-commutative subset in $\Eweb{\Sigma}$ with cardinality $\# I_{\mathfrak{sl}_{3}}(\Delta)$.
	We denote the collection of web clusters by $\Cweb{\Sigma}$.
\end{dfn}

\begin{dfn}\label{def:web-exchange}
	For two elementary webs $G_{1},G_{2}\in\Eweb{\Sigma}$ contained in a common web cluster, define $\Pi(G_1,G_2)\in\bZ$ by
	\[
		G_{1}G_{2}=A^{\Pi(G_{1},G_{2})}G_{2}G_{1}.
	\]
\end{dfn}
\begin{rem}[Relation to the Fomin--Pylyavskyy's algebras of $SL_3$-invariants]\label{rem:FP}
Let $\Sigma=\mathbb{D}_{a+b}$ be a $(a+b)$-gon, which is a disk with $a+b$ special points $p_1,\dots,p_{a+b}$ in this clockwise ordering. Choose a \emph{signature} $\sigma_i \in \{\bullet,\circ\}$ in two alphabets $\bullet,\circ$ for $i=1,\dots,a+b$, and consider the subalgebra $\SK{\mathbb{D}_{a+b}}(\sigma) \subset \SK{\mathbb{D}_{a+b}}$ consisting of webs $G$ such that 
\begin{align*}
    \mathrm{gr}_{p_i}(G) \in \begin{cases}
    \bN \times \{0\} & \mbox{if $\sigma_i=\circ$}, \\
    \{0\} \times \bN & \mbox{if $\sigma_i=\bullet$}.
    \end{cases}
\end{align*}
Then by the specialization $A=1$, $\mathscr{S}_{\mathfrak{sl}_3,\mathbb{D}_{a+b}}^1(\sigma)$ reproduces the algebra $R_\sigma(V)$ in \cite{FP16}. For any unpunctured marked surface $\Sigma$, they also introduced a variant where each special point carries both the black and white signature in \cite[Section 12]{FP14}. This exactly corresponds to the entire algebra $\mathscr{S}_{\mathfrak{sl}_3,\Sigma}^1$. 
\end{rem}

\subsection{The \texorpdfstring{$\mathfrak{sl}_3$}{sl(3)}-skein algebra for a triangle}\label{subsec:triangle_skein}
Let us consider a triangle $T$ with special points $p_{1},p_{2},p_{3}$ and the unique triangulation $\Delta_{T}$.
The boundary web $e_{ij}$ is defined to be the simple oriented arc from $p_{i}$ to $p_{j}$.
In this case, we have 
\begin{align*}
    \Bweb{\partial^{\times}T}=\{e_{12},e_{21},e_{23},e_{32},e_{13},e_{31}\}.
\end{align*}
The $\mathfrak{sl}_3$-web $t_{{123}}^{+}$ (resp. $t_{{123}}^{-}$) is defined to be the flat trivalent graph with a trivalent sink (resp. source) vertex and univalent vertices $p_{1},p_{2},p_{3}$.
Note that $\ast(t_{{123}}^{+})=t_{{123}}^{-}$. See \cref{fig:triEweb}.

\begin{figure}
\centering
		\begin{tikzpicture}[scale=.1]
			\node [draw, fill=black, circle, inner sep=1.5] (C) at (90:12) {};
			\node [draw, fill=black, circle, inner sep=1.5] (A) at (210:12) {} edge[blue] (C);
			\node [draw, fill=black, circle, inner sep=1.5] (B) at (-30:12) {} edge[blue] (C) edge[blue] (A);
			\draw[->-, very thick, red] (A) -- (B);
			\path (A) -- (B) node [pos=1/2, below=3pt] {$e_{12}$};
			\node at (A) [left] {\scriptsize $p_{1}$};
			\node at (B) [right] {\scriptsize $p_{2}$};
			\node at (C) [above] {\scriptsize $p_{3}$};
		\end{tikzpicture}
		\hspace{1em}
		\begin{tikzpicture}[scale=.1]
			\node [draw, fill=black, circle, inner sep=1.5] (C) at (90:12) {};
			\node [draw, fill=black, circle, inner sep=1.5] (A) at (210:12) {} edge[blue] (C);
			\node [draw, fill=black, circle, inner sep=1.5] (B) at (-30:12) {} edge[blue] (C) edge[blue] (A);
			\draw[->-, very thick, red] (B) -- (A);
			\path (A) -- (B) node [pos=1/2, below=3pt] {$e_{21}$};	
			\node at (A) [left] {\scriptsize $p_{1}$};
			\node at (B) [right] {\scriptsize $p_{2}$};
			\node at (C) [above] {\scriptsize $p_{3}$};
		\end{tikzpicture}
		\hspace{1em}
		\begin{tikzpicture}[scale=.1]
			\node [draw, fill=black, circle, inner sep=1.5] (C) at (90:12) {};
			\node [draw, fill=black, circle, inner sep=1.5] (A) at (210:12) {} edge[blue] (C);
			\node [draw, fill=black, circle, inner sep=1.5] (B) at (-30:12) {} edge[blue] (C) edge[blue] (A);
			\draw[->-, very thick, red] (B) -- (C);
			\path (A) -- (B) node [pos=1/2, below=3pt] {$e_{23}$};
			\node at (A) [left] {\scriptsize $p_{1}$};
			\node at (B) [right] {\scriptsize $p_{2}$};
			\node at (C) [above] {\scriptsize $p_{3}$};
		\end{tikzpicture}
		\hspace{1em}
		\begin{tikzpicture}[scale=.1]
			\node [draw, fill=black, circle, inner sep=1.5] (C) at (90:12) {};
			\node [draw, fill=black, circle, inner sep=1.5] (A) at (210:12) {} edge[blue] (C);
			\node [draw, fill=black, circle, inner sep=1.5] (B) at (-30:12) {} edge[blue] (C) edge[blue] (A);
			\draw[->-, very thick, red] (C) -- (B);
			\path (A) -- (B) node [pos=1/2, below=3pt] {$e_{32}$};
			\node at (A) [left] {\scriptsize $p_{1}$};
			\node at (B) [right] {\scriptsize $p_{2}$};
			\node at (C) [above] {\scriptsize $p_{3}$};
		\end{tikzpicture}
		\hspace{1em}
		\begin{tikzpicture}[scale=.1]
			\node [draw, fill=black, circle, inner sep=1.5] (C) at (90:12) {};
			\node [draw, fill=black, circle, inner sep=1.5] (A) at (210:12) {} edge[blue] (C);
			\node [draw, fill=black, circle, inner sep=1.5] (B) at (-30:12) {} edge[blue] (C) edge[blue] (A);
			\draw[->-, very thick, red] (C) -- (A);
			\path (A) -- (B) node [pos=1/2, below=3pt] {$e_{31}$};
			\node at (A) [left] {\scriptsize $p_{1}$};
			\node at (B) [right] {\scriptsize $p_{2}$};
			\node at (C) [above] {\scriptsize $p_{3}$};
		\end{tikzpicture}
		\hspace{1em}
		\begin{tikzpicture}[scale=.1]
			\node [draw, fill=black, circle, inner sep=1.5] (C) at (90:12) {};
			\node [draw, fill=black, circle, inner sep=1.5] (A) at (210:12) {} edge[blue] (C);
			\node [draw, fill=black, circle, inner sep=1.5] (B) at (-30:12) {} edge[blue] (C) edge[blue] (A);
			\draw[->-, very thick, red] (A) -- (C);
			\path (A) -- (B) node [pos=1/2, below=3pt] {$e_{13}$};
			\node at (A) [left] {\scriptsize $p_{1}$};
			\node at (B) [right] {\scriptsize $p_{2}$};
			\node at (C) [above] {\scriptsize $p_{3}$};
		\end{tikzpicture}
		\hspace{1em}
		\begin{tikzpicture}[scale=.1]
			\node [draw, fill=black, circle, inner sep=1.5] (A) at (90:12) {};
			\node [draw, fill=black, circle, inner sep=1.5] (B) at (210:12) {} edge[blue] (A);
			\node [draw, fill=black, circle, inner sep=1.5] (C) at (-30:12) {} edge[blue] (A) edge[blue] (B);
			\coordinate (D) at (0,0);
			\draw[->-, very thick, red] (A) -- (D);
			\draw[->-, very thick, red] (B) -- (D);
			\draw[->-, very thick, red] (C) -- (D);
			\path (B) -- (C) node [pos=1/2, below]{$t_{{123}}^{+}$};	
			\node at (B) [left] {\scriptsize $p_{1}$};
			\node at (C) [right] {\scriptsize $p_{2}$};
			\node at (A) [above] {\scriptsize $p_{3}$};
		\end{tikzpicture}
		\hspace{1em}
		\begin{tikzpicture}[scale=.1]
			\node [draw, fill=black, circle, inner sep=1.5] (A) at (90:12) {};
			\node [draw, fill=black, circle, inner sep=1.5] (B) at (210:12) {} edge[blue] (A);
			\node [draw, fill=black, circle, inner sep=1.5] (C) at (-30:12) {} edge[blue] (A) edge[blue] (B);
			\coordinate (D) at (0,0);
			\draw[-<-, very thick, red,] (A) -- (D);
			\draw[-<-, very thick, red,] (B) -- (D);
			\draw[-<-, very thick, red,] (C) -- (D);
			\path (B) -- (C) node [pos=1/2, below]{$t_{{123}}^{-}$};	
			\node at (B) [left] {\scriptsize $p_{1}$};
			\node at (C) [right] {\scriptsize $p_{2}$};
			\node at (A) [above] {\scriptsize $p_{3}$};
		\end{tikzpicture}
\caption{Elementary webs in the triangle}\label{fig:triEweb}	
\end{figure}
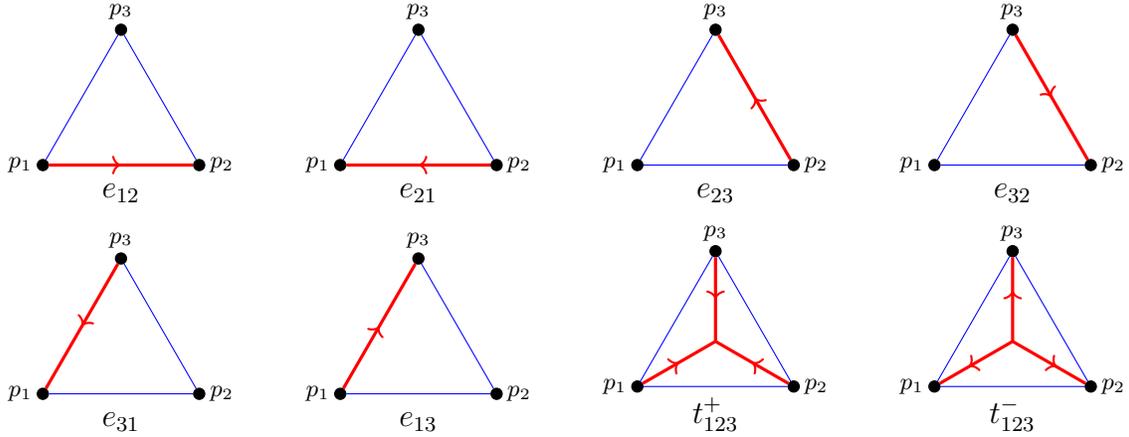

\begin{lem}\label{lem:str const T}
	The complete list of relations among $\Bweb{\partial^{\times}T}\cup\{t_{{123}}^{+},t_{{123}}^{-}\}$ in $\SK{T}$ is given as follows.
	\begin{gather*}
		e_{21}e_{12}=e_{12}e_{21},\quad e_{32}e_{23}=e_{23}e_{32},\quad e_{13}e_{31}=e_{31}e_{13},\\
		\begin{aligned}
			e_{21}e_{32}&=A^{-1/2}[e_{32}e_{21}],& e_{12}e_{32}&=A^{-1}[e_{32}e_{12}],\\
			e_{21}e_{23}&=A^{-1}[e_{23}e_{21}],& e_{12}e_{23}&=A^{-1/2}[e_{23}e_{12}],\\
			e_{21}e_{13}&=A^{1/2}[e_{13}e_{21}],& e_{12}e_{13}&=A[e_{13}e_{12}],\\
			e_{21}e_{31}&=A[e_{31}e_{21}],& e_{12}e_{13}&=A^{1/2}[e_{13}e_{12}],\\
			e_{21}t_{{123}}^{+}&=A^{-1/2}[t_{{123}}^{+}e_{21}],& e_{12}t_{{123}}^{+}&=A^{1/2}[t_{{123}}^{+}e_{12}],\\
			e_{21}t_{{123}}^{-}&=A^{1/2}[t_{{123}}^{-}e_{21}],& e_{12}t_{{123}}^{-}&=A^{-1/2}[t_{{123}}^{-}e_{12}],
		\end{aligned}\\
		t_{{123}}^{+}t_{{123}}^{-}=A^{3/2}[e_{21}e_{13}e_{32}]+A^{-3/2}[e_{12}e_{23}e_{31}],\\
		t_{{123}}^{-}t_{{123}}^{+}=A^{-3/2}[e_{21}e_{13}e_{32}]+A^{3/2}[e_{12}e_{23}e_{31}].
	\end{gather*}
\end{lem}
\begin{proof}
	Compute straightforwardly by the skein relation and the boundary skein relation.
	We remark that we do not need to confirm all relations, thanks to the symmetries given by the Dynkin involution, the mirror-reflection, and rotations of the triangle.
\end{proof}

\begin{prop}\label{prop:generators T}
	The skein algebra $\SK{T}$ is generated by $\Bweb{\partial^{\times}T}\cup\{t_{{123}}^{+},t_{{123}}^{-}\}$ as a $\bZ_{A}$-algebra.
\end{prop}
\begin{proof}
	For a flat trivalent graph $[G]$ in $T$, relations \eqref{rel:H_equivalence}--\eqref{rel:U_elimination} can be used to eliminate $4$-, $3$-, and $2$-gons in $\Sigma\setminus [G]$.
	Obviously, these eliminations do not change the representing basis web.
	A diagram of the resulting non-elliptic flat trivalent graph, described in \cref{fig:Bweb T}, was explicitly given by Kim~\cite{Kim07} and Frohman--Sikora~\cite{FrohmanSikora20}.
	Here a strand labeled by a positive integer $m$ means the $m$-parallelization of the strand;
	the white triangle with three strands labeled by $l$ is a \emph{triangle web} defined by 
	\begin{align*}
		\mathord{
			\ \tikz[baseline=-.6ex, scale=.1]{
				\coordinate (A) at (30:10);
				\coordinate (B) at (150:10);
				\coordinate (C) at (270:10);
				\coordinate (D) at (0,0) {};
				\draw[red, very thick] (A) -- (D);
				\draw[red, very thick] (B) -- (D);
				\draw[red, very thick] (C) -- (D);
				\draw[red, very thick, fill=white] (90:3) -- (210:3) -- (330:3) -- cycle;
				\node at (30:8) [above] {\scriptsize $l$};	
				\node at (150:8) [above] {\scriptsize $l$};	
				\node at (270:8) [left] {\scriptsize $l$};	
			}
		\ }
		:=
		\mathord{
			\ \tikz[baseline=-.6ex, scale=.1]{
				\coordinate (A) at (-15,5);
				\coordinate (B) at (15,5);
				\foreach \i in {0,1,2,6}{
					\draw[red, very thick] ($(A)+(0,-2*\i)$) -- ($(B)+(0,-2*\i)$);
				}
				\draw[red, very thick] ($(A)!0.5!(B)$) -- ($(A)!0.5!(B)+(0,-2)$);
				\foreach \i in {0,1}{
					\draw[red, very thick] ($(A)!0.5!(B)+(-2+4*\i,-2)$) -- ($(A)!0.5!(B)+(-2+4*\i,-4)$);
				}
				\foreach \i in {0,1,2}{
					\draw[red, very thick] ($(A)!0.5!(B)+(-4+4*\i,-4)$) -- ($(A)!0.5!(B)+(-4+4*\i,-6)$);
				}
				\foreach \i in {0,1,4,5}{
					\draw[red, very thick] ($(A)!0.5!(B)+(-10+4*\i,-10)$) -- ($(A)!0.5!(B)+(-10+4*\i,-12)$);
				}
				\foreach \i in {0,1,2,4,5,6}{
					\draw[red, very thick] ($(A)!0.5!(B)+(-12+4*\i,-12)$) -- ($(A)!0.5!(B)+(-12+4*\i,-16)$);
				}
				\node at ($(A)!0.5!(B)+(-14,-8)$) [rotate=90]{\scriptsize $\cdots$};
				\node at ($(A)!0.5!(B)+(14,-8)$) [rotate=-90]{\scriptsize $\cdots$};
				\node at ($(A)!0.5!(B)+(-6,-8)$) [rotate=45]{\scriptsize $\cdots$};
				\node at ($(A)!0.5!(B)+(6,-8)$) [rotate=-45]{\scriptsize $\cdots$};
				\node at ($(A)!0.5!(B)+(0,-10)$) {\scriptsize $\cdots$};
				\node at ($(A)!0.5!(B)+(0,-14)$) {\scriptsize $\cdots$};
				\node[yscale=3] at (17,-1) {$\left.\right\}$};
				\node at (20,-1) {$l$};
			}
		\ }
	\end{align*}
	It can be seen that the triangle web in the left (resp. right) in \cref{fig:Bweb T} is equal to $A^{r}(t_{{123}}^{+})^{l}$ (resp. $A^{r}(t_{{123}}^{-})^{l}$) for some $r\in\bZ$.
	Thus $\SK{T}$ is generated by $\{e_{21}, e_{12},\ldots,e_{31}, t_{{123}}^{+},t_{{123}}^{-}\}$.
\end{proof}
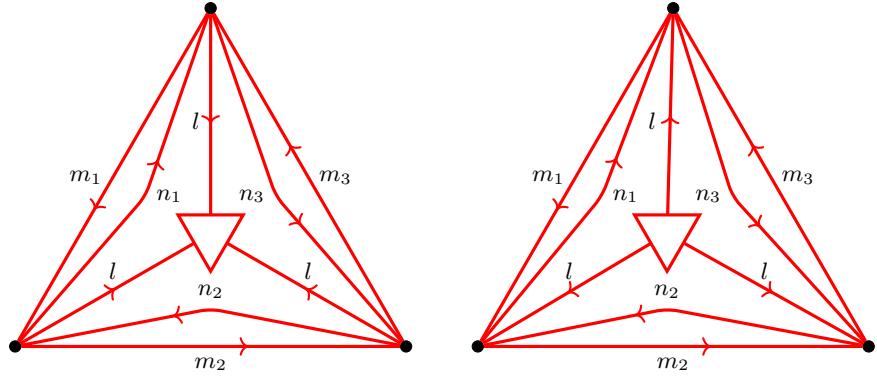
\begin{figure}
	\centering
	\begin{tikzpicture}
		\node (scope1){
			\begin{tikzpicture}[scale=.1]
				\node [draw, fill=black, circle, inner sep=1.5] (A) at (90:30) {};
				\node [draw, fill=black, circle, inner sep=1.5] (B) at (210:30) {} edge (A);
				\node [draw, fill=black, circle, inner sep=1.5] (C) at (-30:30) {} edge (A) edge (B);
				\coordinate (D) at (0,0) {};
				\draw[red, ->-={.6}{}, very thick] (A) -- (B);
				\draw[red, ->-={.6}{}, very thick] (B) -- (C);
				\draw[red, ->-={.6}{}, very thick] (C) -- (A);
				\draw[red, ->-={.6}{}, very thick, rounded corners] (B) -- (150:10) -- (A);
				\draw[red, ->-={.6}{}, very thick, rounded corners] (C) -- (270:10) -- (B);
				\draw[red, ->-={.6}{}, very thick, rounded corners] (A) -- (30:10) -- (C);
				\draw[red, ->-, very thick] (A) -- (D);
				\draw[red, ->-, very thick] (B) -- (D);
				\draw[red, ->-, very thick] (C) -- (D);
				\draw[red, very thick, fill=white] (30:5) -- (150:5) -- (270:5) -- cycle;
				\node at ($(A)!0.5!(B)$) [left] {\scriptsize $m_{1}$};	
				\node at ($(B)!0.5!(C)$) [below] {\scriptsize $m_{2}$};	
				\node at ($(C)!0.5!(A)$) [right] {\scriptsize $m_{3}$};	
				\node at (150:10) [right] {\scriptsize $n_{1}$};	
				\node at (270:10) [above] {\scriptsize $n_{2}$};	
				\node at (30:10) [left] {\scriptsize $n_{3}$};	
				\node at (90:15) [left] {\scriptsize $l$};	
				\node at (210:15) [above] {\scriptsize $l$};	
				\node at (330:15) [above] {\scriptsize $l$};	
			\end{tikzpicture}
		};
		\node[at={($(scope1.east)+(.5cm,0)$)}, anchor=west]{
			\begin{tikzpicture}[scale=.1]
				\node [draw, fill=black, circle, inner sep=1.5] (A) at (90:30) {};
				\node [draw, fill=black, circle, inner sep=1.5] (B) at (210:30) {} edge (A);
				\node [draw, fill=black, circle, inner sep=1.5] (C) at (-30:30) {} edge (A) edge (B);
				\coordinate (D) at (0,0) {};
				\draw[red, ->-={.6}{}, very thick] (A) -- (B);
				\draw[red, ->-={.6}{}, very thick] (B) -- (C);
				\draw[red, ->-={.6}{}, very thick] (C) -- (A);
				\draw[red, ->-={.6}{}, very thick, rounded corners] (B) -- (150:10) -- (A);
				\draw[red, ->-={.6}{}, very thick, rounded corners] (C) -- (270:10) -- (B);
				\draw[red, ->-={.6}{}, very thick, rounded corners] (A) -- (30:10) -- (C);
				\draw[red, -<-, very thick] (A) -- (D);
				\draw[red, -<-, very thick] (B) -- (D);
				\draw[red, -<-, very thick] (C) -- (D);
				\draw[red, very thick, fill=white] (30:5) -- (150:5) -- (270:5) -- cycle;
				\node at ($(A)!0.5!(B)$) [left] {\scriptsize $m_{1}$};	
				\node at ($(B)!0.5!(C)$) [below] {\scriptsize $m_{2}$};	
				\node at ($(C)!0.5!(A)$) [right] {\scriptsize $m_{3}$};	
				\node at (150:10) [right] {\scriptsize $n_{1}$};	
				\node at (270:10) [above] {\scriptsize $n_{2}$};	
				\node at (30:10) [left] {\scriptsize $n_{3}$};	
				\node at (90:15) [left] {\scriptsize $l$};	
				\node at (210:15) [above] {\scriptsize $l$};	
				\node at (330:15) [above] {\scriptsize $l$};	
			\end{tikzpicture}
		};
	\end{tikzpicture}
	\caption{$\mathfrak{sl}_3$ basis webs of $\scS_{\mathfrak{sl}_{3},T}^{A}$}
	\label{fig:Bweb T}	
\end{figure}

\begin{prop}\label{prop:Eweb T}
	Let $C_{(\Delta_T,{\epsilon})}:=\Bweb{\partial^{\times}T}\cup\{t_{123}^{\epsilon}\}$ for $\epsilon \in \{+,-\}$.
	Then we have $\Eweb{T}=\Bweb{\partial^{\times}T}\cup\{t_{{123}}^{+},t_{{123}}^{-}\}$ and $\Cweb{T}=\{C_{(\Delta_T,{+})},C_{(\Delta_T,{-})}\}$.
\end{prop}
\begin{proof}
	By the proof of \cref{prop:generators T}, it suffices to show that $t_{{123}}^{+}$ and $t_{{123}}^{-}$ can not be expressed as a product of basis webs.
	Otherwise, with a notice that the sum $\vec{\gr}$ of the endpoint degree is additive with respect to the multiplication in the skein algebra, $\vec{\gr}(t_{{123}}^{+})=(3,0)$ means that $t_{{123}}^{+}$ must be decomposed into a product of basis webs $G_1$ and $G_2$ with $\vec{\gr}(G_1)=(2,0)$ and $\vec{\gr}(G_2)=(1,0)$.
	However, there exist no such pair of basis webs in $\Eweb{T}$.
	Hence $\Eweb{T}=\{t_{{123}}^{+},t_{{123}}^{-}\}\cup\Bweb{\partial^{\times}T}$.
	It is easy to see that ($t_{{123}}^{+},t_{{123}}^{-})$ is the only pair which do not $A$-commute with each other from \cref{lem:str const T}.
\end{proof}

We will use the following notation.

\begin{dfn}\label{def:monomial}
	For a subset $S$ of $\Bweb{\Sigma}$, let $\langle S\rangle_{\mathrm{alg}}$ denote the subalgebra of $\SK{\Sigma}$ generated by $S$, and $\mathrm{mon}(S)$ the multiplicatively closed set generated by $S\cup\{A^{\pm 1/2}\}$ in $\SK{\Sigma}$.	
\end{dfn}

\begin{thm}[Laurent expression in web clusters for a triangle]\label{thm:Laurant of T}
	For any $x\in\SK{T}$ and $\epsilon\in\{{+},{-}\}$, there exists $(t_{123}^{\epsilon})^{k}\in\mathrm{mon}(C_{(\Delta_{T},\epsilon)})$ for some $k\in\bN$ such that $(t_{123}^{\epsilon})^{k}x\in\langle C_{(\Delta_{T},\epsilon)}\rangle_{\mathrm{alg}}$ and it has positive coefficients if $x\in\Bweb{T}$.
\end{thm}
\begin{proof}
	By \cref{prop:generators T,prop:Eweb T}, any web $x\in\SK{T}$ can be written as a polynomial on the generators $\Eweb{T}$.
	By \cref{lem:str const T}, $t_{{123}}^{+}t_{{123}}^{-}$ is expanded as a polynomial in $\Bweb{\partial^{\times}T}$ with positive coefficients.
	Moreover, $t_{{123}}^{+}$ $A$-commutes with the webs in $\Bweb{\partial^{\times}T}$.
	Hence by multiplying a sufficiently large power $(t_{{123}}^{+})^{k}$ to $x$, we can replace $t_{{123}}^{-}$'s in each monomial in $x$ with boundary webs, without changing the signs of the coefficients. The second assertion follows since each basis web is a monomial on $\Eweb{T}$.
\end{proof}

We remark that the above propositions say that $\SK{T}$ is generated by $\Eweb{T}$, and generated by ``Laurent polynomials'' in a cluster web $C_{(\Delta_T,\epsilon)}$ for $\epsilon\in\{{+}, {-}\}$.

\section{Expansions and positivity of \texorpdfstring{$\mathfrak{sl}_3$}{sl(3)}-webs}\label{sec:expansion}
Based on the expansion results in triangles and quadrilaterals obtained in the previous sections, we are going to give two expansion results in a general unpunctured marked surface.
One expands an $\mathfrak{sl}_3$-web to a Laurent polynomial in web clusters associated with a triangulation.
Moreover, we discuss the positivity of the coefficients of this expansion. 
Another expands an $\mathfrak{sl}_3$-web to elementary webs by localizing $\mathfrak{sl}_3$-webs along boundary intervals.
\subsection{Laurent expressions, positivity and localized skein algebras}
We firstly prepare a key lemma.
\begin{lem}[The cutting trick~\cite{FP14}]\label{lem:cuttingweb}
	\begin{align*}
		\mathord{
			\ \tikz[baseline=-.6ex, scale=.1]{
			\coordinate (A) at (-10,0);
			\coordinate (B) at (10,0);
			\coordinate (NW) at ($(A)+(0,7)$);
			\coordinate (SW) at ($(A)+(0,-7)$);
			\coordinate (NE) at ($(B)+(0,7)$);
			\coordinate (SE) at ($(B)+(0,-7)$);
			\coordinate (T1) at ($(A)!0.5!(B)+(0,7)$);
			\coordinate (T2) at ($(A)!0.5!(B)+(0,-7)$);
			\fill[lightgray] (NW) -- (SW) -- ($(SW)+(-2,0)$) -- ($(NW)+(-2,0)$) -- (NW) -- cycle;
			\fill[lightgray] (NE) -- (SE) -- ($(SE)+(2,0)$) -- ($(NE)+(2,0)$) -- (NE) -- cycle;
			\draw[very thick] (NW) -- (SW);
			\draw[very thick] (NE) -- (SE);
			\draw[red, very thick, ->-={.8}{}] (B) to[out=north west,in=east] (0,3) to[out=west, in=north east] (A);
			\draw[red, very thick, -<-={.8}{}] (B) to[out=south west,in=east] (0,-3) to[out=west, in=south east] (A);
			\draw[overarc, ->-={.6}{red}] (0,-10) -- (0,10);
			\draw[fill] (A) circle (20pt);
			\draw[fill] (B) circle (20pt);
			}
		\ }
		&=A^{3}\mathord{
			\ \tikz[baseline=-.6ex, scale=.1]{
				\coordinate (A) at (-10,0);
				\coordinate (B) at (10,0);
				\coordinate (NW) at ($(A)+(0,7)$);
				\coordinate (SW) at ($(A)+(0,-7)$);
				\coordinate (NE) at ($(B)+(0,7)$);
				\coordinate (SE) at ($(B)+(0,-7)$);
				\coordinate (T1) at ($(A)!0.5!(B)+(0,7)$);
				\coordinate (T2) at ($(A)!0.5!(B)+(0,-7)$);
				\fill[lightgray] (NW) -- (SW) -- ($(SW)+(-2,0)$) -- ($(NW)+(-2,0)$) -- (NW) -- cycle;
				\fill[lightgray] (NE) -- (SE) -- ($(SE)+(2,0)$) -- ($(NE)+(2,0)$) -- (NE) -- cycle;
				\draw[very thick] (NW) -- (SW);
				\draw[very thick] (NE) -- (SE);
				\draw[red, very thick, ->-] (A) -- (B);
				\draw[red, very thick, ->-, rounded corners] (B) -- (T1) -- ($(T1)+(0,3)$);
				\draw[red, very thick, -<-, rounded corners] (A) -- (T2) -- ($(T2)+(0,-3)$);
				\draw[fill] (A) circle (20pt);
				\draw[fill] (B) circle (20pt);
			}
		\ }
		+\mathord{
			\ \tikz[baseline=-.6ex, scale=.1]{
				\coordinate (A) at (-10,0);
				\coordinate (B) at (10,0);
				\coordinate (NW) at ($(A)+(0,7)$);
				\coordinate (SW) at ($(A)+(0,-7)$);
				\coordinate (NE) at ($(B)+(0,7)$);
				\coordinate (SE) at ($(B)+(0,-7)$);
				\coordinate (T1) at ($(A)!0.5!(B)+(0,3)$);
				\coordinate (T2) at ($(A)!0.5!(B)+(0,-3)$);
				\fill[lightgray] (NW) -- (SW) -- ($(SW)+(-2,0)$) -- ($(NW)+(-2,0)$) -- (NW) -- cycle;
				\fill[lightgray] (NE) -- (SE) -- ($(SE)+(2,0)$) -- ($(NE)+(2,0)$) -- (NE) -- cycle;
				\draw[very thick] (NW) -- (SW);
				\draw[very thick] (NE) -- (SE);
				\draw[very thick, red, -<-] (A) -- (T1);
				\draw[very thick, red, -<-] (B) -- (T1);
				\draw[very thick, red, ->-] (A) -- (T2);
				\draw[very thick, red, ->-] (B) -- (T2);
				\draw[very thick, red, ->-] (T1) -- ($(T1)+(0,7)$);
				\draw[very thick, red, -<-] (T2) -- ($(T2)+(0,-7)$);
				\draw[fill] (A) circle (20pt);
				\draw[fill] (B) circle (20pt);
			}
		\ }
		+A^{-3}\mathord{
			\ \tikz[baseline=-.6ex, scale=.1]{
				\coordinate (A) at (-10,0);
				\coordinate (B) at (10,0);
				\coordinate (NW) at ($(A)+(0,7)$);
				\coordinate (SW) at ($(A)+(0,-7)$);
				\coordinate (NE) at ($(B)+(0,7)$);
				\coordinate (SE) at ($(B)+(0,-7)$);
				\coordinate (T1) at ($(A)!0.5!(B)+(0,7)$);
				\coordinate (T2) at ($(A)!0.5!(B)+(0,-7)$);
				\fill[lightgray] (NW) -- (SW) -- ($(SW)+(-2,0)$) -- ($(NW)+(-2,0)$) -- (NW) -- cycle;
				\fill[lightgray] (NE) -- (SE) -- ($(SE)+(2,0)$) -- ($(NE)+(2,0)$) -- (NE) -- cycle;
				\draw[very thick] (NW) -- (SW);
				\draw[very thick] (NE) -- (SE);
				\draw[red, very thick, -<-] (A) -- (B);
				\draw[red, very thick, ->-, rounded corners] (A) -- (T1) -- ($(T1)+(0,3)$);
				\draw[red, very thick, -<-, rounded corners] (B) -- (T2) -- ($(T2)+(0,-3)$);
				\draw[fill] (A) circle (20pt);
				\draw[fill] (B) circle (20pt);
			}
		\ }
	\end{align*}
\end{lem}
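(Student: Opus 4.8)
The plan is to read the left-hand side as a single vertical strand (oriented upward) passing \emph{over} the coherently oriented bigon formed by the two arcs joining the marked points $A$ and $B$, and to resolve its two crossings with these arcs by the Kuperberg relations. The strand crosses the lower arc (oriented $A\to B$) near $(0,-3)$ and the upper arc (oriented $B\to A$) near $(0,3)$; comparing the local orientations at each crossing with the conventions of \eqref{rel:plus} and \eqref{rel:minus}, the lower crossing is of type \eqref{rel:minus} and the upper crossing is of type \eqref{rel:plus}. First I would apply \eqref{rel:minus} at the lower crossing and \eqref{rel:plus} at the upper one. This expands the left-hand side into $2\times 2=4$ terms, each carrying a scalar prefactor $A^{-2}$ or $A$ from the lower crossing and $A^{2}$ or $A^{-1}$ from the upper crossing, according to whether the oriented (planar) smoothing or the trivalent (sink--source) smoothing is chosen. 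Note that the two smoothings are the same local pictures in \eqref{rel:plus} and \eqref{rel:minus}; only the coefficients differ.

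Next I would identify the four resulting flat trivalent graphs and simplify them by the boundary relations. The (\emph{oriented}, \emph{oriented}) term has prefactor $A^{-2}\cdot A^{2}=1$; here the two arc-halves at $A$ reconnect through the middle of the former bigon into a single contractible arc based at $A$, i.e.\ a monogon bounding an empty disk, which is an elliptic face (\cref{fig:elliptic}) and vanishes by \eqref{rel:U_elimination}. Thus this term drops out. The (\emph{trivalent}, \emph{oriented}) term has prefactor $A\cdot A^{2}=A^{3}$ and contains a boundary $H$-configuration at $A$; reducing it by \eqref{rel:H_equivalence} turns it into two parallel strands, yielding exactly the first term $A^{3}(\cdots)$ on the right-hand side. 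The (\emph{oriented}, \emph{trivalent}) term, with prefactor $A^{-2}\cdot A^{-1}=A^{-3}$, is the up--down mirror of the previous one and produces the third term by the same reduction. Finally the (\emph{trivalent}, \emph{trivalent}) term has prefactor $A\cdot A^{-1}=1$ and exhibits boundary bigons at both $A$ and $B$; reducing these by \eqref{rel:pbigon}, each with coefficient $1$, collapses the four trivalent vertices to the single sink--source pattern, giving the middle term. Summing the three surviving contributions reproduces $A^{3}(\cdots)+(\cdots)+A^{-3}(\cdots)$.

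The main obstacle is the orientation and sign bookkeeping: one must verify carefully which of \eqref{rel:plus}/\eqref{rel:minus} governs each crossing and which of the two smoothings is the planar one versus the sink--source $H$, and then check that the boundary reductions \eqref{rel:H_equivalence}, \eqref{rel:pbigon}, \eqref{rel:U_elimination} each carry coefficient exactly $1$ and introduce no stray half-integer powers of $A$ through isotopies near the marked points (the twist relations \eqref{rel:simultwist}). This is what guarantees that the prefactors survive unchanged as $A^{3},1,A^{-3}$; in particular the bigon relation \eqref{rel:bigon}, whose coefficient $-(A^{3}+A^{-3})$ would spoil positivity, is never invoked, because every bigon that appears touches the boundary and is handled by \eqref{rel:pbigon} instead. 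To keep the casework short I would exploit the symmetries of the configuration: the mirror-reflection $\dagger$ (which sends $A^{1/2}\mapsto A^{-1/2}$ and reverses crossings) interchanges the first and third terms, so it suffices to treat one of them and then the two ``pure'' terms, with the $q$-commutativity and twist relations used only to put each output into the stated normal form.
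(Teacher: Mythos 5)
Your proposal is correct and is exactly the paper's (one-line) argument: resolve the two crossings by \eqref{rel:plus} and \eqref{rel:minus}, obtain four terms with prefactors $1, A^{3}, A^{-3}, 1$, kill the doubly-planar term by the returning-arc relation at $A$, and reduce the other three to the stated webs by the boundary relations. The only bookkeeping slips are cosmetic: the mixed terms and the doubly-trivalent term are each cleaned up by a single application of \eqref{rel:pbigon} at one marked point (not at both $A$ and $B$), and the relations you cite, \eqref{rel:H_equivalence} and \eqref{rel:U_elimination}, are just the interval-expanded restatements of \eqref{rel:pbigon} and \eqref{rel:pcircle} — none of which affects the coefficients $A^{3},1,A^{-3}$.
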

\begin{proof}
	Apply skein relations to two internal crossings on the left-hand side.
\end{proof}
The above diagram pictures a neighborhood of an arc between distinguished special points.
We can apply this formula if another $\mathfrak{sl}_3$-web lies in the upper or lower layer of depicted webs.
Such decomposition formula (at $A=1$) of a web by an ideal arc also appears in \cite[Fig.~11]{FP14} to expand $\mathfrak{sl}_3$-skein algebra at $A=1$ into a ring of invariants $R_{a,b,c}(V)$.

\subsubsection{Laurent expression}\label{subsec:expansion}
Let $\bD=(\Delta,\bs_{\Delta})$ be a decorated 
triangulation of $\Sigma$, namely, $\bs_{\Delta}\colon t(\Delta)\to\{\pm\}$ is a function from the set of triangles to the signs.
For each triangle $T\in t(\Delta)$, we can naturally regard the set of elementary webs $\Eweb{T}$ as a subset of $\Bweb{\Sigma}$. Similarly we regard each web cluster $C_{(\Delta_{T},\bs_{\Delta}(T))}\in\Cweb{T}$ as a subset of $\Bweb{\Sigma}$. 

We are going to show that any $\mathfrak{sl}_3$-web $x\in \SK{\Sigma}$ can be expressed as a Laurent polynomial in the web cluster $C_\bD:=\cup_{T\in t(\Delta)}C_{(\Delta_{T},\bs_{\Delta}(T))} \subset \Bweb{\Sigma}$.
Let
\begin{align*}
    \mathrm{mon}(\Delta):=\mathrm{mon}(\Eweb{\partial^{\times}T}) \subset \SK{\Sigma}
\end{align*}
be the multiplicatively closed set generated by the elementary webs along the edges of $\Delta$ and $A^{1/2}$.

\begin{thm}[Expansions in elementary webs on triangles]\label{thm:elementary-web-expansion-T}
	For any web $x\in\SK{\Sigma}$ and a triangulation $\Delta$, there exists a monomial $J_\Delta\in\mathrm{mon}(\Delta)$ such that $xJ_\Delta\in\langle\cup_{T}\Eweb{T}\rangle_{\mathrm{alg}}$.
\end{thm}

\begin{proof}
Let $[G]$ be a flat trivalent graph in $(\Sigma,\bM)$ such that the representative has only finitely many internal (transverse) intersection points with $\Delta$.
Assume that we have $\# [G]\cap E_{ij}=n$ for an edge $E_{ij}\in e(\Delta)$ connecting $p_{i}$ and $p_{j}$, and let $\gamma_1,\gamma_2,\ldots,\gamma_n$ be the corresponding short subarcs of $[G]$ at $[G]\cap \interior E_{ij}$.
One can reduce the number of internal intersection points with $E_{ij}$ by multiplying $[G]$ by $[e_{ij}e_{ji}]$ where $e_{ij}$ and $e_{ji}:=e_{ij}^{*}$ are oriented arcs corresponding to $E_{ij}$.
In fact, the product $\gamma_1[e_{ij}e_{ji}]$ can expanded by \cref{lem:cuttingweb} and denote it by $\gamma_1[e_{ij}e_{ji}]=A^{3}G_{1}^{+}+G_{1}^{0}+A^{-3}G_{1}^{-}$.
It is easy to see that the number of internal intersection points of $G_{1}^{\epsilon}$ with $\Delta$ is one of $[G]$ minus $1$ for $\epsilon={+},0,{-}$, and the operation causes no change in $\gamma_2,\gamma_3,\dots\gamma_n$.
Moreover, one can see that $G_{1}^{\epsilon}$ are $A$-commutative with $[e_{ij}e_{ji}]$ because of the boundary $\mathfrak{sl}_3$-skein relation.
It means that, for any $\ell \geq 0$, we have $(A^{3}G_{1}^{+}+G_{1}^{0}+A^{-3}G_{1}^{-})[e_{ij}e_{ji}]^{\ell}=[e_{ij}e_{ji}]^{\ell}(A^{r_{+}}G_{1}^{+}+A^{r_{0}}G_{1}^{0}+A^{r_{-}}G_{1}^{-})$ for some $r_{+}$, $r_0$, and $r_{-}$.
Therefore one can also apply the above computation to $\gamma_k$ for $k=2,\dots,n$. 
Thus the product $G[e_{ij}e_{ji}]^{n}$ is expanded into a sum of webs without transverse intersection points with $E_{ij}$. 

Any $\mathfrak{sl}_3$-web $x \in \SK{\Sigma}$ can be written as a sum $x=\sum_{G}\lambda_{G}[G]$ of basis webs.
For each $E\in e(\Delta)$, we denote the maximum number of internal intersection points of $E$ with $G$'s by $n_{E}(x):=\max\{\# [G]\cap \interior E \mid \lambda_G\neq 0\}$.
By applying the above computation, the product
\[
	x\prod_{E_{ij}\in\Delta}(e_{ij}e_{ji})^{n_{E_{ij}}(x)}
\]
is expanded into a polynomial of webs in triangles of $\Delta$, which can be further written as a polynomial in $\cup_{T\in t(\Delta)}\Eweb{T}$ by \cref{prop:generators T}. Thus we get the assertion with $J_\Delta:=\prod_{E_{ij}\in\Delta}(e_{ij}e_{ji})^{n_{E_{ij}}(x)}$.
\end{proof}
Naively, the above theorem tells us that the web $x$ has a Laurent expression $x=f_\Delta J_\Delta^{-1}$ with $f_\Delta \in \langle\cup_{T}\Eweb{T}\rangle_{\mathrm{alg}}$. This will be made more precise in \cref{subsec:localization}.

Let us further consider the multiplicatively closed set $\mathrm{mon}(C_{\bD})$, which obviously contains $\mathrm{mon}(\Delta)$.  

\begin{cor}[Expansions in the web cluster $C_\bD$]\label{cor:web-cluster-expansion-T}
	For any web $x\in\SK{\Sigma}$ and a decorated triangulation $\bD$, there exists a monomial $J_\bD\in\mathrm{mon}(C_{\bD})$ such that $xJ_\bD \in\langle C_{\bD}\rangle_{\mathrm{alg}}$.
\end{cor}
\begin{proof}
	By \cref{thm:elementary-web-expansion-T}, there exists a monomial $J'_\Delta\in\mathrm{mon}(\Delta)$ such that $xJ'_\Delta\in\langle\cup_{T}\Eweb{T}\rangle_{\mathrm{alg}}$.
	In the same way as the proof of \cref{thm:Laurant of T}, by multiplying $(e_{v^{\bs(T)}})^{k_T}$ to $xJ'_\Delta$, we can replace $\ast(e_{v^{\bs(T)}})$ for each $v\in I^{\mathrm{tri}}(\Delta)\cap T$ with elementary webs along the edges.
	Here $k_T$ is the degree of $\ast(e_{v^{\bs(T)}})$ in the polynomial $xJ'_\Delta$, and
	note that any monomial containing no $\ast(e_{v^{\bs(T)}})$ is $A$-commutative with $e_{v^{\bs(T)}}$.
	Thus $xJ'_\Delta\prod_{T\in t(\Delta)}(e_{v^{\bs(T)}})^{k_{T}}$ is contained in $\langle C_{\bD}\rangle_{\mathrm{alg}}$, and we get the assertion with $J_\bD:=J'_\Delta\prod_{T\in t(\Delta)}(e_{v^{\bs(T)}})^{k_{T}} \in \mathrm{mon}(C_\bD)$.
\end{proof}

\subsubsection{Laurent positivity for elevation-preserving webs}

We are going to show that the Laurent expressions of webs of a certain kind, which we call the \emph{elevation-preserving $\mathfrak{sl}_3$-webs}, in $\SK{\Sigma}[\Delta^{-1}]$ have positive coefficients.
By arguing as in the proof of \cref{cor:web-cluster-expansion-T}, it implies that the Laurent expressions in the web cluster $C_\bD$ also have positive coefficients.
Elevation-preserving $\mathfrak{sl}_3$-webs include the bracelets and the bangles along an oriented simple closed curve.

For an ideal triangulation $\Delta$ of $\Sigma$, let $\Delta^{\mathrm{split}}$ be the associated \emph{splitting triangulation} obtained by replacing each edge of $\Delta$ with doubled edges as shown in \cref{fig:split triangulation}.
The set of connected components of $\Sigma\setminus\Delta^{\mathrm{split}}$ is divided into two subsets: the set $t(\Delta^{\mathrm{split}})$ of triangles and the set $b(\Delta^{\mathrm{split}})$ of biangles.
We can canonically identify $t(\Delta^{\mathrm{split}})$ with $t(\Delta)$, and $b(\Delta^{\mathrm{split}})$ with $e(\Delta)$.
We denote a triangle in $t(\Delta^{\mathrm{split}})$ by the same symbol as the corresponding triangle in $t(\Delta)$, while the biangle corresponding to an edge $E\in e(\Delta)$ is denoted by $B_{E}\in b(\Delta^{\mathrm{split}})$. 
For an edge $E\in e(\Delta)$ and a triangle $T\in t(\Delta)$ adjacent to $E$, let $E_{T}\in e(\Delta^{\mathrm{split}})$ denote the edge shared by $B_{E}$ and $T$.
\begin{dfn}
	\begin{enumerate}
		\item A \emph{fundamental piece} in $T\in t(\Delta^{\mathrm{split}})$ consists of a superposition of trivalent graphs with at most one trivalent vertex and distinct endpoints on $\partial^{\times}T$ such that endpoints of the same connected component lie in distinct connected components of $\partial^{\times}T$ each other. An \emph{elevation} of a fundamental piece of $T$ is a labeling of its connected components by positive integers. See  left and right of \cref{fig:elevation in T}.
		\item Let $E\in e(\Delta)$ be an edge shared by $T$ and $T'$ in $t(\Delta)$. An \emph{elevation-preserving braid} in $B_{E}\in b(\Delta^{\mathrm{split}})$ connecting fundamental pieces with elevations in $T$ and $T'$  is a braid between $E_{T}$ and $E_{T'}$ such that
			\begin{itemize}
				\item the braid consists of a superposition of strands connecting endpoints of fundamental pieces of $T$ and $T'$;
				\item for any strands $\alpha$ and $\beta$ of the braid, $\alpha(T)\leq\beta(T)$ if and only if $\alpha(T')\leq\beta(T')$;
				\item a strand $\alpha$ passes above another strand $\beta$ if $\alpha(T)>\beta(T)$ or $\alpha(T')>\beta(T')$;
			\end{itemize}
		where $\alpha(T)$ (resp. $\beta(T)$) denotes the elevation on the endpoint of $\alpha$ (resp. $\beta$) in $E_{T}$ induced from the fundamental piece with the elevation in $T$, and similarly for $T'$. See the middle of \cref{fig:elevation in T}.
		\item Let $E\in \mathbb{B}$ be a boundary interval and $T\in t(\Delta)$ the adjacent triangle. An \emph{elevation-preserving braid} in $B_{E}\in b(\Delta^{\mathrm{split}})$ consists of elevation-preserving arcs with no internal crossings connecting $E_{T}$ to one of the tow special points of $B_{E}$.
	\end{enumerate}
\end{dfn}
We define a certain $\mathfrak{sl}_{3}$-web which satisfies positivity by concatenating fundamental pieces with elevations by elevation-preserving braids.
\begin{dfn}[elevation-preserving $\mathfrak{sl}_{3}$-webs]\label{def:elevation-preserving web}
	A tangled trivalent graph in $\Sigma$ is said to be \emph{elevation-preserving with respect to $\Delta$} if it can be decomposed into fundamental pieces in triangles and elevation-preserving braids in biangles connecting them by cutting along $e(\Delta^{\mathrm{split}})$.
	An \emph{elevation-preserving $\mathfrak{sl}_{3}$-web} is an $\mathfrak{sl}_{3}$-web such that it is represented by an elevation-preserving graph with respect to some $\Delta$.
\end{dfn}

\begin{figure}
	\begin{tikzpicture}[scale=.1]
		\coordinate (P1) at (90:30);
		\coordinate (P2) at (180:30);
		\coordinate (P3) at (270:30);
		\coordinate (P4) at (0:30);
		\draw[blue] (P1) -- (P2) -- (P3) -- (P4) -- (P1) -- cycle;
		\draw[blue] (P1) -- (P3);
		\node at ($(P2)!.5!(P4)+(0,5)$) [right]{$E$};
		\node at ($(P2)!.3!(P4)$) {$T$};
		\node at ($(P4)!.3!(P2)$) {$T'$};
		\node at (P3) [below=10pt]{$\Delta$};
	\end{tikzpicture}
	\hspace{1em}
	\begin{tikzpicture}[scale=.1]
		\coordinate (P1) at (90:30);
		\coordinate (P2) at (180:30);
		\coordinate (P3) at (270:30);
		\coordinate (P4) at (0:30);
		\draw[blue!50] (P1) -- (P2) -- (P3) -- (P4) -- (P1) -- cycle;
		\draw[blue!50] (P1) -- (P3);
		\draw[blue, thick, rounded corners] (P1) -- ($(P1)!.5!(P3)+(3,0)$) -- (P3);
		\draw[blue, thick, rounded corners] (P1) -- ($(P1)!.5!(P3)+(-3,0)$) -- (P3);
		\draw[blue, thick, rounded corners] (P1) -- ($(P1)!.5!(P2)+(2,-2)$) -- (P2);
		\draw[blue, thick, rounded corners] (P1) -- ($(P1)!.5!(P2)+(-2,2)$) -- (P2);
		\draw[blue, thick, rounded corners] (P3) -- ($(P3)!.5!(P4)+(2,-2)$) -- (P4);
		\draw[blue, thick, rounded corners] (P3) -- ($(P3)!.5!(P4)+(-2,2)$) -- (P4);
		\draw[blue, thick, rounded corners] (P2) -- ($(P2)!.5!(P3)+(-2,-2)$) -- (P3);
		\draw[blue, thick, rounded corners] (P2) -- ($(P2)!.5!(P3)+(2,2)$) -- (P3);
		\draw[blue, thick, rounded corners] (P4) -- ($(P4)!.5!(P1)+(-2,-2)$) -- (P1);
		\draw[blue, thick, rounded corners] (P4) -- ($(P4)!.5!(P1)+(2,2)$) -- (P1);
		\node at (0,0) {\scriptsize $B_{E}$};
		\node at ($(P2)!.5!(P4)+(0,5)$) [left=5pt]{$E_{T}$};
		\node at ($(P2)!.5!(P4)+(0,5)$) [right=5pt]{$E_{T'}$};
		\node at ($(P2)!.2!(P4)$) {$T$};
		\node at ($(P4)!.2!(P2)$) {$T'$};
		\node at (P3) [below=10pt]{$\Delta^{\mathrm{split}}$};
	\end{tikzpicture}
	\caption{The split triangulation $\Delta^{\mathrm{split}}$ associated with $\Delta$.}
	\label{fig:split triangulation}
\end{figure}
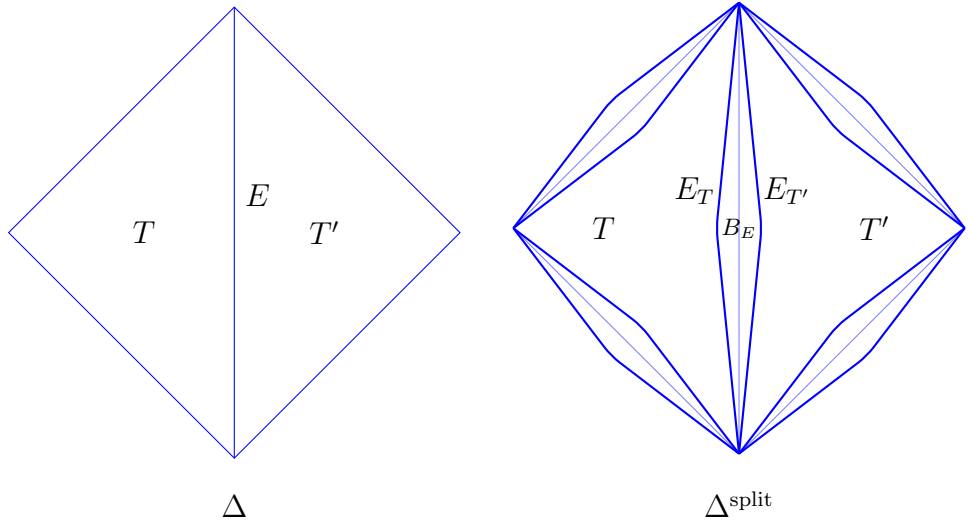
\begin{figure}
	\begin{tikzpicture}[scale=.1]
		\coordinate (P1) at (210:30);
		\coordinate (P2) at (330:30);
		\coordinate (P3) at (90:30);
		\foreach \i in {4,5,6}{
			\coordinate (T\i) at ($(P1)!.\i!(P2)+(0,5)$);
		}
		\draw[very thick, red, overarc, ->-={.5}{red}] ($(P1)!.4!(P3)$) -- (T4);
		\draw[very thick, red, ->-] ($(P1)!.4!(P2)$) -- (T4);
		\draw[very thick, red, ->-] ($(P3)!.4!(P2)$) -- (T4);
		\draw[very thick, red, overarc, -<-={.5}{red}] ($(P1)!.5!(P3)$) -- (T5);
		\draw[very thick, red, -<-] ($(P1)!.5!(P2)$) -- (T5);
		\draw[very thick, red, -<-] ($(P3)!.5!(P2)$) -- (T5);
		\draw[very thick, red, overarc, ->-={.5}{red}] ($(P1)!.6!(P3)$) -- (T6);
		\draw[very thick, red, ->-] ($(P1)!.6!(P2)$) -- (T6);
		\draw[very thick, red, ->-] ($(P3)!.6!(P2)$) -- (T6);
		\draw[very thick, red, ->-] ($(P1)!.1!(P2)$) -- ($(P1)!.1!(P3)$);
		\draw[very thick, red, ->-] ($(P1)!.2!(P2)$) -- ($(P1)!.2!(P3)$);
		\draw[very thick, red, -<-] ($(P2)!.1!(P1)$) -- ($(P2)!.1!(P3)$);
		\draw[very thick, red, ->-] ($(P2)!.2!(P1)$) -- ($(P2)!.2!(P3)$);
		\draw[very thick, red, ->-] ($(P2)!.3!(P1)$) -- ($(P2)!.3!(P3)$);
		\draw[very thick, red, ->-] ($(P3)!.2!(P1)$) -- ($(P3)!.2!(P2)$);
		\draw[very thick, red, -<-] ($(P3)!.3!(P1)$) -- ($(P3)!.3!(P2)$);
		\draw[blue] (P1) -- (P2) -- (P3) -- (P1) -- cycle;
		\node at ($(P1)!.1!(P3)$) [left]{\scriptsize $3$};
		\node at ($(P1)!.2!(P3)$) [left]{\scriptsize $5$};
		\node at ($(P2)!.8!(P3)$) [right]{\scriptsize $1$};
		\node at ($(P2)!.7!(P3)$) [right]{\scriptsize $4$};
		\node at ($(P2)!.6!(P3)$) [right]{\scriptsize $2$};
		\node at ($(P2)!.5!(P3)$) [right]{\scriptsize $6$};
		\node at ($(P2)!.4!(P3)$) [right]{\scriptsize $7$};
		\node at ($(P2)!.3!(P3)$) [right]{\scriptsize $7$};
		\node at ($(P2)!.2!(P3)$) [right]{\scriptsize $5$};
		\node at ($(P2)!.1!(P3)$) [right]{\scriptsize $5$};
		\node[draw, fill=black, circle, inner sep=1] at (P1) {};
		\node[draw, fill=black, circle, inner sep=1] at (P2) {};
		\node[draw, fill=black, circle, inner sep=1] at (P3) {};
		\node at ($(P1)!.5!(P2)$) [below=5pt] {$T$};
	\end{tikzpicture}
	\begin{tikzpicture}[scale=.1]
		\foreach \i [evaluate=\i as \x using \i*5] in {1,2,...,8}{
			\coordinate (L\i) at (-10,\x);
			\coordinate (R\i) at (10,\x);
		}
		\draw[very thick, red, overarc] (L8) to[out=east, in=west] (R7);
		\draw[very thick, red, overarc] (L6) to[out=east, in=west] ($(L6)!.5!(R6)+(0,3)$) to[out=east, in=west] (R6);
		\draw[very thick, red, overarc] (L7) -- ($(L7)+(2,0)$) to[out=east, in=west] (R3);
		\draw[very thick, red, overarc] (L2) to[out=east, in=west] (R1);
		\draw[very thick, red, overarc] (L1) to[out=east, in=west] (R8);
		\draw[very thick, red, overarc] (L5) to[out=east, in=west] ($(L5)!.5!(R5)-(0,4)$) to[out=east, in=west] (R5);
		\draw[very thick, red, overarc] (L3) to[out=east, in=west] (R2);
		\draw[very thick, red, overarc] (L4) to[out=east, in=west] ($(L4)!.5!(R4)-(0,3)$) to[out=east, in=west] (R4);
		\draw[blue, rounded corners] (0,45) -- ($(L8)+(0,2)$) -- ($(L1)-(0,2)$) -- (0,0);
		\draw[blue, rounded corners] (0,45) -- ($(R8)+(0,2)$) -- ($(R1)-(0,2)$) -- (0,0);
		\node[draw, fill=black, circle, inner sep=1] at (0,45) {};
		\node[draw, fill=black, circle, inner sep=1] at (0,0) {};
		\node at (0,0) [below=5pt]{$B_{E}$};
		\node at (L8) [left]{\scriptsize $1$};
		\node at (L7) [left]{\scriptsize $4$};
		\node at (L6) [left]{\scriptsize $2$};
		\node at (L5) [left]{\scriptsize $6$};
		\node at (L4) [left]{\scriptsize $7$};
		\node at (L3) [left]{\scriptsize $7$};
		\node at (L2) [left]{\scriptsize $5$};
		\node at (L1) [left]{\scriptsize $5$};
		\node at (R8) [right]{\scriptsize $5$};
		\node at (R7) [right]{\scriptsize $1$};
		\node at (R6) [right]{\scriptsize $2$};
		\node at (R5) [right]{\scriptsize $5$};
		\node at (R4) [right]{\scriptsize $8$};
		\node at (R3) [right]{\scriptsize $3$};
		\node at (R2) [right]{\scriptsize $6$};
		\node at (R1) [right]{\scriptsize $4$};
	\end{tikzpicture}
	\begin{tikzpicture}[scale=.1]
		\coordinate (P1) at (210:30);
		\coordinate (P2) at (330:30);
		\coordinate (P3) at (90:30);
		\foreach \i in {4,5,6,7}{
			\coordinate (T\i) at ($(P1)!.\i!(P2)+(0,5)$);
		}
		\draw[very thick, red, overarc, ->-={.2}{red}] ($(P3)!.7!(P2)$) -- (T7);
		\draw[very thick, red, ->-] ($(P1)!.7!(P3)$) -- (T7);
		\draw[very thick, red, ->-] ($(P1)!.7!(P2)$) -- (T7);
		\draw[very thick, red, overarc, -<-={.2}{red}] ($(P3)!.6!(P2)$) -- (T6);
		\draw[very thick, red, -<-] ($(P1)!.6!(P3)$) -- (T6);
		\draw[very thick, red, -<-] ($(P1)!.6!(P2)$) -- (T6);
		\draw[very thick, red, overarc, ->-={.2}{red}] ($(P3)!.5!(P2)$) -- (T5);
		\draw[very thick, red, ->-] ($(P1)!.5!(P3)$) -- (T5);
		\draw[very thick, red, -<-] ($(P1)!.5!(P2)$) -- (T5);
		\draw[very thick, red, overarc, -<-={.2}{red}] ($(P3)!.4!(P2)$) -- (T4);
		\draw[very thick, red, -<-] ($(P1)!.4!(P3)$) -- (T4);
		\draw[very thick, red, -<-] ($(P1)!.4!(P2)$) -- (T4);
		\draw[very thick, red, -<-] ($(P1)!.1!(P2)$) -- ($(P1)!.1!(P3)$);
		\draw[very thick, red, -<-] ($(P1)!.2!(P2)$) -- ($(P1)!.2!(P3)$);
		\draw[very thick, red, ->-] ($(P1)!.3!(P2)$) -- ($(P1)!.3!(P3)$);
		\draw[very thick, red, ->-] ($(P2)!.2!(P1)$) -- ($(P2)!.2!(P3)$);
		\draw[very thick, red, -<-] ($(P3)!.2!(P1)$) -- ($(P3)!.2!(P2)$);
		\draw[blue] (P1) -- (P2) -- (P3) -- (P1) -- cycle;
		\node at ($(P2)!.2!(P3)$) [right]{\scriptsize $7$};
		\node at ($(P1)!.8!(P3)$) [left]{\scriptsize $5$};
		\node at ($(P1)!.7!(P3)$) [left]{\scriptsize $1$};
		\node at ($(P1)!.6!(P3)$) [left]{\scriptsize $2$};
		\node at ($(P1)!.5!(P3)$) [left]{\scriptsize $5$};
		\node at ($(P1)!.4!(P3)$) [left]{\scriptsize $8$};
		\node at ($(P1)!.3!(P3)$) [left]{\scriptsize $3$};
		\node at ($(P1)!.2!(P3)$) [left]{\scriptsize $6$};
		\node at ($(P1)!.1!(P3)$) [left]{\scriptsize $4$};
		\node[draw, fill=black, circle, inner sep=1] at (P1) {};
		\node[draw, fill=black, circle, inner sep=1] at (P2) {};
		\node[draw, fill=black, circle, inner sep=1] at (P3) {};
		\node at ($(P1)!.5!(P2)$) [below=5pt] {$T'$};
	\end{tikzpicture}
	\caption{Fundamental pieces in triangles $T$ and $T'$, and an elevation-preserving braid in the biangle $B_E$ connecting them. Elevations are presented by positive integers.}
	\label{fig:elevation in T}
\end{figure}
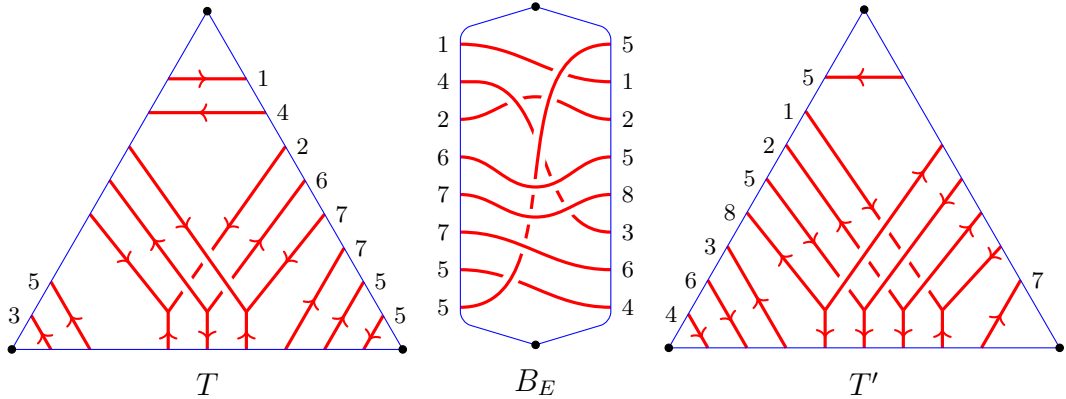

\begin{ex}
	For a triangulation $\Delta$, a simple trivalent graph obtained by attaching fundamental pieces to the triangles $t(\Delta^{\mathrm{split}})$ with no internal crossings and connecting them by identity braids in biangles in $b(\Delta^{\mathrm{split}})$ gives an elevation-preserving trivalent tangle with respect to $\Delta$.
	In particular, oriented simple loops and oriented simple arcs are elevation-preserving for any $\Delta$.
	For any triangulation $\Delta$, the $n$-bracelet along a (non-null homotopic) simple loop $\gamma$ ( \cref{fig:bracelet}) is obtained from the $n$-bangle of $\gamma$ by replacing the identity $n$-braid in some biangle by a braid corresponding to a cyclic permutation $(12\cdots n)$. See \cref{fig:bracelet-example}.
\end{ex}
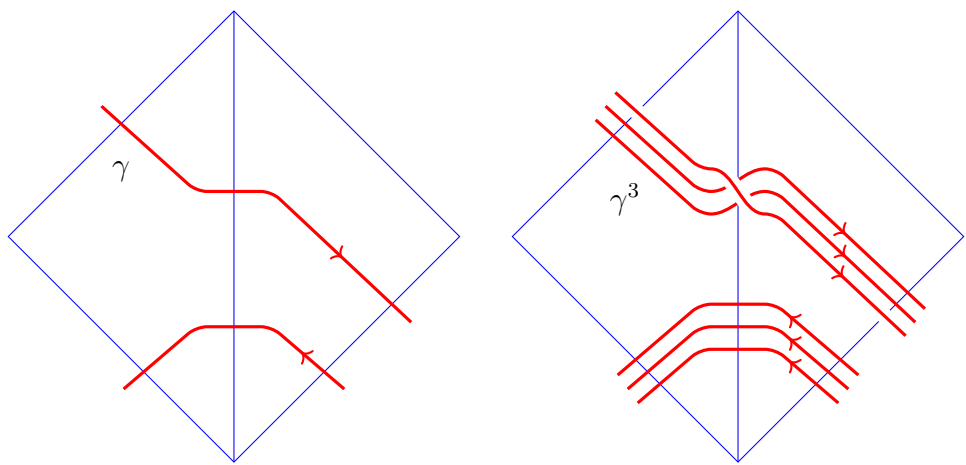
\begin{figure}
	\begin{tikzpicture}[scale=.1]
		\coordinate (P1) at (90:30);
		\coordinate (P2) at (180:30);
		\coordinate (P3) at (270:30);
		\coordinate (P4) at (0:30);
		\draw[blue] (P1) -- (P2) -- (P3) -- (P4) -- (P1) -- cycle;
		\draw[blue] (P1) -- (P3);
		\draw[very thick, red, ->-={.8}{}, rounded corners, shorten <=-10, shorten >=-10] ($(P1)!.5!(P2)$) -- ($(P1)!.4!(P3)-(5,0)$) -- ($(P1)!.4!(P3)+(5,0)$) -- ($(P4)!.3!(P3)$);
		\draw[very thick, red, -<-={.9}{}, rounded corners, shorten <=-10, shorten >=-10] ($(P2)!.6!(P3)$) -- ($(P1)!.7!(P3)-(5,0)$) -- ($(P1)!.7!(P3)+(5,0)$) -- ($(P4)!.6!(P3)$);
		\node at ($(P1)!.5!(P2)$) [below=10pt]{$\gamma$};
	\end{tikzpicture}
	\hspace{1em}
	\begin{tikzpicture}[scale=.1]
		\coordinate (P1) at (90:30);
		\coordinate (P2) at (180:30);
		\coordinate (P3) at (270:30);
		\coordinate (P4) at (0:30);
		\draw[blue] (P1) -- (P2) -- (P3) -- (P4) -- (P1) -- cycle;
		\draw[blue] (P1) -- (P3);
		\draw[very thick, red, ->-={.8}{red}, rounded corners, shorten <=-10, shorten >=-15] ($(P1)!.5!(P2)$) -- ($(P1)!.4!(P3)+(-5,0)$) to[out=east, in=west] ($(P1)!.4!(P3)+(5,3)$) -- ($(P4)!.3!(P3)+(0,3)$);
		\draw[very thick, red, ->-={.8}{red}, rounded corners, shorten <=-15, shorten >=-10] ($(P1)!.5!(P2)+(0,-3)$) -- ($(P1)!.4!(P3)+(-5,-3)$) to[out=east, in=west] ($(P1)!.4!(P3)+(5,0)$) -- ($(P4)!.3!(P3)$);
		\draw[very thick, red, ->-={.8}{red}, rounded corners, shorten <=-5, shorten >=-5, overarc] ($(P1)!.5!(P2)+(0,3)$) -- ($(P1)!.4!(P3)+(-5,3)$) to[out=east, in=west] ($(P1)!.4!(P3)+(5,-3)$) -- ($(P4)!.3!(P3)+(0,-3)$);
		\draw[very thick, red, -<-={.8}{}, rounded corners, shorten <=-10, shorten >=-10] ($(P2)!.6!(P3)$) -- ($(P1)!.7!(P3)+(-5,0)$) -- ($(P1)!.7!(P3)+(5,0)$) -- ($(P4)!.6!(P3)$);
		\draw[very thick, red, -<-={.8}{}, rounded corners, shorten <=-15, shorten >=-15] ($(P2)!.6!(P3)+(0,3)$) -- ($(P1)!.7!(P3)+(-5,3)$) -- ($(P1)!.7!(P3)+(5,3)$) -- ($(P4)!.6!(P3)+(0,3)$);
		\draw[very thick, red, -<-={.8}{}, rounded corners, shorten <=-5, shorten >=-5] ($(P2)!.6!(P3)+(0,-3)$) -- ($(P1)!.7!(P3)+(-5,-3)$) -- ($(P1)!.7!(P3)+(5,-3)$) -- ($(P4)!.6!(P3)+(0,-3)$);
		\node at ($(P1)!.5!(P2)+(0,-3)$) [below=10pt]{$\gamma^{3}$};
	\end{tikzpicture}
	\caption{The left-hand side shows a portion of the diagram of an oriented simple loop $\gamma$ in a quadrilateral in of a triangulation $\Delta$ of $\Sigma$. The right-hand side shows the associated $3$-bracelet.}
	\label{fig:bracelet-example}
\end{figure}

\begin{thm}\label{thm:elevation-preserving web}
	Let $\Sigma$ be any unpunctured marked surface and $\Delta$ its triangulation. 
	For any elevation-preserving web $x\in\SK{\Sigma}$ with respect to a triangulation $\Delta$, there exists $J_\Delta\in \mathrm{mon}(\Delta)$ such that the expansion of $xJ_\Delta$ in $\langle\cup_{T}\Eweb{T}\rangle_{\mathrm{alg}}$ has positive coefficients.
\end{thm}
\begin{proof}
	Let $x$ be an elevation-preserving web with respect to $\Delta$.
	Our strategy for the proof is the following. 
	Firstly, we decompose $x$ into webs in triangles of $\Delta^{\mathrm{split}}$ as in \cref{thm:elementary-web-expansion-T}, in order of increasing elevation.
	Next, expand the remaining part in biangles. Notice that the right-hand sides of the $\mathfrak{sl}_3$-skein relations have positive coefficients, except for \eqref{rel:bigon}, and we can avoid using this relation in the above process. 
	Therefore we can observe that the coefficients in these expansions are positive.
	
	Let us describe the details of the proof.
	We focus on a piece $G$ of $x$ in $T\in t(\Delta^{\mathrm{split}})$ and represent it by $G=G_{n}\dots G_{2}G_{1}$ as a superposition of connected components, where subscripts indicate their elevations. Here $G_{1}$ is the connected component of $G$ of the lowest elevation, and each $G_{i}$ is an arc or a trivalent graph with a single vertex.
	Let $\{p_1,p_2,p_3\}$ be the three special points of $T$, $E_{ij}$ the edge between $p_{i}$ and $p_{j}$, and $B_{ij}:=B_{E_{ij}}$. We will use the notation in \cref{subsec:triangle_skein} for the elementary webs in $T$.
	
	Firstly, expand $G_{1}$ by multiplying  $[e_{12}e_{21}e_{23}e_{32}e_{31}e_{13}]\in \mathrm{mon}(\Delta)$ and by using 
	\cref{lem:cuttingweb}.
	We remark that one can omit to multiply one of $e_{ij}e_{ji}$ if $G_{1}$ is an arc.
	In a neighborhood of each edge, the resulting diagrams in the expansion are decomposed into diagrams in the three parts: in the \emph{biangle part} (shown as a shaded region), on the \emph{edge part}, and the \emph{triangle part} (\emph{i.e.}, the interior of $T$) as follows.
	For a strand incoming to $T$,
	\begin{align*}
		\mathord{
			\ \tikz[baseline=-.6ex, scale=.1]{
			\node [blue, draw, fill=black, circle, inner sep=1.5] (A) at (-10,0) {};
			\node [blue, draw, fill=black, circle, inner sep=1.5] (B) at (10,0) {} edge[blue] (A);
			\coordinate (T1) at ($(A)!0.5!(B)+(0,7)$);
			\coordinate (T2) at ($(A)!0.5!(B)+(0,-7)$);
			\fill[blue!20] (A) -- (B) to[out=south, in=east] ($(T2)+(0,-3)$) to[out=west, in=south] (A) -- cycle;
			\draw[red, very thick, ->-={.8}{}] (B) to[out=north west,in=east] (0,3) to[out=west, in=north east] (A);
			\draw[red, very thick, -<-={.8}{}] (B) to[out=south west,in=east] (0,-3) to[out=west, in=south east] (A);
			\draw[overarc, ->-={.6}{red}] (0,-10) -- (0,10);
			\node at (T1) [left=5pt]{$T$};
			}
		\ }
		&=A^{3}\mathord{
			\ \tikz[baseline=-.6ex, scale=.1]{
			\node [draw, fill=black, circle, inner sep=1.5] (A) at (-10,0) {};
			\node [draw, fill=black, circle, inner sep=1.5] (B) at (10,0) {} edge[blue] (A);
			\coordinate (T1) at ($(A)!0.5!(B)+(0,7)$);
			\coordinate (T2) at ($(A)!0.5!(B)+(0,-7)$);
			\fill[blue!20] (A) -- (B) to[out=south, in=east] ($(T2)+(0,-3)$) to[out=west, in=south] (A) -- cycle;
			\draw[red, very thick, ->-] (A) -- (B);
			\draw[red, very thick, ->-, rounded corners] (B) -- (T1) -- ($(T1)+(0,3)$);
			\draw[red, very thick, -<-, rounded corners] (A) -- (T2) -- ($(T2)+(0,-3)$);
			\node at (T1) [left=5pt]{$T$};
			}
		\ }
		+\mathord{
			\ \tikz[baseline=-.6ex, scale=.1]{
			\node [draw, fill=black, circle, inner sep=1.5] (A) at (-10,0) {};
			\node [draw, fill=black, circle, inner sep=1.5] (B) at (10,0) {} edge[blue] (A);
			\coordinate (T1) at ($(A)!0.5!(B)+(0,7)$);
			\coordinate (T2) at ($(A)!0.5!(B)+(0,-7)$);
			\fill[blue!20] (A) -- (B) to[out=south, in=east] ($(T2)+(0,-3)$) to[out=west, in=south] (A) -- cycle;
			\draw[very thick, red, -<-] (A) -- (T1);
			\draw[very thick, red, -<-] (B) -- (T1);
			\draw[very thick, red, ->-] (A) -- (T2);
			\draw[very thick, red, ->-] (B) -- (T2);
			\draw[very thick, red, ->-] (T1) -- ($(T1)+(0,3)$);
			\draw[very thick, red, -<-] (T2) -- ($(T2)+(0,-3)$);
			\node at (T1) [left=5pt]{$T$};
			}
		\ }
		+A^{-3}\mathord{
			\ \tikz[baseline=-.6ex, scale=.1]{
			\node [draw, fill=black, circle, inner sep=1.5] (A) at (-10,0) {};
			\node [draw, fill=black, circle, inner sep=1.5] (B) at (10,0) {} edge[blue] (A);
			\coordinate (T1) at ($(A)!0.5!(B)+(0,7)$);
			\coordinate (T2) at ($(A)!0.5!(B)+(0,-7)$);
			\fill[blue!20] (A) -- (B) to[out=south, in=east] ($(T2)+(0,-3)$) to[out=west, in=south] (A) -- cycle;
			\draw[red, very thick, -<-] (A) -- (B);
			\draw[red, very thick, ->-, rounded corners] (A) -- (T1) -- ($(T1)+(0,3)$);
			\draw[red, very thick, -<-, rounded corners] (B) -- (T2) -- ($(T2)+(0,-3)$);
			\node at (T1) [left=5pt]{$T$};
			}
		\ },
	\end{align*}
	where the bottom-half belongs to a biangle part and the top-half does to one of the triangle parts covering $T$. The oriented edges between the special points belong to the edge part. 
	For a strand outgoing from $T$ (obtained by applying the Dynkin involution to the incoming case),
	\begin{align*}
		\mathord{
			\ \tikz[baseline=-.6ex, scale=.1]{
			\node [blue, draw, fill=black, circle, inner sep=1.5] (A) at (-10,0) {};
			\node [blue, draw, fill=black, circle, inner sep=1.5] (B) at (10,0) {} edge[blue] (A);
			\coordinate (T1) at ($(A)!0.5!(B)+(0,7)$);
			\coordinate (T2) at ($(A)!0.5!(B)+(0,-7)$);
			\fill[blue!20] (A) -- (B) to[out=south, in=east] ($(T2)+(0,-3)$) to[out=west, in=south] (A) -- cycle;
			\draw[red, very thick, ->-={.8}{}] (B) to[out=north west,in=east] (0,3) to[out=west, in=north east] (A);
			\draw[red, very thick, -<-={.8}{}] (B) to[out=south west,in=east] (0,-3) to[out=west, in=south east] (A);
			\draw[overarc, -<-={.6}{red}] (0,-10) -- (0,10);
			\node at (T1) [left=5pt]{$T$};
			}
		\ }
		&=A^{3}\mathord{
			\ \tikz[baseline=-.6ex, scale=.1]{
			\node [draw, fill=black, circle, inner sep=1.5] (A) at (-10,0) {};
			\node [draw, fill=black, circle, inner sep=1.5] (B) at (10,0) {} edge[blue] (A);
			\coordinate (T1) at ($(A)!0.5!(B)+(0,7)$);
			\coordinate (T2) at ($(A)!0.5!(B)+(0,-7)$);
			\fill[blue!20] (A) -- (B) to[out=south, in=east] ($(T2)+(0,-3)$) to[out=west, in=south] (A) -- cycle;
			\draw[red, very thick, -<-] (A) -- (B);
			\draw[red, very thick, -<-, rounded corners] (B) -- (T1) -- ($(T1)+(0,3)$);
			\draw[red, very thick, ->-, rounded corners] (A) -- (T2) -- ($(T2)+(0,-3)$);
			\node at (T1) [left=5pt]{$T$};
			}
		\ }
		+\mathord{
			\ \tikz[baseline=-.6ex, scale=.1]{
			\node [draw, fill=black, circle, inner sep=1.5] (A) at (-10,0) {};
			\node [draw, fill=black, circle, inner sep=1.5] (B) at (10,0) {} edge[blue] (A);
			\coordinate (T1) at ($(A)!0.5!(B)+(0,7)$);
			\coordinate (T2) at ($(A)!0.5!(B)+(0,-7)$);
			\fill[blue!20] (A) -- (B) to[out=south, in=east] ($(T2)+(0,-3)$) to[out=west, in=south] (A) -- cycle;
			\draw[very thick, red, ->-] (A) -- (T1);
			\draw[very thick, red, ->-] (B) -- (T1);
			\draw[very thick, red, -<-] (A) -- (T2);
			\draw[very thick, red, -<-] (B) -- (T2);
			\draw[very thick, red, -<-] (T1) -- ($(T1)+(0,3)$);
			\draw[very thick, red, ->-] (T2) -- ($(T2)+(0,-3)$);
			\node at (T1) [left=5pt]{$T$};
			}
		\ }
		+A^{-3}\mathord{
			\ \tikz[baseline=-.6ex, scale=.1]{
			\node [draw, fill=black, circle, inner sep=1.5] (A) at (-10,0) {};
			\node [draw, fill=black, circle, inner sep=1.5] (B) at (10,0) {} edge[blue] (A);
			\coordinate (T1) at ($(A)!0.5!(B)+(0,7)$);
			\coordinate (T2) at ($(A)!0.5!(B)+(0,-7)$);
			\fill[blue!20] (A) -- (B) to[out=south, in=east] ($(T2)+(0,-3)$) to[out=west, in=south] (A) -- cycle;
			\draw[red, very thick, ->-] (A) -- (B);
			\draw[red, very thick, -<-, rounded corners] (A) -- (T1) -- ($(T1)+(0,3)$);
			\draw[red, very thick, ->-, rounded corners] (B) -- (T2) -- ($(T2)+(0,-3)$);
			\node at (T1) [left=5pt]{$T$};
			}
		\ }.
	\end{align*}
	Then the webs in the expansion of $G_{1}[e_{12}e_{21}e_{23}e_{32}e_{31}e_{13}]$ in $T$ are obtained by concatenating the pieces in the three sectors $T_{12},T_{23},T_{31}$ shown in \cref{fig:triangle part}, and their coefficients are one of $\{1, A^{\pm 3}, A^{\pm 6}\}$.
	\begin{figure}
		\begin{tikzpicture}[scale=.1]
			\node [draw, fill=black, circle, inner sep=1.5] (A) at (210:20) {};
			\node [draw, fill=black, circle, inner sep=1.5] (B) at (330:20) {};
			\node [draw, fill=black, circle, inner sep=1.5] (C) at (90:20) {};
			\draw[dashed] (A) -- (0,0);
			\draw[dashed] (B) -- (0,0);
			\draw[dashed] (C) -- (0,0);
			\draw[blue] (A) -- (B) -- (C) -- (A) -- cycle;
			\node at (270:6) {$T_{12}$};
			\node at (30:6) {$T_{23}$};
			\node at (150:6) {$T_{31}$};
			\node at (A) [left]{$p_{1}$};
			\node at (B) [right]{$p_{2}$};
			\node at (C) [above]{$p_{3}$};
		\end{tikzpicture}
		\caption{Concatenation of webs in three region $X,Y,Z$ gives webs appearing in the expansion on $T$.}
		\label{fig:triangle part}
	\end{figure}
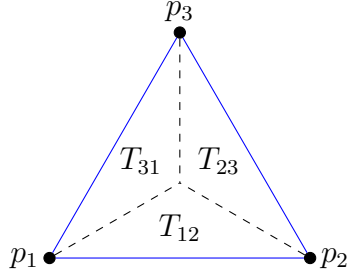
	In order to list the concatenation patterns, let us denote the resulting webs in each sector by
	\begin{align*}
		X_{ij}^{+}
		=\mathord{
			\ \tikz[baseline=-.6ex, scale=.1]{
			\node [draw, fill=black, circle, inner sep=1.5] (A) at (-10,0) {};
			\node [draw, fill=black, circle, inner sep=1.5] (B) at (10,0) {} edge[blue] (A);
			\coordinate (T1) at ($(A)!0.5!(B)+(0,7)$);
			\draw[red, very thick, ->-, rounded corners] (B) -- (T1) -- ($(T1)+(0,3)$);
			\node at (A) [below]{$p_{i}$};
			\node at (B) [below]{$p_{j}$};
			\node at (T1) [left=5pt]{$T_{ij}$};
			}
		\ },\quad
		X_{ij}^{0}
		=\mathord{
			\ \tikz[baseline=-.6ex, scale=.1]{
			\node [draw, fill=black, circle, inner sep=1.5] (A) at (-10,0) {};
			\node [draw, fill=black, circle, inner sep=1.5] (B) at (10,0) {} edge[blue] (A);
			\coordinate (T1) at ($(A)!0.5!(B)+(0,7)$);
			\draw[very thick, red, -<-] (A) -- (T1);
			\draw[very thick, red, -<-] (B) -- (T1);
			\draw[very thick, red, ->-] (T1) -- ($(T1)+(0,3)$);
			\node at (A) [below]{$p_{i}$};
			\node at (B) [below]{$p_{j}$};
			\node at (T1) [left=5pt]{$T_{ij}$};
			}
		\ },\quad
		X_{ij}^{-}
		+\mathord{
			\ \tikz[baseline=-.6ex, scale=.1]{
			\node [draw, fill=black, circle, inner sep=1.5] (A) at (-10,0) {};
			\node [draw, fill=black, circle, inner sep=1.5] (B) at (10,0) {} edge[blue] (A);
			\coordinate (T1) at ($(A)!0.5!(B)+(0,7)$);
			\draw[red, very thick, ->-, rounded corners] (A) -- (T1) -- ($(T1)+(0,3)$);
			\node at (T1) [left=5pt]{$T_{ij}$};
			}
		\ }.
	\end{align*}
	If $G_{1}$ is an arc connecting the edges $E_{ij}$ and $E_{jk}$, we just concatenate $X_{ij}^{\epsilon}$ and $\ast X_{jk}^{\epsilon'}$.
	If $G_{1}$ is a trivalent graph, the concatenation of the three pieces $X_{12}^{\epsilon}, X_{23}^{\epsilon'}, X_{31}^{\epsilon''}$ produce one new sink or source vertex at the center of $T$. 
	It is easy to confirm that these concatenations produce the following webs. 
	In the case that $G_{1}$ is an arc, we get
	\begin{align*}
		X_{12}^{+}\star(\ast X_{23}^{+})&=\Triweb{\tbwebneg{0}}{$e_{21}$},&
		X_{12}^{+}\star(\ast X_{23}^{0})&=\Triweb{\ttwebplus}{$t_{123}^{+}$},&
		X_{12}^{+}\star(\ast X_{23}^{-})&=\Triweb{\tbwebpos{120}}{$e_{23}$},\\
		X_{12}^{0}\star(\ast X_{23}^{+})&=0,&
		X_{12}^{0}\star(\ast X_{23}^{0})&=\Triweb{\tbwebpos{0}\tbwebpos{-120}}{$[e_{12}e_{31}]$},&
		X_{12}^{0}\star(\ast X_{23}^{-})&=\Triweb{\ttwebminus}{$t_{123}^{-}$},\\
		X_{12}^{-}\star(\ast X_{23}^{+})&=0,&
		X_{12}^{-}\star(\ast X_{23}^{0})&=0,&
		X_{12}^{-}\star(\ast X_{23}^{-})&=\Triweb{\tbwebneg{-120}}{$e_{13}$},
	\end{align*}
	\smallskip
	\begin{align*}
		X_{12}^{+}\star(\ast X_{31}^{+})&=\Triweb{\tbwebpos{120}}{$e_{23}$},&
		X_{12}^{+}\star(\ast X_{31}^{0})&=0,&
		X_{12}^{+}\star(\ast X_{31}^{-})&=0,\\
		X_{12}^{0}\star(\ast X_{31}^{+})&=\Triweb{\ttwebplus}{$t_{123}^{+}$},&
		X_{12}^{0}\star(\ast X_{31}^{0})&=\Triweb{\tbwebneg{120}\tbwebneg{0}}{$[e_{32}e_{21}]$},&
		X_{12}^{0}\star(\ast X_{31}^{-})&=0,\\
		X_{12}^{-}\star(\ast X_{31}^{+})&=\Triweb{\tbwebneg{-120}}{$e_{13}$},&
		X_{12}^{-}\star(\ast X_{31}^{0})&=\Triweb{\ttwebminus}{$t_{123}^{-}$},&
		X_{12}^{-}\star(\ast X_{31}^{-})&=\Triweb{\tbwebpos{0}}{$e_{12}$},
	\end{align*}
	where $X\star Y$ means the concatenation of $X$ and $Y$.
	In the case that $G_{1}$ is a trivalent graph, we get
	\begin{align*}
		X_{12}^{+}\star X_{23}^{+} \star X_{31}^{+} &=\Triweb{\ttwebplus}{$t_{123}^{+}$},&
		X_{12}^{+}\star X_{23}^{+} \star X_{31}^{0} &=\Triweb{\tbwebpos{120}\tbwebpos{-120}}{$[e_{23}e_{31}]$},\\
		X_{12}^{+}\star X_{23}^{+} \star X_{31}^{-} &=0,&
		X_{12}^{+}\star X_{23}^{0} \star X_{31}^{0} &=\Triweb{\tbwebpos{120}\ttwebminus}{$[e_{23}t_{123}^{-}]$},\\
		X_{12}^{+}\star X_{23}^{0} \star X_{31}^{-} &=\Triweb{\tbwebpos{120}\tbwebneg{120}}{$[e_{23}e_{32}]$},&
		X_{12}^{+}\star X_{23}^{-} \star X_{31}^{-} &=0,\\
		X_{12}^{0}\star X_{23}^{0} \star X_{31}^{0} &=\Triweb{\ttwebminus\ttwebminusdown}{$[t_{123}^{+}t_{123}^{+}]$},&
		X_{12}^{0}\star X_{23}^{0} \star X_{31}^{-} &=\Triweb{\ttwebminus\tbwebneg{120}}{$[t_{123}^{+}e_{21}]$},\\
		X_{12}^{0}\star X_{23}^{-} \star X_{31}^{-} &=\Triweb{\tbwebneg{120}\tbwebneg{0}}{$[e_{32}e_{21}]$},&
		X_{12}^{-}\star X_{23}^{-} \star X_{31}^{-} &=\Triweb{\ttwebplus}{$t_{123}^{+}$},
	\end{align*}
	where $X\star Y\star Z$ means the concatenation of $X$, $Y$ and $Z$. Here we have applied some skein relations. 
	For example,
	\begin{align*}
		X_{12}^{+}\star X_{23}^{0}\star X_{31}^{0}
		=\mathord{\ 
			\tikz[baseline=-.6ex, scale=.1]{
				\coordinate (P1) at (210:10);
				\coordinate (P2) at (330:10);
				\coordinate (P3) at (90:10);
				\coordinate (T1) at (270:2);
				\coordinate (T2) at (30:2);
				\coordinate (T3) at (150:2);
				\draw[gray] (P1) -- (P2) -- (P3) -- (P1) -- cycle;
				\draw[very thick, red] (0,0) -- (T1);
				\draw[very thick, red] (0,0) -- (T2);
				\draw[very thick, red] (0,0) -- (T3);
				\draw[very thick, red, ->-] (P2) to[out=west, in=south] (T1);
				\draw[very thick, red, -<-] (P2) -- (T2);
				\draw[very thick, red, -<-] (P3) -- (T2);
				\draw[very thick, red, -<-] (P3) -- (T3);
				\draw[very thick, red, -<-] (P1) -- (T3);
				\node[draw, fill=black, circle, inner sep=1] at (210:10) {};
				\node[draw, fill=black, circle, inner sep=1] at (330:10) {};
				\node[draw, fill=black, circle, inner sep=1] at (90:10) {};
			}
		\ }
		=\mathord{\ 
			\tikz[baseline=-.6ex, scale=.1]{
				\coordinate (P1) at (210:10);
				\coordinate (P2) at (330:10);
				\coordinate (P3) at (90:10);
				\coordinate (T1) at (270:2);
				\coordinate (T2) at (30:2);
				\coordinate (T3) at (150:2);
				\draw[gray] (P1) -- (P2) -- (P3) -- (P1) -- cycle;
				\draw[very thick, red, ->-] (0,0) -- (P1);
				\draw[very thick, red, ->-] (0,0) -- (P2);
				\draw[very thick, red, ->-] (0,0) -- (P3);
				\draw[very thick, red, ->-] (P2) -- (P3);
				\node[draw, fill=black, circle, inner sep=1] at (210:10) {};
				\node[draw, fill=black, circle, inner sep=1] at (330:10) {};
				\node[draw, fill=black, circle, inner sep=1] at (90:10) {};
			}
		\ }.
	\end{align*}
	The above calculation shows that the triangle part of $G_{1}[e_{12}e_{21}e_{23}e_{32}e_{31}e_{13}]$ in $T\in t(\Delta^{\mathrm{split}})$ is expanded as a polynomial in $\Eweb{T}$ with positive coefficients.
	The webs appearing in this expansion $A$-commute with webs along the edges of $T$, since the biangle part, edge apart, and triangle part $A$-commute with each other.
	Therefore in the product 
	\begin{align*}
	    G[e_{12}e_{21}e_{23}e_{32}e_{31}e_{13}]^{2}&=G_{n}\cdots G_{2}G_{1}[e_{12}e_{21}e_{23}e_{32}e_{31}e_{13}]^{2} \\
	    &=G_{n}\cdots G_{2}(G_{1}[e_{12}e_{21}e_{23}e_{32}e_{31}e_{13}])[e_{12}e_{21}e_{23}e_{32}e_{31}e_{13}],
	\end{align*}
	we can move $[e_{12}e_{21}e_{23}e_{32}e_{31}e_{13}]$ to the left beyond $G_{1}[e_{12}e_{21}e_{23}e_{32}e_{31}e_{13}]$ preserving the positivity of coefficients in the expansion, and hence the component $G_{2}$ can be expanded in the same way. 
	Proceeding in this way, one can decompose $G[e_{12}e_{21}e_{23}e_{32}e_{31}e_{13}]^{n}$ into webs in $T\in\Delta^{\mathrm{split}}$ and webs outside of $T$ with positive coefficients.
	
	Applying this operation to $x$ for all $T\in\Delta^{\mathrm{split}}$, we obtain a positive sum of webs such that
	\begin{itemize}
		\item their triangle parts (and edge parts) in $T\in t(\Delta^{\mathrm{split}})$ are expressed as monomials in $\Eweb{T}$,
		\item concatenations of elevation-preserving braids and biangle parts produced in the expansion procedure.
	\end{itemize}
	It remains to show that a web in each biangle becomes a polynomial in $\Eweb{T}$, especially $\Eweb{\partial^{\times}}$, with positive coefficients.
	In the expansion of $G$, webs in a biangle part adjacent to $T$ inherit the elevation from the fundamental piece in $T$.
	Hence from the elevation-preserving assumption, for adjacent triangles $T, T'\in t(\Delta^{\mathrm{split}})$, the biangle parts of $T$ and $T'$ can be connected by strands with preserving their elevations.
	Hence, the concatenation of biangle parts and the elevation-preserving braid is presented as a superposition of the concatenation of biangle parts, as listed below:
	\begin{align*}
		\mathord{\
			\tikz[baseline=-.6ex, scale=.1]{
				\coordinate (P1) at (-10,0);
				\coordinate (P2) at (10,0);
				\coordinate (U) at (0,7);
				\coordinate (D) at (0,-7);
				\coordinate (T1) at (0,5);
				\coordinate (T2) at (0,-5);
				\fill[blue!20] (P1) to[out=north, in=west] (U) to[out=east, in=north] (P2) to[out=south, in=east] (D) to[out=west, in=south] (P1) -- cycle;
				\draw[very thick, red, -<-, rounded corners] (P1) --(T1) -- (0,0);
				\draw[very thick, red, ->-, rounded corners] (P2) -- (T2) -- (0,0);
				\draw[blue] (P1) to[out=north, in=west] (U) to[out=east, in=north] (P2);
				\draw[blue] (P1) to[out=south, in=west] (D) to[out=east, in=south] (P2);
				\node[draw, fill=black, circle, inner sep=1] at (P1) {};
				\node[draw, fill=black, circle, inner sep=1] at (P2) {};
				\node at (P1) [left]{\scriptsize $p_{1}$};
				\node at (P2) [right]{\scriptsize $p_{2}$};
			}
		}&=e_{21},&
		\mathord{\
			\tikz[baseline=-.6ex, scale=.1]{
				\coordinate (P1) at (-10,0);
				\coordinate (P2) at (10,0);
				\coordinate (U) at (0,7);
				\coordinate (D) at (0,-7);
				\coordinate (T1) at (0,5);
				\coordinate (T2) at (0,-5);
				\fill[blue!20] (P1) to[out=north, in=west] (U) to[out=east, in=north] (P2) to[out=south, in=east] (D) to[out=west, in=south] (P1) -- cycle;
				\draw[very thick, red, -<-, rounded corners] (P1) --(T1) -- (0,0);
				\draw[very thick, red, -<-] (P2) -- (T2);
				\draw[very thick, red, -<-] (P1) -- (T2);
				\draw[very thick, red, ->-] (T2) -- (0,0);
				\draw[blue] (P1) to[out=north, in=west] (U) to[out=east, in=north] (P2);
				\draw[blue] (P1) to[out=south, in=west] (D) to[out=east, in=south] (P2);
				\node[draw, fill=black, circle, inner sep=1] at (P1) {};
				\node[draw, fill=black, circle, inner sep=1] at (P2) {};
				\node at (P1) [left]{\scriptsize $p_{1}$};
				\node at (P2) [right]{\scriptsize $p_{2}$};
			}
		\ }&=0,&
		\mathord{\
			\tikz[baseline=-.6ex, scale=.1]{
				\coordinate (P1) at (-10,0);
				\coordinate (P2) at (10,0);
				\coordinate (U) at (0,7);
				\coordinate (D) at (0,-7);
				\coordinate (T1) at (0,5);
				\coordinate (T2) at (0,-5);
				\fill[blue!20] (P1) to[out=north, in=west] (U) to[out=east, in=north] (P2) to[out=south, in=east] (D) to[out=west, in=south] (P1) -- cycle;
				\draw[very thick, red, -<-, rounded corners] (P1) --(T1) -- (0,0);
				\draw[very thick, red, ->-, rounded corners] (P1) -- (T2) -- (0,0);
				\draw[blue] (P1) to[out=north, in=west] (U) to[out=east, in=north] (P2);
				\draw[blue] (P1) to[out=south, in=west] (D) to[out=east, in=south] (P2);
				\node[draw, fill=black, circle, inner sep=1] at (P1) {};
				\node[draw, fill=black, circle, inner sep=1] at (P2) {};
				\node at (P1) [left]{\scriptsize $p_{1}$};
				\node at (P2) [right]{\scriptsize $p_{2}$};
			}
		}&=0,\\
		\mathord{\
			\tikz[baseline=-.6ex, scale=.1]{
				\coordinate (P1) at (-10,0);
				\coordinate (P2) at (10,0);
				\coordinate (U) at (0,7);
				\coordinate (D) at (0,-7);
				\coordinate (T1) at (0,5);
				\coordinate (T2) at (0,-5);
				\fill[blue!20] (P1) to[out=north, in=west] (U) to[out=east, in=north] (P2) to[out=south, in=east] (D) to[out=west, in=south] (P1) -- cycle;
				\draw[very thick, red, ->-] (P1) --(T1);
				\draw[very thick, red, ->-] (P2) --(T1);
				\draw[very thick, red, ->-] (0,0) --(T1);
				\draw[very thick, red, ->-, rounded corners] (P2) -- (T2) -- (0,0);
				\draw[blue] (P1) to[out=north, in=west] (U) to[out=east, in=north] (P2);
				\draw[blue] (P1) to[out=south, in=west] (D) to[out=east, in=south] (P2);
				\node[draw, fill=black, circle, inner sep=1] at (P1) {};
				\node[draw, fill=black, circle, inner sep=1] at (P2) {};
				\node at (P1) [left]{\scriptsize $p_{1}$};
				\node at (P2) [right]{\scriptsize $p_{2}$};
			}
		}&=0,&
		\mathord{\
			\tikz[baseline=-.6ex, scale=.1]{
				\coordinate (P1) at (-10,0);
				\coordinate (P2) at (10,0);
				\coordinate (U) at (0,7);
				\coordinate (D) at (0,-7);
				\coordinate (T1) at (0,5);
				\coordinate (T2) at (0,-5);
				\fill[blue!20] (P1) to[out=north, in=west] (U) to[out=east, in=north] (P2) to[out=south, in=east] (D) to[out=west, in=south] (P1) -- cycle;
				\draw[very thick, red, ->-] (P1) --(T1);
				\draw[very thick, red, ->-] (P2) --(T1);
				\draw[very thick, red, ->-] (0,0) --(T1);
				\draw[very thick, red, -<-] (P2) -- (T2);
				\draw[very thick, red, -<-] (P1) -- (T2);
				\draw[very thick, red, ->-] (T2) -- (0,0);
				\draw[blue] (P1) to[out=north, in=west] (U) to[out=east, in=north] (P2);
				\draw[blue] (P1) to[out=south, in=west] (D) to[out=east, in=south] (P2);
				\node[draw, fill=black, circle, inner sep=1] at (P1) {};
				\node[draw, fill=black, circle, inner sep=1] at (P2) {};
				\node at (P1) [left]{\scriptsize $p_{1}$};
				\node at (P2) [right]{\scriptsize $p_{2}$};
			}
		\ }&=[e_{12}e_{21}],&
		\mathord{\
			\tikz[baseline=-.6ex, scale=.1]{
				\coordinate (P1) at (-10,0);
				\coordinate (P2) at (10,0);
				\coordinate (U) at (0,7);
				\coordinate (D) at (0,-7);
				\coordinate (T1) at (0,5);
				\coordinate (T2) at (0,-5);
				\fill[blue!20] (P1) to[out=north, in=west] (U) to[out=east, in=north] (P2) to[out=south, in=east] (D) to[out=west, in=south] (P1) -- cycle;
				\draw[very thick, red, ->-] (P1) --(T1);
				\draw[very thick, red, ->-] (P2) --(T1);
				\draw[very thick, red, ->-] (0,0) --(T1);
				\draw[very thick, red, ->-, rounded corners] (P1) -- (T2) -- (0,0);
				\draw[blue] (P1) to[out=north, in=west] (U) to[out=east, in=north] (P2);
				\draw[blue] (P1) to[out=south, in=west] (D) to[out=east, in=south] (P2);
				\node[draw, fill=black, circle, inner sep=1] at (P1) {};
				\node[draw, fill=black, circle, inner sep=1] at (P2) {};
				\node at (P1) [left]{\scriptsize $p_{1}$};
				\node at (P2) [right]{\scriptsize $p_{2}$};
			}
		}&=0,\\
		\mathord{\
			\tikz[baseline=-.6ex, scale=.1]{
				\coordinate (P1) at (-10,0);
				\coordinate (P2) at (10,0);
				\coordinate (U) at (0,7);
				\coordinate (D) at (0,-7);
				\coordinate (T1) at (0,5);
				\coordinate (T2) at (0,-5);
				\fill[blue!20] (P1) to[out=north, in=west] (U) to[out=east, in=north] (P2) to[out=south, in=east] (D) to[out=west, in=south] (P1) -- cycle;
				\draw[very thick, red, -<-, rounded corners] (P2) --(T1) -- (0,0);
				\draw[very thick, red, ->-, rounded corners] (P2) -- (T2) -- (0,0);
				\draw[blue] (P1) to[out=north, in=west] (U) to[out=east, in=north] (P2);
				\draw[blue] (P1) to[out=south, in=west] (D) to[out=east, in=south] (P2);
				\node[draw, fill=black, circle, inner sep=1] at (P1) {};
				\node[draw, fill=black, circle, inner sep=1] at (P2) {};
				\node at (P1) [left]{\scriptsize $p_{1}$};
				\node at (P2) [right]{\scriptsize $p_{2}$};
			}
		}&=0,&
		\mathord{\
			\tikz[baseline=-.6ex, scale=.1]{
				\coordinate (P1) at (-10,0);
				\coordinate (P2) at (10,0);
				\coordinate (U) at (0,7);
				\coordinate (D) at (0,-7);
				\coordinate (T1) at (0,5);
				\coordinate (T2) at (0,-5);
				\fill[blue!20] (P1) to[out=north, in=west] (U) to[out=east, in=north] (P2) to[out=south, in=east] (D) to[out=west, in=south] (P1) -- cycle;
				\draw[very thick, red, -<-, rounded corners] (P2) --(T1) -- (0,0);
				\draw[very thick, red, -<-] (P2) -- (T2);
				\draw[very thick, red, -<-] (P1) -- (T2);
				\draw[very thick, red, ->-] (T2) -- (0,0);
				\draw[blue] (P1) to[out=north, in=west] (U) to[out=east, in=north] (P2);
				\draw[blue] (P1) to[out=south, in=west] (D) to[out=east, in=south] (P2);
				\node[draw, fill=black, circle, inner sep=1] at (P1) {};
				\node[draw, fill=black, circle, inner sep=1] at (P2) {};
				\node at (P1) [left]{\scriptsize $p_{1}$};
				\node at (P2) [right]{\scriptsize $p_{2}$};
			}
		\ }&=0,&
		\mathord{\
			\tikz[baseline=-.6ex, scale=.1]{
				\coordinate (P1) at (-10,0);
				\coordinate (P2) at (10,0);
				\coordinate (U) at (0,7);
				\coordinate (D) at (0,-7);
				\coordinate (T1) at (0,5);
				\coordinate (T2) at (0,-5);
				\fill[blue!20] (P1) to[out=north, in=west] (U) to[out=east, in=north] (P2) to[out=south, in=east] (D) to[out=west, in=south] (P1) -- cycle;
				\draw[very thick, red, -<-, rounded corners] (P2) --(T1) -- (0,0);
				\draw[very thick, red, ->-, rounded corners] (P1) -- (T2) -- (0,0);
				\draw[blue] (P1) to[out=north, in=west] (U) to[out=east, in=north] (P2);
				\draw[blue] (P1) to[out=south, in=west] (D) to[out=east, in=south] (P2);
				\node[draw, fill=black, circle, inner sep=1] at (P1) {};
				\node[draw, fill=black, circle, inner sep=1] at (P2) {};
				\node at (P1) [left]{\scriptsize $p_{1}$};
				\node at (P2) [right]{\scriptsize $p_{2}$};
			}
		}&=e_{12}.\\
	\end{align*}
	Consequently, $x$ is decomposed into a sum of monomials in $\cup_{T\in t(\Delta)}\Eweb{T}$ such that its coefficients are positive Laurent polynomial in $\bZ_{A}$.
\end{proof}

\begin{cor}\label{cor:elevation-preserving web in cluster}
	Let $\bD=(\Delta,\bs_{\Delta})$ be a decorated triangulation of $\Sigma$.
	Then, for any elevation-preserving web $x\in\SK{\Sigma}$ with respect to $\Delta$, there exists $J_{\bD}\in \mathrm{mon}(C_{\bD})$ such that the expansion of $xJ_{\bD}$ in $\langle C_{\bD}\rangle_{\mathrm{alg}}$ has positive coefficients.
\end{cor}
\begin{proof}
	By \cref{thm:elevation-preserving web}, an elevation-preserving web is expanded as a positive polynomial in $\cup_{T\in t(\Delta)}\Eweb{T}$ by multiplying an appropriate element in $\mathrm{mon}(\Delta)$.
	In a similar way to \cref{cor:web-cluster-expansion-T}, we can further expand it as a polynomial in a web cluster $C_{\bD}$ with positive coefficients by multiplying appropriate elementary webs in triangles.
\end{proof}

\begin{cor}\label{cor:simple web}
	Let $\gamma$ be an oriented simple loop in $\Sigma$.
	Then for any $\bD$ and $n\in\bN$, the $n$-bracelet and $n$-bangle are expressed as polynomial with positive coefficients in $\langle C_{\bD}\rangle_{\mathrm{alg}}$ by multiplying some monomial in $\mathrm{mon}(C_{\bD})$.
\end{cor}

\subsubsection{The $\Delta$-lacalization of $\SK{\Sigma}$}
\label{subsec:localization}
In \cref{thm:elementary-web-expansion-T,thm:elevation-preserving web}, we expanded any $\mathfrak{sl}_{3}$-webs in $\SK{\Sigma}$ by multiplying some monomials.
We are going to see that these expansions give rise to expressions of $\mathfrak{sl}_{3}$-webs as Laurent polynomials in suitable localizations of $\SK{\Sigma}$.

\begin{lem}
	The multiplicatively closed set $\mathrm{mon}(\Delta)$ in $\SK{\Sigma}$ satisfies the Ore condition. 
\end{lem}
\begin{proof}
	We first show the right Ore condition that for any web $x\in\SK{\Sigma}$ and monomial $J\in\mathrm{mon}(\Delta)$, there exist a web $x'\in\SK{\Sigma}$ and a monomial $J'\in\mathrm{mon}(\Delta)$ such that $xJ'=Jx'$.
	By \cref{thm:elementary-web-expansion-T}, there exist a monomial $J''\in\mathrm{mon}(\Delta)$ such that
	\[
		xJ''=\sum_{f}\lambda_{f}\bm{e}^{f}\in\langle\cup_{T\in t(\Delta)}\Eweb{T}\rangle_{\mathrm{alg}}
	\]
	where $f\colon\cup_{T\in t(\Delta)}\Eweb{T}\to\bN$ and $\bm{e}^{f}:=\prod_{i\in I(\Delta)}e_{i}^{f(i)}$.
	Since any monomial in $\mathrm{mon}(\Delta)$ is $A$-commutative with $\bm{e}^{f}$ by \cref{lem:str const T}, we obtain the following:
	\begin{align*}
		xJ''J=\left(\sum_{f}\lambda_{f}\bm{e}^{f}\right)J
		=\sum_{f}\lambda_{f}\bm{e}^{f}J
		=\sum_{f}\lambda_{f}A^{n(f,J)}J\bm{e}^{f}
		=J\left(\sum_{f}A^{n(f,J)}\lambda_{f}\bm{e}^{f}\right),
	\end{align*}
	where $n(f,J)$ is some half integer. In other words,  $xJ'=Jx'$ holds with $J':=J''J$ and $x':=\sum_{f}A^{n(f,J)}\lambda_{f}\bm{e}^{f}$. 
	By applying the mirror-reflection $\dagger$, we see that the left Ore condition also holds.
\end{proof}

\begin{dfn}[the localized $\mathfrak{sl}_3$-skein algebras for $(\Sigma,\bM)$]\label{def:localized_skein_algebras}
    The \emph{$\Delta$-localized skein algebra $\SK{\Sigma}[\Delta^{-1}]$} is the Ore localization of $\SK{\Sigma}$ by the Ore set $\mathrm{mon}(\Delta)$.  Similarly, the \emph{$\partial$-localized skein algebra $\SK{\Sigma}[\partial^{-1}]$} is the Ore localization by $\mathrm{mon}(\Eweb{\partial^{\times}\Sigma})$.
\end{dfn}

The following theorem guarantees the existence of the skew-field of fractions $\mathrm{Frac}\SK{\Sigma}$ and embeddings of the above localizations of $\SK{\Sigma}$.

\begin{thm}[\cite{IY-stateclasp}]\label{thm:localization}
	$\SK{\Sigma}$ is an Ore domain.
\end{thm}
\begin{proof}[Sketch of Proof]
    We use an isomorphism between the $\partial$-localized skein algebra $\SK{\Sigma}[\partial^{-1}]$ and the \emph{reduced stated skein algebra $\mathscr{S}^{\mathsf{st}}_{\mathfrak{sl}_3}(\Sigma)_{\mathrm{red}}$} and the splitting property of the (reduced) stated skein algebra.
    The stated skein algebra $\mathscr{S}^{\mathsf{st}}_{\mathfrak{sl}_3}(\Sigma)$ is defined by Higgins~\cite{Higgins20}.
    He also showed injectivity of the \emph{splitting homomorphism} $\theta_{\alpha}\colon\mathscr{S}^{\mathsf{st}}_{\mathfrak{sl}_3}(\Sigma)\to\mathscr{S}^{\mathsf{st}}_{\mathfrak{sl}_3}(\Sigma')$ where $\Sigma'$ is an unpunctured marked surface obtained by cutting $\Sigma$ along an ideal arc $\alpha$.
    The authors defined the reduced version of the stated skein algebra $\mathscr{S}^{\mathsf{st}}_{\mathfrak{sl}_3}(\Sigma)_{\mathrm{red}}$ and an isomorphism $\Phi_{\Sigma}\colon\mathscr{S}^{\mathsf{st}}_{\mathfrak{sl}_3}(\Sigma)_{\mathrm{red}}\to\SK{\Sigma}[\partial^{-1}]$ in \cite{IY-stateclasp}.
    Moreover, the reduced stated skein algebra inherits the splitting property and its injectivity.
    For a given ideal triangulation $\Delta=\{\alpha_1,\alpha_2,\dots,\alpha_n\}$ of $\Sigma$, one can consider the composition of the above maps:
    \[
    \SK{\Sigma}\xhookrightarrow[]{}\SK{\Sigma}[\partial^{-1}]\xrightarrow[\Phi^{-1}_{\Sigma}]{\cong}\mathscr{S}^{\mathsf{st}}_{\mathfrak{sl}_3}(\Sigma)_{\mathrm{red}}\xhookrightarrow[\theta_{\Delta}^{\mathrm{red}}]{}\bigotimes_{T\in t(\Delta)}\mathscr{S}^{\mathsf{st}}_{\mathfrak{sl}_3}(T)_{\mathrm{red}}\xrightarrow[\bigotimes_{T}\Phi_{T}]{\cong}\bigotimes_{T\in t(\Delta)}\SK{T}[\partial^{-1}]
    \]
    where $\theta_{\Delta}^{\mathrm{red}}:=\theta_{\alpha_n}^{\mathrm{red}}\circ\cdots\circ\theta_{\alpha_2}^{\mathrm{red}}\circ\theta_{\alpha_1}^{\mathrm{red}}$ is a composition of splitting homomorphisms for reduced stated skein algebras.
    Note that we use $\mathscr{S}^{\mathsf{st}}_{\mathfrak{sl}_3}(\sqcup_{T\in t(\Delta)}T)_{\mathrm{red}}\cong\bigotimes_{T\in t(\Delta)}\mathscr{S}^{\mathsf{st}}_{\mathfrak{sl}_3}(T)_{\mathrm{red}}$.
    The right-most algebra is an Ore domain.
    Then we have an embedding of $\SK{\Sigma}$ into an Ore domain, which implies that $\Skein{\Sigma}$ is an Ore domain.
\end{proof}.


\begin{cor}\label{cor:localization}We have inclusions
	\[
		\SK{\Sigma}\subset\SK{\Sigma}[\partial^{-1}]\subset\SK{\Sigma}[\Delta^{-1}]\subset\mathrm{Frac}\SK{\Sigma}.
	\]
\end{cor}

\subsection{Expansions on boundary intervals}

As we have seen in the previous subsection, we can obtain a Laurent expression of a given $\mathfrak{sl}_3$-web in $\SK{\Sigma}[\Delta^{-1}]$ by cutting it along the triangulation $\Delta$.
In this subsection, we are going to give a way to obtain a Laurent expression of an $\mathfrak{sl}_3$-web in $\SK{\Sigma}[\partial^{-1}]$. 
To obtain such an expansion, we make an $\mathfrak{sl}_3$-web stick to boundary intervals by the following lemma.
\begin{lem}[The sticking trick]\label{lem:stickto}
	\begin{align}
		\mathord{
			\ \tikz[baseline=-.6ex, scale=.1, yshift=-5cm]{
				\coordinate (A) at (-10,0);
				\coordinate (B) at (10,0);
				\fill[lightgray] (A) -- (B) -- ($(B)+(0,-3)$) -- ($(A)+(0,-3)$) -- (A) -- cycle;
				\coordinate (T11) at ($(A)!0.2!(B)+(0,15)$);
				\coordinate (T21) at ($(A)!0.2!(B)+(0,10)$);
				\coordinate (T12) at ($(A)!0.8!(B)+(0,15)$);
				\coordinate (T22) at ($(A)!0.8!(B)+(0,10)$);
				\draw[red, very thick, ->-={.5}{}] (B) to[out=north,in=east] (0,5) to[out=west, in=north] (A);
				\draw[red, very thick, -<-={.5}{}] (B) to[out=north west,in=east] (0,3) to[out=west, in=north east] (A);
				\draw[overarc, ->-={.5}{red}, rounded corners] (T11) -- (T21) -- (T22) -- (T12);
				\draw[fill] (A) circle (20pt);
				\draw[fill] (B) circle (20pt);
				\draw[very thick] (A) -- (B);
			}
		\ }
		&=A^{6}
		\mathord{
			\ \tikz[baseline=-.6ex, scale=.1, yshift=-5cm]{
				\coordinate (A) at (-10,0);
				\coordinate (B) at (10,0);
				\fill[lightgray] (A) -- (B) -- ($(B)+(0,-3)$) -- ($(A)+(0,-3)$) -- (A) -- cycle;
				\coordinate (T11) at ($(A)!0.2!(B)+(0,15)$);
				\coordinate (T21) at ($(A)!0.2!(B)+(0,10)$);
				\coordinate (T12) at ($(A)!0.8!(B)+(0,15)$);
				\coordinate (T22) at ($(A)!0.8!(B)+(0,10)$);
				\draw[red, very thick, ->-] (T11) -- (A);
				\draw[red, very thick, -<-] (T12) -- (B);
				\draw[red, very thick, -<-={.5}{}] (B) to[out=north west,in=east] (0,3) to[out=west, in=north east] (A);
				\draw[fill] (A) circle (20pt);
				\draw[fill] (B) circle (20pt);
				\draw[very thick] (A) -- (B);
		}
		\ }
		-A^{5}\mathord{
			\ \tikz[baseline=-.6ex, scale=.1, yshift=-5cm]{
				\coordinate (A) at (-10,0);
				\coordinate (B) at (10,0);
				\fill[lightgray] (A) -- (B) -- ($(B)+(0,-3)$) -- ($(A)+(0,-3)$) -- (A) -- cycle;
				\coordinate (T11) at ($(A)!0.2!(B)+(0,15)$);
				\coordinate (T21) at ($(A)!0.2!(B)+(0,10)$);
				\coordinate (T12) at ($(A)!0.8!(B)+(0,15)$);
				\coordinate (T22) at ($(A)!0.8!(B)+(0,10)$);
			\draw[very thick, red, ->-] (T11) -- (T21);
			\draw[very thick, red, -<-] (T12) -- (T22);
			\draw[very thick, red, ->-={.5}{red}, overarc] (B) -- (T21);
			\draw[very thick, red, -<-={.5}{red}, overarc] (A) -- (T22);
			\draw[very thick, red, ->-={.5}{red}] (A) -- (T21);
			\draw[very thick, red, -<-] (B) -- (T22);
			\draw[fill] (A) circle (20pt);
			\draw[fill] (B) circle (20pt);
			\draw[very thick] (A) -- (B);
		}
		\ }
		+A^{2}\mathord{
			\ \tikz[baseline=-.6ex, scale=.1, yshift=-5cm]{
				\coordinate (A) at (-10,0);
				\coordinate (B) at (10,0);
				\fill[lightgray] (A) -- (B) -- ($(B)+(0,-3)$) -- ($(A)+(0,-3)$) -- (A) -- cycle;
				\coordinate (T11) at ($(A)!0.2!(B)+(0,15)$);
				\coordinate (T21) at ($(A)!0.2!(B)+(0,10)$);
				\coordinate (T12) at ($(A)!0.8!(B)+(0,15)$);
				\coordinate (T22) at ($(A)!0.8!(B)+(0,10)$);
			\draw[red, very thick, ->-={.6}{red}, overarc] (T11) -- (B);
			\draw[red, very thick, -<-={.6}{red}, overarc] (T12) -- (A);
			\draw[red, very thick, ->-={.5}{}] (B) to[out=north west,in=east] (0,3) to[out=west, in=north east] (A);
			\draw[fill] (A) circle (20pt);
			\draw[fill] (B) circle (20pt);
			\draw[very thick] (A) -- (B);
		}
		\ }
	\end{align}
\end{lem}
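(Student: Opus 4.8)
The plan is to prove the identity by a direct computation in $\SK{\Sigma}$, resolving the crossings produced when the over-passing strand is pressed against the doubled boundary-parallel arc, and then normalizing the resulting local pictures at the two marked points using the boundary skein relations. The situation is a boundary analogue of the cutting trick \cref{lem:cuttingweb}, the essential new feature being the interaction with the marked points $A$ and $B$, which is precisely what the boundary relations \eqref{rel:parallel}--\eqref{rel:pcircle} are designed to handle.

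First I would isotope the over-strand downward so that it meets each of the two nested arcs transversally, creating two internal crossings in which the strand over-passes both arcs. Second, I would resolve these two crossings by the $\mathfrak{sl}_3$-skein relations \eqref{rel:plus} and \eqref{rel:minus} (the two arcs carry opposite orientations, so one crossing is of each type); each resolution contributes a smoothing term and a trivalent term, giving four terms weighted by monomials in $A$. Third, in each term several edges now either terminate at $A$ and $B$ or form short arcs returning to the boundary, so I would bring these to the standard stuck-to-boundary form using the boundary skein relations \eqref{rel:parallel}, \eqref{rel:antiparallel}, \eqref{rel:pbigon} together with the boundary twist relations \eqref{rel:simultwist}--\eqref{rel:antisimultwist}, discarding the terms that vanish by \eqref{rel:degbigon} or \eqref{rel:pcircle}. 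Finally I would collect the surviving webs, which should organize into the three diagrams on the right-hand side, and record the accumulated powers of $A$.

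The hard part will be accounting for the negative coefficient $-A^5$. Since every skein and boundary relation except the bigon relation \eqref{rel:bigon} has nonnegative coefficients, this minus sign can only enter through an application of \eqref{rel:bigon}, which arises when one of the four resolution terms closes up an interior bigon after the strand has been attached to the boundary. Because \eqref{rel:bigon} produces the two-term factor $-(A^3+A^{-3})$ while the final answer carries only the single term $-A^5$, the delicate point is to verify that the companion contribution cancels against (or is identified with) another resolution term, leaving exactly $-A^5$; keeping precise track of the orientations and elevations at $A$ and $B$, and hence of the exact powers of $A$ attached to each surviving web, is the principal bookkeeping obstacle. An alternative, and perhaps cleaner, route would be to apply \cref{lem:cuttingweb} directly to the over-strand and the doubled arc to obtain three terms with coefficients $A^3,1,A^{-3}$, and then apply the boundary relations at $A$ and $B$ to each term; this isolates the boundary manipulations and makes the origin of the $A$-power shifts more transparent.
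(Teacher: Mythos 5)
Your overall strategy---create crossings between the over-strand and the doubled boundary-parallel arc, resolve them with the skein relations, and clean up at the marked points with the boundary relations---is the right one, and your ``alternative route'' via \cref{lem:cuttingweb} is essentially what the paper does. But the first concrete step of your main route is topologically wrong. Both ends of the over-strand leave the local picture on the same side of the doubled arc: each of the two arcs, together with the boundary interval from $A$ to $B$, bounds a disk containing neither end of the over-strand, so any isotopy produces an \emph{even} number of transverse intersections with each arc. The minimal nontrivial picture therefore has four internal crossings (two on the descent near one marked point, two on the ascent near the other), not two, and the resolution via \eqref{rel:plus} and \eqref{rel:minus} produces $2^4=16$ terms (or $3\times 3=9$ if each pair of crossings is packaged by \cref{lem:cuttingweb}), not four. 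The same issue undermines the alternative route as stated: \cref{lem:cuttingweb} applies to a strand whose two ends lie on \emph{opposite} sides of the doubled arc, so it cannot be applied ``directly'' to the original picture; after the second Reidemeister move one application only consumes the two crossings in, say, the right half, and each of the three resulting terms still carries the two crossings of the left half. The paper's proof is precisely this corrected version: deform by the second Reidemeister move, apply \cref{lem:cuttingweb} to the two crossings in the right half, and dispose of the left half by the mirror reflection of the same computation.

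Your diagnosis that the coefficient $-A^{5}$ is the delicate point is fair, and a negative sign can indeed only enter through \eqref{rel:bigon} (or by inverting a crossing relation), so a companion term produced by the factor $-(A^{3}+A^{-3})$ would have to be tracked and cancelled. However, since the term count underlying your bookkeeping is wrong, the cancellation analysis you outline cannot be carried out as described; the nine (or sixteen) terms of the correct expansion, together with the vanishings from \eqref{rel:degbigon}, \eqref{rel:pcircle} and the simplifications from \eqref{rel:parallel}, \eqref{rel:antiparallel}, \eqref{rel:pbigon}, must actually be collected before one can see that exactly the three displayed webs survive with coefficients $A^{6}$, $-A^{5}$, $A^{2}$. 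As it stands the proposal is a plan whose first step fails, rather than a proof.
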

\begin{proof}
	We can deform the left-hand side as
	\[ 
		\mathord{
			\ \tikz[baseline=-.6ex, scale=.1, yshift=-5cm]{
				\coordinate (A) at (-10,0);
				\coordinate (B) at (10,0);
				\fill[lightgray] (A) -- (B) -- ($(B)+(0,-3)$) -- ($(A)+(0,-3)$) -- (A) -- 	cycle;
				\coordinate (T11) at ($(A)!0.2!(B)+(0,15)$);
				\coordinate (T21) at ($(A)!0.2!(B)+(0,10)$);
				\coordinate (T12) at ($(A)!0.8!(B)+(0,15)$);
				\coordinate (T22) at ($(A)!0.8!(B)+(0,10)$);
				\draw[red, very thick, ->-={.5}{}] (B) to[out=north,in=east] (0,5) to	[out=west, in=north] (A);
				\draw[red, very thick, -<-={.5}{}] (B) to[out=north west,in=east] (0,3) to	[out=west, in=north east] (A);
				\draw[overarc, ->-={.5}{red}, rounded corners] (T11) -- (T21) -- (T22) -- 	(T12);
				\draw[fill] (A) circle (20pt);
				\draw[fill] (B) circle (20pt);
				\draw[very thick] (A) -- (B);
			}
		\ }
		=
		\mathord{
			\ \tikz[baseline=-.6ex, scale=.1, yshift=-5cm]{
				\coordinate (A) at (-10,0);
				\coordinate (B) at (10,0);
				\fill[lightgray] (A) -- (B) -- ($(B)+(0,-3)$) -- ($(A)+(0,-3)$) -- (A) -- 	cycle;
				\coordinate (T11) at ($(A)!0.2!(B)+(0,15)$);
				\coordinate (T21) at ($(A)!0.2!(B)+(0,10)$);
				\coordinate (T12) at ($(A)!0.8!(B)+(0,15)$);
				\coordinate (T22) at ($(A)!0.8!(B)+(0,10)$);
				\draw[red, very thick, ->-={.5}{}] (B) to[out=north,in=east] (0,13) to	[out=west, in=north] (A);
				\draw[red, very thick, -<-={.5}{}] (B) to[out=north,in=east] (0,10) to	[out=west, in=north] (A);
				\draw[overarc, ->-={.5}{red}, rounded corners] (T11) -- ($(T21)-(0,5)$) -- ($(T22)-(0,5)$) -- 	(T12);
				\draw[fill] (A) circle (20pt);
				\draw[fill] (B) circle (20pt);
				\draw[very thick] (A) -- (B);
			}
		\ }
	\]
	by the second Reidemeister move, and apply \cref{lem:cuttingweb} to the two crossings in the right-half of the diagram.
	Then the desired equation is the mirror reflection of the result.
\end{proof}

Let us recall the generating set of $\SK{\Sigma}$ given in Frohman--Sikora~\cite{FrohmanSikora20}. 
A \emph{triad} is a connected tangled trivalent graph with a single sink or source and three edges. 
An oriented arc is said to be \emph{descending} if one passes every self-crossing point through an over-pass first, following its orientation. 
An oriented knot is said to be descending if it admits a diagram with a basepoint such that the oriented arc starting from the basepoint satisfies the descending property. 
A triad is said to be descending if each of its three edges are descending and there exists a linear ordering among them such that the $i$-th edge is always over-passing the $j$-th edge for $j<i$.

Frohman and Sikora proved the following by induction on the ``size'' of a tangled trivalent graph.
\begin{thm}[{\cite[Theorem~6]{FrohmanSikora20}}]\label{thm:FSgenerator}
	$\SK{\Sigma}$ is generated by descending knots, arcs, and triads.
\end{thm}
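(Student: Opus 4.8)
The plan is to argue by induction on a \emph{size} of a tangled trivalent graph $G$, represented by a diagram on $\Sigma$ in generic position. I would take the size to be the lexicographically ordered tuple consisting of the number of internal crossings of $G$ and, as tie-breakers, the number of connected components and the number of trivalent vertices. The engine of the induction is the observation, visible directly in \cref{def:skeinrel} and \cref{def:bskeinrel}, that applying any $\mathfrak{sl}_3$-skein relation or boundary skein relation at a single crossing rewrites $G$ as a $\bZ_A$-linear combination of tangled trivalent graphs each of strictly smaller size: the identity- and $H$-smoothings of \eqref{rel:plus}--\eqref{rel:minus} both drop the crossing count by one, while the boundary relations \eqref{rel:parallel}--\eqref{rel:antiparallel} merely reorder elevations. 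Thus it suffices to exhibit, for each $G$, a single leading term that is a product of descending knots, arcs, and triads, with every remaining term of strictly smaller size; the latter lie in the generated subalgebra by the inductive hypothesis.

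First I would reduce to the case of a connected $G$. Isotoping the components $G^{(1)},\dots,G^{(m)}$ of $G$ so that they occupy disjoint height bands makes the superposition product $G^{(1)}\cdots G^{(m)}$ equal to $G$ except at those crossings where a lower band passes over a higher one; resolving each such crossing by the appropriate relation replaces it, up to a power of $A$, by the product-ordered crossing plus smaller-size terms. Hence, modulo smaller size, $G$ is a monomial in its connected components, and it remains to treat a single connected tangled trivalent graph.

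For a connected $G$ with no trivalent vertex --- an arc or a knot --- I would make it \emph{descending}: fixing a basepoint and orientation, any self-crossing violating the descending condition is switched using \eqref{rel:plus} or \eqref{rel:minus}, whose remaining smoothing terms have fewer crossings and hence smaller size. After finitely many switches the leading term is a descending arc or knot, a generator, and the error terms are absorbed by induction. For a connected $G$ carrying trivalent vertices $v_1,\dots,v_k$, the idea is to present $G$ as a superposition of $k$ triads (one per vertex) together with some arcs, at the cost of smaller-size terms. Concretely, I would cut each internal edge (an edge joining two trivalent vertices) and route its two free ends to a common marked point so that they arrive as a parallel pair along a boundary interval; the resulting cut-open graph is literally a product of triads and arcs, and the derived boundary relation \eqref{rel:H_equivalence}, read from right to left, re-creates the internal $H$-edge gluing two triads back together. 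Iterating over all internal edges reconstructs $G$ modulo configurations eliminated by \eqref{rel:Y_elimination} and \eqref{rel:U_elimination}, which are of smaller size, after which each triad and arc is made descending as in the vertex-free case.

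The main obstacle I anticipate is precisely this vertex case: one must show that the routing of the severed half-edges to marked points can always be performed --- using only that $\bM\neq\varnothing$ and that $\Sigma$ is connected --- without inflating the size, and that the regluing via \eqref{rel:H_equivalence}, together with the attendant elevation bookkeeping at the marked points, genuinely produces $G$ as the unique leading term with all corrections strictly smaller. Making this ``explosion and regluing'' of a multivalent web into triads precise, and verifying that the induced order on marked-point elevations is compatible with the chosen size function, is the delicate combinatorial core of the argument; by comparison, the vertex-free descending reductions and the disconnection step are routine.
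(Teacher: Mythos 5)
Your reductions that involve only crossings --- ordering the connected components into height bands, and making a vertex-free component descending by switching crossings at the cost of smoothing terms with strictly fewer crossings --- are sound and close in spirit to the paper's argument. The fatal problem is the step for components with trivalent vertices. Cutting an internal edge $e$ and routing its two severed ends to a marked point $p$ produces pieces with two new univalent endpoints at $p$, and the relation \eqref{rel:H_equivalence} cannot remove them: it equates two configurations that both have the same pair of endpoints on the boundary (an $H$-web whose lower stubs end on the boundary, versus the identity pair of strands), so reading it right-to-left merely inserts a bar and two extra trivalent vertices while the two legs still terminate at $p$. More decisively, every defining relation of $\SK{\Sigma}$ is homogeneous for the endpoint grading of \cref{def:egrading}, and $\vec{\gr}$ is additive under multiplication; your product of cut-open triads and arcs has $\vec{\gr}$ exceeding $\vec{\gr}(G)$ by $(m,m)$ when $m$ internal edges are cut, so no identity in $\SK{\Sigma}$ can express that product as $A^{k}G$ plus corrections --- every term in any expansion retains the extra endpoints. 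At best a cut-and-reglue strategy proves generation of the localized algebra $\SK{\Sigma}[\partial^{-1}]$ after multiplying by boundary webs, which is the content of \cref{lem:stickto} and \cref{thm:localized-generators}, not of the present theorem.

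The missing idea is to trade trivalent vertices for crossings entirely in the interior of the surface: reading \eqref{rel:plus} and \eqref{rel:minus} backwards at an internal edge $e$ gives $G=AG_{e,+}-A^{3}G_{e,0}$ and $G=A^{-1}G_{e,-}-A^{-3}G_{e,0}$, where $G_{e,\pm}$ replaces $e$ together with its two trivalent vertices by a crossing and $G_{e,0}$ replaces it by parallel arcs. Both replacements preserve the endpoint grading and strictly decrease the number of trivalent vertices, so one inducts on that number: walking along the graph from a basepoint and choosing the sign at each vertex so that the traversed path is always over-passing peels off a single descending knot, arc, or triad as the top layer times a web with fewer vertices, plus terms handled by the inductive hypothesis. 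This is the route taken in the paper's \cref{lem:refineFS}.
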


We give another proof of the above theorem with a refinement of the above generating set, which will be useful for our argument on a generating set of $\SK{\Sigma}[\partial^{-1}]$.

\begin{lem}\label{lem:refineFS}
    Fix a boundary interval $E$. Then we have a generating set of $\SK{\Sigma}$ consisting of the following $\mathfrak{sl}_3$-webs:
	\begin{itemize}
		\item Descending knots $\alpha$ such that 
		there is a path from a point on $\alpha$ to $E$ without crossing $\alpha$ except for its initial point. 
		\item Descending arcs.
		\item Descending triads $\tau$ such that the diagram obtained from $\tau$ by removing the first edge has no internal crossings.
	\end{itemize}
\end{lem}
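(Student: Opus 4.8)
The plan is to invoke the Frohman--Sikora generating set of \cref{thm:FSgenerator} and to rewrite each of its three kinds of generators --- descending knots, descending arcs, and descending triads --- as a $\bZ_A$-polynomial in the refined generators of the statement. Descending arcs appear verbatim in the refined list, so there is nothing to do for them; the entire content is in the knot and triad cases, and the latter is the crux.

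For a descending knot $\alpha$ the accessibility condition is in fact automatic and needs no skein manipulation. Let $R$ be the component of the complement of the underlying $4$-valent shadow of $\alpha$ whose closure contains $E$. The frontier of $R$ in $\Sigma$ is contained in $\alpha\cup\partial\Sigma$; were it contained in $\partial\Sigma$, then $R$ would be clopen, hence all of the connected surface $\Sigma$, contradicting $\alpha\neq\emptyset$. Therefore $\overline{R}$ meets $\alpha$ in the interior of some edge, and joining such a point to $E$ inside $R$ yields a path meeting $\alpha$ only at its initial point. Since visibility holds for \emph{every} representative, in particular for the descending one, each descending knot already lies in the refined set.

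The triad case is handled by induction on the number of crossings (the parameter refining the ``size'' used by Frohman--Sikora). Given a descending triad $\tau$, order its edges $e_1\prec e_2\prec e_3$ by the over-passing order, so that $e_3$ passes over $e_2$ over $e_1$, and set $a:=e_2\cup e_3$, the arc through the (now smooth) former trivalent vertex obtained by deleting the first edge $e_1$. If $a$ has no internal crossing, then every crossing of $\tau$ involves $e_1$ and $\tau$ is already of the refined type. Otherwise pick any crossing internal to $a$ and resolve it by the appropriate relation \eqref{rel:plus} or \eqref{rel:minus}; this writes $\tau$ as a $\bZ_A$-combination of two tangled trivalent graphs, each with strictly fewer crossings (the smoothing term reconnects two strands of $a$, while the $H$-term trades the crossing for two new trivalent vertices). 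By \cref{thm:FSgenerator} and the inductive hypothesis both are polynomials in the refined generators. Iterating clears all crossings internal to $a$ while keeping $e_1$ at the lowest elevation so that it absorbs the surviving crossings, making the descending structure and the first-edge condition hold simultaneously.

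I expect the main obstacle to be exactly this triad induction. One must check that re-expanding the resolved webs into Frohman--Sikora generators does not raise the crossing count --- the connected components and the triads cut out at their trivalent vertices each carry at most as many crossings as the ambient web --- so that the induction on crossings is well founded despite the $H$-term introducing new vertices; and one must verify that the descending order can always be arranged with the uncrossed pair of edges on top. This bookkeeping, rather than any individual skein computation, is where the care lies.
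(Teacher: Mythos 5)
Your strategy differs fundamentally from the paper's: the paper does not post-process the Frohman--Sikora generators but re-proves generation from scratch, by induction on the number of trivalent vertices of a \emph{basis web}, tracing a descending component from a basepoint and peeling it off as a top-elevation factor. That choice is not cosmetic, and your route has a genuine gap in the triad case. Your induction is on the number of crossings, but the inductive step resolves a crossing of $\tau$ via \eqref{rel:plus} or \eqref{rel:minus} into a smoothing term and an $H$-term, and then re-expands each term into Frohman--Sikora generators via \cref{thm:FSgenerator} before invoking the hypothesis. The Frohman--Sikora decomposition, however, works by converting internal edges (pairs of trivalent vertices) into crossings --- exactly the move of \cref{fig:edge-replacement} --- so it trades vertices \emph{for} crossings and in general strictly increases the crossing count of the generators it outputs. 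Your parenthetical justification that the resulting components ``carry at most as many crossings as the ambient web'' does not describe how those generators actually arise, so the induction is not well founded. The paper avoids this entirely: its measure (number of trivalent vertices) strictly decreases when a descending factor is split off, and the refined triad condition comes for free because the two external edges of a triad inside a \emph{basis} web are already embedded; no crossing of $e_2\cup e_3$ ever has to be removed after the fact.

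Your topological argument for knots is correct as far as it goes --- some point of any knot diagram in a connected $\Sigma$ is visible from $E$ --- but it proves the condition only in a form that is too weak to be useful. The way \cref{lem:refineFS} is used in \cref{thm:localized-generators}, the auxiliary path must reach the \emph{basepoint} of the descending structure, so that \cref{lem:stickto} applied there splits $\alpha$ into descending arcs; a descending diagram cannot in general be re-based at an arbitrary visible point. The paper secures this by choosing the basepoint $x_0$ of its tracing algorithm to be a visible point of the original basis web and observing that the whole procedure happens away from a neighborhood of the access path, so every descending knot it produces is descending \emph{from} a visible basepoint. Your argument, which treats visibility as a property of the knot independent of its descending structure, does not deliver that.
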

\begin{proof}
	We show that any basis web $G$ can be described as a product of the above $\mathfrak{sl}_3$-webs by induction on the number of trivalent vertices $V(G)$ of $G$. 
	First, any basis web $G$ with $\# V(G)\leq 1$ is a product of simple loops, arcs and a single triad. This establishes the basis step. 
	
	For a basis web $G$ with $\#V(G) \geq 2$, 
	let $G_{e,{+}}$ (resp.~$G_{e,-}$) be the tangled trivalent graph obtained by replacing an internal edge $e$ of $G$ with a positive (resp. negative) crossing, and $G_{e,{0}}$ the one obtained by replacing $e$ with parallel arcs. 
	Then we know 
	\begin{align*}
	    G=AG_{e,{+}}-A^3G_{e,0}\quad \mbox{and} \quad G=A^{-1}G_{e,{-}}-A^{-3}G_{e,0}
	\end{align*}
	from the skein relations \eqref{rel:plus}, \eqref{rel:minus}. This is the basic operation that we use in the following argument. 
	
	Let us first consider the case where $G$ has no endpoints on $\mathbb{M}$. In this case, our claim is that such a basis web can be written as a polynomial of descending loops. We give an algorithm to obtain such a polynomial expression of $G$. 
	Fix a basepoint $x_0$ on an arbitrary edge of $G$, and we start to move from $x_0$ along the edge following its orientation.
	If we arrive at a trivalent vertex, then choose one of the three incident edges other than what we passed, and call it $e$.
	Then we replace $G$ with $G_{e,{+}}$ or $G_{e,{-}}$ by using one of the above skein relations: see \cref{fig:edge-replacement}. Then we move to a position on one of the new internal edges, as shown there. 
	Here we make a choice of $G_{e,{\pm}}$ in such a way that our chosen path becomes an over-passing arc.
	We repeatedly apply this procedure to each of the resulting terms. 
	
	Then we eventually come back to the original point $x_0$, since the number of trivalent vertices is finite.
	We remark that in some step of the above procedure, the chosen edge $e$ may have crossings. (For example, you may choose the under-passing edge of the first web in the right-hand side of \cref{fig:edge-replacement} in a latter step.) 
	However, since the original basis web $G$ has no internal crossings, such an edge $e$ must be under-passing, and over-passed by an edge that we already passed in a former step. 
	This remark and our way to choose $G_{e,{\pm}}$ ensure that the resulting loop is descending and lies in a higher elevation than the other connected components.
	Hence the basis web $G$ is written as the product of a descending loop and a basis web with a smaller number of trivalent vertices. By the induction assumption, it follows that $G$ is written as a polynomial of descending loops.
	To see that the condition in the statement is satisfied, note that we can choose a point $x$ on an edge of the original basis web $G$ and a path from $x$ to the boundary interval $E$ so that it does not cross $G$ except for its initial point. Since the above procedure preserves a neighborhood of the path, all the resulting descending knot satisfies the desired property. 
	
	For a basis web $G$ having endpoints on $\mathbb{M}$, we choose the basepoint to be one of its endpoint and apply the same procedure.
	It will terminate when we arrive at another endpoint of $G$ or a trivalent vertex with two external edges.
	These external edges have no internal crossings, since $G$ is a basis web.
	In these cases, the result is a descending arc in the first case, or a trivalent graph satisfying the required condition in the second case. Then $G$ is written as a product of one of these graphs and another web with a smaller number of trivalent vertices. Thus the assertion follows from the induction assumption. 
	\begin{figure}
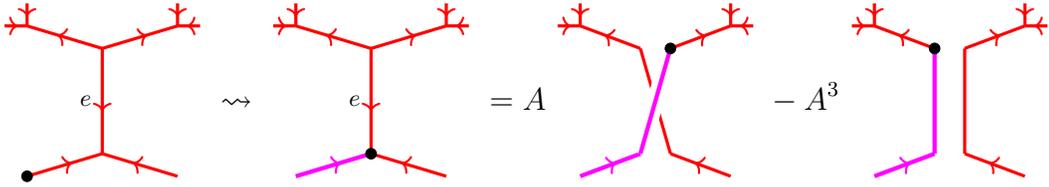

		\begin{align*}
			\mathord{
				\ \tikz[baseline=-.6ex, scale=.1]{
					\coordinate (AL) at (-10,-10);
					\coordinate (AR) at (10,-10);
					\coordinate (BL) at (-10,10);
					\coordinate (BR) at (10,10);
					\coordinate (CD) at (0,-7);
					\coordinate (CU) at (0,7);
					\coordinate (BLL) at ($(-10,10)+(-3,0)$);
					\coordinate (BLR) at ($(-10,10)+(0,3)$);
					\coordinate (BRR) at ($(10,10)+(3,0)$);
					\coordinate (BRL) at ($(10,10)+(0,3)$);
					\draw[red, very thick, ->-={.6}{}] (AL) -- (CD);
					\draw[red, very thick, ->-={.6}{}] (AR) -- (CD);
					\draw[red, very thick, ->-={.6}{}] (CU) -- (CD);
					\draw[red, very thick, ->-={.6}{}] (CU) -- (BL);
					\draw[red, very thick, ->-={.6}{}] (CU) -- (BR);
					\draw[red, very thick, ->-={.6}{}] (BRR) -- (BR);
					\draw[red, very thick, ->-={.6}{}] (BRL) -- (BR);
					\draw[red, very thick, ->-={.6}{}] (BLL) -- (BL);
					\draw[red, very thick, ->-={.6}{}] (BLR) -- (BL);
					\draw[fill] (AL) circle (20pt);
					\node at (0,0) [left]{\scriptsize$e$};
				}
			\ }
			\leadsto
			\mathord{
				\ \tikz[baseline=-.6ex, scale=.1]{
					\coordinate (AL) at (-10,-10);
					\coordinate (AR) at (10,-10);
					\coordinate (BL) at (-10,10);
					\coordinate (BR) at (10,10);
					\coordinate (CD) at (0,-7);
					\coordinate (CU) at (0,7);
					\coordinate (BLL) at ($(-10,10)+(-3,0)$);
					\coordinate (BLR) at ($(-10,10)+(0,3)$);
					\coordinate (BRR) at ($(10,10)+(3,0)$);
					\coordinate (BRL) at ($(10,10)+(0,3)$);
					\draw[magenta, ultra thick, ->-={.6}{}] (AL) -- (CD);
					\draw[red, very thick, ->-={.6}{}] (AR) -- (CD);
					\draw[red, very thick, ->-={.6}{}] (CU) -- (CD);
					\draw[red, very thick, ->-={.6}{}] (CU) -- (BL);
					\draw[red, very thick, ->-={.6}{}] (CU) -- (BR);
					\draw[red, very thick, ->-={.6}{}] (BRR) -- (BR);
					\draw[red, very thick, ->-={.6}{}] (BRL) -- (BR);
					\draw[red, very thick, ->-={.6}{}] (BLL) -- (BL);
					\draw[red, very thick, ->-={.6}{}] (BLR) -- (BL);
					\draw[fill] (CD) circle (20pt);
					\node at (0,0) [left]{\scriptsize$e$};
				}
			\ }
			&=A
			\mathord{
				\ \tikz[baseline=-.6ex, scale=.1]{
					\coordinate (AL) at (-10,-10);
					\coordinate (AR) at (10,-10);
					\coordinate (BL) at (-10,10);
					\coordinate (BR) at (10,10);
					\coordinate (CD) at (0,-7);
					\coordinate (CU) at (0,7);
					\coordinate (CDL) at ($(CD)+(-2,0)$);
					\coordinate (CDR) at ($(CD)+(2,0)$);
					\coordinate (CUL) at ($(CU)+(-2,0)$);
					\coordinate (CUR) at ($(CU)+(2,0)$);
					\coordinate (BLL) at ($(-10,10)+(-3,0)$);
					\coordinate (BLR) at ($(-10,10)+(0,3)$);
					\coordinate (BRR) at ($(10,10)+(3,0)$);
					\coordinate (BRL) at ($(10,10)+(0,3)$);
					\draw[magenta, ultra thick, ->-={.6}{}] (AL) -- (CDL);
					\draw[red, very thick, ->-={.6}{}] (AR) -- (CDR);
					\draw[red, very thick] (CUL) -- (CDR);
					\draw[white, double=magenta, double distance=1.6pt, line width=2.4pt] (CUR) -- (CDL);
					\draw[red, very thick, ->-={.6}{}] (CUL) -- (BL);
					\draw[red, very thick, ->-={.6}{}] (CUR) -- (BR);
					\draw[red, very thick, ->-={.6}{}] (BRR) -- (BR);
					\draw[red, very thick, ->-={.6}{}] (BRL) -- (BR);
					\draw[red, very thick, ->-={.6}{}] (BLL) -- (BL);
					\draw[red, very thick, ->-={.6}{}] (BLR) -- (BL);
					\draw[fill] (CUR) circle (20pt);
				}
			\ }
			-A^3
			\mathord{
				\ \tikz[baseline=-.6ex, scale=.1]{
					\coordinate (AL) at (-10,-10);
					\coordinate (AR) at (10,-10);
					\coordinate (BL) at (-10,10);
					\coordinate (BR) at (10,10);
					\coordinate (CD) at (0,-7);
					\coordinate (CU) at (0,7);
					\coordinate (CDL) at ($(CD)+(-2,0)$);
					\coordinate (CDR) at ($(CD)+(2,0)$);
					\coordinate (CUL) at ($(CU)+(-2,0)$);
					\coordinate (CUR) at ($(CU)+(2,0)$);
					\coordinate (BLL) at ($(-10,10)+(-3,0)$);
					\coordinate (BLR) at ($(-10,10)+(0,3)$);
					\coordinate (BRR) at ($(10,10)+(3,0)$);
					\coordinate (BRL) at ($(10,10)+(0,3)$);
					\draw[magenta, ultra thick, ->-={.6}{}] (AL) -- (CDL);
					\draw[red, very thick, ->-={.6}{}] (AR) -- (CDR);
					\draw[red, very thick] (CUR) -- (CDR);
					\draw[magenta, ultra thick] (CUL) -- (CDL);
					\draw[red, very thick, ->-={.6}{}] (CUL) -- (BL);
					\draw[red, very thick, ->-={.6}{}] (CUR) -- (BR);
					\draw[red, very thick, ->-={.6}{}] (BRR) -- (BR);
					\draw[red, very thick, ->-={.6}{}] (BRL) -- (BR);
					\draw[red, very thick, ->-={.6}{}] (BLL) -- (BL);
					\draw[red, very thick, ->-={.6}{}] (BLR) -- (BL);
					\draw[fill] (CUL) circle (20pt);
				}
			\ }
		\end{align*}
			\caption{Decomposition of a basis web $G$ into the sum of $G_{e,+}$ and $G_{e,0}$. The black dot shows our location before and after the step.}
		\label{fig:edge-replacement}
	\end{figure}
\end{proof}

We next show that $\SK{\Sigma}[\partial^{-1}]$ is generated by elementary webs coming from triangles in $\Sigma$.
It is easy to see that any flat arcs and triads are elementary webs, since the endpoint grading of such an $\mathfrak{sl}_3$-web can not be represented as a sums of two endpoint gradings of $\mathfrak{sl}_3$-webs in $\SK{\Sigma}$. 
\begin{thm}\label{thm:localized-generators}
	For a connected unpunctured marked surface $\Sigma$ with at least two special points, $\SK{\Sigma}[\partial^{-1}]$ is generated by oriented simple arcs and triads.
\end{thm}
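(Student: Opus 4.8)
The plan is to combine the refined generating set from \cref{lem:refineFS} with the sticking trick \eqref{lem:stickto} to show that, after inverting the boundary webs, descending knots and descending triads become superfluous, leaving only oriented simple arcs and triads as generators. By \cref{thm:FSgenerator} and \cref{lem:refineFS}, it suffices to express each of the three types of generators (descending knots, descending arcs, descending triads) as a quantum Laurent polynomial in $\SK{\Sigma}[\partial^{-1}]$ using only simple arcs and triads. Descending arcs already reduce to simple arcs by applying the skein relations \eqref{rel:plus}, \eqref{rel:minus} to resolve their self-crossings together with the Reidemeister moves (R0)--(R3); this is the easy base case and the claim that flat arcs and triads are elementary was already noted just before the statement.

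First I would treat the descending knots. The key point is the extra condition from \cref{lem:refineFS}: each descending knot $\alpha$ admits a path to the fixed boundary interval $E$ crossing $\alpha$ only at its initial point. I would use this path to isotope a small portion of $\alpha$ toward $E$ and then invoke the sticking trick \eqref{lem:stickto}, which rewrites a strand running parallel to a boundary interval (equivalently, a component that can be pushed against the gray boundary region) as a sum of webs each of which touches $E$ at its endpoints. Since $E$ is a boundary interval and we are working in $\SK{\Sigma}[\partial^{-1}]$, the resulting boundary-web factors are invertible, so multiplying by $(e_E e_E^\ast)^{-1}$ or the appropriate inverse boundary monomial converts the closed loop into a web with strictly fewer internal crossings or a web that is now an arc (or product of arcs) ending on $E$. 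Iterating, each descending knot is expressed through simple arcs and triads after localizing at $\partial$; the connectivity hypothesis on $\Sigma$ and the existence of at least two special points guarantee that such a path to $E$ genuinely exists and that $E$ carries an invertible boundary web.

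Next I would handle the descending triads $\tau$. By the refinement in \cref{lem:refineFS} we may assume the diagram obtained from $\tau$ by deleting its first (top) edge has no internal crossings, so the only crossings of $\tau$ involve that first edge passing over the rest. I would resolve these crossings using \eqref{rel:plus} and \eqref{rel:minus}; each resolution either produces a web with fewer crossings or, via the smoothing term, splits off a boundary-parallel strand to which the sticking trick applies. As in the knot case, after inverting boundary webs each term becomes a product of a simple triad (the crossingless core) with simple arcs. One must keep track of the fact that the sticking trick introduces negative signs (as flagged in the introduction around \cref{lem:stickto}), but for a mere generation statement signs are irrelevant.

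The main obstacle I anticipate is the bookkeeping of the inductive termination: one must verify that each application of the sticking trick or a crossing-resolution strictly decreases a well-chosen complexity measure (number of internal crossings, then number of trivalent vertices, as in the proof of \cref{lem:refineFS}), while simultaneously ensuring that the boundary-web factors produced genuinely lie in the Ore set $\mathrm{mon}(\Eweb{\partial^{\times}\Sigma})$ so that they may be inverted inside $\SK{\Sigma}[\partial^{-1}]$. A subtle point is that the path-to-$E$ condition must be preserved under the resolutions, exactly as the neighborhood-of-the-path argument in \cref{lem:refineFS} guarantees; I would lean on that argument to maintain the invariant throughout the induction. The hypothesis that $\Sigma$ is connected with at least two special points is what ensures every component can be routed to a boundary interval, and I expect this is precisely where the connectivity assumption is used.
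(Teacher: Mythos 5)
Your high-level strategy (start from the refined generating set of \cref{lem:refineFS} and use the sticking trick \cref{lem:stickto} to peel generators apart, with boundary webs becoming invertible after localization) is the same as the paper's, but three steps as you describe them do not go through. First, descending arcs are \emph{not} an easy base case: a descending arc can represent a non-simple homotopy class (e.g.\ winding around a handle), so Reidemeister moves cannot remove its self-crossings, and resolving a self-crossing with \eqref{rel:plus} or \eqref{rel:minus} produces an $H$-web term with two new trivalent vertices plus a smoothing that may split off a closed component --- neither term is a simple arc. The paper treats descending arcs by exactly the same sticking-trick induction as knots (stick one endpoint to $E$ and proceed), and you need to do the same. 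Second, you never use a second boundary interval $E'$, yet this is precisely where the hypothesis of at least two special points enters: after the first application of \cref{lem:stickto} along the path to $E$, the top-layer component may still have self-crossings, and the paper resolves these by sticking along an auxiliary path to a \emph{different} interval $E'$; your reading of the hypothesis (``every component can be routed to a boundary interval'') is not its actual role, since a single marked point already guarantees one boundary interval.

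Third, your treatment of descending triads by ``resolving the crossings of the first edge with \eqref{rel:plus}, \eqref{rel:minus}'' lacks a termination argument: each such resolution trades a crossing for either a reconnection (destroying the triad structure) or an $H$-web insertion (adding trivalent vertices), so the complexity measure ``crossings, then trivalent vertices'' does not visibly decrease, and the claim that the smoothing term ``splits off a boundary-parallel strand'' is unsupported. The paper instead applies the sticking trick along three auxiliary paths from $E$, arranged at interleaved elevations near the trivalent vertex (\cref{fig:triad2edge}), so that the component containing the trivalent vertex lands inside a biangle and becomes a monomial in boundary webs, while the remaining components are arcs handled as before. Without this (or some substitute inductive scheme whose termination you actually verify), the triad case of your argument is incomplete.
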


\begin{proof}
	We are going to replace the generators in \cref{lem:refineFS} with simple arcs and triads by using \cref{lem:stickto}.
	The condition on the number of special points of $\Sigma$ ensures that there exist two distinct boundary intervals $E$ and $E'$.
	First, we are going to replace a descending knot with a product of descending arcs, triads, and H-webs.
	By the condition on the descending knots $\alpha$ in \cref{lem:refineFS}, one can take an auxiliary path (a blue dotted arc on the left-hand side of the equation below) from the boundary interval $E$ to the basepoint of $\alpha$ without crossing the knot.
	Then we apply \cref{lem:stickto} to a neighborhood of the path, as follows:
	\begin{align*}
		\mathord{
			\ \tikz[baseline=-.6ex, scale=.1, yshift=-5cm]{
				\coordinate (A) at (-10,0);
				\coordinate (B) at (10,0);
				\coordinate (T11) at ($(A)!0.2!(B)+(0,15)$);
				\coordinate (T21) at ($(A)!0.2!(B)+(0,10)$);
				\coordinate (T12) at ($(A)!0.8!(B)+(0,15)$);
				\coordinate (T22) at ($(A)!0.8!(B)+(0,10)$);
				\draw[red, very thick, ->-={.6}{}] (B) to[out=north,in=east] (0,5) to[out=west, in=north] (A);
				\draw[red, very thick, -<-={.6}{}] (B) to[out=north west,in=east] (0,3) to[out=west, in=north east] (A);
				\draw[overarc, ->-={.7}{red}, rounded corners] (T11) -- (T21) -- (T22) -- (T12);
				\node[draw, fill=red!30, rounded corners] at (0,18) {descending arc};
				\draw[blue, very thick, dotted] ($(T21)!0.5!(T22)$) -- (0,0);
				\node at ($(T21)!0.5!(T22)$) {$\ast$};
				\fill[lightgray] (A) -- (B) -- ($(B)+(0,-3)$) -- ($(A)+(0,-3)$) -- (A) -- cycle;
				\draw[fill] (A) circle (20pt);
				\draw[fill] (B) circle (20pt);
				\draw[very thick] (A) -- (B);
			}
		\ }
		&=A^{6}
		\mathord{
			\ \tikz[baseline=-.6ex, scale=.1, yshift=-5cm]{
				\coordinate (A) at (-10,0);
				\coordinate (B) at (10,0);
				\coordinate (T11) at ($(A)!0.2!(B)+(0,15)$);
				\coordinate (T21) at ($(A)!0.2!(B)+(0,10)$);
				\coordinate (T12) at ($(A)!0.8!(B)+(0,15)$);
				\coordinate (T22) at ($(A)!0.8!(B)+(0,10)$);
				\draw[red, very thick, ->-] (T11) -- (A);
				\draw[red, very thick, -<-] (T12) -- (B);
				\draw[red, very thick, -<-={.5}{}] (B) to[out=north west,in=east] (0,3) to[out=west, in=north east] (A);
				\node[draw, fill=red!30, rounded corners] at (0,18) {descending arc};
				\fill[lightgray] (A) -- (B) -- ($(B)+(0,-3)$) -- ($(A)+(0,-3)$) -- (A) -- cycle;
				\draw[fill] (A) circle (20pt);
				\draw[fill] (B) circle (20pt);
				\draw[very thick] (A) -- (B);
		}
		\ }
		-A^{5}\mathord{
			\ \tikz[baseline=-.6ex, scale=.1, yshift=-5cm]{
				\coordinate (A) at (-10,0);
				\coordinate (B) at (10,0);
				\coordinate (T11) at ($(A)!0.2!(B)+(0,15)$);
				\coordinate (T21) at ($(A)!0.2!(B)+(0,10)$);
				\coordinate (T12) at ($(A)!0.8!(B)+(0,15)$);
				\coordinate (T22) at ($(A)!0.8!(B)+(0,10)$);
				\draw[very thick, red, ->-] (T11) -- (T21);
				\draw[very thick, red, -<-] (T12) -- (T22);
				\draw[very thick, red, ->-={.5}{red}, overarc] (B) -- (T21);
				\draw[very thick, red, -<-={.5}{red}, overarc] (A) -- (T22);
				\draw[very thick, red, ->-={.5}{red}] (A) -- (T21);
				\draw[very thick, red, -<-] (B) -- (T22);
				\node[draw, fill=red!30, rounded corners] at (0,18) {descending arc};
				\fill[lightgray] (A) -- (B) -- ($(B)+(0,-3)$) -- ($(A)+(0,-3)$) -- (A) -- cycle;
				\draw[fill] (A) circle (20pt);
				\draw[fill] (B) circle (20pt);
				\draw[very thick] (A) -- (B);
			}
		\ }
		+A^{2}\mathord{
			\ \tikz[baseline=-.6ex, scale=.1, yshift=-5cm]{
				\coordinate (A) at (-10,0);
				\coordinate (B) at (10,0);
				\coordinate (T11) at ($(A)!0.2!(B)+(0,15)$);
				\coordinate (T21) at ($(A)!0.2!(B)+(0,10)$);
				\coordinate (T12) at ($(A)!0.8!(B)+(0,15)$);
				\coordinate (T22) at ($(A)!0.8!(B)+(0,10)$);
				\draw[red, very thick, ->-={.6}{red}, overarc] (T11) -- (B);
				\draw[red, very thick, -<-={.6}{red}, overarc] (T12) -- (A);
				\draw[red, very thick, ->-={.5}{}] (B) to[out=north west,in=east] (0,3) to[out=west, in=north east] 	(A);
				\node[draw, fill=red!30, rounded corners] at (0,18) {descending arc};
				\fill[lightgray] (A) -- (B) -- ($(B)+(0,-3)$) -- ($(A)+(0,-3)$) -- (A) -- cycle;
				\draw[fill] (A) circle (20pt);
				\draw[fill] (B) circle (20pt);
				\draw[very thick] (A) -- (B);
			}
		\ }
	\end{align*}
	This procedure expands $\alpha$ into a sum of two descending arcs and an H-web with a descending internal edge, up to the multiplication of boundary webs. 
	For each of such descending arcs or H-webs, its portion from $E$ to its first self-intersection point has one of the diagrams in \cref{fig:descending endpoint}.
	\begin{figure}
		\begin{tikzpicture}[scale=.1]
			\coordinate (A) at (-8,0);
			\coordinate (B) at (8,0);
			\coordinate (C) at (0,0);
			\fill[lightgray] (A) -- (B) -- ($(B)+(0,-3)$) -- ($(A)+(0,-3)$) -- (A) -- cycle;
			\draw[blue,very thick,dotted] (-4,0) to[out=north, in=south] (4,7);
			\draw[red, very thick,->-={.8}{red}] (8,10) to[out=south, in=east] (4,7) to[out=west, in=south] (-3,20);
			\draw[red, very thick, ->-={.2}{red}, overarc] (C) -- (0,10) to[out=north, in=west] (4,13) to[out=east, in=north] (8,10);
			\draw[fill=lightgray] (4,10) circle (40pt);
			\draw[fill] (A) circle (20pt);
			\draw[fill] (C) circle (20pt);
			\draw[very thick] (A) -- (B);
			\node at (-4,0) [below]{$E$};
		\end{tikzpicture}
		\begin{tikzpicture}[scale=.1]
			\coordinate (A) at (-8,0);
			\coordinate (B) at (8,0);
			\coordinate (C) at (0,0);
			\draw[blue,very thick,dotted] (4,0) to[out=north, in=south] (4,7);
			\fill[lightgray] (A) -- (B) -- ($(B)+(0,-3)$) -- ($(A)+(0,-3)$) -- (A) -- cycle;
			\draw[red, very thick,->-={.8}{red}] (8,10) to[out=south, in=east] (4,7) to[out=west, in=south] (-3,20);
			\draw[red, very thick, ->-={.2}{red}, overarc] (C) -- (0,10) to[out=north, in=west] (4,13) to[out=east, in=north] (8,10);
			\draw[fill=lightgray] (4,10) circle (40pt);
			\draw[fill] (B) circle (20pt);
			\draw[fill] (C) circle (20pt);
			\draw[very thick] (A) -- (B);
			\node at (4,0) [below]{$E$};
		\end{tikzpicture}
		\begin{tikzpicture}[scale=.1]
			\coordinate (A) at (-8,0);
			\coordinate (B) at (8,0);
			\coordinate (C) at (0,3);
			\draw[blue,very thick,dotted] (0,0) to[out=north, in=south] (4,7);
			\fill[lightgray] (A) -- (B) -- ($(B)+(0,-3)$) -- ($(A)+(0,-3)$) -- (A) -- cycle;
			\draw[red, very thick,->-={.8}{red}] (8,10) to[out=south, in=east] (4,7) to[out=west, in=south] (-3,20);
			\draw[red, very thick, ->-={.2}{red}, overarc] (C) -- (0,10) to[out=north, in=west] (4,13) to[out=east, in=north] (8,10);
			\draw[fill=lightgray] (4,10) circle (40pt);
			\draw[red, very thick, ->-={.5}{}] (C) -- (-5,0);
			\draw[red, very thick, ->-={.5}{red}, overarc] (C) -- (5,0);
			\draw[fill] (-5,0) circle (20pt);
			\draw[fill] (5,0) circle (20pt);
			\draw[very thick] (A) -- (B);
			\node at (0,0) [below]{$E$};
		\end{tikzpicture}
		\begin{tikzpicture}[scale=.1]
			\coordinate (A) at (-8,0);
			\coordinate (B) at (8,0);
			\coordinate (C) at (0,0);
			\draw[blue,very thick,dotted] (-4,0) to[out=north, in=south] (-4,7);
			\fill[lightgray] (A) -- (B) -- ($(B)+(0,-3)$) -- ($(A)+(0,-3)$) -- (A) -- cycle;
			\draw[red, very thick,->-={.8}{red}] (-8,10) to[out=south, in=west] (-4,7) to[out=east, in=south] (3,20);
			\draw[red, very thick, ->-={.2}{red}, overarc] (C) -- (0,10) to[out=north, in=east] (-4,13) to[out=west, in=north] (-8,10);
			\draw[fill=lightgray] (-4,10) circle (40pt);
			\draw[fill] (A) circle (20pt);
			\draw[fill] (C) circle (20pt);
			\draw[very thick] (A) -- (B);
			\node at (-4,0) [below]{$E$};
		\end{tikzpicture}
		\begin{tikzpicture}[scale=.1]
			\coordinate (A) at (-8,0);
			\coordinate (B) at (8,0);
			\coordinate (C) at (0,0);
			\draw[blue,very thick,dotted] (4,0) to[out=north, in=south] (-4,7);
			\fill[lightgray] (A) -- (B) -- ($(B)+(0,-3)$) -- ($(A)+(0,-3)$) -- (A) -- cycle;
			\draw[red, very thick,->-={.8}{red}] (-8,10) to[out=south, in=west] (-4,7) to[out=east, in=south] (3,20);
			\draw[red, very thick, ->-={.2}{red}, overarc] (C) -- (0,10) to[out=north, in=east] (-4,13) to[out=west, in=north] (-8,10);
			\draw[fill=lightgray] (-4,10) circle (40pt);
			\draw[fill] (B) circle (20pt);
			\draw[fill] (C) circle (20pt);
			\draw[very thick] (A) -- (B);
			\node at (4,0) [below]{$E$};
		\end{tikzpicture}
		\begin{tikzpicture}[scale=.1]
			\coordinate (A) at (-8,0);
			\coordinate (B) at (8,0);
			\coordinate (C) at (0,3);
			\draw[blue,very thick,dotted] (0,0) to[out=north, in=south] (-4,7);
			\fill[lightgray] (A) -- (B) -- ($(B)+(0,-3)$) -- ($(A)+(0,-3)$) -- (A) -- cycle;
			\draw[red, very thick,->-={.8}{red}] (-8,10) to[out=south, in=west] (-4,7) to[out=east, in=south] (3,20);
			\draw[red, very thick, ->-={.2}{red}, overarc] (C) -- (0,10) to[out=north, in=east] (-4,13) to[out=west, in=north] (-8,10);
			\draw[fill=lightgray] (-4,10) circle (40pt);
			\draw[red, very thick, ->-={.5}{red}, overarc] (C) -- (-5,0);
			\draw[red, very thick, ->-={.5}{}] (C) -- (5,0);
			\draw[fill] (-5,0) circle (20pt);
			\draw[fill] (5,0) circle (20pt);
			\draw[very thick] (A) -- (B);
			\node at (0,0) [below]{$E$};
		\end{tikzpicture}
	\caption{Six patterns of a portion from $E$ to the first self-intersection point. These diagrams show a tubular neighborhood of an $\mathfrak{sl}_3$-web, which may have other under-crossing arcs and endpoints of lower elevation.}
		\label{fig:descending endpoint}
	\end{figure}
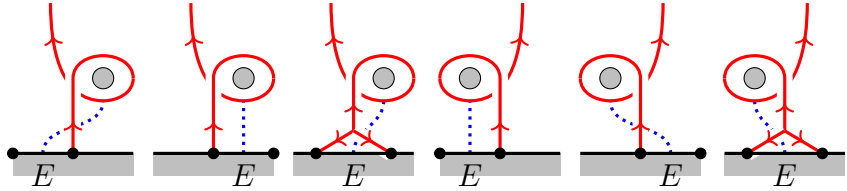
	We apply \cref{lem:stickto} along the dotted path shown in \cref{fig:descending endpoint}. 
	Then, the first $\mathfrak{sl}_3$-web in \cref{fig:descending endpoint} is expanded into a linear combination of
	\begin{align*}
		\mathord{\ 
			\tikz[baseline=-.6ex, scale=.1, yshift=-5cm]{
				\coordinate (A) at (-8,0);
				\coordinate (B) at (8,0);
				\coordinate (C) at (0,0);
				\fill[lightgray] (A) -- (B) -- ($(B)+(0,-3)$) -- ($(A)+(0,-3)$) -- (A) -- cycle;
				\draw[red, very thick, ->-={.5}{}] (A) to[out=north, in=south] (-3,20);
				\draw[red, very thick] (8,10) to[out=south, in=north east] (C);
				\draw[red, very thick, ->-={.5}{red}, overarc] (C) -- (0,10) to[out=north, in=west] (4,13) to	[out=east, in=north] (8,10);
				\draw[fill=lightgray] (4,10) circle (40pt);
				\draw[fill] (A) circle (20pt);
				\draw[fill] (C) circle (20pt);
				\draw[very thick] (A) -- (B);
				\node at (-4,0) [below]{$E$};
			}
		\ },
		\mathord{\ 
			\tikz[baseline=-.6ex, scale=.1, yshift=-5cm]{
				\coordinate (A) at (-8,0);
				\coordinate (B) at (8,0);
				\coordinate (C) at (0,0);
				\fill[lightgray] (A) -- (B) -- ($(B)+(0,-3)$) -- ($(A)+(0,-3)$) -- (A) -- cycle;
				\draw[red, very thick, ->-={.5}{}] (C) to[out=north west, in=south] (-3,20);
				\draw[red, very thick, overarc] (8,10) to[out=south, in=north east] (A);
				\draw[red, very thick, ->-={.5}{red}, overarc] (C) -- (0,10) to[out=north, in=west] (4,13) to	[out=east, in=north] (8,10);
				\draw[fill=lightgray] (4,10) circle (40pt);
				\draw[fill] (A) circle (20pt);
				\draw[fill] (C) circle (20pt);
				\draw[very thick] (A) -- (B);
				\node at (-4,0) [below]{$E$};
			}
		\ },
		\mathord{\ 
			\tikz[baseline=-.6ex, scale=.1, yshift=-5cm]{
				\coordinate (A) at (-8,0);
				\coordinate (B) at (8,0);
				\coordinate (C) at (0,0);
				\coordinate (ACl) at (-8,5);
				\coordinate (ACr) at (-3,5);
				\fill[lightgray] (A) -- (B) -- ($(B)+(0,-3)$) -- ($(A)+(0,-3)$) -- (A) -- cycle;
				\draw[red, very thick, ->-={.5}{}] (ACl) to[out=north, in=south] (-3,20);
				\draw[red, very thick, ->-={.5}{}] (ACl) -- (A);
				\draw[red, very thick, ->-={.5}{}, shorten >=.3cm] (ACl) -- (C);
				\draw[red, very thick] (ACr) to[out=north, in=south] (8,10);
				\draw[red, very thick, -<-={.5}{red}, overarc] (ACr) -- (A);
				\draw[red, very thick, -<-={.5}{red}, overarc, shorten >=.15cm] (ACr) -- (C);
				\draw[red, very thick, ->-={.5}{red}, overarc] (C) to[out=north east, in=south] (0,10) to[out=north, in=west] (4,13) to	[out=east, in=north] (8,10);
				\draw[red,very thick, shorten >=.6cm] (ACr) -- (A);
				\draw[fill=lightgray] (4,10) circle (40pt);
				\draw[fill] (A) circle (20pt);
				\draw[fill] (C) circle (20pt);
				\draw[very thick] (A) -- (B);
				\node at (-4,0) [below]{$E$};
			}
		\ },
	\end{align*}
	where we omitted boundary webs.
	Similarly for the second diagram
	\begin{align*}
		\mathord{\ 
			\tikz[baseline=-.6ex, scale=.1, yshift=-5cm]{
				\coordinate (A) at (-8,0);
				\coordinate (B) at (8,0);
				\coordinate (C) at (0,0);
				\fill[lightgray] (A) -- (B) -- ($(B)+(0,-3)$) -- ($(A)+(0,-3)$) -- (A) -- cycle;
				\draw[red, very thick,->-={.8}{red}] (C) to[out=north west, in=south] (-3,20);
				\draw[red, very thick, ->-={.2}{red}, overarc] (C) -- (0,10) to[out=north, in=west] (4,13) to[out=east, in=north] (8,10) -- (B);
				\draw[fill=lightgray] (4,10) circle (40pt);
				\draw[fill] (B) circle (20pt);
				\draw[fill] (C) circle (20pt);
				\draw[very thick] (A) -- (B);
				\node at (4,0) [below]{$E$};
			}
		\ },
		\mathord{\ 
			\tikz[baseline=-.6ex, scale=.1, yshift=-5cm]{
				\coordinate (A) at (-8,0);
				\coordinate (B) at (8,0);
				\coordinate (C) at (0,0);
				\fill[lightgray] (A) -- (B) -- ($(B)+(0,-3)$) -- ($(A)+(0,-3)$) -- (A) -- cycle;
				\draw[red, very thick,->-={.8}{red}] (B) to[out=north, in=east] (4,5) to[out=west, in=south] (-3,20);
				\draw[red, very thick, ->-={.2}{red}, overarc] (C) -- (0,10) to[out=north, in=west] (4,13) to[out=east, in=north] (8,10);
				\draw[red, very thick, overarc, shorten >=.2cm] (8,10) to[out=south, in=north east] (C);
				\draw[fill=lightgray] (4,10) circle (40pt);
				\draw[fill] (B) circle (20pt);
				\draw[fill] (C) circle (20pt);
				\draw[very thick] (A) -- (B);
				\node at (4,0) [below]{$E$};
			}
		\ },
		\mathord{\ 
			\tikz[baseline=-.6ex, scale=.1, yshift=-5cm]{
				\coordinate (A) at (-8,0);
				\coordinate (B) at (8,0);
				\coordinate (C) at (0,0);
				\fill[lightgray] (A) -- (B) -- ($(B)+(0,-3)$) -- ($(A)+(0,-3)$) -- (A) -- cycle;
				\draw[red, very thick,->-={.5}{red}] (-4,7) -- (-4,20);
				\draw[red, very thick,->-={.8}{red}, shorten >=.2cm] (-4,7) -- (B);
				\draw[red, very thick,->-={.5}{red}, shorten >=.3cm] (-4,7) -- (C);
				\draw[red, very thick, ->-={.2}{red}, overarc] (C) -- (0,10) to[out=north, in=west] (4,13) to[out=east, in=north] (8,7);
				\draw[red, very thick,->-={.5}{red}, overarc, shorten >=.2cm] (8,7) -- (C);
				\draw[red, very thick,->-={.5}{red}] (8,7) -- (B);
				\draw[fill=lightgray] (4,10) circle (40pt);
				\draw[fill] (B) circle (20pt);
				\draw[fill] (C) circle (20pt);
				\draw[very thick] (A) -- (B);
				\node at (4,0) [below]{$E$};
			}
		\ },
	\end{align*}
	and for the third diagram
	\begin{align*}
		\mathord{\ 
			\tikz[baseline=-.6ex, scale=.1, yshift=-5cm]{
				\coordinate (A) at (-8,0);
				\coordinate (B) at (8,0);
				\coordinate (AA) at (-6,0);
				\coordinate (BB) at (6,0);
				\coordinate (C) at (0,0);
				\coordinate (Cl) at (-6,5);
				\coordinate (Cr) at (6,5);
				\draw[red, very thick, ->-={.5}{}, shorten <=.2cm] (AA) to[out=north, in=south] (-3,20);
				\draw[red, very thick, shorten <=.2cm] (BB) to[out=north, in=south] (8,10);
				\draw[red, very thick, ->-={.5}{red}] (0,5) -- (0,10) to[out=north, in=west] (4,13) to	[out=east, in=north] (8,10);
				\draw[red,very thick, ->-={.4}{red}] (0,5) -- (AA);
				\draw[red,very thick, ->-={.4}{red}] (0,5) -- (BB);
				\draw[fill=lightgray] (4,10) circle (40pt);
				\fill[lightgray] (A) -- (B) -- ($(B)+(0,-3)$) -- ($(A)+(0,-3)$) -- (A) -- cycle;
				\draw[fill] (AA) circle (20pt);
				\draw[fill] (BB) circle (20pt);
				\draw[very thick] (A) -- (B);
				\node at (0,0) [below]{$E$};
			}
		\ },
		\mathord{\ 
			\tikz[baseline=-.6ex, scale=.1, yshift=-5cm]{
				\coordinate (A) at (-8,0);
				\coordinate (B) at (8,0);
				\coordinate (AA) at (-6,0);
				\coordinate (BB) at (6,0);
				\coordinate (C) at (0,0);
				\coordinate (Cl) at (-6,5);
				\coordinate (Cr) at (6,5);
				\draw[red, very thick, ->-={.8}{}, shorten <=.2cm] (BB) to[out=north, in=south] (-4,15) -- (-4,20);
				\draw[red, very thick, shorten <=.2cm, overarc] (AA) to[out=north, in=south] (8,10);
				\draw[red, very thick, ->-={.5}{red}, overarc] (0,3) -- (0,10) to[out=north, in=west] (4,13) to	[out=east, in=north] (8,10);
				\draw[red,very thick, ->-={.4}{red}] (0,3) -- (AA);
				\draw[red,very thick, ->-={.4}{red}] (0,3) -- (BB);
				\draw[fill=lightgray] (4,10) circle (40pt);
				\fill[lightgray] (A) -- (B) -- ($(B)+(0,-3)$) -- ($(A)+(0,-3)$) -- (A) -- cycle;
				\draw[fill] (AA) circle (20pt);
				\draw[fill] (BB) circle (20pt);
				\draw[very thick] (A) -- (B);
				\node at (0,0) [below]{$E$};
			}
		\ },
		\mathord{\ 
			\tikz[baseline=-.6ex, scale=.1, yshift=-5cm]{
				\coordinate (A) at (-8,0);
				\coordinate (B) at (8,0);
				\coordinate (AA) at (-6,0);
				\coordinate (BB) at (6,0);
				\coordinate (C) at (0,0);
				\coordinate (Cl) at (-6,5);
				\coordinate (Cr) at (6,5);
				\draw[red, very thick, ->-={.5}{}] (Cl) to[out=north, in=south] (-3,20);
				\draw[red, very thick, ->-={.5}{}, shorten >=.3cm] (Cl) -- (AA);
				\draw[red, very thick, shorten >=.4cm] (Cl) -- (BB);
				\draw[red, very thick] (Cr) to[out=north, in=south] (8,10);
				\draw[red, very thick, overarc] (Cr) -- (AA);
				\draw[red, very thick, -<-={.5}{red}] (Cr) -- (BB);
				\draw[red, very thick, ->-={.5}{red}] (0,8) -- (0,10) to[out=north, in=west] (4,13) to	[out=east, in=north] (8,10);
				\draw[red,very thick, ->-={.4}{red}, overarc] (0,8) -- (AA);
				\draw[red,very thick, ->-={.4}{red}, overarc] (0,8) -- (BB);
				\draw[red,very thick, ->-={.4}{red}, shorten >=.2cm] (0,8) -- (AA);
				\draw[fill=lightgray] (4,10) circle (40pt);
				\fill[lightgray] (A) -- (B) -- ($(B)+(0,-3)$) -- ($(A)+(0,-3)$) -- (A) -- cycle;
				\draw[fill] (AA) circle (20pt);
				\draw[fill] (BB) circle (20pt);
				\draw[very thick] (A) -- (B);
				\node at (0,0) [below]{$E$};
			}
		\ }.
	\end{align*}
	The treatment of the last three diagrams in \cref{fig:descending endpoint} is similar.
	In the cases where the connected component in the top layer of the resulting diagram has self-crossings, we can further expand it into simple arcs, triads, and $H$-webs by applying \cref{lem:stickto} along an auxiliary path from another boundary interval $E'$.
	Moreover, a simple $H$-web can be written as a polynomial of simple arcs by \cref{fig:edge-replacement}.
	This argument reduces the number of self-crossings, and it can be repeatedly applied.
	Consequently, we can expand each of the $\mathfrak{sl}_3$-web in \cref{fig:descending endpoint}, and hence a descending knot, into a polynomial of simple arcs and triads by multiplying sufficiently many boundary webs along $E$ and $E'$.

	For a descending arc, one of its endpoints sticks to $E$ by \cref{lem:stickto}.
	Thus, one can apply the above argument.
	
	For a descending triad with two simple edges, we take an auxiliary path from $E$ to a point near the trivalent vertex for each of its three edges as in \cref{fig:triad2edge}.
	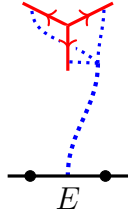
\begin{figure}
		\begin{tikzpicture}[scale=.1]
			\coordinate (A) at (-8,0);
			\coordinate (B) at (8,0);
			\coordinate (T0) at (0,20);
			\coordinate (T1) at (-6,23);
			\coordinate (T2) at (0,14);
			\coordinate (T3) at (6,23);
			\coordinate (T4) at (4,15);
			\draw[blue,very thick,dotted] (T4) to[out=north west, in=south] ($(T0)!.8!(T1)$);
			\draw[blue,very thick,dotted] (T4) to[out=north, in=south] ($(T0)!.8!(T3)$);
			\draw[blue,ultra thick,dotted] (T4) to[out=south, in=north] (0,0);
			\draw[red, very thick, ->-={.5}{}] (T0) -- (T1);
			\draw[red, very thick, ->-={.5}{red}, overarc] (T0) -- (T2);
			\draw[red, very thick, ->-={.5}{}] (T0) -- (T3);
			\draw[blue,very thick,dotted] (T4) -- ($(T0)!.8!(T2)$);
			\draw[fill] (-5,0) circle (20pt);
			\draw[fill] (5,0) circle (20pt);
			\draw[very thick] (A) -- (B);
			\node at (0,0) [below]{$E$};
		\end{tikzpicture}
		\caption{The choice of auxiliary paths from $E$ to the three edges of a triad. The three paths follow the same route from $E$ until a point near to the trivalent vertex, having distinct elevations. The auxiliary path to the first edge is arranged in an intermediate layer between the first and the second edges; the second path lies in a layer between the second and the third edges, the third path in the lowest layer.}
		\label{fig:triad2edge}
	\end{figure}
	After applying \cref{lem:stickto} along these paths, a connected component of the resulting $\mathfrak{sl}_3$-webs containing the trivalent vertex lies in a biangle.
	In particular, such connected components are represented by monomials of boundary webs along $E$.
	Other components containing three edges are descending arcs or triads with ends as in \cref{fig:descending endpoint}.
	Thus any descending triad is expanded into simple arcs and triads, up to boundary webs.
\end{proof}

\begin{rem}\label{rem:classical-proof}
    In the classical case ($A^{\frac{1}{2}}=1$ or $-1$), the proof of \cref{thm:localized-generators} becomes simple.
    We do not need to discuss generators of the (boundary-localized) $\mathfrak{sl}_3$-skein algebra because the multiplication is commutative in this case.
    The proof is the following.
    For any $\mathfrak{sl}_3$-web and every vertices of it, one can stick three edges around a vertex to a boundary interval $E$ by \cref{lem:stickto} as in \cref{fig:triad2edge}.
    The resulting $\mathfrak{sl}_3$-web is a polynomial of simple arcs and simple triads.
\end{rem}

\section{Quantum cluster algebras}\label{sect:qcluster}

\subsection{Quantum cluster algebra}
Here we recall the definition of the quantum cluster algebra following \cite{BZ} and related fundamental results. We also recall a grading on the (quantum) cluster algebra which we call the \emph{ensemble grading} from the viewpoint of the cluster variety \cite{FG09}, which has been originally investigated in \cite{GSV}.

\subsubsection{The exchange graph and the cluster algebra}\label{subsub:CA}
Fix a finite set $I=\{1,\dots,N\}$ of indices
and a field $\cF$ which is isomorphic to the field of rational functions on $N$ variables with rational coefficients. We also fix a subset $I_\uf \subset I$ and let $I_\f:=I \setminus I_\uf$. 
A \emph{(labeled) seed} in $\cF$ is a pair $(B,\mathbf{A})$, where
\begin{itemize}
    \item $B=(b_{ij})_{i,j \in I}$ is a skew-symmetric matrix with half-integral entries such that $b_{ij} \in \bZ$ unless $(i,j) \in I_\f \times I_\f$;
    \item $\mathbf{A}=(A_i)_{i \in I}$ is a tuple of algebraically independent elements in $\cF$.
\end{itemize}
We call a matrix $B$ satisfying the above conditions an \emph{exchange matrix}. The elements $A_i$ for $i \in I$ are called the \emph{cluster ($\A$-)variables}, and those for $i \in I_\f$ are called the \emph{frozen variables}. 

It is useful to represent an exchange matrix $B=(b_{ij})_{i,j \in I}$ by a quiver $Q$. 
Let us define the \emph{quiver exchange matrix}\footnote{This is identified with the Fock--Goncharov's exchange matrix \cite{FG09}. See \cref{sec:FG}.} $\varepsilon=(\varepsilon_{ij})_{i,j \in I}$ by $\varepsilon_{ij}:=b_{ji}$. Then the quiver $Q$ corresponding to $B$ has vertices parametrized by the set $I$ and $|\varepsilon_{ij}|$ arrows from $i$ to $j$ (resp. $j$ to $i$) if $\varepsilon_{ij} >0$ (resp. $\varepsilon_{ji} >0$). In figures, we draw $n$ dashed arrows from $i$ to $j$ if $\varepsilon_{ij}=n/2$ for $n \in \bZ$, where a pair of dashed arrows is replaced with a solid arrow. 

For an unfrozen index $k \in I_\uf$, the \emph{seed mutation} produces a new seed $(B',\mathbf{A}')=\mu_k(B,\mathbf{A})$ according to the following rule:
\begin{align}
    b'_{ij} &= 
    \begin{cases}
    -b_{ij} & \mbox{if $i=k$ or $j=k$}, \\
    b_{ij} + [b_{ik}]_+ [b_{kj}]_+ - [-b_{ik}]_+ [-b_{kj}]_+ & \mbox{otherwise},
    \end{cases} \label{eq:matrix mutation}\\
    A'&=\begin{cases}
    \displaystyle{ A_k^{-1} \left(\prod_{j \in I}A_j^{[ b_{jk}]_+} + \prod_{j \in I}A_j^{[-b_{jk}]_+}\right)} & \mbox{if $i=k$}, \\
    A_i & \mbox{if $i \neq k$}. 
    \end{cases} \label{eq:A-transf}
\end{align}
Here $[a]_+:=\max\{a,0\}$ for $a \in \bR$. The relation \eqref{eq:matrix mutation} is called the \emph{matrix mutation}, and \eqref{eq:A-transf} is called the \emph{exchange relation}. 
It is not hard to check that the seed mutation is involutive: $(B,\mathbf{A})=\mu_k\mu_k(B,\mathbf{A})$. 


For a permutation $\sigma\in \mathfrak{S}_{I_\uf} \times \mathfrak{S}_{I_\f}$ that do not mix the unfrozen/frozen indices, a new seed $(B',\mathbf{A}')=\sigma(B,\mathbf{A})$ is defined by 
\begin{align}\label{eq:classical_permutation}
    b'_{ij} = b_{\sigma^{-1}(i),\sigma^{-1}(j)}, \quad A'_i=A_{\sigma^{-1}(i)}.
\end{align}
An $\mathfrak{S}_{I_\uf} \times \mathfrak{S}_{I_\f}$-orbit of seeds is called an \emph{unlabeled seed}. 
Two seeds in $\cF$ are said to be \emph{mutation-equivalent} if they are transformed to each other by a finite sequence of seed mutations and permutations. An equivalence class of seeds is called a \emph{mutation class}. The relations among the seeds in a given mutation class $\sfs$ can be encoded in the \emph{(labeled) exchange graphs}:
\begin{dfn}
The \emph{labeled exchange graph} is a graph $\bExch_\sfs$ with vertices $v$ corresponding to the seeds $\sfs^{(v)}$ in $\sfs$, together with labeled edges of the following two types:
\begin{itemize}
    \item edges of the form $v \overbar{k} v'$ whenever the seeds $\sfs^{(v)}$ and $\sfs^{(v')}$ are related by the mutation $\mu_k$ for $k \in I_\uf$;
    \item edges of the form $v \overbar{\sigma} v'$ whenever the seeds $\sfs^{(v)}$ and $\sfs^{(v')}$ are related by the transposition $\sigma=(j\ k)$ for $(j,k) \in I_\uf \times I_\uf$ or $I_\f \times I_\f$.
\end{itemize}
The \emph{exchange graph} is a graph $\Exch_\sfs$ with vertices $\omega$ corresponding to the unlabeled seeds $\sfs^{(\omega)}$ in $\sfs$, together with (unlabeled) edges corresponding to the mutations. There is a graph projection $\pi_\sfs: \bExch_\sfs \to \Exch_\sfs$. 
\end{dfn}
When no confusion can occur, we simply denote a vertex of the labeled exchange graph by $v \in \bExch_\sfs$ instead of $v \in V(\bExch_\sfs)$, and similarly for the exchange graph. We remark that the (labeled) exchange graph depends only on the mutation class of the underlying exchange matrices. 

To each vertex $\omega \in \Exch_\sfs$, associated is the unordered collection $\mathbf{A}_{(\omega)}=\{A_i^{(v)}\}_{i \in I}$ of cluster variables called a \emph{cluster},
where $v \in \pi_\sfs^{-1}(\omega)$. Let $\bZ[\mathbf{A}_{(\omega)}^{\pm 1}]:=\bZ[(A_i^{(v)})^{\pm 1} \mid i \in I]$ denote the ring of Laurent polynomials.

\begin{dfn}
The \emph{cluster algebra} associated with a mutation class $\sfs$ of seeds is the subring $\CA_{\sfs} \subset \cF$ generated by the union of the clusters $\mathbf{A}_{(\omega)}$ for $\omega \in \Exch_\sfs$ and the inverses of the frozen variables.
The \emph{upper cluster algebra} is defined to be the subring
\begin{align*}
    \UCA_\sfs:= \bigcap_{\omega \in \Exch_\sfs} \bZ[\mathbf{A}_{(\omega)}^{\pm 1}] \subset \cF.
\end{align*}
\end{dfn}
The \emph{Laurent phenomenon theorem} \cite[Theorem 3.1]{FZ-CA1} tells us that each cluster variable can be expressed as a Laurent polynomial in any cluster, and hence $\CA_\sfs \subset \UCA_\sfs$ holds. We remark that the (upper) cluster algebra depends only on the mutation class of exchange matrices, up to  automorphisms of the ambient field. 


\subsubsection{The quantum cluster algebra}\label{subsub:q-CA}
We basically follow \cite{BZ}, partially employing the notation in \cite[Section 13.3]{GS19}. Recall that for a skew-symmetric form $\Pi$ on a lattice $L$, the associated \emph{based quantum torus} is the associative $\bZ_q$-algebra $T_\Pi$ such that
\begin{itemize}
    \item $T_\Pi$ has a free $\bZ_q$-basis $M^\alpha$ parametrized by $\alpha \in L$, and 
    \item the product of these basis elements is given by $M^\alpha\cdot M^\beta = q^{\Pi(\alpha,\beta)/2} M^{\alpha+\beta}$.
\end{itemize}
Let $\cF$ be a skew-field. 
A \emph{quantum seed} in $\cF$ is a quadruple $(B,\Pi,\accentset{\circ}{\Lambda},M)$, where
\begin{itemize}
    \item $B$ is an exchange matrix;
    \item $\Pi=(\pi_{ij})_{i,j \in I}$ is a skew-symmetric matrix with integral entries satisfying the \emph{compatibility relation}
    \begin{align*}
        \sum_{k \in I} b_{ki}\pi_{kj} =\delta_{ij}d_j
    \end{align*}
    for all $i \in I_\uf$ and $j \in I$, where $d_i$ is a positive integer for $i \in I_\uf$. 
    \item $\accentset{\circ}{\Lambda}=\bigoplus_{i \in I}\sff_i$ is a lattice, on which the matrix $\Pi$ defines a skew-symmetric form by $\Pi(\sff_i,\sff_j):=\pi_{ij}$ \footnote{Our notation for the lattice is motivated by the connection to the theory of cluster varieties: see \cref{sec:FG}.};
    \item $M: \accentset{\circ}{\Lambda} \to \cF\setminus \{0\}$ is a function such that 
    \begin{align*}
        M(\alpha)M(\beta) = q^{\Pi(\alpha,\beta)/2} M(\alpha+\beta)
    \end{align*}
    for $\alpha,\beta \in \accentset{\circ}{\Lambda}$, 
    and the $\bZ_q$-span of $M(\accentset{\circ}{\Lambda}) \subset \cF$ is the based quantum torus of the form $\Pi$ whose skew-field of fractions coincides with $\cF$. 
\end{itemize}
We call $\Pi$ the \emph{compatibility matrix}, and $M$ the \emph{toric frame} of the quantum seed. When no confusion can occur, we omit the lattice $\accentset{\circ}{\Lambda}$ from the notation and call the triple $(B,\Pi,M)$ a quantum seed. 
The compatibility relation can be written as
\begin{align}\label{eq:compatibility_definition}
    \varepsilon\Pi = B^{\mathsf{T}} \Pi = (D,0),
\end{align}
where $D:=\mathrm{diag}(d_i\mid i \in I_\uf)$ and $0$ denotes the $I_\uf \times I_\f$-zero matrix. By \cite[Lemma 4.4]{BZ}, a toric frame $M$ is uniquely determined by the values $A_i:=M(\sff_i)$, which we call the \emph{(quantum) cluster variable}, on the basis vectors $\sff_i$ for $i \in I$. Indeed, 
we have
\begin{align}\label{eq:extension_toric_frame}
    M\left(\sum_{i \in I} x_i\sff_i\right) = q^{\frac{1}{2}\sum_{l < k}x_k x_l \pi_{kl} } A_1^{x_1}\dots A_N^{x_N}
\end{align}
for all $(x_1,\dots,x_N) \in \bZ^N$.
Note that both sides are invariant under permutations of indices. 
Elements of the form $M(\alpha)$ for $\alpha \in \accentset{\circ}{\Lambda}$ are called \emph{cluster monomials}. 
In order to motivate the quantum mutations, we recall the following lemma:

\begin{lem}[{\cite[(3.1)]{BZ}}]
The matrix mutation \eqref{eq:matrix mutation} can be written as
\begin{align*}
    B' = E_{k,\epsilon} B F_{k,\epsilon},
\end{align*}
where $E_{k,\epsilon}= (e_{ij})_{i,j \in I}$ and $F_{k,\epsilon}= (f_{ij})_{i,j \in I}$ are defined by
\begin{align*}
    e_{ij}:=\begin{cases}
    \delta_{ij} & \mbox{if $j \neq k$},\\
    -1 & \mbox{if $i=k=j$},\\
    [-\epsilon b_{ik}]_+ & \mbox{if $i\neq k=j$},
    \end{cases}
\end{align*}
and 
\begin{align*}
    f_{ij}:=\begin{cases}
    \delta_{ij} & \mbox{if $i \neq k$},\\
    -1 & \mbox{if $i=k=j$},\\
    [\epsilon b_{kj}]_+ & \mbox{if $i=k \neq j$},
    \end{cases}
\end{align*}
respectively for $k \in I_\uf$ and $\epsilon \in \{+,-\}$. 
\end{lem}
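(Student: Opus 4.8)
The plan is to establish the factorization $B'=E_{k,\epsilon}BF_{k,\epsilon}$ by a direct, entry-by-entry matrix computation, exploiting the fact that $E_{k,\epsilon}$ and $F_{k,\epsilon}$ are elementary perturbations of the identity. By construction $F_{k,\epsilon}$ agrees with the identity outside its $k$-th row (since $f_{ij}=\delta_{ij}$ for $i\neq k$), while $E_{k,\epsilon}$ agrees with the identity outside its $k$-th column (since $e_{ij}=\delta_{ij}$ for $j\neq k$). This localization makes both multiplications completely explicit, so the whole argument is a bookkeeping verification rather than anything structural.

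First I would compute the intermediate product $BF_{k,\epsilon}$. Summing over the $k$-th row of $F_{k,\epsilon}$ and using $b_{kk}=0$, one finds
\[
    (BF_{k,\epsilon})_{il}=b_{il}+b_{ik}[\epsilon b_{kl}]_+ \quad (l\neq k), \qquad (BF_{k,\epsilon})_{ik}=-b_{ik}.
\]
Left-multiplying by $E_{k,\epsilon}$ and using that only its $k$-th column is nontrivial then gives $(E_{k,\epsilon}BF_{k,\epsilon})_{kl}=-(BF_{k,\epsilon})_{kl}$ along the $k$-th row, and $(E_{k,\epsilon}BF_{k,\epsilon})_{il}=(BF_{k,\epsilon})_{il}+[-\epsilon b_{ik}]_+(BF_{k,\epsilon})_{kl}$ for $i\neq k$.

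The cases $i=k$ or $l=k$ then follow immediately from the skew-symmetry $b_{kk}=0$: they reproduce the mutation rule $b'_{ij}=-b_{ij}$ along the $k$-th row and column. The only case carrying genuine content is $i,l\neq k$, where, upon substituting the intermediate values above, the desired identity $(E_{k,\epsilon}BF_{k,\epsilon})_{il}=b_{il}+[b_{ik}]_+[b_{kl}]_+-[-b_{ik}]_+[-b_{kl}]_+$ reduces to the scalar claim
\[
    a[\epsilon c]_+ + [-\epsilon a]_+\,c = [a]_+[c]_+ - [-a]_+[-c]_+, \qquad a:=b_{ik},\ c:=b_{kl}.
\]

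I expect this scalar identity to be the sole crux of the proof. It is verified using the elementary decomposition $x=[x]_+-[-x]_+$: treating $\epsilon=+$ and $\epsilon=-$ separately, one expands both products on the left, and the cross terms cancel so that each case collapses to the right-hand side. A pleasant by-product is that the right-hand side is manifestly independent of the sign $\epsilon$, which recovers the well-known equality $E_{k,+}BF_{k,+}=E_{k,-}BF_{k,-}$; this sign-independence is worth remarking on, since it is exactly what allows one to choose either sign freely when defining the quantum mutation. No real obstacle arises beyond keeping careful track of the half-integral entries permitted on $I_\f\times I_\f$, which do not interfere with the computation because the compatibility and mutation formulas only ever pair such an entry with integral factors.
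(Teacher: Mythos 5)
Your verification is correct: the intermediate entries of $BF_{k,\epsilon}$ and the reduction of the $i,l\neq k$ case to the scalar identity $a[\epsilon c]_+ + [-\epsilon a]_+c = [a]_+[c]_+ - [-a]_+[-c]_+$ (checked via $x=[x]_+-[-x]_+$) all hold, and the $i=k$ or $l=k$ cases do collapse using $b_{kk}=0$. The paper itself gives no proof — it quotes this as equation (3.1) of Berenstein–Zelevinsky — and your entry-by-entry computation is exactly the standard argument behind that reference, so there is nothing to add.
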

Given a quantum seed $(B,\Pi,M)$ in $\cF$ and an unfrozen index $k \in I_\uf$, the \emph{quantum seed mutation} produces a new quantum seed $(B',\Pi',M')=\mu_k(B,\Pi,M)$ according to the following rule: 
\begin{align}
    B' &= E_{k,\epsilon} B F_{k,\epsilon}, \nonumber\\
    \Pi' &= E_{k,\epsilon}^{\mathsf{T}} \Pi E_{k,\epsilon}, \nonumber\\
    M'(\sff_i') &= \begin{cases}
    M(\sff_i) & \mbox{if $i \neq k$}, \\
    M(-\sff_k + \sum_{j \in I} [b_{jk}]_+ \sff_j) + M(-\sff_k + \sum_{j \in I} [-b_{jk}]_+ \sff_j) & \mbox{if $i=k$}.
    \end{cases}\label{eq:q-mutation}
\end{align}
Here $(\sff_i)_{i \in I}$ and $(\sff'_i)_{i \in I}$ denote the basis vectors of the underlying lattices. The relation \eqref{eq:q-mutation} is called the \emph{quantum exchange relation}. 
The verification of the following lemma is straightforward:

\begin{lem}\label{lem:compatibility_check}
Let $(B,\Pi,M)$ be a quantum seed in $\cF$, $k \in I_\uf$, and consider the exchange matrix $B' := E_{k,\epsilon} B F_{k,\epsilon}$ and the toric frame $M'$ determined by \eqref{eq:q-mutation}. Let $\Pi'=(\pi'_{ij})_{i,j \in I}$ be the skew-symmetric matrix associated with $M'$, which is uniquely determined by the condition 
\begin{align*}
    A'_i A'_j = q^{\pi'_{ij}} A'_j A'_i
\end{align*}
for $i,j \in I$ with $A'_i:=M'(\sff'_i)$. Then the pair $(B',\Pi')$ satisfies the compatibility relation.
\end{lem}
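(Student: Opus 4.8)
The plan is to follow the quantum analogue of Berenstein--Zelevinsky's proof that seed mutation preserves compatibility \cite{BZ}, exploiting the simplifications available in the skew-symmetric (square-matrix) setting here. The statement reduces to two assertions: first, that the commutation matrix $\Pi'$ of the mutated variables is well-defined and is given by the conjugation rule $\Pi' = E_{k,\epsilon}^{\mathsf{T}}\Pi E_{k,\epsilon}$; and second, the matrix identity $(B')^{\mathsf{T}}\Pi' = (D,0)$ on the unfrozen rows. Throughout I would use two elementary observations: because $B$ is skew-symmetric, a direct comparison of the defining formulas together with $b_{ki} = -b_{ik}$ gives $F_{k,\epsilon} = E_{k,\epsilon}^{\mathsf{T}}$; and $E_{k,\epsilon}^2 = I$, hence $(E_{k,\epsilon}^{\mathsf{T}})^2 = I$.

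First I would compute the $q$-commutation relations among the $A'_i = M'(\sff'_i)$ directly from the quantum exchange relation \eqref{eq:q-mutation} and the toric-frame identity $M(\alpha)M(\beta) = q^{\Pi(\alpha,\beta)}M(\beta)M(\alpha)$. Writing $\gamma_\pm := -\sff_k + \sum_{j}[\pm b_{jk}]_+\sff_j$, so that $A'_k = M(\gamma_+) + M(\gamma_-)$ and $\gamma_\pm = E_{k,\mp}\sff_k$, the crucial point is that both monomials $M(\gamma_\pm)$ $q$-commute with $A_j = A'_j$ (for $j\neq k$) by the \emph{same} power of $q$: indeed $\Pi(\gamma_+ - \gamma_-,\sff_j) = \Pi(B\sff_k,\sff_j) = (B^{\mathsf{T}}\Pi)_{kj} = \delta_{kj}d_k = 0$ by the compatibility of the original seed. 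Thus $A'_k$ genuinely $q$-commutes with each $A'_j$, so $\Pi'$ is well-defined, and reading off the exponents gives $\pi'_{kj} = \Pi(E_{k,\epsilon}\sff_k,\sff_j)$; together with $A'_i = A_i$ for $i\neq k$ this yields $\Pi' = E_{k,\epsilon}^{\mathsf{T}}\Pi E_{k,\epsilon}$ for either sign of $\epsilon$.

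With $\Pi'$ in this form, compatibility becomes a short matrix computation. Using $(B')^{\mathsf{T}} = E_{k,\epsilon}B^{\mathsf{T}}E_{k,\epsilon}^{\mathsf{T}}$ and $(E_{k,\epsilon}^{\mathsf{T}})^2 = I$, one obtains $(B')^{\mathsf{T}}\Pi' = E_{k,\epsilon}(B^{\mathsf{T}}\Pi)E_{k,\epsilon}$. Expanding the unfrozen rows ($i\in I_\uf$) of this product using the original compatibility $B^{\mathsf{T}}\Pi = (D,0)$, every entry comes out as required --- namely $d_i\delta_{ij}$ for $j\neq k$ and $d_k$ at $(k,k)$ --- except for the column-$k$ entries, where a correction term $(d_i - d_k)[-\epsilon b_{ik}]_+$ appears for $i\in I_\uf\setminus\{k\}$.

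The one genuine point to settle, and where I expect the main care to be needed, is that this correction vanishes. This follows from an identity forced by the original compatibility alone: since $B^{\mathsf{T}} = -B$, the $I_\uf$-rows of $B\Pi$ equal $(-D,0)$ and, by $\Pi B = (B\Pi)^{\mathsf{T}}$, the $I_\uf$-columns of $\Pi B$ are likewise governed by $-D$; comparing the two bracketings $(B\Pi)B = B(\Pi B)$ then gives $-d_i b_{ij} = -d_j b_{ij}$, i.e. $(d_i - d_j)b_{ij}=0$ for all $i,j\in I_\uf$ (equivalently, $D$ is constant along each arrow of the unfrozen part). Hence $(d_i - d_k)[-\epsilon b_{ik}]_+ = 0$, the column-$k$ correction disappears, and the unfrozen rows of $(B')^{\mathsf{T}}\Pi'$ equal $(D,0)$ with the same $D$. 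This establishes that $(B',\Pi')$ is a compatible pair.
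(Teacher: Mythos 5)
Your proof is correct. There is nothing in the paper to compare it against: the authors simply declare the verification ``straightforward'' and give no argument (the content is essentially \cite[Propositions 3.4, 4.2 and 4.7]{BZ}), so your write-up supplies exactly the omitted details. Each step checks out. In the skew-symmetric setting one indeed has $F_{k,\epsilon}=E_{k,\epsilon}^{\mathsf{T}}$ and $E_{k,\epsilon}^{2}=I$; the two monomials $M(\gamma_{\pm})$ constituting $A'_k$ $q$-commute with each $A_j$ ($j\neq k$) by the same power of $q$ because $\Pi(\gamma_{+}-\gamma_{-},\sff_j)=(B^{\mathsf{T}}\Pi)_{kj}=\delta_{kj}d_k$, which vanishes for $j \neq k$ by the compatibility of the original seed; this both shows $\Pi'$ is well-defined and identifies it with $E_{k,\epsilon}^{\mathsf{T}}\Pi E_{k,\epsilon}$, matching the mutation rule \eqref{eq:q-mutation}. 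The matrix computation $(B')^{\mathsf{T}}\Pi'=E_{k,\epsilon}(B^{\mathsf{T}}\Pi)E_{k,\epsilon}$ and the resulting correction term $(d_i-d_k)[-\epsilon b_{ik}]_+$ in column $k$ are as you say, and your derivation of $(d_i-d_j)b_{ij}=0$ for $i,j\in I_\uf$ from the two bracketings of $B\Pi B$ is valid --- this is the one genuinely non-mechanical point, and it amounts to the standard fact that $D$ is a skew-symmetrizer of the principal part of $B$ (cf.\ \cite[Proposition 3.3]{BZ}). In the paper's actual application $D=6\cdot\mathrm{Id}$ (\cref{prop:compatibility}), so the correction term there vanishes for trivial reasons, but your argument covers the general situation in which the lemma is stated.
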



For a permutation $\sigma\in \mathfrak{S}_{I_\uf} \times \mathfrak{S}_{I_\f}$, a quantum seed $(B',\Pi',M')=\sigma(B,\Pi,M)$ is defined by 
\begin{align*}
    b'_{ij} = b_{\sigma^{-1}(i),\sigma^{-1}(j)}, \quad \pi'_{ij} = \pi_{\sigma^{-1}(i),\sigma^{-1}(j)}, \quad A'_i = A_{\sigma^{-1}(i)}.
\end{align*}
Two quantum seeds in $\cF$ are said to be \emph{mutation-equivalent} if they are transformed to each other by a finite sequence of quantum seed mutations and permutations. An equivalence class of quantum seeds is again called a \emph{mutation class}. The (labeled) exchange graphs $\bExch_{\sfs_q}$, $\Exch_{\sfs_q}$ of quantum seeds can be introduced just in the same way as the classical case. However, we do not need these graphs by the following reason. 

Given a mutation class $\sfs_q$ of quantum seeds in $\cF$, a mutation class $\sfs$ of seeds in some field is called a \emph{classical counterpart} of $\sfs_q$ if they share the collection of the underlying exchange matrices. Then it is known that the collection of quantum seeds in $\sfs_q$ are in a one-to-one correspondence with the seeds in $\sfs$, and the natural covering $\bExch_{\sfs_q} \to \bExch_\sfs$ and its unlabeled version are in fact isomorphisms \cite[Theorem 6.1]{BZ}. Therefore, to each vertex $v \in \bExch_{\sfs}$, we can associate a based quantum torus 
\begin{align*}
    T_{(v)}=\mathrm{span}_{\bZ_q} M^{(v)}(\accentset{\circ}{\Lambda}^{(v)}) \subset \cF.
\end{align*}
We also have the unlabeled version $T_{(\omega)}=\mathrm{span}_{\bZ_q} M^{(\omega)}(\accentset{\circ}{\Lambda}^{(\omega)})$ for $\omega \in \Exch_\sfs$, where the basis of $\Lambda^{(\omega)}$ is given up to permutations. The unordered collection $\mathbf{A}_{(\omega)}:=
\{A_i^{(v)}\}_{i \in I}$ is called a \emph{quantum cluster}, where $v \in \pi_\sfs^{-1}(\omega)$. 

\begin{dfn}
The \emph{quantum cluster algebra} associated with a mutation class $\sfs_q$ of quantum seeds is the $\bZ_q$-subalgebra $\CA_{\sfs_q} \subset \cF$ generated by the union of the quantum clusters $\mathbf{A}_{(\omega)}$ for $\omega \in \Exch_{\sfs}$ and the inverses of frozen variables. 
The \emph{quantum upper cluster algebra} is defined to be
\begin{align*}
    \UCA_{\sfs_q} :=\bigcap_{\omega \in \Exch_{\sfs}} T_{(\omega)} \subset \cF.
\end{align*}
\end{dfn}
For each vertex $\omega \in \Exch_\sfs$, the \emph{upper bound} at $\omega$ is defined to be 
\begin{align*}
    \UCA_{\sfs_q}(\omega):=T_{(\omega)}\cap \bigcap_{\omega'} T_{(\omega')},
\end{align*}
where $\omega' \in \Exch_\bs$ runs over the vertices adjacent to $\omega$. 

\begin{thm}[Quantum upper bound theorem {\cite[Theorem 5.1]{BZ}}]\label{thm:q-Laurent}
For any vertices $\omega,\omega' \in \Exch_\sfs$, we have $\UCA_{\sfs_q}(\omega) = \UCA_{\sfs_q}(\omega')$. In particular, we have
\begin{align*}
    \UCA_{\sfs_q} = \UCA_{\sfs_q}(\omega)
\end{align*}
for any $\omega \in \Exch_\sfs$.\footnote{We remark here that the coprimality condition required in the classical setting (\cite[Corollary 1.7]{BFZ}) is automatically satisfied in the quantum setting, since the existence of the compatibility matrix forces the exchange matrix to be full-rank.}
\end{thm}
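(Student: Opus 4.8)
The statement is the quantum analogue of the Berenstein--Fomin--Zelevinsky upper bound theorem, and the plan is to follow their strategy \cite{BFZ} while tracking the $q$-commutation factors as in \cite{BZ}. The first reduction is purely graph-theoretic: since $\Exch_\sfs$ is connected, it suffices to prove that the local upper bound is invariant under a single mutation, i.e. $\UCA_{\sfs_q}(\omega)=\UCA_{\sfs_q}(\omega')$ whenever $\omega$ and $\omega'$ are joined by an edge, say via $\mu_k$. Granting this, the common value $U$ of all local upper bounds satisfies $U=\UCA_{\sfs_q}(\omega)\subset T_{(\omega)}$ for every $\omega$, hence $U\subset\bigcap_\omega T_{(\omega)}=\UCA_{\sfs_q}$; the reverse inclusion $\UCA_{\sfs_q}\subset\UCA_{\sfs_q}(\omega)$ is immediate because the global intersection is taken over a larger index set. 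This yields both assertions at once.

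So I would fix a labeled seed $t$ over $\omega$ and set $t'=\mu_k t$; write $T=T_{(t)}$, $T'=T_{(t')}$ with quantum cluster variables $A_i=M(\sff_i)$ and $A_i'=M'(\sff_i')$, where $A_i'=A_i$ for $i\neq k$ and $A_k'=A_k^{-1}P_k$ up to a power of $q$, with $P_k:=M(\sum_j[b_{jk}]_+\sff_j)+M(\sum_j[-b_{jk}]_+\sff_j)$ the exchange binomial in the variables $A_j$, $j\neq k$. The first computational step is to identify the pairwise intersection inside $\cF$. Grading the quantum torus $T$ by the power of $A_k$ (legitimate since $A_k$ $q$-commutes with the remaining cluster variables) and using $A_k'=A_k^{-1}P_k$, one checks that the degree-$m$ component is the left $\bZ_q[A_j^{\pm}:j\neq k]$-module generated by $P_k^{\max(0,-m)}$, so that
\begin{equation*}
    T\cap T'=\bigoplus_{m\in\bZ}A_k^{m}\cdot\big(P_k^{\max(0,-m)}\,\bZ_q[A_j^{\pm}:j\neq k]\big).
\end{equation*}
Equivalently, $T\cap T'$ is the $\bZ_q$-subalgebra in which the frozen and the remaining unfrozen variables are inverted while $A_k$ and $A_k'$ occur only with non-negative powers; this description is manifestly symmetric under $t\leftrightarrow t'$.

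It then remains to incorporate the other neighbours. Following \cite{BFZ}, the full local upper bound is the subset of $T\cap T'$ cut out by the divisibility conditions coming from $T_{(\mu_j t)}$ for $j\neq k$, namely that the $A_j$-negative part of an element be a left multiple of the corresponding exchange binomial $P_j$. The heart of the argument is to show that these conditions are preserved when $t$ is replaced by $t'$: mutation in direction $k$ alters each $P_j$ ($j\neq k$) and the compatibility data in a controlled way, and one must verify that the resulting system of divisibilities defines the same subalgebra. This is exactly where the quantum case demands more than a transcription of \cite{BFZ}: every monomial rearrangement produces a power of $q$ dictated by $\Pi$, and one uses the compatibility relation $\varepsilon\Pi=(D,0)$ together with the mutation rule $\Pi'=E_{k,\epsilon}^{\mathsf{T}}\Pi E_{k,\epsilon}$ to check that these powers are consistent across the two clusters. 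I expect this bookkeeping, ensuring that the $q$-twisted divisibility conditions match, to be the main obstacle. A pleasant simplification special to the quantum setting, already noted in the footnote to \cref{thm:q-Laurent}, is that the coprimality hypothesis needed classically is automatic: the existence of a compatibility matrix $\Pi$ forces $B$ to have full rank, so the binomials $P_j$ are pairwise coprime and the divisibility conditions behave well without any extra assumption.
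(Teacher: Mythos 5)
This theorem is not proved in the paper at all: it is imported verbatim from Berenstein--Zelevinsky (\cite[Theorem 5.1]{BZ}), and the only original content here is the footnote observing that the classical coprimality hypothesis of \cite{BFZ} is automatic once a compatibility matrix exists. So there is no in-paper argument to compare against; the relevant comparison is with the proof in \cite{BZ} itself. Your outline does reproduce that proof's architecture faithfully: reduction to a single mutation by connectivity of $\Exch_\sfs$, the explicit description of $T_{(\omega)}\cap T_{(\omega')}$ as $\bZ_q[A_k,A_k',A_j^{\pm}:j\neq k]$ via the $A_k$-grading, the reformulation of the upper bound as divisibility conditions by the exchange binomials $P_j$, and the observation that full rank of the exchange matrix (forced by compatibility) yields coprimality of the $P_j$. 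All of these steps are correct and are exactly the quantum analogues of \cite[Lemmas 4.1--4.5]{BFZ} carried out in \cite[Section 5]{BZ}.

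The gap is that the step you yourself identify as ``the heart of the argument'' --- verifying that the $q$-twisted divisibility conditions coming from the neighbours $\mu_j t$, $j\neq k$, cut out the same subalgebra of $T\cap T'$ before and after the mutation $\mu_k$ --- is described but not carried out. This is precisely where the bulk of \cite[Section 5]{BZ} lives (the case analysis on whether $b_{jk}=0$ or not, the identity relating $P_j$ to $P_j'$ through $P_k$, and the check that the powers of $q$ produced by reordering monomials are absorbed by the compatibility relation $\varepsilon\Pi=(D,0)$ and the mutation rule $\Pi'=E_{k,\epsilon}^{\mathsf{T}}\Pi E_{k,\epsilon}$). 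Deferring it with ``I expect this bookkeeping to be the main obstacle'' means the proposal, read as a standalone proof, establishes only the easy reductions; the nontrivial content of the theorem remains an appeal to \cite{BZ}, which is in effect what the paper itself does more honestly by simply citing the result.
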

It in particular implies the inclusion $\CA_{\sfs_q} \subset \UCA_{\sfs_q}$, which is called the \emph{quantum Laurent phenomenon}. Again we remark that the quantum (upper) cluster algebra depends only on the mutation class of the compatibility pairs $(B,\Pi)$, up to automorphisms of the ambient skew-field. In other words, the choice of toric frames determines the way of realization of these algebras in some skew-field.

\paragraph{\textbf{Bar-involution}}
For each $\omega \in \Exch_{\sfs}$, define a $\bZ$-linear
involution $\dagger: T_{(\omega)} \to T_{(\omega)}$ by
\begin{align*}
    (q^{r/2}M^{(\omega)}(\alpha))^\dagger := q^{-r/2}M^{(\omega)}(\alpha)
\end{align*}
for $r \in \bZ$ and $\alpha \in \accentset{\circ}{\Lambda}{}^{(\omega)}$. Then $\dagger$ preserves the subalgebra $\UCA_{\sfs_q} \subset T_{(\omega)}$, and the induced involution does not depend on the choice of $\omega$ \cite[Proposition 6.2]{BZ}. Following \cite{BZ}, we call this anti-involution $\dagger: \UCA_{\sfs_q} \to \UCA_{\sfs_q}$ the \emph{bar-involution}. Each quantum cluster variable is invariant under the bar-involution.

\bigskip
\paragraph{\textbf{Ensemble grading}}
We have a natural grading on the (quantum) upper cluster algebra, which we call the \emph{ensemble grading} (a.k.a. \emph{universal grading} \cite{Muller16}). In order to motivate its definition from the algebro-geometric viewpoint, we borrow some notations from \cite{FG09}, for which the reader is referred to \cref{sec:FG}.

\begin{lemdef}[cf. {\cite[Lemma 5.3]{GSV}}]\label{lemdef:ensemble_grading}
For each $v \in \bExch_\sfs$, define $\mathbf{gr}(A_i^{(v)}) \in \coker p^*$ to be the image of the basis vector $\sff_i^{(v)} \in \accentset{\circ}{\Lambda}^{(v)}$ under the natural projection $\alpha^*_{(v)}: \accentset{\circ}{\Lambda}^{(v)} \to \coker p_{(v)}^*$. Then $\mathrm{\mathbf{gr}}$ defines a grading on the ring $\UCA_\sfs$, which we call the \emph{ensemble grading}. 
The ensemble grading on the quantum upper cluster algebra $\UCA_{\sfs_q}$ is defined by the same manner, which makes the latter a graded $\bZ_q$-algebra. 
\end{lemdef}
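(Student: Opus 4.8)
The plan is to exhibit, for each vertex $\omega \in \Exch_\sfs$, a $\coker p^*$-grading on the quantum seed torus $T_{(\omega)}$ under which the cluster monomial $M^{(\omega)}(\alpha)$ is homogeneous of degree $\alpha^*_{(\omega)}(\alpha)$ and $q$ is of degree $0$, and then to verify that these gradings are mutually compatible, so that they restrict to a single grading on the intersection $\UCA_{\sfs_q} = \bigcap_\omega T_{(\omega)}$. The classical statement for $\UCA_\sfs$ is then the specialization $q=1$ and is essentially \cite[Lemma 5.3]{GSV}; the only additional input in the quantum case is that $q$ and the bar-involution do not interfere with degrees.

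First I would observe that each $T_{(\omega)}$ is graded. Since $\alpha^*_{(\omega)}\colon \accentset{\circ}{\Lambda}^{(\omega)} \to \coker p^*_{(\omega)}$ is a homomorphism of lattices and the defining relation $M^{(\omega)}(\alpha)M^{(\omega)}(\beta) = q^{\Pi(\alpha,\beta)/2} M^{(\omega)}(\alpha+\beta)$ introduces only a central power of $q$, setting $\deg q := 0$ and $\deg M^{(\omega)}(\alpha) := \alpha^*_{(\omega)}(\alpha)$ makes $T_{(\omega)}$ a $\coker p^*_{(\omega)}$-graded $\bZ_q$-algebra. As $\coker p^*_{(\omega)}$ is a group, negative exponents are allowed, so the grading is unaffected by inverting the frozen variables.

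The crux is the compatibility across a mutation $\mu_k$ relating $\omega$ and $\omega'$. On the unchanged variables $A_i^{(v')} = A_i^{(v)}$ (for $i \neq k$) the two gradings agree under the canonical isomorphism $\coker p^*_{(v)} \cong \coker p^*_{(v')}$ induced by the linear mutation $E_{k,\epsilon}$ of \cref{sec:FG}, which intertwines $p^*_{(v)}$ and $p^*_{(v')}$. For the mutated variable, I would check that the quantum exchange relation \eqref{eq:q-mutation} is homogeneous: the two exponent vectors differ by
\[
  \Big({-}\sff_k + \sum_{j} [b_{jk}]_+ \sff_j\Big) - \Big({-}\sff_k + \sum_{j}[-b_{jk}]_+\sff_j\Big) = \sum_{j} b_{jk}\,\sff_j = p^*_{(v)}(\sfe_k),
\]
which lies in $\mathrm{im}\,p^*_{(v)} = \ker \alpha^*_{(v)}$. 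Hence both monomials have the same image in $\coker p^*_{(v)}$, so $A_k^{(v')}$ is homogeneous and $\mathbf{gr}(A_k^{(v')})$ is well defined; a direct comparison then shows that this degree coincides with $\alpha^*_{(v')}(\sff_k^{(v')})$ computed in the new seed. Thus the gradings on $T_{(\omega)}$ and $T_{(\omega')}$ agree on all cluster variables, hence on $T_{(\omega)} \cap T_{(\omega')}$.

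Granting this, I would conclude as follows: by \cref{thm:q-Laurent} we have $\UCA_{\sfs_q} \subset T_{(\omega)}$ for every $\omega$, so each homogeneous component $(\UCA_{\sfs_q})_g := \UCA_{\sfs_q} \cap (T_{(\omega)})_g$ is independent of $\omega$, and $\UCA_{\sfs_q} = \bigoplus_g (\UCA_{\sfs_q})_g$ inherits the $\coker p^*$-grading, making it a graded $\bZ_q$-algebra; the bar-involution preserves each component since it only rescales the power of $q$. I expect the main obstacle to be precisely this mutation-compatibility of the cokernel identifications, namely checking that $\alpha^*_{(v)}$ and $\alpha^*_{(v')}$ match under the linear mutation and that $\mathbf{gr}(A_k^{(v')})$ is computed consistently in both seeds. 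Once the identity $\sum_j b_{jk}\sff_j = p^*_{(v)}(\sfe_k)$ recorded in \cref{sec:FG} is in hand, the remaining verifications are routine lattice bookkeeping.
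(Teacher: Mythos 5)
Your proposal is correct and follows essentially the same route as the paper, which defers the classical case to the $H_\A$-action constructed in \cref{sec:FG} and handles the quantum case via the decomposition of mutations of \cite{BZ}, where "the automorphism part does not affect the grading" is precisely your observation that the two exponent vectors in the quantum exchange relation differ by $p^*_{(v)}(\sfe_k)\in\ker\alpha^*_{(v)}$. The remaining checks you flag (sign-independence of the cokernel identification and the commutation of $\alpha^*$ with the monomial parts of mutations) are exactly the content of the diagram \eqref{eq:ensemble_commutativity} in the appendix.
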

Indeed, the grading on the upper cluster algebra $\UCA_\sfs=\cO(\A_\sfs)$ is the same as the one explained in \cref{sec:FG}. The quantum version is similarly seen to be well-defined by using the \lq\lq decomposition of mutations'' formula in the quantum setting \cite[(4.22)]{BZ}, where the automorphism part does not affect on the grading. The grading $\mathbf{gr}$ is the universal grading in the sense of \cite{GSV}, while we choose to call it the ensemble grading in order to emphasize its relation to the cluster ensemble structure.

\subsection{The cluster algebra related to the moduli space \texorpdfstring{$\A_{SL_3,\Sigma}$}{A(G,Sigma)}}\label{subsec:classical_Teich}

Let $\Sigma$ be an unpunctured marked surface as in \cref{sec:intro}. 
Recall that a decorated triangulation $\bD=(\Delta,\bs_\Delta)$ consists of an ideal triangulation $\Delta$ of $\Sigma$, together with a function $\bs_\Delta: t(\Delta) \to \{+,-\}$. 
Given a decorated triangulation $\bD$, we define a quiver $Q^{\bD}$ with the vertex set $I(\Delta)=I_{\mathfrak{sl}_3}(\Delta)$ as follows. Let $Q_+$ and $Q_-$ be the quivers shown in the left and right of \cref{fig:triangle_quivers}, respectively. These quivers are related by the mutation at the central vertex $k$. 
For each triangle $T \in t(\Delta)$, we draw the quiver $Q_{\bs_\Delta(T)}$, and glue them via the \emph{amalgamation} procedure \cite{FG06} to get a quiver $Q^{\bD}$ drawn on $\Sigma$. In our situation, opposite half-arrows cancel together, and parallel half-arrows combine to give a usual arrow. Some examples are shown in \cref{fig:amalgamation}.

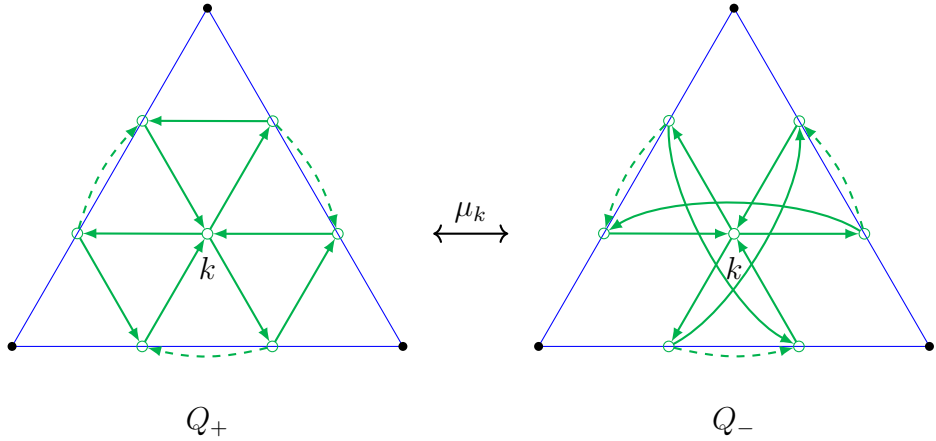
\begin{figure}[h]
\begin{tikzpicture}

\draw[blue] (210:3) -- (-30:3) -- (90:3) --cycle;
\foreach \x in {-30,90,210}
\path(\x:3) node [fill, circle, inner sep=1.2pt]{};

\begin{scope}[color=mygreen,>=latex]
\quiverplus{210:3}{-30:3}{90:3};
\node[black] at (0,-2.5) {$Q_+$};
\qdlarrow{x122}{x121}
\qdlarrow{x232}{x231}
\qdlarrow{x312}{x311}
\draw[black](G) node[below=0.4em]{$k$};
\end{scope}
\draw[<->,thick] (3,0) --node[midway,above]{$\mu_{k}$} (4,0);
\begin{scope}[xshift=7cm]
    \draw[blue] (210:3) -- (-30:3) -- (90:3) --cycle;
    \foreach \x in {-30,90,210}
    \path(\x:3) node [fill, circle, inner sep=1.2pt]{};
    {\begin{scope}[color=mygreen,>=latex]
    \quiverminus{210:3}{-30:3}{90:3};
    \node[black] at (0,-2.5) {$Q_-$};
    \qdrarrow{x121}{x122}
    \qdrarrow{x231}{x232}
    \qdrarrow{x311}{x312}
    \draw[black](G) node[below=0.4em]{$k$};
    \end{scope}}
\end{scope}
\end{tikzpicture}
    \caption{Two quivers on a triangle.}
    \label{fig:triangle_quivers}
\end{figure}

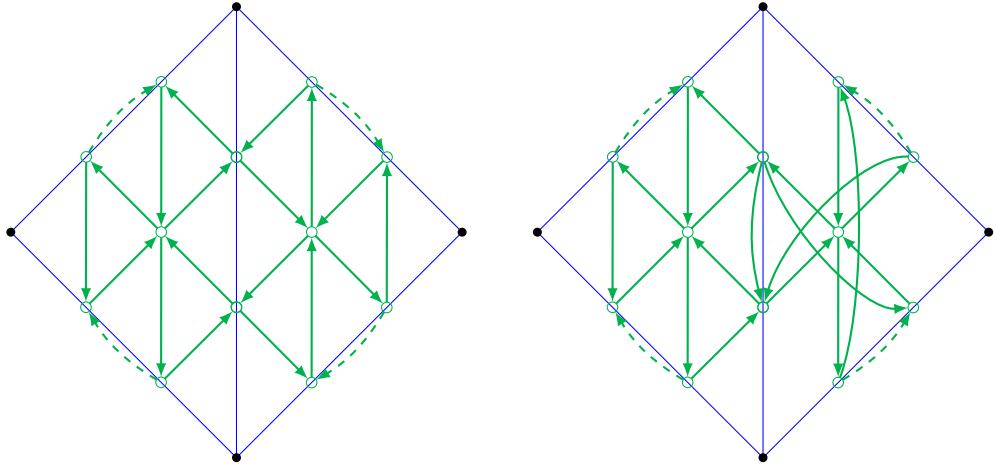
\begin{figure}
\begin{tikzpicture}
\draw[blue] (0:3) -- (90:3) -- (180:3) -- (270:3) --cycle; 
\draw[blue] (90:3) -- (270:3);
\foreach \x in {0,90,180,270}
\path(\x:3) node [fill, circle, inner sep=1.2pt]{};
\begin{scope}[color=mygreen,>=latex]
\quiverplus{0:3}{90:3}{270:3}
    \qdlarrow{x122}{x121}
    \qdlarrow{x312}{x311}
\quiverplus{90:3}{180:3}{270:3}
    \qdlarrow{x122}{x121}
    \qdlarrow{x232}{x231}
\end{scope}
    
\begin{scope}[xshift=7cm]
\draw[blue] (0:3) -- (90:3) -- (180:3) -- (270:3) --cycle; 
\draw[blue] (90:3) -- (270:3);
\foreach \x in {0,90,180,270}
\path(\x:3) node [fill, circle, inner sep=1.2pt]{};
{\begin{scope}[color=mygreen,>=latex]
\quiverminus{0:3}{90:3}{270:3}
    \qdrarrow{x121}{x122}
    \qdrarrow{x311}{x312}
\quiverplus{90:3}{180:3}{270:3}
    \qdlarrow{x122}{x121}
    \qdlarrow{x232}{x231}
    \qarrowbr{x312}{x311}
\end{scope}}
\end{scope}
\end{tikzpicture}
    \caption{The quivers on a quadrilateral with the signs $(+,+)$ (left) and $(+,-)$ (right).}
    \label{fig:amalgamation}
\end{figure}

Let $B^{\bD}=(b_{ij}^{\bD})_{i,j \in I(\bD)}$ denote the exchange matrix determined by the quiver $Q^{\bD}$. 

\begin{thm}[Fock--Goncharov {\cite[Section 10.3]{FG03}}]\label{thm:mutation-equivalence}
The exchange matrices $B^{\bD}$ associated with decorated triangulations $\bD$ of a fixed marked surface $\Sigma$ are mutation-equivalent to each other.
\end{thm}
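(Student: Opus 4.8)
The plan is to reduce the statement to two elementary moves relating decorated triangulations and to realize each of them by mutations. Two decorated triangulations $\bD=(\Delta,\bs_\Delta)$ and $\bD'=(\Delta',\bs_{\Delta'})$ can differ in their sign functions and/or in their underlying ideal triangulations, so I would first dispose of the sign changes. The claim is that flipping the value $\bs_\Delta(T)$ at a single triangle $T\in t(\Delta)$ is realized by the single mutation $\mu_{k_T}$ at the central vertex $k_T\in I^{\mathrm{tri}}(\Delta)$ of $T$. Indeed, by construction the local quivers $Q_+$ and $Q_-$ of \cref{fig:triangle_quivers} are related by $\mu_k$, and under amalgamation the central vertex of a triangle has arrows only to the six edge-vertices of that same triangle; hence $\mu_{k_T}$ acts on $Q^{\bD}$ exactly by the local move $Q_{+}\leftrightarrow Q_{-}$ inside $T$ and leaves every other arrow of $Q^{\bD}$ untouched. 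Consequently all decorated triangulations sharing a fixed $\Delta$ have mutation-equivalent exchange matrices.

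It then remains to connect two decorated triangulations with different underlying triangulations. Here I would invoke the classical fact that, since $\Sigma$ has no punctures, any two ideal triangulations are related by a finite sequence of flips of interior edges; the absence of punctures means there are no self-folded triangles, so every flip is of the standard quadrilateral type. By the previous paragraph I am free to adjust the signs on each intermediate triangulation at will, so it suffices to realize a single flip by a sequence of mutations after a convenient choice of signs on the two triangles of the flip quadrilateral.

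The core of the argument is therefore local: in the quadrilateral $Q$ formed by the two triangles $T_1,T_2$ adjacent to the flipped edge $E$, I would fix signs on $T_1,T_2$ and exhibit a prescribed sequence of mutations supported on the four vertices lying in the interior of $Q$ — namely the two edge-points on the diagonal $E$ and the two triangle-centers of $T_1,T_2$ — carrying $Q^{\bD}$ to $Q^{\bD'}$ for the flipped triangulation equipped with a suitable sign function. This is a finite quiver computation following \cite{FG03}: one checks that the prescribed sequence reproduces, inside $Q$, the $\mathfrak{sl}_3$-quiver of the flipped quadrilateral, and that it leaves unaltered every arrow incident to the exterior of $Q$. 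The second point is where care is needed and is the main obstacle: mutating a triangle-center $g_i$ creates and cancels arrows among the edge-vertices surrounding it, some of which lie on the outer sides of $Q$ and are shared with the adjacent exterior triangles, so one must verify that upon completing the sequence all such exterior-facing arrows are restored and that the half-arrow bookkeeping of the amalgamation is respected. Once this local identity of quivers is established, concatenating the flips furnished by flip-connectivity, interspersed with the sign-changing mutations of the first paragraph, produces a mutation path between $B^{\bD}$ and $B^{\bD'}$ for arbitrary $\bD,\bD'$, which proves the theorem.
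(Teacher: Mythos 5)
Your proposal is correct and follows essentially the same route as the paper's proof: reduce sign changes to the single mutation at the triangle-center vertex (using that amalgamation commutes with these mutations), reduce to a single flip via flip-connectivity, and realize the flip by a four-term mutation sequence supported on the two edge-points of the diagonal and the two triangle-centers of the flip quadrilateral — exactly the sequences the paper displays in \cref{fig:flip sequence}. The only difference is that you defer the explicit local quiver computation, which the paper records pictorially rather than in prose.
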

For later use, we reproduce the proof here.

\begin{proof}
Let $\bD$, $\bD'$ be two decorated triangulations of a marked surface $\Sigma$. 
Since the quiver $Q_-$ is transformed into $Q_+$ by a mutation and the amalgamations commute with mutations at the vertices in $I^\mathrm{tri}(\Delta)$, we can assume that both $\bD$ and $\bD'$ have the positive sign on each triangle. Moreover, since any two ideal triangulations are transformed to each other by a sequence of flips, it suffices to consider the case where the underlying triangulations of $\bD$ and $\bD'$ are related by the flip along an edge. Some sequences of mutations which realizes a flip are shown in \cref{fig:flip sequence}. The assertion is proved.
\end{proof}

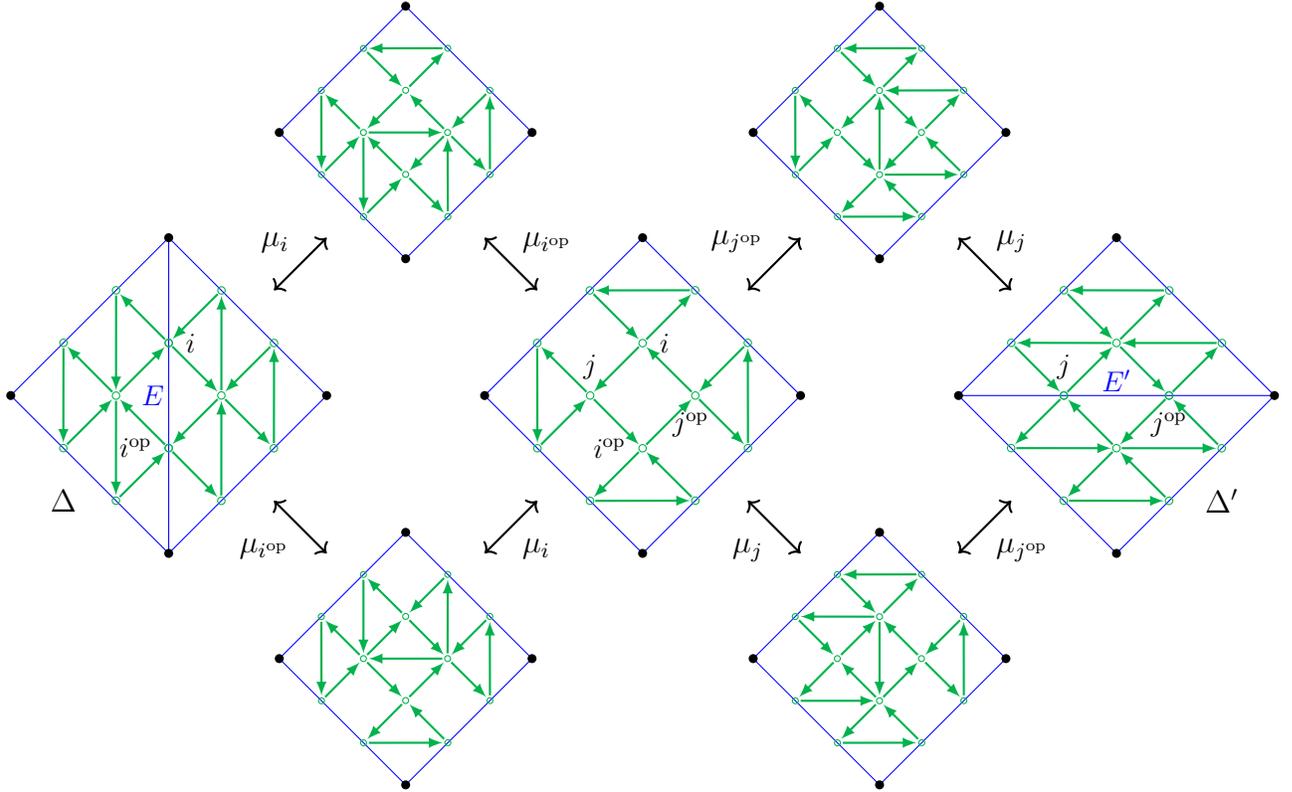
\begin{figure}
\begin{tikzpicture}[scale=0.7]
{\color{blue}
\draw (3,0) -- (0,3) -- (-3,0) -- (0,-3) --cycle;
\draw[blue] (0,-3) --node[midway,left=-0.2em]{\scalebox{0.9}{$E$}} (0,3);
}
\draw (-2,-2) node{$\Delta$};
\foreach \x in {0,90,180,270}
\path(\x:3) node [fill, circle, inner sep=1.2pt]{};
\quiverplus{3,0}{0,3}{0,-3}
\draw(x231) node[right=0.2em]{\scalebox{0.9}{$i$}};
\draw(x232) node[left=0.2em]{\scalebox{0.9}{$i^\mathrm{op}$}};
\quiverplus{-3,0}{0,-3}{0,3}

\begin{scope}[xshift=4.5cm,yshift=-5cm,scale=0.8,>=latex,yscale=-1]
\draw[blue] (3,0) -- (0,3) -- (-3,0) -- (0,-3) --cycle;
\foreach \x in {0,90,180,270}
\path(\x:3) node [fill, circle, inner sep=1.2pt]{};
{\color{mygreen}
\quiversquare{-3,0}{0,-3}{3,0}{0,3}
    \qarrow{y241}{x122}
				\qarrow{x122}{y131}
				\qarrow{y131}{x121}
				\qarrow{x121}{x412}
				\qarrow{x412}{y131}
				\qarrow{y131}{y241}
				\qarrow{y241}{y132}
				\qarrow{y132}{x341}
				\qarrow{x341}{x232}
				\qarrow{x232}{y132}
				\qarrow{y132}{x231}
				\qarrow{x231}{y241}
				
				\qarrow{x342}{y242}
				\qarrow{y242}{x411}
				\qarrow{x411}{x342}
				\qarrow{y131}{y242}
				\qarrow{y242}{y132}
				\qarrow{y132}{y131}
}

\end{scope}

\begin{scope}[xshift=4.5cm,yshift=5cm,scale=0.8,>=latex,yscale=-1]
\draw[blue] (3,0) -- (0,3) -- (-3,0) -- (0,-3) --cycle;
\foreach \x in {0,90,180,270}
\path(\x:3) node [fill, circle, inner sep=1.2pt]{};
{\color{mygreen}
\quiversquare{-3,0}{0,-3}{3,0}{0,3}
    \qarrow{y242}{x342}
				\qarrow{x342}{y132}
				\qarrow{y132}{x341}
				\qarrow{x341}{x232}
				\qarrow{x232}{y132}
				\qarrow{y132}{y242}
				\qarrow{y242}{y131}
				\qarrow{y131}{x121}
				\qarrow{x121}{x412}
				\qarrow{x412}{y131}
				\qarrow{y131}{x411}
				\qarrow{x411}{y242}
				
				\qarrow{y132}{y241}
				\qarrow{y241}{y131}
				\qarrow{y131}{y132}
				\qarrow{x122}{y241}
				\qarrow{y241}{x231}
				\qarrow{x231}{x122}
}
\end{scope}

\begin{scope}[xshift=9cm,>=latex,xscale=-1]
\draw[blue] (3,0) -- (0,3) -- (-3,0) -- (0,-3) --cycle;
\foreach \x in {0,90,180,270}
\path(\x:3) node [fill, circle, inner sep=1.2pt]{};
{\color{mygreen}
\quiversquare{-3,0}{0,-3}{3,0}{0,3}
    \qarrow{x122}{y241}
				\qarrow{y131}{x121}
				\qarrow{x121}{x412}
				\qarrow{x412}{y131}
				\qarrow{y241}{y131}
				\qarrow{y132}{y241}
				\qarrow{y132}{x341}
				\qarrow{x341}{x232}
				\qarrow{x232}{y132}
				\qarrow{y241}{x231}
				\qarrow{x231}{x122}
				
				\qarrow{x342}{y242}
				\qarrow{y242}{x411}
				\qarrow{x411}{x342}
				\qarrow{y131}{y242}
				\qarrow{y242}{y132}
}
\draw(y242) node[right=0.2em]{\scalebox{0.9}{$i$
}};
\draw(y241) node[left=0.2em]{\scalebox{0.9}{$i^\mathrm{op}$}};
\draw(y131) node[below=0.2em]{\scalebox{0.9}{$j^\mathrm{op}$
}};
\draw(y132) node[above=0.2em]{\scalebox{0.9}{$j$}};
\end{scope}

\begin{scope}[xshift=13.5cm,yshift=5cm,scale=0.8,xscale=-1,rotate=90,>=latex]
\draw[blue] (3,0) -- (0,3) -- (-3,0) -- (0,-3) --cycle;
\foreach \x in {0,90,180,270}
\path(\x:3) node [fill, circle, inner sep=1.2pt]{};
{\color{mygreen}
\quiversquare{-3,0}{0,-3}{3,0}{0,3}
    \qarrow{y242}{x342}
				\qarrow{x342}{y132}
				\qarrow{y132}{x341}
				\qarrow{x341}{x232}
				\qarrow{x232}{y132}
				\qarrow{y132}{y242}
				\qarrow{y242}{y131}
				\qarrow{y131}{x121}
				\qarrow{x121}{x412}
				\qarrow{x412}{y131}
				\qarrow{y131}{x411}
				\qarrow{x411}{y242}
				
				\qarrow{y132}{y241}
				\qarrow{y241}{y131}
				\qarrow{y131}{y132}
				\qarrow{x122}{y241}
				\qarrow{y241}{x231}
				\qarrow{x231}{x122}
}
\end{scope}

\begin{scope}[xshift=13.5cm,yshift=-5cm,scale=0.8,yscale=-1,rotate=-90,>=latex]
\draw[blue] (3,0) -- (0,3) -- (-3,0) -- (0,-3) --cycle;
\foreach \x in {0,90,180,270}
\path(\x:3) node [fill, circle, inner sep=1.2pt]{};
{\color{mygreen}
\quiversquare{-3,0}{0,-3}{3,0}{0,3}
    \qarrow{y241}{x122}
				\qarrow{x122}{y131}
				\qarrow{y131}{x121}
				\qarrow{x121}{x412}
				\qarrow{x412}{y131}
				\qarrow{y131}{y241}
				\qarrow{y241}{y132}
				\qarrow{y132}{x341}
				\qarrow{x341}{x232}
				\qarrow{x232}{y132}
				\qarrow{y132}{x231}
				\qarrow{x231}{y241}
				
				\qarrow{x342}{y242}
				\qarrow{y242}{x411}
				\qarrow{x411}{x342}
				\qarrow{y131}{y242}
				\qarrow{y242}{y132}
				\qarrow{y132}{y131}
}
\end{scope}

\begin{scope}[xshift=18cm]
{\color{blue}
\draw (3,0) -- (0,3) -- (-3,0) -- (0,-3) --cycle;
\draw (3,0) --node[midway,above=-0.2em]{\scalebox{0.9}{$E'$}} (-3,0);
}
\foreach \x in {0,90,180,270}
\path(\x:3) node [fill, circle, inner sep=1.2pt]{};
\quiverplus{-3,0}{3,0}{0,3}
\draw(x121) node[above=0.2em]{\scalebox{0.9}{$j$}};
\draw(x122) node[below=0.2em]{\scalebox{0.9}{$j^\mathrm{op}$}};
\quiverplus{-3,0}{0,-3}{3,0}
\draw (2,-2) node{$\Delta'$};
\end{scope}

\draw[thick,<->] (2,2) --node[midway,above left]{$\mu_{i}$} (3,3);
\draw[thick,<->] (2,-2) --node[midway,below left]{$\mu_{i^\mathrm{op}}$} (3,-3);
\draw[thick,<->] (6,3) --node[midway,above right]{$\mu_{i^\mathrm{op}}$} (7,2);
\draw[thick,<->] (6,-3) --node[midway,below right]{$\mu_{i}$} (7,-2);
\draw[thick,<->] (16,2) --node[midway,above right]{$\mu_{j}$} (15,3);
\draw[thick,<->] (16,-2) --node[midway,below right]{$\mu_{j^\mathrm{op}}$} (15,-3);
\draw[thick,<->] (12,3) --node[midway,above left]{$\mu_{j^\mathrm{op}}$} (11,2);
\draw[thick,<->] (12,-3) --node[midway,below left]{$\mu_{j}$} (11,-2);
\end{tikzpicture}
    \caption{Some of the sequences of mutations that realize the flip $f_E: \Delta \to \Delta'$.}
    \label{fig:flip sequence}
\end{figure}

In particular, there exists a canonical mutation class $\sfs({\mathfrak{sl}_3,\Sigma})$ containing the exchange matrices $B^{\bD}$ associated with any decorated triangulation $\bD$. For a geometric construction of an unlabeled seed $(B^{\bD},\mathbf{A}^{\bD})$, see \cref{rem:geometry of moduli} below. Let us simplify the notation as
\begin{align*}
    \CA_{\mathfrak{sl}_3,\Sigma}:=\CA_{\sfs({\mathfrak{sl}_3,\Sigma})}, \quad \UCA_{\mathfrak{sl}_3,\Sigma}:=\UCA_{\sfs({\mathfrak{sl}_3,\Sigma})}, \mbox{ and } \Exch_{\mathfrak{sl}_3,\Sigma}:=\Exch_{\sfs({\mathfrak{sl}_3,\Sigma})}. 
\end{align*}

It is typically hard to understand all the seeds in $\sfs({\mathfrak{sl}_3,\Sigma})$ in geometric terms. 
We are first going to consider those associated with the decorated triangulations and those along the flip sequences. A \emph{decorated cell decomposition} (of deficiency $\leq 1$) is an ideal cell decomposition $(\Delta;E)$ of deficiency $1$ equipped with a sign on each triangle and one of the quivers shown in \cref{fig:flip sequence} on the unique quadrilateral. In particular, a decorated triangulation is a decorated cell decomposition. 

\begin{dfn}
Define the \emph{surface subgraph} to be the subgraph $\Exch'_{\mathfrak{sl}_3,\Sigma} \subset \Exch_{{\mathfrak{sl}_3,\Sigma}}$ such that 
\begin{itemize}
    \item the vertices are the seeds corresponding to the decorated cell decompositions;
    \item the edges are mutations realizing changes of the signs (\cref{fig:triangle_quivers}) and those realizing flips (\cref{fig:flip sequence}).
\end{itemize}
The exchange matrix $B^{(\omega)}$ for any vertex $\omega \in \Exch'_{\mathfrak{sl}_3,\Sigma}$ is determined by the corresponding quiver in \cref{fig:flip sequence}. 
\end{dfn}
Here is a remark on the labeling. If we fix a labeling $\ell: I(\bD) \xrightarrow{\sim} \{1,\dots,N\}$, then the part of the exchange graph shown in \cref{fig:flip sequence} can be lifted to the labeled exchange graph $\bExch_\sfs$. We call the pair $(\bD,\ell)$ a \emph{labeled decorated triangulation}. 
In particular, we can use such a labeling for the indices of exchange matrices $B^{(\omega)}$ associated with the vertices in this part. 
In other words, for two ideal triangulations $\Delta$ and $\Delta'$ related by a single flip, the two sets $I(\Delta)$ and $I(\Delta')$ can be canonically identified, and we use this set as the common index set $I(\omega)$ for these exchange matrices.

\begin{rem}[Relation to the moduli space $\A_{SL_3,\Sigma}$]\label{rem:geometry of moduli}
Via the Goncharov--Shen's construction \cite[Section  8.2]{GS19}, for any decorated triangulation $\bD$, we get a collection $\mathbf{A}^{\bD}$ of regular functions on the moduli space $\A_{SL_3,\Sigma}$ of decorated $SL_3$-local systems on $\Sigma$. Here the sign $+$ (resp. $-$) assigned to a triangle corresponds to the reduced word $(1,2,1)$ (resp. $(2,1,2)$) of the longest element $w_0 \in W(\mathfrak{sl}_3)$ in the Weyl group of $\mathfrak{sl}_3$, and thanks to the cyclic symmetry of the cluster structure on the moduli space $\A_{SL_3,T}$, we do not need to choose a vertex of each triangle $T$ as required there. Thus we get a seed $(B^{\bD},\mathbf{A}^{\bD})$ in the field of rational functions on $\A_{SL_3,\Sigma}$. These seeds are mutation-equivalent to each other \cite[Theorem 8.7]{GS19}. 
We have the isomorphisms \cite[Theorem 4.3]{IOS}\footnote{In general, it is true that the function ring $\cO(\A^\times_{G,\Sigma})$ coincides with the upper bound $\UCA_{\sfs({\mathfrak{g},\Sigma)}}(\bD)$ at any decorated triangulation $\bD$ by an argument parallel to the proof of \cite[Theorem 1.1]{Shen20}. The first author thanks Linhui Shen for his explanation of this statement. \label{foot:moduli}
}.
\begin{align*}  \UCA_{\mathfrak{sl}_3,\Sigma}
    =\cO(\A_{\mathfrak{sl}_3,\Sigma}) = \cO(\A^\times_{SL_3,\Sigma}),
\end{align*}
where $\A^\times_{SL_3,\Sigma} \subset \A_{SL_3,\Sigma}$ is the open substack obtained by requiring the pairs of decorated flags associated with any boundary intervals to be generic. This corresponds to our localization convention of frozen variables. 

\end{rem}

\paragraph{\textbf{Some group actions}}
Recall the cluster modular group $\Gamma_\sfs$ from \cref{sec:FG}, which acts on the (upper) cluster algebra from the right by permuting the clusters. 
By \cite[Theorem 6.1]{BZ}, this action lifts to any quantization $\CA_{\sfs_q}$. 

When $\sfs=\sfs(\mathfrak{sl}_3,\Sigma)$, it is known that the cluster modular group contains the group $MC(\Sigma) \times \mathrm{Out}(SL_3)$ \cite{GS16}. Here 
\begin{itemize}
    \item $MC(\Sigma)$ denotes the \emph{mapping class group} of $\Sigma$, which consists of the isotopy classes of orientation-preserving homeomorphisms on $\Sigma$ that preserve $\partial \Sigma$ and $\bM$ set-wisely;
    \item $\mathrm{Out}(SL_3):=\mathrm{Aut}(SL_3)/\mathrm{Inn}(SL_3)$ denotes the outer automorphism group, which is generated by the \emph{Dynkin involution} $\ast:SL_3 \to SL_3$. 
\end{itemize}
The actions of these groups on $\CA_{\mathfrak{sl}_3,\Sigma}$ are described as follows. See \cite{GS16} for a detail.
\begin{itemize}
    \item Each mapping class $\phi \in MC(\Sigma)$ sends each labeled decorated triangulation $((\Delta,\bs_\Delta),\ell)$ to $((\phi^{-1}(\Delta),\phi^*\bs_\Delta),\phi^*\ell)$, where  $(\phi^*\bs_\Delta)(T):=\bs_\Delta(\phi(T))$ for $T \in t(\phi^{-1}(\Delta))$, and $\phi^*\ell:I(\phi^{-1}(\Delta)) \xrightarrow{\sim} I(\Delta) \xrightarrow{\sim} \{1,\dots,N\}$.
    The action on $\Exch_{\mathfrak{sl}_3,\Sigma}$ is uniquely interpolated by mutation-equivariance.
    
    \item The Dynkin involution $\ast$ sends each labeled decorated triangulation $((\Delta,\bs_\Delta),\ell)$ to $((\Delta,\bs^\ast_\Delta),\ell)$, where $\bs^\ast_\Delta(T):=-\bs_\Delta(T)$ for $T \in t(\Delta)$. 
    The action on $\Exch_{\mathfrak{sl}_3,\Sigma}$ is uniquely interpolated by mutation-equivariance.
\end{itemize}
Although the interpolation by mutation-equivariance is rather implicit, we will see that these actions are described as certain geometric actions on webs on $\Sigma$.

\section{Realization of the quantum cluster algebra inside \texorpdfstring{$\mathrm{Frac}\mathscr{S}^q_{\mathfrak{sl}_3,\Sigma}$}{Frac S}}\label{sect:correspondance}

In this section, we construct a mutation class $\sfs_q(\mathfrak{sl}_3,\Sigma)$ of quantum seeds in the skew-field $\mathrm{Frac}\Skein{\Sigma}^q$ of fractions of the skein algebra $\Skein{\Sigma}^q$, which quantizes the mutation class $\sfs(\mathfrak{sl}_3,\Sigma)$. It defines a quantum cluster algebra inside $\mathrm{Frac}\Skein{\Sigma}^q$.
In what follows, we identify the quantum parameters as $q=A$. 

For any vertex $\omega \in \Exch'_{\mathfrak{sl}_3,\Sigma}$ of the surface subgraph, we are going to define a quantum seed $(B^{(\omega)},\Pi^{(\omega)},M^{(\omega)})$ in $\mathrm{Frac}\Skein{\Sigma}^q$. The exchange matrix $B^{(\omega)}$ is the one already defined in \cref{subsec:classical_Teich}. 
In order to define the remaining data, we consider a web cluster (\cref{webcluster}) $C_{(\omega)}=\{e^{(\omega)}_i \mid i \in I(\omega)\}$ defined as follows. See \cref{fig:flip sequence_web}.
\begin{itemize}
    \item Suppose $\omega=\bD=(\Delta,\bs_\Delta)$ is a decorated triangulation. 
    If $i \in I^{\mathrm{tri}}(\Delta)$, then $e_i^{\bD}$ is one of the elementary webs on the corresponding triangle $T \in t(\Delta)$. If $\bs_\Delta(T)=+$ (resp. $\bs_\Delta(T)=-$), then it is defined to be the one with the unique trivalent sink (resp. source). If $i \in I^\mathrm{edge}(\Delta)$, then $e_i^{\bD}$ is one of the elementary webs given by assigning an orientation to the edge on which $i$ is located. The orientation is determined so that the terminal point is closer to the vertex $i$.
    \item For a decorated cell decomposition $\omega$ obtained by the mutation $\mu_i$ for $i \in I^\mathrm{edge}(\Delta)$ from a decorated triangulation $\bD$, we set $e_j^{(\omega)}:=e_j^\bD$ for $j \neq i$. Define $e_i^{(\omega)}$ to be the trivalent sink with endpoints three of the special points on the unique quadrilateral, which span a triangle that contains $i$ in its interior. 
    \item For a decorated cell decomposition $\omega$ obtained by the mutation $\mu_{\mathrm{op}}$ for $i \in I^\mathrm{edge}(\Delta)$ from the decorated cell decomposition $\omega':=\mu_i(\bD)$, we set $e_j^{(\omega)}:=e_j^{(\omega')}$ for $j \neq i^\mathrm{op}$. Define $e_{i^\mathrm{op}}^{(\omega)}$ to be the trivalent sink with endpoints three of the special points on the unique quadrilateral, which span a triangle that contains $i^\mathrm{op}$ in its interior. 
\end{itemize}
Then define the compatibility matrix  $\Pi^{(\omega)}=(\pi^{(\omega)}_{ij})_{i,j \in I(\omega)}$ by
\begin{align*}
    \pi^{(\omega)}_{ij}:= \Pi( e^{(\omega)}_i,e^{(\omega)}_j ).
\end{align*}
Here recall \cref{def:web-exchange}. 
Then $\Pi^{(\omega)}$ is evidently skew-symmetric.

\begin{prop}\label{prop:compatibility}
For any decorated triangulation $\bD=(\Delta,\bs_\Delta)$ with $\bs_\Delta(T)={+}$ for all $T \in t(\Delta)$, the pair $(B^{\bD},\Pi^{\bD})$ satisfies the compatibility relation
\begin{align*}
    (B^{\bD})^{\mathsf{T}}\Pi^{\bD} = (6\cdot\mathrm{Id}, 0).
\end{align*}

\end{prop}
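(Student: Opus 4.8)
The plan is to verify the matrix identity entry-by-entry, exploiting that both $B^{\bD}$ and $\Pi^{\bD}$ are assembled from purely local, triangle-by-triangle data. By \eqref{eq:compatibility_definition} the asserted equality is equivalent to the system
\[
  \sum_{k\in I(\Delta)} b^{\bD}_{ki}\,\pi^{\bD}_{kj} = 6\,\delta_{ij}\qquad (i\in I(\Delta)_\uf,\ j\in I(\Delta)),
\]
where the case $j\in I(\Delta)_\f$ (giving $0$) encodes the vanishing of the unfrozen--frozen block. So I would fix an unfrozen index $i$ and evaluate the left-hand side for every $j$.

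First I would record the locality of the compatibility matrix. Since the elementary webs $e^{\bD}_k$ mutually $A$-commute, any two of them can be superposed with all interior crossings resolved, so that the exponent $\Pi(e^{\bD}_k,e^{\bD}_j)$ of \cref{def:web-exchange} is produced solely by reordering endpoints at the shared special points via the boundary twist relations \eqref{rel:simultwist} and \eqref{rel:antisimultwist}. Hence each $\pi^{\bD}_{kj}$ is a sum of corner-contributions, and for webs supported in a single triangle these are exactly the exponents tabulated in \cref{lem:str const T}. In particular, with the identifications $e^{\bD}_G=t_{123}^{+}$ at the center of a triangle $T$ and the six boundary arcs $e_{21},e_{12},e_{32},e_{23},e_{13},e_{31}$ on its edge-vertices, the rotational symmetry invoked in \cref{lem:str const T} yields $\Pi(e_{21},t_{123}^{+})=\Pi(e_{32},t_{123}^{+})=\Pi(e_{13},t_{123}^{+})=-1$ and $\Pi(e_{12},t_{123}^{+})=\Pi(e_{23},t_{123}^{+})=\Pi(e_{31},t_{123}^{+})=+1$.

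Next I would split into the two kinds of unfrozen index. If $i\in I^{\mathrm{tri}}(\Delta)$ is a triangle-center, then reading the arrows at the central vertex of $Q_+$ in \cref{fig:triangle_quivers} shows $b^{\bD}_{ki}\neq 0$ only for the six edge-vertices of $T$, with $b_{ki}=+1$ for the three that receive an arrow from the center and $b_{ki}=-1$ for the three that send one into it. Pairing these signs against the six exponents above, the three outgoing neighbors carry $\Pi(e,t_{123}^{+})=+1$ and the three incoming ones carry $-1$, so the diagonal entry is $3\cdot(+1)(+1)+3\cdot(-1)(-1)=6$, confirming $d_i=6$; for $j\neq i$ the relevant entries are the edge--edge and edge--center exponents of \cref{lem:str const T}, and I would check they cancel in pairs, where the rotational and Dynkin/mirror symmetries of the triangle cut the case list down sharply. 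If $i\in I(\Delta)_\uf\cap I^{\mathrm{edge}}(\Delta)$ lies on an interior edge $E=T\cap T'$, its quiver-neighbors come from both adjacent copies of $Q_+$: the two centers $G_T,G_{T'}$, the two corner-neighbors, and the co-point $i^{\mathrm{op}}$, whose two half-arrows combine as described in \cref{fig:amalgamation}. Here I would again pair the (now possibly half-integral) weights with the corresponding exponents of \cref{lem:str const T} to obtain $6\delta_{ij}$.

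The main obstacle I anticipate is precisely the shared-edge bookkeeping in this last case: one must verify that the contributions of the two amalgamated triangles to a single interior-edge equation combine with the correct signs, so that the combined half-arrows at $i^{\mathrm{op}}$ and the two corner arrows conspire with the half-integer commutation factors to leave $6$ on the diagonal and $0$ off it, and likewise that $\sum_k b_{ki}\pi_{kj}$ vanishes for frozen $j$. Because every summand is local to one triangle, the symmetries of $Q_+$ and of the skein relations reduce the whole verification to a short finite list of corner configurations, which I would tabulate once and apply uniformly.
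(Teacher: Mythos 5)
Your proposal follows essentially the same route as the paper's proof: both verify the identity entry-by-entry via $\sum_k b_{ki}\pi_{kj}=6\delta_{ij}$, split into the cases $i\in I^{\mathrm{tri}}(\Delta)$ and $i\in I^{\mathrm{edge}}(\Delta)$, and reduce everything to the local $q$-commutation exponents of elementary webs read off from \cref{lem:str const T} and the corner-by-corner contributions at shared special points. The only difference is cosmetic bookkeeping (you tally corner contributions per neighbor, the paper tabulates the signed sums $\sum_\nu(-1)^\nu\pi_{k_\nu j}$ directly), so the argument is correct and matches the paper.
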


\begin{figure}[t]
\begin{tikzpicture}
\draw[blue] (2,2) -- (-2,2) -- (-2,-2) -- (2,-2) --cycle;
\draw[blue] (2,2) -- (-2,-2);
\foreach \i in {0,90,180,270}
{
\begin{scope}[rotate=\i]
\filldraw[gray!30] (2-0.292*0.5,2+0.707*0.5) arc(180:270:0.5) -- (2+0.707*0.5,2+0.707*0.5);
\draw[thick] (2-0.292*0.5,2+0.707*0.5) arc(180:270:0.5);
\node[fill, circle, inner sep=1.2pt] at (2,2) {}; 
\end{scope}
}

\quiverplus{-2,-2}{2,-2}{2,2};
\draw(x232) node[right]{$k_1$};
\draw(G) node[above left]{$k_4$};
\quiverplus{-2,-2}{2,2}{-2,2};
\draw(x122) node[above left]{$i$};
\draw(x231) node[above]{$k_2$};
\draw(G) node[above left]{$k_3$};

\begin{scope}[xshift=7cm]
\draw[blue] (2,2) -- (-2,2) -- (-2,-2) -- (2,-2) --cycle;
\draw[blue] (2,2) -- (-2,-2);
\foreach \i in {0,90,180,270}
{
\begin{scope}[rotate=\i]
\filldraw[gray!30] (2-0.292*0.5,2+0.707*0.5) arc(180:270:0.5) -- (2+0.707*0.5,2+0.707*0.5);
\draw[thick] (2-0.292*0.5,2+0.707*0.5) arc(180:270:0.5);
\node[fill, circle, inner sep=1.2pt] at (2,2) {}; 
\end{scope}
}

\draw[->-,red,thick] (-2,2) --node[midway,above]{$e_{k_2}$} (2,2);
\draw[->-,red,thick] (2,-2) --node[midway,right]{$e_{k_1}$} (2,2);
\CoG{2,2}{-2,2}{-2,-2}
\draw[->-,red,thick] (2,2) -- (G) node[left] {$e_{k_3}$};
\draw[->-,red,thick] (-2,-2) -- (G);
\draw[->-,red,thick] (-2,2) -- (G);
\CoG{2,2}{2,-2}{-2,-2}
\draw[->-,red,thick] (2,2) -- (G)  node[right] {$e_{k_4}$};
\draw[->-,red,thick] (-2,-2) -- (G);
\draw[->-,red,thick] (2,-2) -- (G);
\draw[->-,purple,thick] (-3,0) to[out=60,in=260] node[midway,left]{$e_j$} (-2,2);
\end{scope}
\end{tikzpicture}
    \caption{The neighboring vertices to $i \in I^\mathrm{edge}(\bD)$ (left) and the corresponding collection of elementary webs (right). An additional elementary web $e_j$ is also shown in purple.}
    \label{fig:compatibility_figure1}
\end{figure}

\begin{figure}
\begin{tikzpicture}
\draw[blue] (-30:3) -- (90:3) -- (210:3) --cycle;
\foreach \i in {0,120,240}
{
\begin{scope}[rotate=\i]
\filldraw[gray!30] \centerarc(90:3.5)(-30:-150:0.5) --cycle;
\draw[thick] \centerarc(90:3.5)(-30:-150:0.5);
\end{scope}
}
\foreach \i in {-30,90,210}
\node[fill,circle, inner sep=1.2pt] at (\i:3) {}; 
\quiverplus{210:3}{-30:3}{90:3};
\draw(G) node[above=0.4em]{$i$};
\draw(x121) node[below]{$k_1$};
\draw(x122) node[below]{$k_2$};
\draw(x231) node[right]{$k_3$};
\draw(x232) node[right]{$k_4$};
\draw(x311) node[left]{$k_5$};
\draw(x312) node[left]{$k_6$};

\begin{scope}[xshift=7cm]
\draw[blue] (-30:3) -- (90:3) -- (210:3) --cycle;
\foreach \i in {0,120,240}
{
\begin{scope}[rotate=\i]
\filldraw[gray!30] \centerarc(90:3.5)(-30:-150:0.5) --cycle;
\draw[thick] \centerarc(90:3.5)(-30:-150:0.5);
\end{scope}
}
\foreach \i in {-30,90,210}
\node[fill,circle, inner sep=1.2pt] at (\i:3) {}; 

\draw[->-,red,thick] (-30:3) to[bend right=10] node[above right]{$e_{k_4}$} (90:3);
\draw[->-,red,thick] (90:3) to[bend right=10] node[midway,above left]{$e_{k_6}$} (210:3);
\draw[->-,red,thick] (210:3) to[bend right=10] node[below]{$e_{k_2}$} (-30:3);
\draw[->-,red,thick] (-30:3) to[bend right=10] node[midway,above left]{$e_{k_1}$}  (210:3);
\draw[->-,red,thick] (210:3) to[bend right=10] node[midway,right]{$e_{k_5}$} (90:3);
\draw[->-,red,thick] (90:3) to[bend right=10] node[midway,left]{$e_{k_3}$}  (-30:3);
\CoG{-30:3}{90:3}{210:3}
\draw[red](G) node[below]{$e_i$};
\draw[->-,red,thick] (-30:3) -- (G);
\draw[->-,red,thick] (90:3) -- (G);
\draw[->-,red,thick] (210:3) -- (G);
\draw[->-,purple,thick] (-3,1) to[out=-60,in=90] node[midway,left]{$e_j$} (210:3);
\end{scope}
\end{tikzpicture}
    \caption{The neighboring vertices to $i \in I^\mathrm{tri}(\bD)$ (left) and the corresponding collection of elementary webs (right). An additional elementary web $e_j$ is also shown in purple.}
    \label{fig:compatibility_figure2}
\end{figure}

\begin{proof}
During the proof, we fix a decorated triangulation $\bD$ and omit the superscript $\bD$. 
Let $\varepsilon:=B^{\mathsf{T}}$ denote the quiver exchange matrix associated with $\bD$. 
For $i \in I(\bD)_\uf$ and $j \in I(\bD)$, we are going to compute $(\varepsilon\Pi)_{ij}=\sum_{k \in I(\bD)} \varepsilon_{ik}\pi_{kj}$. Let us divide into the cases $i \in I^\mathrm{edge}(\Delta)$ and $i \in I^\mathrm{tri}(\Delta)$.

\paragraph{\textbf{The case $i \in I^\mathrm{edge}(\Delta)$:}}
Let $Q$ be the quadrilateral having $E$ as its diagonal. 
Label the neighboring vertices of the quiver as in \cref{fig:compatibility_figure1}. 
In this case, we have
\begin{align*}
    (\varepsilon \Pi )_{ij}=\sum_{\nu=1}^4 (-1)^\nu \pi_{k_\nu j}.
\end{align*}
If $j$ lies on an edge outside of $Q$, then one can easily see that $(\varepsilon \Pi )_{ij}=0$. One example of the elementary web $e_j$ corresponding to such a vertex is shown in the right of \cref{fig:compatibility_figure1}. For this example, we have $\pi_{k_2j}=\pi_{k_3j}=+1$ and thus $(\varepsilon \Pi )_{ij}=(-1)^2+(-1)^3 =0$. This is also the case for the vertices lying on the left and the bottom edges in \cref{fig:compatibility_figure1}, since each entry of the compatibility matrix is defined as the sum of the contribution from each end. If $j$ lies on the face of a triangle outside of $Q$, then we get $(\varepsilon \Pi )_{ij}=0$ by a similar consideration. The remaining entries are computed as follows:
\begin{align*}
    (\varepsilon \Pi )_{ik_1}= (-1)^2\cdot (+2) + (-1)^3 \cdot (+1) + (-1)^4 \cdot (+1-2) = 0
\end{align*}
and similarly for $j=k_1^{\mathrm{op}}, k_2, k_2^{\mathrm{op}}$;
\begin{align*}
    (\varepsilon \Pi )_{ik_3}= (-1)^1\cdot (-1) + (-1)^2\cdot (-2+1) + (-1)^4\cdot(-2+2) =0
\end{align*}
and similarly for $j=k_4$;
\begin{align*}
    (\varepsilon \Pi )_{ii^{\mathrm{op}}} = (-1)^1\cdot (-1) + (-1)^2\cdot (+1) + (-1)^3\cdot (-1+2) + (-1)^4 \cdot (-2+1) = 0,
\end{align*}
and finally
\begin{align*}
    (\varepsilon \Pi )_{ii} =  (-1)^1\cdot (-2) + (-1)^2\cdot (+2) + (-1)^3\cdot (+1-2) + (-1)^4 \cdot (-1+2) = 6.
\end{align*}
\paragraph{\textbf{The case $i \in I^\mathrm{tri}(\Delta)$:}}
Label the neighboring vertices of the quiver as in \cref{fig:compatibility_figure2}. In this case, we have
\begin{align*}
    (\varepsilon \Pi )_{ij} = \sum_{\nu=1}^6 (-1)^\nu \pi_{k_\nu j}.
\end{align*}
Then a similar computation shows that the matrix entries $(\varepsilon \Pi )_{ij}$ vanishes except for
\begin{align*}
    (\varepsilon \Pi )_{ii} = -3\cdot (+1-2) +3\cdot (+2-1) =6.
\end{align*}
Thus $(B^{(\omega)},\Pi^{(\omega)})$ is a compatible pair when $\omega=(\Delta,\bs_\Delta)$ is a decorated triangulation with $\bs_T=+$ for all $T \in t(\Delta)$. 
\end{proof}
The check of the compatibility relation for a general  $\omega \in \Exch'_{\mathfrak{sl}_3,\Sigma}$ is postponed until the proof of \cref{thm:q-mutation-equivalence} below. 
For any vertex $\omega \in \Exch'_{\mathfrak{sl}_3,\Sigma}$, define a toric frame
\begin{align*}
    M^{(\omega)}: \accentset{\circ}{\Lambda}{}^{(\omega)} \to \mathrm{Frac}\Skein{\Sigma}^q
\end{align*}
by sending the basis vector $\sff_i^{(\omega)}$ to the corresponding elementary web $e^{(\omega)}_i$, and extending by
\begin{align*}
    M^{(\omega)}\left(\sum_{i=1}^N x_i\sff^{(\omega)}_i\right):=[(e^{(\omega)}_{1})^{x_1}\dots (e^{(\omega)}_{N})^{x_N}]
\end{align*}
by using the Weyl ordering (\cref{def:simulcrossing}) for an auxiliary labeling $I(\omega) \cong \{1,\dots,N\}$. Note that this is the same extension rule as \eqref{eq:extension_toric_frame}, and hence we get: 

\begin{lem}\label{prop:q-seed condition}
For any vertex $\omega \in \Exch'_{\mathfrak{sl}_3,\Sigma}$, the pair $(\Pi^{(\omega)},M^{(\omega)})$ satisfies
\begin{align*}
    M^{(\omega)}(\alpha)M^{(\omega)}(\beta) = q^{\Pi^{(\omega)}(\alpha,\beta)/2} M^{(\omega)}(\alpha+\beta)
\end{align*}
for $\alpha,\beta \in \accentset{\circ}{\Lambda}{}^{(\omega)}$. 
\end{lem}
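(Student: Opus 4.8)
The plan is to reduce the statement to the purely formal fact that, for a collection of pairwise $q$-commuting elements, the symmetric (Weyl) normalization of monomials defines a based quantum torus, and then to identify our Weyl ordering with that symmetric normalization. So the proof splits into three steps: recording the $q$-commutativity, matching the extension rule with \eqref{eq:extension_toric_frame}, and invoking the quantum-torus computation.

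First I would record the $q$-commutativity. Since $C_{(\omega)}=\{e_i^{(\omega)}\}_{i\in I(\omega)}$ is a web cluster (\cref{webcluster}), it is a maximal $A$-commutative subset, so by \cref{def:web-exchange} we have $e_i^{(\omega)}e_j^{(\omega)}=q^{\pi_{ij}^{(\omega)}}e_j^{(\omega)}e_i^{(\omega)}$ with $\pi_{ij}^{(\omega)}=\Pi(e_i^{(\omega)},e_j^{(\omega)})$ the entries of the compatibility matrix. In particular the $e_i^{(\omega)}$ generate a quantum torus inside $\mathrm{Frac}\Skein{\Sigma}^q$.

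Second, and this is the substantive step, I would identify the Weyl-ordered monomial with the symmetric normalization in this quantum torus. As visible from the explicit pictures defining $C_{(\omega)}$, the elementary webs of the cluster lie on distinct edges and in distinct triangle interiors, hence can be realized so that their only mutual crossings occur at the marked points; thus the superposition $(e_1^{(\omega)})^{x_1}\cdots(e_N^{(\omega)})^{x_N}$ has no interior crossings and the Weyl ordering of \cref{def:simulcrossing} applies. Using the boundary twist relations \eqref{rel:simultwist} and \eqref{rel:antisimultwist}, interchanging two adjacent half-edges at a marked point multiplies the web by the half-power of $A$ equal to one half of the corresponding $q$-commutation exponent. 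Tracking these half-powers shows that the bracket is independent of the order of the factors and of the auxiliary labeling, and that
\[
M^{(\omega)}\!\left(\sum_i x_i \sff_i^{(\omega)}\right) = q^{\frac12\sum_{l<k} x_k x_l \pi_{kl}^{(\omega)}}\, (e_1^{(\omega)})^{x_1}\cdots(e_N^{(\omega)})^{x_N},
\]
which is exactly the extension rule \eqref{eq:extension_toric_frame} applied to the $q$-commuting cluster variables $A_i:=e_i^{(\omega)}$.

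Third, given this identification the desired relation is the standard toric-frame identity: one expands $M^{(\omega)}(\alpha)M^{(\omega)}(\beta)$ as a product of two normalized ordered monomials, moves the $\beta$-factors past the $\alpha$-factors using the $q$-commutativity from Step 1, and collects the resulting $q$-power; skew-symmetry $\pi_{ij}^{(\omega)}=-\pi_{ji}^{(\omega)}$ together with the bilinearity of the normalization exponent yields precisely $q^{\Pi^{(\omega)}(\alpha,\beta)/2}M^{(\omega)}(\alpha+\beta)$, where $\Pi^{(\omega)}(\alpha,\beta)=\sum_{i,j}x_i y_j \pi_{ij}^{(\omega)}$. The main obstacle is Step 2: the precise bookkeeping of the half-powers of $A$ coming from the boundary skein relations, and the verification that the Weyl bracket is genuinely order- and labeling-independent so that the displayed formula is unambiguous. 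Once this matching is established, the $q$-commutativity and the final computation are routine.
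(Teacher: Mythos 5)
Your proposal is correct and follows essentially the same route as the paper, which simply observes that the Weyl-ordering extension rule coincides with the normalization \eqref{eq:extension_toric_frame} and concludes the toric-frame identity from there. You fill in the one point the paper treats as immediate — that the boundary twist relations make the Weyl bracket equal to $q^{\frac12\sum_{l<k}x_kx_l\pi_{kl}}(e_1^{(\omega)})^{x_1}\cdots(e_N^{(\omega)})^{x_N}$, independently of ordering — and the rest is the standard quantum-torus computation, as you say.
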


\begin{thm}\label{thm:q-mutation-equivalence}
For any vertex $\omega \in \Exch'_{\mathfrak{sl}_3,\Sigma}$, the triple  $(B^{(\omega)},\Pi^{(\omega)},M^{(\omega)})$ is a quantum seed in $\mathrm{Frac}\Skein{\Sigma}^q$. These quantum seeds are mutation-equivalent to each other. 
\end{thm}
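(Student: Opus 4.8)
The plan is to establish the two assertions—that each triple is a quantum seed and that they are all mutation-equivalent—\emph{simultaneously}, by propagating along the connected surface subgraph $\Exch'_{\sfs(\mathfrak{sl}_3,\Sigma)}$ starting from the all-positive decorated triangulations. Recall that $\Exch'_{\sfs(\mathfrak{sl}_3,\Sigma)}$ is connected: by the proof of \cref{thm:mutation-equivalence}, any two decorated triangulations are joined by sign-change edges (\cref{fig:triangle_quivers}) and flip sequences (\cref{fig:flip sequence}), all of whose intermediate vertices are decorated cell decompositions.

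First I would dispose of the base case. For a decorated triangulation $\bD$ with $\bs_\Delta(T)=+$ on every triangle, \cref{prop:compatibility} gives $(B^{\bD})^{\mathsf{T}}\Pi^{\bD}=(6\cdot\mathrm{Id},0)$, so $(B^{\bD},\Pi^{\bD})$ is a compatible pair with $d_i=6$. Combined with \cref{prop:q-seed condition}, which supplies the toric-frame identity $M^{\bD}(\alpha)M^{\bD}(\beta)=q^{\Pi^{\bD}(\alpha,\beta)/2}M^{\bD}(\alpha+\beta)$, this provides all the algebraic data of a quantum seed. The remaining clause—that the $\bZ_q$-span of $M^{\bD}(\accentset{\circ}{\Lambda}^{\bD})$ is a based quantum torus with fraction field $\mathrm{Frac}\Skein{\Sigma}^q$—follows from \cref{cor:web-cluster-expansion-T}: every web becomes a Laurent polynomial in $C_\bD$ after multiplying by an element of $\mathrm{mon}(C_\bD)$, so the fractions of the quantum torus spanned by $C_\bD$ contain $\SK{\Sigma}$ and hence equal $\mathrm{Frac}\Skein{\Sigma}^q$; since $\#C_\bD=\#I(\Delta)$ equals the transcendence degree and $\SK{\Sigma}$ is a domain by \cref{thm:localization}, the elementary webs of $C_\bD$ are algebraically independent and span a genuine quantum torus.

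Next I would propagate along each edge $\omega \to \omega'=\mu_k(\omega)$, where $k\in I^\mathrm{tri}(\Delta)$ (sign change) or $k\in I^\mathrm{edge}(\Delta)$ (flip step). The crucial point is to verify \emph{skein-theoretically} that the directly-defined toric frames obey the quantum exchange relation \eqref{eq:q-mutation}, i.e. $e_j^{(\omega')}=e_j^{(\omega)}$ for $j\ne k$ and
\[
e_k^{(\omega')}=M^{(\omega)}\Bigl(-\sff_k+\sum_{j}[b_{jk}]_+\sff_j\Bigr)+M^{(\omega)}\Bigl(-\sff_k+\sum_{j}[-b_{jk}]_+\sff_j\Bigr).
\]
For a sign change this is exactly the triangle relation $t_{123}^{+}t_{123}^{-}=A^{3/2}[e_{21}e_{13}e_{32}]+A^{-3/2}[e_{12}e_{23}e_{31}]$ of \cref{lem:str const T}; for a flip it is the analogous identity computed in the quadrilateral. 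Granting this identity, \cref{lem:compatibility_check} applies: since $(B^{(\omega)},\Pi^{(\omega)})$ is compatible by induction, the matrix $B':=E_{k,\epsilon}B^{(\omega)}F_{k,\epsilon}$ together with $M^{(\omega')}$ produces, via its commutation relations, a matrix $\Pi'$ forming a compatible pair with $B'$. By \cref{def:web-exchange} this $\Pi'$ is precisely $\Pi^{(\omega')}$, and the classical matrix mutation underlying \cref{thm:mutation-equivalence} gives $B'=B^{(\omega')}$. Hence $(B^{(\omega')},\Pi^{(\omega')})$ is compatible and $(B^{(\omega')},\Pi^{(\omega')},M^{(\omega')})$ is the quantum mutation $\mu_k$ of the seed at $\omega$; the quantum-torus clause is inherited because $M^{(\omega)}$ and $M^{(\omega')}$ generate the same skew-field, their cluster variables being mutually Laurent-expressible by \eqref{eq:q-mutation}. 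Connectedness of $\Exch'_{\sfs(\mathfrak{sl}_3,\Sigma)}$ then yields both assertions at once.

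The main obstacle will be the skein computation identifying \eqref{eq:q-mutation} with the skein relations, especially the flip case in a quadrilateral: one must match the two cluster monomials $M^{(\omega)}\bigl(-\sff_k+\sum_j[\pm b_{jk}]_+\sff_j\bigr)$—including the $q$-powers arising from the Weyl ordering—with the two terms obtained by resolving the relevant crossings, while carefully tracking the $A$-powers contributed by the boundary skein relations. Organizing this via the Dynkin-involution, mirror-reflection and rotational symmetries (as in the proof of \cref{lem:str const T}) should keep the case analysis manageable.
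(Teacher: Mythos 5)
Your proposal is correct and follows essentially the same route as the paper: base case for an all-positive decorated triangulation via \cref{prop:compatibility} and \cref{prop:q-seed condition} (with \cref{cor:web-cluster-expansion-T} handling the fraction-field condition), then propagation along the three edge types of the surface subgraph by matching the quantum exchange relation with a skein relation and invoking \cref{lem:compatibility_check} to transport compatibility. The only difference is that the "main obstacle" you flag—the quadrilateral identity for the flip mutations—is already available as \eqref{eq:skein_1st_mutation} in \cref{lem:str const Q}, so no further computation is needed.
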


\begin{proof}
By \cref{prop:compatibility} and \cref{prop:q-seed condition}, the triple $(B^{\bD},\Pi^{\bD},M^{\bD})$ associated with a decorated triangulation $\omega=\bD$ is a quantum seed. Here the condition $\mathrm{Frac}T_\bD=\mathrm{Frac} \Skein{\Sigma}^q$ follows from \cref{cor:web-cluster-expansion-T}. 
We have also seen that the exchange matrices $B^{(\omega)}$ are related to each other by matrix mutations. 

We are going to first show that the toric frames $M^{(\omega)}$ are related to each other by the quantum exchange relations \eqref{eq:q-mutation}. 
By the connectivity of the surface subgraph $\Exch'_{\mathfrak{sl}_3,\Sigma}$ and symmetry, it suffices to consider the toric frames associated with two vertices $\omega$ and $\omega'$ connected by an edge of the following three types.
\begin{enumerate}
    \item The first mutation from a decorated triangulation, where $\omega=(\Delta,\bs_\Delta)$ is a decorated triangulation and $\omega'=\mu_i(\omega)$ for a vertex $i\in I^\mathrm{edge}(\Delta)$.
    \item The second mutation from a decorated triangulation, where $\omega=\mu_{i^\mathrm{op}}(\bD)$ and $\omega'=\mu_{i}\mu_{i^\mathrm{op}}(\bD)$ for some decorated triangulation $\bD=(\Delta,\bs_\Delta)$ and a vertex $i\in I^\mathrm{edge}(\Delta)$.
    \item A change of a sign at a triangle $T$, where $\omega=(\Delta,\bs_\Delta)$ and $\omega'=(\Delta,\bs_{\Delta'})$ are both decorated triangulations with the same underlying triangulation but with $\bs_\Delta(T) = +$, $\bs_{\Delta'}(T)=-$ and $\bs_\Delta(T')=\bs_{\Delta'}(T')$ for $T' \in t(\Delta) \setminus \{T\}$.
\end{enumerate}
During the proof, we simply denote the elementary webs by $e_j:=e_j^{(\omega)}$ and $e'_j:=e_j^{(\omega')}$ in each case. 

In the first case, label the vertices of the quiver as in the left in \cref{fig:compatibility_figure1}. Then we need to check the quantum exchange relation 
\begin{align}
    e'_i &= M^{(\omega)}(-\sff_i + \sff_{k_2}+\sff_{k_4}) + M^{(\omega)}(-\sff_i + \sff_{k_1}+\sff_{k_3}) 
\end{align}
holds in $\mathrm{Frac}\Skein{\Sigma}^q$. Using the relation $M^{(\omega)}(\alpha+\beta)=q^{\Pi^{(\omega)}(\alpha,\beta)/2} M^{(\omega)}(\beta)M^{(\omega)}(\alpha)$ which follows from the definition of the toric frame, this is equivalent to
\begin{align*}
    e_ie'_i &= q^{-3/2}M^{(\omega)}(\sff_{k_2}+\sff_{k_4}) + q^{3/2} M^{(\omega)}(\sff_{k_1}+\sff_{k_3}) \\
    &=q^{-3/2}[e_{k_2}e_{k_4}] + q^{3/2} [e_{k_1}e_{k_3}],
\end{align*}
This is nothing but the \eqref{eq:skein_1st_mutation} (with a suitable change of labelings).

In the second case, label the vertices of the quiver as in the left of \cref{fig:compatibility_figure3}. The expected quantum exchange relation is:
\begin{align*}
    e'_i &= M^{(\omega)}(-\sff_i + \sff_{k_2}+\sff_{k_4}) + M^{(\omega)}(-\sff_i + \sff_{k_1}+\sff_{k_3}) \\
    &=q^{-3/2}e_i^{-1} M^{(\omega)}(\sff_{k_2}+\sff_{k_4}) + q^{3/2}e_i^{-1} M^{(\omega)}(\sff_{k_1}+\sff_{k_3}) \\
    &=e_i^{-1} (q^{-3/2}[e_{k_2}e_{k_4}] + q^{3/2}[e_{k_1}e_{k_3}]).
\end{align*}
This is again the relation \eqref{eq:skein_1st_mutation}. 

In the third case, label the vertices of the quiver on $T$ as in the left of \cref{fig:compatibility_figure2}. The expected quantum exchange relation is
\begin{align*}
    e'_i &= M^{(\omega)}(-\sff_i + \sff_{k_2}+\sff_{k_4}+\sff_{k_6}) + M^{(\omega)}(-\sff_i + \sff_{k_1}+\sff_{k_3}+\sff_{k_5}) \\
    &=e_i^{-1} (q^{-3/2}  M(\sff_{k_2}+\sff_{k_4}+\sff_{k_6}) + q^{3/2} M(\sff_{k_1}+\sff_{k_3}+\sff_{k_5})) \\
    &=e_i^{-1} (q^{-3/2}  [e_{k_2}e_{k_4}e_{k_6}] + q^{3/2} [e_{k_1}e_{k_3}e_{k_5}])
\end{align*}
This is the relation \eqref{eq:skein_sign_change}. Thus the toric frames $M^{(\omega)}$ are related to each other by the quantum exchange relations. 

Then it follows from \cref{prop:q-seed condition} and \cref{lem:compatibility_check} that the pair $(B^{(\omega)},\Pi^{(\omega)})$ satisfies the compatibility relation for all $\omega \in \Exch'_{\mathfrak{sl}_3,\Sigma}$. The assertion is proved.
\end{proof}

\begin{figure}
\begin{tikzpicture}
\begin{scope}[>=latex,yscale=-1,rotate=45]
\draw[blue] (2.8284,0) -- (0,2.8284) -- (-2.8284,0) -- (0,-2.8284) --cycle;
\draw(-3,0) node{$1$};
\draw(0,-3) node{$2$};
\draw(3,0) node{$3$};
\draw(0,3) node{$4$};
\foreach \x in {0,90,180,270}
\path(\x:2.8284) node [fill, circle, inner sep=1.2pt]{};
{\color{mygreen}
\quiversquare{-2.8284,0}{0,-2.8284}{2.8284,0}{0,2.8284}
    \qarrow{y242}{x342}
				\qarrow{x342}{y132}
				\qarrow{y132}{x341}
				\qarrow{x341}{x232}
				\qarrow{x232}{y132}
				\qarrow{y132}{y242}
				\qarrow{y242}{y131}
				\qarrow{y131}{x121}
				\qarrow{x121}{x412}
				\qarrow{x412}{y131}
				\qarrow{y131}{x411}
				\qarrow{x411}{y242}
				
				\qarrow{y132}{y241}
				\qarrow{y241}{y131}
				\qarrow{y131}{y132}
				\qarrow{x122}{y241}
				\qarrow{y241}{x231}
				\qarrow{x231}{x122}
}
\draw(y242) node[below left]{$i$};
\draw(y131) node[above left]{$k_4$};
\draw(x411) node[left]{$k_1$};
\draw(y132) node[above right]{$k_3$};
\draw(x342) node[below]{$k_2$};
\end{scope}

\begin{scope}[xshift=7cm]
\draw[blue] (-2,-2) -- (-2,2) -- (2,2) -- (2,-2) --cycle;
\draw[->-,red,thick] (-2,2) --node[midway,left]{$e_{k_1}$} (-2,-2);
\draw[->-,red,thick] (2,-2) --node[midway,below]{$e_{k_2}$} (-2,-2);
\CoG{-2,-2}{-2,2}{2,2}
\draw[red] (G) node[left]{$e_{k_4}$};
\draw[->-,red,thick] (-2,2) -- (G);
\draw[->-,red,thick] (2,2) -- (G);
\draw[->-,red,thick] (-2,-2) -- (G);
\CoG{-2,-2}{2,-2}{2,2}
\draw[red] (G) node[right]{$e_{k_3}$};
\draw[->-,red,thick] (2,-2) -- (G);
\draw[->-,red,thick] (2,2) -- (G);
\draw[->-,red,thick] (-2,-2) -- (G);
\end{scope}

\end{tikzpicture}
    \caption{The neighboring vertices to $i \in I^\mathrm{edge}(\mu_{i^\mathrm{op}}(\bD))$ (left) and the corresponding collection of elementary webs (right). }
    \label{fig:compatibility_figure3}
\end{figure}
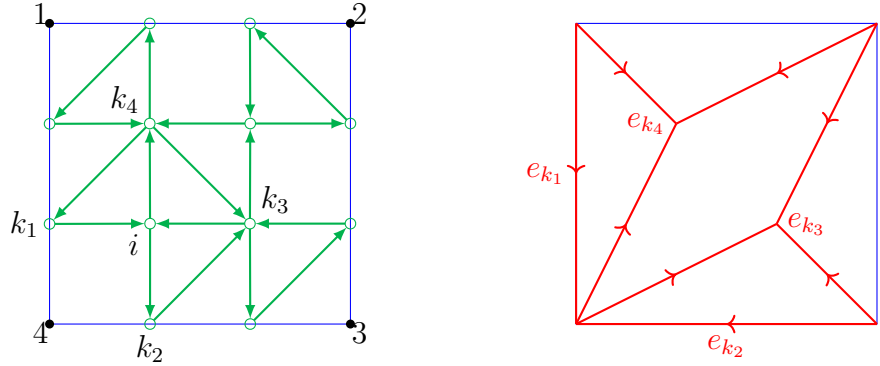

\begin{figure}
    \begin{tikzpicture}[scale=0.7]
    {\color{blue}
    \draw (3,0) -- (0,3) -- (-3,0) -- (0,-3) --cycle;
    \draw[blue] (0,-3) --(0,3);
    }
    \draw[red,thick,->-] (0,-3) to[bend right=10] (0,3);  
    \draw[red,thick,->-] (0,3) to[bend right=10] (0,-3);
    \triv{-3,0}{0,-3}{0,3};
    \triv{3,0}{0,-3}{0,3};
    \draw (-2,-2) node{$\Delta$};
    \foreach \x in {0,90,180,270}
    \path(\x:3) node [fill, circle, inner sep=1.2pt]{};
    \draw[red,dashed] (-0.3,0) ..controls (-1,-0.5) and (-2.2,-1.3).. (-2.5,-1.3) node[left]{$e_{i^\mathrm{op}}^\bD$};
	\draw[red,dashed] (0.3,0) ..controls (1,0.5) and (2.2,1.3).. (2.5,1.3) node[right]{$e_{i}^\bD$};

    \begin{scope}[xshift=4.5cm,yshift=-5cm,scale=0.8]
    \draw[blue] (3,0) -- (0,3) -- (-3,0) -- (0,-3) --cycle;
    \draw[red,thick,->-] (0,-3) to[bend right=10] (0,3);  
    \triv{-3,0}{0,-3}{0,3};
    \triv{3,0}{0,-3}{0,3};
    \triv{3,0}{0,-3}{-3,0};
    \foreach \x in {0,90,180,270}
    \path(\x:3) node [fill, circle, inner sep=1.2pt]{};
    \end{scope}
    
    \begin{scope}[xshift=4.5cm,yshift=5cm,scale=0.8]
    \draw[blue] (3,0) -- (0,3) -- (-3,0) -- (0,-3) --cycle;
    \draw[red,thick,->-] (0,3) to[bend right=10] (0,-3);
    \triv{-3,0}{0,-3}{0,3};
    \triv{3,0}{0,-3}{0,3};
    \triv{3,0}{0,3}{-3,0};
    \foreach \x in {0,90,180,270}
    \path(\x:3) node [fill, circle, inner sep=1.2pt]{};
    \end{scope}
    
    \begin{scope}[xshift=9cm]
    \draw[blue] (3,0) -- (0,3) -- (-3,0) -- (0,-3) --cycle;
    \triv{-3,0}{0,-3}{0,3};
    \triv{3,0}{0,-3}{0,3};
    \triv{3,0}{0,3}{-3,0};
    \triv{3,0}{0,-3}{-3,0};
    \foreach \x in {0,90,180,270}
    \path(\x:3) node [fill, circle, inner sep=1.2pt]{};
    \end{scope}
    
    \begin{scope}[xshift=13.5cm,yshift=5cm,scale=0.8]
    \draw[blue] (3,0) -- (0,3) -- (-3,0) -- (0,-3) --cycle;
    \triv{-3,0}{0,-3}{0,3};
    \draw[red,thick,->-] (-3,0) to[bend right=10] (3,0);
    \triv{3,0}{0,3}{-3,0};
    \triv{3,0}{0,-3}{-3,0};
    \foreach \x in {0,90,180,270}
    \path(\x:3) node [fill, circle, inner sep=1.2pt]{};
    \end{scope}
    
    \begin{scope}[xshift=13.5cm,yshift=-5cm,scale=0.8]
    \draw[blue] (3,0) -- (0,3) -- (-3,0) -- (0,-3) --cycle;
    \draw[red,thick,->-] (3,0) to[bend right=10] (-3,0);
    \triv{3,0}{0,-3}{0,3};
    \triv{3,0}{0,3}{-3,0};
    \triv{3,0}{0,-3}{-3,0};
    \foreach \x in {0,90,180,270}
    \path(\x:3) node [fill, circle, inner sep=1.2pt]{};
    \end{scope}
    
    \begin{scope}[xshift=18cm]
    {\color{blue}
    \draw (3,0) -- (0,3) -- (-3,0) -- (0,-3) --cycle;
    \draw (3,0) -- (-3,0);
    }
    \draw[red,thick,->-] (3,0) to[bend right=10] (-3,0);
    \draw[red,thick,->-] (-3,0) to[bend right=10] (3,0);
    \triv{3,0}{0,3}{-3,0};
    \triv{3,0}{0,-3}{-3,0};
    \foreach \x in {0,90,180,270}
    \path(\x:3) node [fill, circle, inner sep=1.2pt]{};
    \draw (2,-2) node{$\Delta'$};
    \draw[red,dashed] (0,0.3) ..controls (0.5,1) and (2.2,1.3).. (2.5,1.3) node[right]{$e_{j}^{\bD'}$};
	\draw[red,dashed] (0,-0.3) ..controls (-0.5,-1) and (-2.2,-1.3).. (-2.5,-1.3) node[left]{$e_{j^\mathrm{op}}^{\bD'}$};
    \end{scope}
    
    \draw[thick,->] (2,2) --node[midway,above left]{$\mu_{i}$} (3,3);
    \draw[thick,->] (2,-2) --node[midway,below left]{$\mu_{i^\mathrm{op}}$} (3,-3);
    \draw[thick,->] (6,3) --node[midway,above right]{$\mu_{i^\mathrm{op}}$} (7,2);
    \draw[thick,->] (6,-3) --node[midway,below right]{$\mu_{i}$} (7,-2);
    \draw[thick,->] (16,2) --node[midway,above right]{$\mu_{j}$} (15,3);
    \draw[thick,->] (16,-2) --node[midway,below right]{$\mu_{j^\mathrm{op}}$} (15,-3);
    \draw[thick,->] (12,3) --node[midway,above left]{$\mu_{j^\mathrm{op}}$} (11,2);
    \draw[thick,->] (12,-3) --node[midway,below left]{$\mu_{j}$} (11,-2);
    \end{tikzpicture}
        \caption{The web clusters associated with the flip sequence in \cref{fig:flip sequence}. Here the relevant elementary webs are just overwritten, not meaning the simultaneous crossing. The elementary webs on the boundary of the quadrilateral are omitted.}
        \label{fig:flip sequence_web}
    \end{figure}
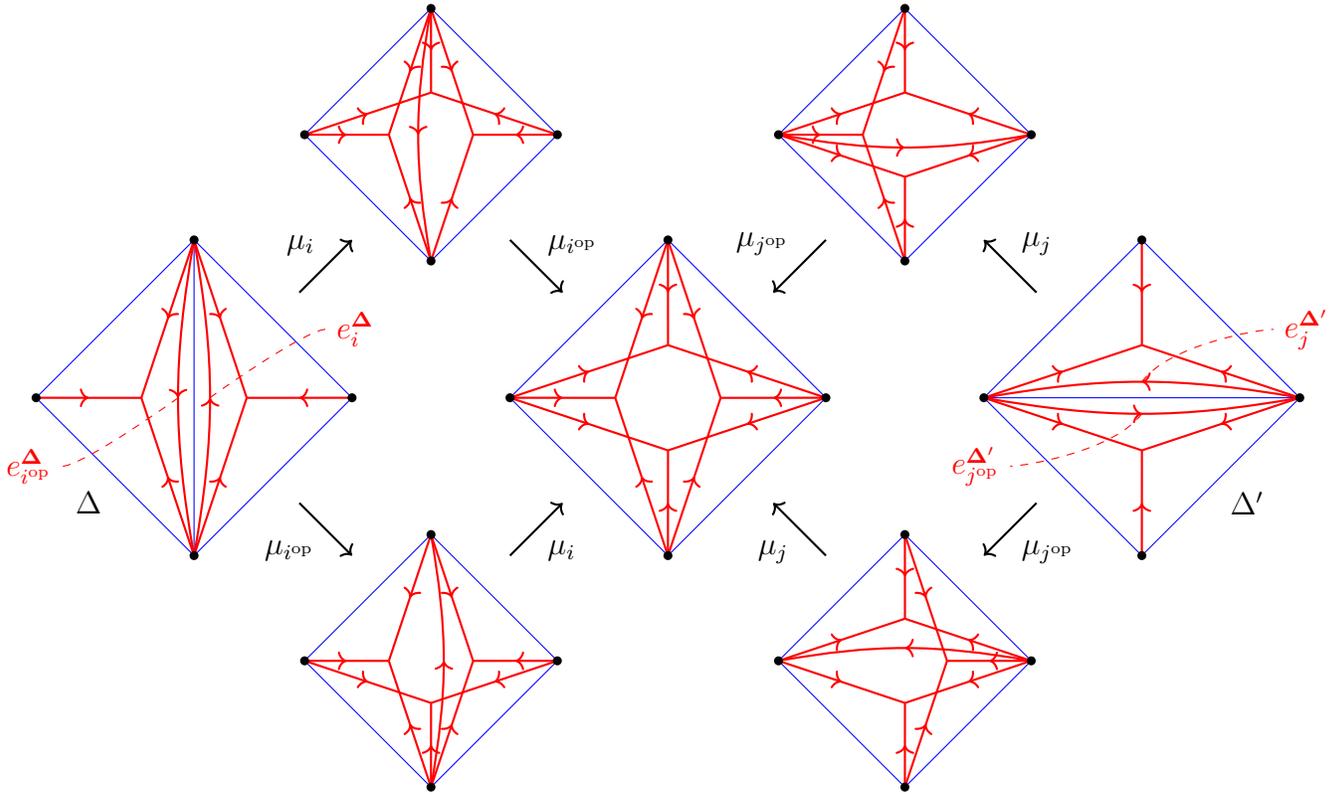

It follows from the above theorem that there exists a canonical mutation class $\sfs_q(\mathfrak{sl}_3,\Sigma)$ containing the quantum seeds $(B^{(\omega)},\Pi^{(\omega)},M^{(\omega)})$ associated with the vertices $\omega \in \Exch'_{\mathfrak{sl}_3,\Sigma}$, and we get the following algebras:
\begin{align*}
    \CA^q_{\mathfrak{sl}_3,\Sigma} \subset \UCA^q_{\mathfrak{sl}_3,\Sigma} \subset \mathrm{Frac}\Skein{\Sigma}^q.
\end{align*}
Here we write $\CA^q_{\mathfrak{sl}_3,\Sigma}:=\CA_{\sfs_q(\mathfrak{sl}_3,\Sigma)}$, $\UCA^q_{\mathfrak{sl}_3,\Sigma}:=\UCA_{\sfs_q(\mathfrak{sl}_3,\Sigma)}$ by simplifying the notation. 
By construction, we already know that the cluster variables associated with the vertices $\omega \in \Exch'_{\mathfrak{sl}_3,\Sigma}$ in the surface subgraph are realized in $\Skein{\Sigma}^q[\partial^{-1}]$. 
Comparison of the three algebras $\CA^q_{\mathfrak{sl}_3,\Sigma}, \UCA^q_{\mathfrak{sl}_3,\Sigma}, \Skein{\Sigma}^q[\partial^{-1}]$ is discussed in \cref{sec:comparison}.

\section{Comparison of quantum cluster algebras and skein algebras}\label{sec:comparison}
In this section, we give several results on the comparison of the three subalgebras $\Skein{\Sigma}^q[\partial^{-1}]$, $\CA^q_{\mathfrak{sl}_3,\Sigma}$, $\UCA^q_{\mathfrak{sl}_3,\Sigma}$ of the fraction algebra $\mathrm{Frac}\Skein{\Sigma}^q$. Recall from \cref{thm:q-Laurent} that we know the inclusion $\CA^q_{\mathfrak{sl}_3,\Sigma}\subset \UCA^q_{\mathfrak{sl}_3,\Sigma}$ as the quantum Laurent phenomenon. 

\subsection{The three algebras coincide for a triangle or a quadrilateral}
We first establish the basic cases where the marked surface is a triangle or a quadrilateral.

\begin{cor}\label{prop:comparison_T and Q}
When $\Sigma$ is a triangle or a quadrilateral, we have 
\begin{align*}
    \Skein{\Sigma}^q[\partial^{-1}] =\CA^q_{\mathfrak{sl}_3,\Sigma} =  \UCA^q_{\mathfrak{sl}_3,\Sigma}. 
\end{align*}
 
\end{cor}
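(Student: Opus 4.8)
The plan is to prove the two equalities by combining the general inclusions already available with the explicit structural results established for the triangle (and their quadrilateral analogues). Recall that for any marked surface the quantum Laurent phenomenon (\cref{thm:q-Laurent}) gives $\CA_{\sfs_q}\subset\UCA_{\sfs_q}$, so it suffices to prove the two inclusions
\begin{align*}
    \UCA_{\sfs_q}\subset \Skein{\Sigma}^q[\partial^{-1}]
    \qquad\text{and}\qquad
    \Skein{\Sigma}^q[\partial^{-1}]\subset \CA_{\sfs_q}.
\end{align*}
Since both a triangle $T$ and a quadrilateral $Q$ admit only finitely many decorated triangulations (for $T$ there are exactly two, by \cref{prop:Eweb T}), the exchange graph $\Exch'_{\sfs(\mathfrak{sl}_3,\Sigma)}$ restricted to these surfaces is finite, and the mutation class $\sfs_q$ is therefore controlled by an explicit, finite list of quantum seeds. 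This is what makes the basic cases tractable.

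First I would treat the inclusion $\Skein{\Sigma}^q[\partial^{-1}]\subset \CA_{\sfs_q}$. For the triangle, \cref{thm:Laurant of T} shows that every $x\in\SK{T}$ becomes, after multiplication by a suitable power of $t_{123}^{\epsilon}\in\mathrm{mon}(C_{(\Delta_T,\epsilon)})$, an element of $\langle C_{(\Delta_T,\epsilon)}\rangle_{\mathrm{alg}}$; since the web cluster $C_{(\Delta_T,\epsilon)}$ is precisely the quantum cluster associated with the decorated triangulation $(\Delta_T,\epsilon)$ (\cref{thm:q-mutation-equivalence}) and its frozen variables (the boundary webs $\Bweb{\partial^\times T}$) are inverted in $\SK{T}[\partial^{-1}]$, this exhibits every element of $\SK{T}[\partial^{-1}]$ as an element of the quantum cluster algebra. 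The quadrilateral case is handled the same way using the corresponding expansion result in \cref{subsec:quadrilateral_skein} (the analogue of \cref{thm:Laurant of T} for $Q$), together with \cref{cor:web-cluster-expansion-T}. The point to verify carefully is that the localization at $\mathrm{mon}(C_{\bD})$ agrees with the localization $\Skein{\Sigma}^q[\partial^{-1}]$ at boundary webs, which holds because on $T$ (resp.\ $Q$) every non-boundary elementary web is itself a quantum cluster variable, hence already lies in $\CA_{\sfs_q}$, so inverting only the frozen (boundary) variables suffices.

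Next I would prove $\UCA_{\sfs_q}\subset \Skein{\Sigma}^q[\partial^{-1}]$. Here I would use the quantum upper bound theorem (\cref{thm:q-Laurent}), which lets us compute $\UCA_{\sfs_q}$ as the upper bound $\UCA_{\sfs_q}(\bD)$ at a single decorated triangulation $\bD$, namely the intersection of the quantum torus $T_{(\bD)}$ with the finitely many adjacent tori. For the triangle there are only two seeds, related by the sign-change mutation $t_{123}^{+}\leftrightarrow t_{123}^{-}$ at the central vertex; thus $\UCA_{\sfs_q}=T_{(\Delta_T,+)}\cap T_{(\Delta_T,-)}$, an intersection of just two explicit quantum tori inside $\mathrm{Frac}\SK{T}^q$. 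Using the exchange relation $t_{123}^{+}t_{123}^{-}=q^{3/2}[e_{21}e_{13}e_{32}]+q^{-3/2}[e_{12}e_{23}e_{31}]$ from \cref{lem:str const T}, one checks directly that this intersection is generated as a $\bZ_q$-algebra by $\Bweb{\partial^\times T}^{\pm1}$, $t_{123}^{+}$ and $t_{123}^{-}$, which is exactly $\SK{T}[\partial^{-1}]$ by \cref{prop:generators T}. The quadrilateral is analogous, using its finite seed list and the quadrilateral structure constants.

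The main obstacle I expect is the upper-bound computation for the quadrilateral: whereas the triangle reduces to a two-term intersection with a single simple exchange relation, for $Q$ one must identify the upper bound $\UCA_{\sfs_q}(\bD)$ at a chosen decorated triangulation with the explicitly-generated skein algebra, and this requires a genuine (if finite) check that no ``extra'' elements survive in the intersection of the several adjacent quantum tori beyond those coming from webs. I would organize this by first reducing via \cref{cor:web-cluster-expansion-T} to showing that any element of the upper bound, when cleared of denominators by a monomial in $\mathrm{mon}(C_{\bD})$, lies in $\langle\cup_T\Eweb{T}\rangle_{\mathrm{alg}}$, and then using the positivity/expansion bookkeeping of \cref{sec:expansion} to rule out spurious terms. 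Everything else is a matter of matching the already-established quantum exchange relations with the skein relations, so the proof is essentially an assembly of \cref{thm:Laurant of T}, \cref{cor:web-cluster-expansion-T}, \cref{thm:q-mutation-equivalence} and \cref{thm:q-Laurent}, specialized to these two finite-type surfaces.
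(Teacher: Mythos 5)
There is a genuine gap, and it sits in your second inclusion $\UCA_{\sfs_q}\subset \Skein{\Sigma}^q[\partial^{-1}]$. You propose to compute $\UCA_{\sfs_q}$ directly as an intersection of quantum tori via the upper bound theorem, but you never actually carry out this computation: for the triangle you assert that ``one checks directly'' that $T_{(\Delta_T,+)}\cap T_{(\Delta_T,-)}$ is generated by the boundary webs and $t_{123}^{\pm}$, and for the quadrilateral (type $D_4$, with $24$ cluster variables and $50$ clusters) you concede that a ``genuine check that no extra elements survive'' is needed and offer only a vague plan to rule out spurious terms using the expansion bookkeeping of \cref{sec:expansion}. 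That check is precisely the hard part, and it is not an assembly of the cited results. The paper avoids it entirely with one key observation you are missing: the exchange types here are $A_1$ and $D_4$, which are \emph{acyclic}, so $\CA_{\sfs_q}=\UCA_{\sfs_q}$ holds by a general theorem (\cite[Proposition 8.17]{Muller16}). With that in hand the whole corollary reduces to the single equality $\CA_{\sfs_q}=\Skein{\Sigma}^q[\partial^{-1}]$, which is proved by matching generators: every elementary web is a cluster variable and every cluster variable is an elementary web. For the quadrilateral this requires explicitly identifying the four ``extra'' $D_4$ cluster variables (those not attached to decorated cell decompositions) with the $H$-webs $h_i$ via the quantum exchange relation and \eqref{eq:H-web appears} — a step your proposal does not address.

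There is also a smaller but real problem in your first inclusion. From \cref{thm:Laurant of T} you get $x=(t_{123}^{\epsilon})^{-k}f$ with $f\in\langle C_{(\Delta_T,\epsilon)}\rangle_{\mathrm{alg}}$, and you conclude $x\in\CA_{\sfs_q}$. But $t_{123}^{\epsilon}$ is an \emph{unfrozen} cluster variable, so its inverse does not lie in $\CA_{\sfs_q}$; your expression only places $x$ in the quantum torus $T_{(\Delta_T,\epsilon)}$, hence (at best) in $\UCA_{\sfs_q}$. The correct and simpler route is \cref{prop:generators T} (resp.\ \cref{prop:generators Q}): $\SK{T}$ is generated as an algebra by $\Bweb{\partial^{\times}T}\cup\{t_{123}^{+},t_{123}^{-}\}$, all of which are cluster variables, so $\SK{T}\subset\CA_{\sfs_q}$ with no localization at unfrozen variables, and inverting the frozen boundary webs then gives $\Skein{T}^q[\partial^{-1}]\subset\CA_{\sfs_q}$.
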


\begin{proof}
Since these quantum cluster algebras have acyclic exchange types $A_1$ and $D_4$, we know $\CA^q_{\mathfrak{sl}_3,\Sigma}=\UCA^q_{\mathfrak{sl}_3,\Sigma}$. See, for instance, \cite[Proposition 8.17]{Muller16}. We are going to show the equality $\CA^q_{\mathfrak{sl}_3,\Sigma} = \Skein{\Sigma}^q[\partial^{-1}]$. Since these two are subalgebras of $\mathrm{Frac} \Skein{\Sigma}^q$, we only need to establish a correspondence between their generators: the cluster variables and the elementary webs. 

When $\Sigma=T$ is a triangle, we have two quantum clusters in $\CA^q_{\mathfrak{sl}_3,T}$, which are exactly the two web clusters $C_{(\Delta_T,\pm)}$ given in \cref{prop:Eweb T}. Since the skein algebra $\Skein{T}^q$ is generated by the elementary webs contained in these web clusters by \cref{prop:generators T}, we have $\CA^q_{\mathfrak{sl}_3,T} = \Skein{T}^q[\partial^{-1}]$.

When $\Sigma=Q$ is a quadrilateral, we can also provide a one-to-one correspondence between the quantum clusters in $\CA^q_{\mathfrak{sl}_3,Q}$ and the web clusters in $\Skein{Q}^q$, as follows. The quantum cluster algebra $\CA^q_{\mathfrak{sl}_3,Q}$ of type $D_4$ has $16$ unfrozen variables and $8$ frozen variables. Among those, we have already identified all the frozen ones and the $12$ unfrozen ones which are associated with the decorated cell decompositions of $Q$. In order to identify the remaining four cluster variables, let us consider the decorated triangulation $\bD=(\Delta_{Q}^{(13)},(+,-))$.
The associated quiver and the cluster variables are shown in \cref{fig:quadrilateral_+-cluster}. Consider the new cluster variable $A'_1 \in \mathrm{Frac} \Skein{Q}^q$ obtained from this quantum cluster by the mutation directed to $1$. Explicitly, it is defined by the quantum exchange relation
\begin{align*}
    A'_1 &= M^{\bD}(-\sff_1 + \sff_2 + \sff_3) + M^{\bD}(-\sff_1 + \sff_4 + \sff_7 + \sff_{12}) \\
    &=A_1^{-1} (q [A_2A_3] + q^{-2} [A_4A_7A_{12}]).
\end{align*}
Then by comparing with the skein relation \eqref{eq:H-web appears}, we may identify $A'_1$ with the elementary web $\tikz[scale=0.9,baseline=2mm]{\draw[blue](0,0)--(1,0)--(1,1)--(0,1)--cycle; \Hweb{0,1}{0,0}{1,0}{1,1}}$. In particular, we have $A'_1 \in \Skein{Q}^q$. The other three remaining cluster variables are obtained by rotations. Thus all the cluster variables are realized in $\Skein{Q}^q$. 
Then by \cref{prop:generators Q}, we have $\CA^q_{\mathfrak{sl}_3,Q} = \Skein{Q}^q[\partial^{-1}]$.
\end{proof}

\begin{figure}
\begin{tikzpicture}
\newcommand{\smallsq}[1]{
\draw[blue] (#1) --++(1,0) --++(0,1) --++(-1,0)--cycle;}

\draw[blue] (0,0) -- (3,0) -- (3,3) -- (0,3) --cycle;
\draw[blue] (3,0) -- (0,3);
\fill(0,0) circle(2pt) node[left]{$p_2$};
\fill(3,0) circle(2pt) node[right]{$p_3$};
\fill(0,3) circle(2pt) node[left]{$p_1$};
\fill(3,3) circle(2pt) node[right]{$p_4$};
{\color{mygreen}
\quiverplus{0,0}{3,0}{0,3}
\quiverminus{3,0}{3,3}{0,3}
\begin{scope}[>=latex]
\qarrowbr{1,2}{2,1};
\qdlarrow{2,0}{1,0};
\qdlarrow{0,1}{0,2};
\qdrarrow{2,3}{1,3};
\qdrarrow{3,1}{3,2};
\end{scope}

\draw(1,2) node[above right,scale=0.8]{$1$};
\draw(2,2) node[above right,scale=0.8]{$2$};
\draw(1,1) node[above right,scale=0.8]{$3$};
\draw(2,1) node[above right,scale=0.8]{$4$};
\draw(1,3) node[above,scale=0.8]{$5$};
\draw(2,3) node[above,scale=0.8]{$6$};
\draw(3,2) node[right,scale=0.8]{$7$};
\draw(3,1) node[right,scale=0.8]{$8$};
\draw(2,0) node[below,scale=0.8]{$9$};
\draw(1,0) node[below,scale=0.8]{$10$};
\draw(0,1) node[left,scale=0.8]{$11$};
\draw(0,2) node[left,scale=0.8]{$12$};
}

\begin{scope}[xshift=5cm,yshift=3cm]
\smallsq{0,0}
\draw[red,thick,->-] (1,0) -- (0,1);
\node at (0.5,-0.3) {$A_1$};
\smallsq{2,0}
\trivop{2,1}{3,1}{3,0}
\node at (2.5,-0.3) {$A_2$};
\smallsq{4,0}
\triv{4,0}{4,1}{5,0}
\node at (4.5,-0.3) {$A_3$};
\smallsq{6,0}
\draw[red,thick,->-] (6,1) -- (7,0);
\node at (6.5,-0.3) {$A_4$};
\end{scope}

\begin{scope}[xshift=5cm,yshift=1cm]
\smallsq{0,0}
\draw[red,thick,->-,bend left=20] (1,1) to (0,1);
\node at (0.5,-0.3) {$A_5$};
\smallsq{2,0}
\draw[red,thick,->-,bend right=20] (2,1) to (3,1);
\node at (2.5,-0.3) {$A_6$};
\smallsq{4,0}
\draw[red,thick,->-,bend left=20] (5,0) to (5,1);
\node at (4.5,-0.3) {$A_7$};
\smallsq{6,0}
\draw[red,thick,->-,bend right=20] (7,1) to (7,0);
\node at (6.5,-0.3) {$A_8$};
\end{scope}

\begin{scope}[xshift=5cm,yshift=-1cm]
\smallsq{0,0}
\draw[red,thick,->-,bend left=20] (0,0) to (1,0);
\node at (0.5,-0.3) {$A_9$};
\smallsq{2,0}
\draw[red,thick,->-,bend right=20] (3,0) to (2,0);
\node at (2.5,-0.3) {$A_{10}$};
\smallsq{4,0}
\draw[red,thick,->-,bend left=20] (4,1) to (4,0);
\node at (4.5,-0.3) {$A_{11}$};
\smallsq{6,0}
\draw[red,thick,->-,bend right=20] (6,0) to (6,1);
\node at (6.5,-0.3) {$A_{12}$};
\end{scope}
\end{tikzpicture}
    \caption{The decorated triangulation $\bD=(\Delta_{Q}^{(13)},(+,-))$ and the associated quantum cluster.}
    \label{fig:quadrilateral_+-cluster}
\end{figure}

\subsection{Inclusion $\Skein{\Sigma}^q[\partial^{-1}]\subset \CA^q_{\mathfrak{sl}_3,\Sigma}$}\label{subsec:S_in_A}
Here we give the inclusion $\Skein{\Sigma}^q[\partial^{-1}]\subset \CA^q_{\mathfrak{sl}_3,\Sigma}$ and a comparison result of the additional structures: gradings, involutions and group actions.

\begin{thm}\label{thm:S in A}
For a connected unpunctured marked surface $\Sigma$ with at least two special points, we have an inclusion
\begin{align*}
    \Skein{\Sigma}^q[\partial^{-1}]\subset \CA^q_{\mathfrak{sl}_3,\Sigma}.
\end{align*}
\end{thm}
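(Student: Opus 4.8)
The plan is to reduce the inclusion $\Skein{\Sigma}^q[\partial^{-1}]\subset \CA_{\sfs_q}$ to the statement, already available to us, that every $\mathfrak{sl}_3$-web can be expanded as a quantum Laurent polynomial in the web cluster associated with a decorated triangulation. First I would recall from \cref{thm:localized-generators} that, since $\Sigma$ is connected with at least two special points, $\Skein{\Sigma}^q[\partial^{-1}]$ is generated as an algebra by oriented simple arcs and triads (together with the inverted boundary webs). Hence it suffices to show that each such generator, and the inverses of the boundary elementary webs, lie in $\CA_{\sfs_q}$. The boundary elementary webs are themselves frozen cluster variables in every seed coming from a decorated triangulation, so their inverses belong to $\CA_{\sfs_q}$ by definition of the (localized) cluster algebra. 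Thus the crux is to realize an arbitrary simple arc or triad inside $\CA_{\sfs_q}$.

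For this I would fix any decorated triangulation $\bD=(\Delta,\bs_\Delta)$ and invoke \cref{cor:web-cluster-expansion-T}: for a web $x$ (e.g. a simple arc or triad) there is a monomial $J_\bD\in\mathrm{mon}(C_\bD)$ with $xJ_\bD\in\langle C_\bD\rangle_{\mathrm{alg}}$. Since every element of $C_\bD$ is a quantum cluster variable in the seed $(B^{\bD},\Pi^{\bD},M^{\bD})$ constructed in \cref{thm:q-mutation-equivalence}, the product $xJ_\bD$ is a quantum polynomial in cluster variables, hence lies in $\CA_{\sfs_q}$. The monomial $J_\bD$ is a product of elementary webs in $C_\bD$; the ones lying on edges of $\Delta$ are frozen variables (on the boundary) or unfrozen variables (on interior edges), while the triangle webs $e_{v^{\bs(T)}}$ are unfrozen variables. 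To invert $J_\bD$ within $\CA_{\sfs_q}$, I would argue that each factor of $J_\bD$ is a \emph{cluster monomial} whose inverse is expressible after suitable mutations: more precisely, multiplying $x=f\,J_\bD^{-1}$ with $f=xJ_\bD\in\CA_{\sfs_q}$, the element $J_\bD^{-1}$ is a product of inverses of cluster variables appearing in a single cluster $C_\bD$, and these inverses belong to the quantum torus $T_{(\bD)}$. The key point is then that $x$, a priori only an element of $T_{(\bD)}$, in fact lies in the intersection defining $\CA_{\sfs_q}$ because it is a genuine web, and we already know (from the realization of all surface-subgraph cluster variables inside $\Skein{\Sigma}^q[\partial^{-1}]$) that $x$ has a Laurent expansion with the frozen-denominator structure in every seed.

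The main obstacle I anticipate is precisely this last transition from ``$x$ is a quantum Laurent polynomial in one cluster'' to ``$x\in\CA_{\sfs_q}$'': membership in the cluster algebra is \emph{not} the same as membership in a single quantum torus, and $J_\bD^{-1}$ involves inverting unfrozen cluster variables (the interior-edge webs and the triangle webs), which are not invertible in $\CA_{\sfs_q}$ in general. To circumvent this I would not try to invert unfrozen variables directly; instead I would sharpen the expansion so that the only denominators are boundary (frozen) webs. Concretely, I would first pass from $C_\bD$ back to pure boundary data: using the $A$-commutativity relations of \cref{lem:str const T} and the sticking trick \cref{lem:stickto}, rewrite $x$ so that each interior elementary web and triangle web that appears as a denominator is cancelled against a numerator occurrence, leaving a quantum Laurent expression whose denominator lies in $\mathrm{mon}(\Eweb{\partial^\times\Sigma})$. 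Since the boundary webs are frozen variables, their inverses are in $\CA_{\sfs_q}$ by definition, and the resulting expression exhibits $x$ as an element of $\CA_{\sfs_q}$.

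Finally I would record that the argument applies uniformly to all the algebra generators furnished by \cref{thm:localized-generators}, and that closing under the $\bZ_q$-algebra operations is automatic since $\CA_{\sfs_q}$ is a subalgebra of $\mathrm{Frac}\Skein{\Sigma}^q$; this yields $\Skein{\Sigma}^q[\partial^{-1}]\subset \CA_{\sfs_q}$. The comparison of the additional structures (the ensemble/endpoint gradings, the bar-involution versus mirror-reflection, and the $MC(\Sigma)\times\mathrm{Out}(SL_3)$-actions) I would treat separately, matching them on the explicit generating elementary webs where both sides are computed directly, and then extending by multiplicativity and equivariance of mutation.
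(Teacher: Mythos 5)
Your reduction to the generating set of \cref{thm:localized-generators} is the right first move, but the second half of your argument has a genuine gap. You try to place each generator $x$ in $\CA_{\sfs_q}$ by expanding it in a \emph{fixed} web cluster $C_\bD$ via \cref{cor:web-cluster-expansion-T}, and you correctly observe that this only yields $xJ_\bD\in\langle C_\bD\rangle_{\mathrm{alg}}$ with $J_\bD$ a monomial in \emph{unfrozen} cluster variables (interior-edge webs and triangle webs), whose inverses do not lie in $\CA_{\sfs_q}$. Your proposed repair --- cancelling every interior and triangle denominator against a numerator occurrence so that only boundary (frozen) webs remain in the denominator --- is not substantiated and is not true of the expansion as it stands: $J_\Delta$ does not divide the individual monomials of $xJ_\Delta$, and the sticking trick does not by itself produce an expression of $x$ as a polynomial in the cluster variables of a single seed over $\mathrm{mon}(\Eweb{\partial^{\times}\Sigma})$. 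What this line of reasoning can at best deliver is membership in the quantum torus $T_{(\bD)}$ (or, checking all seeds, in $\UCA_{\sfs_q}$), which is a different and a priori larger algebra than $\CA_{\sfs_q}$.

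The point you are missing is that there is no need to work in a fixed cluster at all: $\CA_{\sfs_q}$ is generated by the cluster variables of \emph{all} seeds in the mutation class, so it suffices to exhibit each generator of $\Skein{\Sigma}^q[\partial^{-1}]$ as a cluster variable in \emph{some} seed. This is immediate: an oriented simple arc is an elementary web along an edge of an ideal triangulation containing its underlying arc, hence a cluster variable of the associated decorated triangulation; a (flat) triad spans an ideal triangle, which can be completed to an ideal triangulation, and the triad is then the face variable $t^{\pm}$ of that triangle for a suitable choice of sign. Together with the fact that inverses of the boundary webs are inverses of frozen variables, this closes the argument in one step and is exactly how the paper proceeds. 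Your concluding remarks on gradings, involutions and group actions concern the surrounding comparison theorem rather than this inclusion, and should be kept separate as you suggest.
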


\begin{proof}
By \cref{thm:localized-generators}, the boundary-localized skein algebra $\Skein{\Sigma}^q[\partial^{-1}]$ is generated by oriented simple arcs and triads. Therefore it suffices to prove that these elements are cluster variables. For an oriented simple arc $\alpha$, there exists an ideal triangulation $\Delta$ that contains the underlying ideal arc. Then $\alpha$ is a cluster variable contained in the cluster $C_{\bD}$ assoociated with a decorated triangulation $\bD$ with underlying triangulation $\Delta$. For a triad $\tau$, let $T$ be the ideal triangle spanned by the three endpoints of $\tau$. Then one can similarly choose an ideal triangulation that contains $T$ and see that $\tau$ is a cluster variable. 
\end{proof}

\bigskip
\paragraph{\textbf{Comparison of gradings}}
Recall the lattice
\begin{align*}
    L(3):= \ker( (\bZ \times \bZ)^\bM \xrightarrow{\mathrm{aug}} \bZ \xrightarrow{\mathrm{mod}_3} \bZ_3),
\end{align*}
where $\mathrm{aug}((k_p,l_p)_{p \in \bM}):=\sum_{p \in \bM} (k_p - l_p)$. We are going to compare the lattices $\coker p^\ast$ and $L(3)$, where the former parametrizes the ensemble degree (\cref{lemdef:ensemble_grading}) and the latter does the endpoint degree (\cref{def:egrading}).
For any $\omega \in \Exch'_{\mathfrak{sl}_3,\Sigma}$, take the associated web cluster $(e_i^{(\omega)})_{i \in I(\omega)}$ and consider the map
\begin{align*}
    \mathrm{end}^{(\omega)}: \accentset{\circ}{\Lambda} {}^{(\omega)} \to L(3),\quad \sff_i^{(\omega)} \mapsto \mathrm{gr}(e_i^{(\omega)}).
\end{align*}

\begin{lem}[mutation-invariance]\label{lem:grading_mutation}
For an edge $\omega \overbar{k} \omega'$ in $\Exch'_{\mathfrak{sl}_3,\Sigma}$, we have $\mathrm{end}^{(\omega')} = \mathrm{end}^{(\omega)} \circ \mu_{k,\epsilon}^*$ 
for each $\epsilon \in \{+,-\}$. 
\end{lem}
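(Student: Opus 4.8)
The statement to prove is the mutation-invariance of the endpoint grading map, namely that $\mathrm{end}^{(\omega')} = \mathrm{end}^{(\omega)} \circ \mu_{k,\epsilon}^*$ for an edge $\omega \overbar{k} \omega'$ in the surface subgraph. Here $\mu_{k,\epsilon}^*$ is the lattice map dual to the mutation of the $\accentset{\circ}{\Lambda}$-lattices, which on basis vectors sends $\sff_j^{(\omega')}$ to $\sff_j^{(\omega)}$ for $j \neq k$ and $\sff_k^{(\omega')}$ to $-\sff_k^{(\omega)} + \sum_{j} [\epsilon b_{jk}]_+ \sff_j^{(\omega)}$. Since both sides are $\bZ$-linear maps out of $\accentset{\circ}{\Lambda}{}^{(\omega')}$, it suffices to check equality on each basis vector $\sff_i^{(\omega')}$.

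\textbf{Key steps.} First I would dispose of the indices $i \neq k$: for these, the web cluster element is unchanged, $e_i^{(\omega')} = e_i^{(\omega)}$, directly from the definition of the web clusters along the flip/sign-change sequence (the only elementary web that changes under the mutation $\mu_k$ is the one at $k$). Hence $\mathrm{end}^{(\omega')}(\sff_i^{(\omega')}) = \mathrm{gr}(e_i^{(\omega)}) = \mathrm{end}^{(\omega)}(\mu_{k,\epsilon}^*\sff_i^{(\omega')})$, as $\mu_{k,\epsilon}^*$ fixes $\sff_i$ for $i \neq k$. The substantive case is $i = k$. Here I would invoke the quantum exchange relation established in the proof of \cref{thm:q-mutation-equivalence}, which realizes $e_k^{(\omega')}$ as a sum of two Weyl-ordered monomials in the webs $e_j^{(\omega)}$, namely the two terms $M^{(\omega)}(-\sff_k + \sum_j [b_{jk}]_+ \sff_j)$ and $M^{(\omega)}(-\sff_k + \sum_j [-b_{jk}]_+ \sff_j)$. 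Since the endpoint grading $\mathrm{gr}$ is additive with respect to multiplication of webs (as $\vec{\gr}$ is additive and the boundary/skein relations are homogeneous), each of these two monomials has the same endpoint grading, and that common grading equals $\mathrm{end}^{(\omega)}$ applied to the corresponding lattice vector. This common value is exactly $\mathrm{end}^{(\omega)}(-\sff_k^{(\omega)} + \sum_j [\epsilon b_{jk}]_+ \sff_j^{(\omega)}) = \mathrm{end}^{(\omega)}(\mu_{k,\epsilon}^*\sff_k^{(\omega')})$, giving the desired equality on $\sff_k^{(\omega')}$.

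\textbf{The main obstacle.} The one point requiring genuine care is the consistency of the two terms in the exchange relation: I must check that the two monomials appearing in the expansion of $e_k^{(\omega')}$ have \emph{equal} endpoint grading, for otherwise $e_k^{(\omega')}$ would not be $\vec{\gr}$-homogeneous and the map $\mathrm{end}^{(\omega')}$ would not even be well-defined on this basis vector. This is guaranteed because the skein relations relating these expansions are homogeneous with respect to $\vec{\gr}$ (noted after \cref{def:egrading}), so both terms lie in the same graded piece; equivalently, on the lattice side, $-\sff_k + \sum_j [b_{jk}]_+ \sff_j$ and $-\sff_k + \sum_j [-b_{jk}]_+ \sff_j$ differ by $\sum_j b_{jk} \sff_j$, which lies in the image of $p^*$ (the kernel of the grading) by the defining property of the exchange matrix, and $\mathrm{end}$ factors through $L(3)$ compatibly. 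I would verify this compatibility by tracing through the three edge-types (first mutation, second mutation, sign change) exactly as organized in the proof of \cref{thm:q-mutation-equivalence}, reusing the explicit relations there, so no new computation beyond reading off endpoint gradings of the already-identified elementary webs is needed.
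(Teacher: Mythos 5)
Your proposal is correct and follows essentially the same route as the paper: reduce to the basis vector $\sff_k^{(\omega')}$, identify the exchange monomial $\prod_j (A_j^{(\omega)})^{[\epsilon b_{jk}]_+}$ with one of the two terms of the corresponding skein relation established in the proof of \cref{thm:q-mutation-equivalence}, and conclude by homogeneity of the skein relations with respect to the endpoint grading (your extra remark that the two terms necessarily share the same grading is exactly the point the paper's homogeneity appeal is making). No gap.
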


\begin{proof}
It suffices to check the relation $\mathrm{end}^{(\omega')}(\sff_k^{(\omega')})= \mathrm{end}^{(\omega)}(\mu_{k,\epsilon}^*(\sff_k^{(\omega)}))$, which is equivalent to
\begin{align*}
    &\mathrm{gr}(e_k^{(\omega')}) = \mathrm{gr}\left((e_k^{(\omega)})^{-1} \prod_{j \in I(\omega)} (A_j^{(\omega)})^{[\epsilon b_{jk}^{(\omega)}]_+} \right), \quad \mbox{or}\\
    &\mathrm{gr}(e_k^{(\omega)}e_k^{(\omega')})=\mathrm{gr}\left(\prod_{j \in I(\omega)} (A_j^{(\omega)})^{[\epsilon b_{jk}^{(\omega)}]_+} \right).
\end{align*}
On the other hand, the comparison of the skein and quantum exchange relations obtained in the proof of \cref{thm:q-mutation-equivalence} tells us that the monomial appearing in the right-hand side is exactly one of the two terms in the corresponding skein relation. Since the skein relations are homogeneous with respect to the endpoint grading, we get the desired assertion.
\end{proof}

\begin{lem}\label{lem:grading_isomorphism}
For any $\omega \in \Exch'_{\mathfrak{sl}_3,\Sigma}$, we have an isomorphism $\overline{\mathrm{end}}^{(\omega)}: \coker p_{(\omega)}^* \xrightarrow{\sim} L(3)$
of lattices which fits into the following diagram:
\begin{equation*}
    \begin{tikzcd}
    \accentset{\circ}{\Lambda}{}^{(\omega)} \ar[r,"\mathrm{end}^{(\omega)}"] \ar[d,"\alpha_{(\omega)}^*"'] & L(3). \\
    \coker p_{(\omega)}^* \ar[ru,"\overline{\mathrm{end}}^{(\omega)}"'] &
    \end{tikzcd}
\end{equation*}
Here $\alpha_{(\omega)}^*:\accentset{\circ}{\Lambda}{}^{(\omega)} \to \coker p_{(\omega)}^*$ denotes the natural projection.
Together with the mutation-invariance (\cref{lem:grading_mutation}), we get a canonical isomorphism 
\begin{align*}
    \overline{\mathrm{end}}: \coker p^* \xrightarrow{\sim} L(3).
\end{align*}
\end{lem}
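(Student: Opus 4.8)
The plan is to establish the isomorphism $\overline{\mathrm{end}}^{(\omega)}$ at a single well-chosen vertex and then propagate it using the mutation-invariance already recorded in \cref{lem:grading_mutation}. First I would fix $\omega=\bD=(\Delta,\bs_\Delta)$ to be a decorated triangulation all of whose signs are $+$, so that I may use the explicit compatibility relation $(B^{\bD})^{\mathsf{T}}\Pi^{\bD}=(6\cdot\mathrm{Id},0)$ from \cref{prop:compatibility}. The diagram asserts that $\mathrm{end}^{(\omega)}$ factors through the projection $\alpha_{(\omega)}^*:\accentset{\circ}{\Lambda}{}^{(\omega)}\to\coker p_{(\omega)}^*$; since $\alpha_{(\omega)}^*$ is surjective, such a factorization exists (uniquely) precisely when $\mathrm{end}^{(\omega)}$ kills the image of $p_{(\omega)}^*$. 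The key computation is therefore to show $\mathrm{end}^{(\omega)}\circ p_{(\omega)}^* = 0$. Recalling from \cref{sec:FG} that $p^*$ is built from the exchange matrix $\varepsilon=B^{\mathsf{T}}$ (its image is spanned by the vectors $\sum_j \varepsilon_{ij}\sff_j$ for unfrozen $i$, up to the kernel/cokernel conventions there), this amounts to checking that for each unfrozen $i$ the endpoint grading of the exchange monomial $\prod_j (e_j^{(\omega)})^{[\varepsilon_{ij}]_+}$ equals that of $\prod_j (e_j^{(\omega)})^{[-\varepsilon_{ij}]_+}$. But this is exactly the homogeneity of the skein exchange relations with respect to $\mathrm{gr}$, which I already used in the proof of \cref{lem:grading_mutation}: the two monomials are the two terms appearing in the corresponding skein relation, and the skein relations are $\mathrm{gr}$-homogeneous. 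Hence $\mathrm{end}^{(\omega)}$ descends to $\overline{\mathrm{end}}^{(\omega)}:\coker p_{(\omega)}^*\to L(3)$.

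Next I would verify that $\overline{\mathrm{end}}^{(\omega)}$ is an isomorphism of lattices, not merely a homomorphism. Here I would compare ranks and then check surjectivity and injectivity concretely. The rank of $\coker p_{(\omega)}^*$ is computable from the cluster ensemble data: because the compatibility relation forces $B^{\bD}$ to have full rank on the unfrozen part, the rank of $\coker p_{(\omega)}^*$ equals $\#I(\Delta)-\#I(\Delta)_{\uf} + (\text{corank contributions})$, which I expect to match $\rank L(3)=2\#\bM-1$. The cleanest route is to produce an explicit inverse: for a marked point $p\in\bM$, the endpoint grading records the numbers of incoming and outgoing half-edges at $p$, and the boundary elementary webs $e_i^{(\omega)}$ for $i\in I(\Delta)_\f$ (together with a few interior ones) realize all of $L(3)$ under $\mathrm{gr}$. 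I would exhibit a spanning set of $L(3)$ in the image of $\mathrm{end}^{(\omega)}$ using the explicit webs in the triangulation---each oriented boundary arc $e_{ij}$ contributes a standard generator $(0,\dots,(1,0)_{p_i},\dots,(0,1)_{p_j},\dots,0)$---and conversely show that any relation among the $\mathrm{gr}(e_i^{(\omega)})$ already lies in $\operatorname{im}p_{(\omega)}^*$, giving injectivity on the cokernel. These are finite, local verifications at the level of a single triangle and its edges, handled by the structure constants in \cref{lem:str const T}.

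Finally I would globalize. By \cref{lem:grading_mutation}, for any edge $\omega\overbar{k}\omega'$ we have $\mathrm{end}^{(\omega')}=\mathrm{end}^{(\omega)}\circ\mu_{k,\epsilon}^*$; on the other hand the cluster ensemble formalism of \cref{sec:FG} gives a canonical identification of the cokernels $\coker p_{(\omega)}^*\cong\coker p_{(\omega')}^*$ under which the projections $\alpha_{(\omega)}^*,\alpha_{(\omega')}^*$ are intertwined by $\mu_{k,\epsilon}^*$ (this is precisely the statement that $\mathbf{gr}$ of \cref{lemdef:ensemble_grading} is well-defined). Consequently the single-vertex isomorphisms $\overline{\mathrm{end}}^{(\omega)}$ agree after these identifications, so they glue to a vertex-independent isomorphism $\overline{\mathrm{end}}:\coker p^*\xrightarrow{\sim}L(3)$. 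Since the surface subgraph $\Exch'_{\sfs(\mathfrak{sl}_3,\Sigma)}$ is connected and any two all-$+$ decorated triangulations are joined through it, the choice of base vertex is immaterial.

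The main obstacle I anticipate is the bijectivity step rather than the well-definedness step: proving that $\overline{\mathrm{end}}^{(\omega)}$ is an isomorphism requires pinning down the precise rank and cokernel structure of $p_{(\omega)}^*$ in terms of the $\mathfrak{sl}_3$-triangulation data and matching it against $\rank L(3)=2\#\bM-1$, and then checking that no extra relations are introduced or lost. The homogeneity argument delivers the commuting triangle almost for free, but the lattice-theoretic identification---especially confirming that $\mathrm{end}^{(\omega)}$ is surjective onto $L(3)$ and that its kernel is exactly $\operatorname{im}p_{(\omega)}^*$ over $\bZ$ (no torsion or index discrepancy)---is where the genuine work lies.
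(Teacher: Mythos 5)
Your treatment of the well-definedness of $\overline{\mathrm{end}}^{(\omega)}$ is correct and essentially matches the paper: showing $\mathrm{end}^{(\omega)}\circ p_{(\omega)}^*=0$ reduces to the two exchange monomials having equal endpoint grading, which is the $\mathrm{gr}$-homogeneity of the skein relations (the paper verifies the same identities by direct inspection of the local figures, which amounts to the same computation). The globalization via \cref{lem:grading_mutation} and the compatibility of $\check{\mu}_{k,\epsilon}^*$ with the cokernels is also the paper's route.

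However, the bijectivity step contains a concrete error and a genuine gap. You assert twice that $\rank L(3)=2\#\bM-1$; this is wrong. $L(3)$ is the kernel of the \emph{composite} $(\bZ\times\bZ)^{\bM}\xrightarrow{\mathrm{aug}}\bZ\xrightarrow{\mathrm{mod}_3}\bZ/3\bZ$, hence an index-$3$ (in particular full-rank) sublattice of $(\bZ\times\bZ)^{\bM}$, so $\rank L(3)=2|\bM|$; you appear to have computed the rank of $\ker(\mathrm{aug})$ instead. This matters because the paper closes the argument precisely by a rank count: it proves surjectivity of $\overline{\mathrm{end}}^{(\bD)}$ (translating an arbitrary element of $L(3)$ by gradings of the trivalent webs, which have $\mathrm{aug}=\pm3$, into $\ker(\mathrm{aug})$, and then writing the result as a sum of gradings of oriented arcs), and then invokes $\rank\coker p^*=\dim\A_{SL_3,\Sigma}-\dim\X_{PGL_3,\Sigma}=2|\bM|$ from the moduli-space realization to conclude that a surjection between lattices of equal rank is an isomorphism. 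With your value $2|\bM|-1$ the ranks would not match and the strategy would stall. Your alternative plan --- proving injectivity by showing directly that every relation among the $\mathrm{gr}(e_i^{(\omega)})$ lies in $\operatorname{im}p_{(\omega)}^*$ over $\bZ$ --- is exactly the step you flag as ``where the genuine work lies'' and is never carried out; the paper's rank argument is what makes this direct verification unnecessary. As written, the proposal therefore does not establish that $\overline{\mathrm{end}}^{(\omega)}$ is an isomorphism.
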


\begin{proof}
Thanks to the mutation-invariance (\cref{lem:grading_mutation}), it suffices to prove the statement for a decorated triangulation $\omega=\bD$. For the well-definedness of $\overline{\mathrm{end}}^{\bD}$, we need to check that 
\begin{align*}
    \mathrm{end}^{\bD}(p_{\bD}^*\sfe_i^{\bD})=\mathrm{end}^{\bD}\left(\sum_{j \in I(\Delta)}\varepsilon_{ij}^\bD\sff_j^{\bD}\right)=\mathrm{gr}\left(\prod_{j \in I(\Delta)}(e_j^\bD)^{\varepsilon_{ij}^\bD}\right)=0
\end{align*}
for all $i \in I(\bD)_\uf$. Let us write $e_j:=e_j^{\bD} \in \Skein{\Sigma}^q$ for simplicity. 

If $i \in I^\mathrm{edge}(\Delta)$, then label the neighboring vertices as in the left of \cref{fig:compatibility_figure1}. Then one can see $\mathrm{gr}(e_{j_2}e_{j_4}/e_{j_1}e_{j_3})=0$ by inspection into the right of \cref{fig:compatibility_figure1}. If $i \in I^\mathrm{tri}(\Delta)$, then label the neighboring vertices as in the left of \cref{fig:compatibility_figure2}. Then one can see $\mathrm{gr}(e_{j_2}e_{j_4}e_{j_6}/e_{j_1}e_{j_3}e_{j_5})=0$ by inspection into the right of \cref{fig:compatibility_figure2}. Thus the map $\overline{\mathrm{end}}^{\bD}$ is well-defined. 

For the surjectivity, first note that the lattice $L(3)$ is generated by the following degree vectors 
\begin{enumerate}
    \item $\mathrm{gr}(e_i^{\bD})$ for $i \in I^\mathrm{tri}(\bD)$;
    \item $\mathrm{gr}(e_i^{(\omega)})$ for $\omega \in \Exch'_{\mathfrak{sl}_3,\Sigma}$ and $i \in I^\mathrm{edge}(\omega)$.
\end{enumerate}
Indeed, given any vector in $L(3)$, by adding a suitable number of vectors of the the form (1), we can translate it so that $\mathrm{aug}=\sum_{p \in \bM}k_p - \sum_{p \in \bM}l_p=0$. Such a degree vector can be written as a sum of the degree vectors of the oriented arcs, which are of the form (2). Since each web cluster $C_{(\omega)}$ is mutation-equivalent to $C_\bD$, each degree vector in (1)(2) can be written as a sum of the vectors $\mathrm{gr}(e_i^{\bD})$ for $i \in I(\bD)$. Thus $\overline{\mathrm{end}}^{\bD}$ is surjective. 

Since $L(3)$ is a full-rank sub-lattice of $(\bZ\times \bZ)^\bM$, we have $\rank L(3)=2|\bM|$. On the other hand, we know from the geometric realizations of the cluster varieties as certain moduli spaces that $\rank \coker p^*=\dim \A_{SL_3,\Sigma} - \dim \X_{PGL_3,\Sigma} = 2|\bM|$ \cite[Lemma 2.4]{FG03}. It follows that $\overline{\mathrm{end}}^{\bD}$ is an isomorphism of lattices as a surjective morphism between two lattices of the same rank.
\end{proof}

\begin{prop}\label{thm:comparison_grading}
The ensemble grading on $\UCA^q_{\mathfrak{sl}_3,\Sigma}$ coincides with the endpoint grading on $\Skein{\Sigma}^q[\partial^{-1}]$. More precisely, we have
\begin{align*}
    \overline{\mathrm{end}}(\mathrm{\mathbf{gr}}(A_i^{(\omega)})) = \mathrm{gr} (e_i^{(\omega)})
\end{align*}
for any $\omega \in \Exch_{\mathfrak{sl}_3,\Sigma}$ and $i \in I(\omega)$. 
\end{prop}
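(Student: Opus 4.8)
The plan is to deduce \cref{thm:comparison_grading} from the two lemmas just established, \cref{lem:grading_mutation} and \cref{lem:grading_isomorphism}, by unwinding the definitions of the two gradings on the nose. Recall from \cref{lemdef:ensemble_grading} that the ensemble grading is defined by $\mathbf{gr}(A_i^{(\omega)}) = \alpha^*_{(\omega)}(\sff_i^{(\omega)}) \in \coker p^*_{(\omega)}$, where $\alpha^*_{(\omega)}:\accentset{\circ}{\Lambda}{}^{(\omega)} \to \coker p^*_{(\omega)}$ is the natural projection. The content of the proposition is that, after transporting $\coker p^*$ to $L(3)$ via the canonical isomorphism $\overline{\mathrm{end}}$ produced in \cref{lem:grading_isomorphism}, this class is sent precisely to the endpoint grading $\mathrm{gr}(e_i^{(\omega)})$ of the corresponding elementary web.

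First I would recall that \cref{lem:grading_isomorphism} gives, for each $\omega$, a commuting triangle expressing $\mathrm{end}^{(\omega)} = \overline{\mathrm{end}}^{(\omega)} \circ \alpha^*_{(\omega)}$, together with the fact that these $\overline{\mathrm{end}}^{(\omega)}$ assemble (via the mutation-invariance of \cref{lem:grading_mutation}) into a single canonical isomorphism $\overline{\mathrm{end}}:\coker p^* \xrightarrow{\sim} L(3)$ independent of $\omega$. With this in hand the computation is immediate: for $i \in I(\omega)$,
\begin{align*}
    \overline{\mathrm{end}}\bigl(\mathbf{gr}(A_i^{(\omega)})\bigr)
    = \overline{\mathrm{end}}^{(\omega)}\bigl(\alpha^*_{(\omega)}(\sff_i^{(\omega)})\bigr)
    = \mathrm{end}^{(\omega)}(\sff_i^{(\omega)})
    = \mathrm{gr}(e_i^{(\omega)}),
\end{align*}
where the middle equality is exactly the triangle of \cref{lem:grading_isomorphism} and the last is the definition of $\mathrm{end}^{(\omega)}$. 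This settles the claimed formula for every vertex $\omega \in \Exch'_{\sfs(\mathfrak{sl}_3,\Sigma)}$ of the surface subgraph.

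The one genuine issue to address is that the statement is asserted for \emph{all} $\omega \in \Exch_{\sfs(\mathfrak{sl}_3,\Sigma)}$, whereas the web clusters $e_i^{(\omega)}$ and hence the maps $\mathrm{end}^{(\omega)}$ have only been defined for $\omega$ in the surface subgraph $\Exch'_{\sfs(\mathfrak{sl}_3,\Sigma)}$. I would handle this by observing that the ensemble grading side makes sense on the whole exchange graph and is mutation-covariant by construction (it is a grading on $\UCA_{\sfs_q(\mathfrak{sl}_3,\Sigma)}$, well-defined by \cref{lemdef:ensemble_grading}), so the identity $\overline{\mathrm{end}} \circ \mathbf{gr} = \mathrm{gr}$ is really a statement comparing two gradings both of which are determined on the subalgebra generated by the clusters attached to the surface subgraph. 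Concretely, since by \cref{thm:S in A} the algebra $\Skein{\Sigma}^q[\partial^{-1}]$ is generated by simple arcs and triads, each of which appears as a cluster variable $e_i^{(\omega)}$ for some $\omega \in \Exch'_{\sfs(\mathfrak{sl}_3,\Sigma)}$, the endpoint grading is determined by its values on these generators; and the ensemble grading is likewise determined once it is pinned down on one cluster together with mutation-covariance. Thus the equality on the surface subgraph propagates to all of $\Exch_{\sfs(\mathfrak{sl}_3,\Sigma)}$, and I expect this bookkeeping — matching the two mutation-covariant structures on the overlap and then extending — to be the only subtle point, the numerical heart of the matter having already been absorbed into \cref{lem:grading_mutation,lem:grading_isomorphism}.
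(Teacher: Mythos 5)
Your proof is correct and is essentially identical to the paper's: the paper's entire argument is the three-term chain $\overline{\mathrm{end}}(\mathbf{gr}(A_i^{(\omega)})) = \overline{\mathrm{end}}^{(\omega)}(\alpha_{(\omega)}^*(\sff_i^{(\omega)})) = \mathrm{end}^{(\omega)}(\sff_i^{(\omega)}) = \mathrm{gr}(e_i^{(\omega)})$, exactly as in your second paragraph. Your final paragraph on propagating the identity from the surface subgraph to all of $\Exch_{\sfs(\mathfrak{sl}_3,\Sigma)}$ goes beyond what the paper writes (it leaves this implicit), but it is a reasonable and harmless addition.
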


\begin{proof}
From \cref{lem:grading_isomorphism}, we get
\begin{align*}
    \overline{\mathrm{end}}(\mathrm{\mathbf{gr}}(A_i^{(\omega)})) = \overline{\mathrm{end}}^{(\omega)}(\alpha_{(\omega)}^*(\sff_i^{(\omega)})) = 
    \mathrm{end}^{(\omega)} (\sff_i^{(\omega)}) =
    \mathrm{gr} (e_i^{(\omega)})
\end{align*}
as desired.
\end{proof}

\begin{rem}
This comparison result of gradings has a practical importance in finding web clusters, though it is implicit in this paper. Namely, suppose we know a web cluster $C_{(\omega)}$ corresponding to a vertex $\omega \in \Exch_{\mathfrak{sl}_3,\Sigma}$, and want to know the web cluster $C_{(\omega')}$ corresponding to an adjacent vertex $\omega'$ connected by a mutation $\mu_k$. First, we can easily compute the ensemble degree of the new cluster variable $A_k^{(\omega')}$ from those of $A_j^{(\omega)}$. Then from the comparison result, we know the endpoint degree which the new elementary web $e_k^{(\omega')}$ should have. It is also typically easy to guess the web $e_k^{(\omega')}$ from its endpoint degree by naturally connecting the prescribed endpoints.
\end{rem}

\begin{proof}[Proof of \cref{introthm:comparison}]
What remaining are the group equivariance, and the comparison of the bar-involution and the mirror-reflection. 
The group equivariance is easily seen by comparing the descriptions of the group actions given in \cref{subsub:basic_structures,subsec:classical_Teich}. For example, under the action of a mapping class $\phi \in MC(\Sigma)$, a cluster variable (resp. elementary web) associated to a decorated triangulation $\bD$ is sent to a cluster variable (resp. elementary web) associated to $\phi^{-1}(\bD)$. 
To compare the bar-involution and the mirror-reflection, note that they coincide on the elementary webs associated with a decorated triangulation. Then the assertion follows from the cluster expansion (\cref{cor:web-cluster-expansion-T}).
\end{proof}

\subsection{A direct inclusion $\Skein{\Sigma}^q[\partial^{-1}]\subset \UCA^q_{\mathfrak{sl}_3,\Sigma}$ and quantum Laurent positivity}
Although we get the inclusions $\Skein{\Sigma}^q[\partial^{-1}]\subset \CA^q_{\mathfrak{sl}_3,\Sigma} \subset \UCA^q_{\mathfrak{sl}_3,\Sigma}$ by combining \cref{thm:S in A} with the quantum Laurent phenomenon, it does not tell us how to get the quantum Laurent expressions of an $\mathfrak{sl}_3$-web. In particular, positivity of the coefficients is not clear. 
Here we give a direct way to compute the inclusion $\Skein{\Sigma}^q[\partial^{-1}]\subset \UCA^q_{\mathfrak{sl}_3,\Sigma}$ based on the cluster expansion results we have discussed in \cref{sect:skein}, which also tell us a partial result on the positivity.

\begin{thm}\label{thm:comparison_skein_cluster}
For any unpunctured marked surface $\Sigma$, we have an 
inclusion
\begin{align*}
    \Skein{\Sigma}^q[\partial^{-1}] \subset \UCA^q_{\mathfrak{sl}_3,\Sigma}.
\end{align*}
\end{thm}

\begin{proof}

Thanks to \cref{thm:q-Laurent}, it suffices to check the inclusion $\Skein{\Sigma}^q[\partial^{-1}] \subset \UCA^q_{\mathfrak{sl}_3,\Sigma}(\bD)$ for a decorated triangulation $\bD$. Moreover, note that all the vertices of $\Exch_{\mathfrak{sl}_3,\Sigma}$ adjacent to $\bD$ are decorated cell decompositions. Therefore it suffices to see that
for any $x \in \Skein{\Sigma}^q$ and a decorated cell decomposition $\omega$, there exists a monomial $J_{(\omega)}$ of elementary webs in the web cluster $C_{(\omega)}$ such that $xJ_{(\omega)} \in \langle C_{(\omega)} \rangle_\mathrm{alg}$. 

When $\omega=\bD$ is a decorated triangulation, it is exactly \cref{cor:web-cluster-expansion-T}. Indeed, we have seen that by multiplying a product of elementary webs $e_i^{\bD}$ for $i \in I^{\mathrm{edge}(\Delta)}$ to $x$, we can decompose it into a sum of webs in triangles (\emph{i.e.}, a product of $e_j^{\bD}$'s and $\ast (e_j^{\bD})$'s for $j\ \in I^\mathrm{tri}(\Delta)$), and these webs are further decomposed into a polynomial in $C_{\bD}$ by multiplying some product of $\ast (e_j^{\bD})$'s. 

When $\omega$ is a decorated cell decomposition of deficiency $1$, the assertion follows from \cref{prop:adjacent-web-cluster-expansion-Q}. Indeed, we can similarly decompose $x \in \Skein{\Sigma}^q$ into a sum of webs in triangles and the unique quadrilateral, and the latter webs can be expanded in the web cluster associated with $\omega$. Thus we get the inclusion $\Skein{\Sigma}^q[\partial^{-1}] \subset \UCA^q_{\mathfrak{sl}_3,\Sigma}(\bD) = \UCA^q_{\mathfrak{sl}_3,\Sigma}$ as desired.

\end{proof}

As a quantum counterpart of \eqref{eq:univ_Laurent}, we have the semiring
\begin{align*}
    \mathbb{L}_{\sfs_q}^+:= \bigcap_{\omega \in \Exch_\sfs}T_{(\omega)}^+ \subset \UCA_{\sfs_q}
\end{align*}
of \emph{quantum universally Laurent polynomials}, where $T_{(\omega)}^+ \subset T_{(\omega)}$ denote the semiring of quantum Laurent polynomials in the variables $A_i^{(\omega)}$ for $i \in I$ and $q^{1/2}$ with non-negative coefficients. When $\sfs_q=\sfs_q(\mathfrak{sl}_3,\Sigma)$, one may ask the existence of the following diagram:
\begin{equation*}
    \begin{tikzcd}
    \Bweb{\Sigma} \ar[r] \ar[dr,dashed,"?"'] & \UCA^q_{\mathfrak{sl}_3,\Sigma} \\
    & \mathbb{L}_{\sfs_q(\mathfrak{sl}_3,\Sigma)}^+. \ar[u,hookrightarrow]
    \end{tikzcd}
\end{equation*}
In order to state our partial result, consider a larger semiring
\begin{align*}
    \widetilde{\mathbb{L}}_{\mathfrak{sl}_3,\Sigma}^+:= \bigcap_{\bD}T_{\bD}^+ \subset \UCA_{\sfs_q},
\end{align*}
where $\bD$ runs over all the decorated triangulations of $\Sigma$. An element of $\widetilde{\mathbb{L}}_{\mathfrak{sl}_3,\Sigma}^+$ is called a \emph{quantum GS-universally positive Laurent polynomial}. 
The following is a rephrasing of \cref{cor:elevation-preserving web in cluster,cor:simple web}:

\begin{thm}[Quantum Laurent positivity of webs]\label{thm:positivity_cluster}
Any elevation-preserving web with respect to $\Delta$ is contained in $T_{\bD}^+$. In particular, the $n$-bracelet or the $n$-bangle along an oriented simple loop in $\Sigma$  for any $n$ are contained in the semiring $\widetilde{\mathbb{L}}_{\mathfrak{sl}_3,\Sigma}^+$.
\end{thm}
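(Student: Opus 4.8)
The plan is to read off this statement as a direct translation of the skein-theoretic positivity results \cref{cor:elevation-preserving web in cluster} and \cref{cor:simple web} into the language of the quantum cluster algebra, using the dictionary established in \cref{sect:correspondance}. Recall that for a decorated triangulation $\bD=(\Delta,\bs_\Delta)$ the web cluster $C_\bD=\{e_i^\bD\}_{i\in I(\bD)}$ is identified with the quantum cluster $\{A_i^\bD\}_{i\in I(\bD)}$ via the toric frame $M^\bD$, and the Weyl ordering (\cref{def:simulcrossing}) of a product of elementary webs agrees with the monomial extension rule of $M^\bD$. Consequently, a cluster monomial is exactly a Weyl-ordered product $q^{s/2}M^\bD(\mu)$ of elementary webs, and the subalgebra $\langle C_\bD\rangle_{\mathrm{alg}}$ with nonnegative coefficients is precisely the semiring $\bZ_q^+[\mathbf{A}_\bD]$ of quantum polynomials with nonnegative coefficients. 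By \cref{thm:comparison_skein_cluster}, any $x\in\Skein{\Sigma}^q[\partial^{-1}]$ lies in $\UCA_{\sfs_q(\mathfrak{sl}_3,\Sigma)}$, so its expansion in each $T_\bD$ is a genuine quantum Laurent polynomial; the whole content is to upgrade these expansions to positive ones.

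The core step I would carry out is the following. Fix a decorated triangulation $\bD$ with underlying triangulation $\Delta$, and let $x$ be elevation-preserving with respect to $\Delta$. By \cref{cor:elevation-preserving web in cluster} there is a monomial $J_\bD\in\mathrm{mon}(C_\bD)$ such that the expansion of $xJ_\bD$ in $\langle C_\bD\rangle_{\mathrm{alg}}$ has nonnegative coefficients, i.e.\ $xJ_\bD\in\bZ_q^+[\mathbf{A}_\bD]$. Since $J_\bD=q^{s/2}M^\bD(\mu)$ for some $\mu\in\accentset{\circ}{\Lambda}{}^\bD$ and the compatibility form is skew-symmetric, its inverse in $T_\bD$ is the single positive Laurent monomial
\begin{equation*}
    J_\bD^{-1}=q^{-s/2}M^\bD(-\mu).
\end{equation*}
Thus $x=(xJ_\bD)J_\bD^{-1}$ inside $T_\bD$. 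Multiplication by a monomial $M^\bD(-\mu)$ sends a basis element $M^\bD(\alpha)$ to $q^{\Pi^\bD(\alpha,-\mu)/2}M^\bD(\alpha-\mu)$, so it preserves the property of having coefficients in $\bZ_q^+=\bN[q^{\pm1/2}]$: it merely shifts the lattice support and rescales each coefficient by a power of $q^{1/2}$. Hence $x\in\bZ_q^+[\mathbf{A}_\bD^{\pm1}]$, which is the first assertion.

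For the \emph{in particular} clause, I would invoke that the $n$-bracelet and the $n$-bangle along an oriented simple loop are elevation-preserving with respect to \emph{every} triangulation $\Delta$ (the Example following \cref{def:elevation-preserving web}, recorded directly as \cref{cor:simple web}). Applying the first part to each decorated triangulation $\bD$ gives membership in $\bZ_q^+[\mathbf{A}_\bD^{\pm1}]$ for all $\bD$, whence these webs lie in the intersection $\widetilde{\mathbb{L}}_{\mathfrak{sl}_3,\Sigma}^+=\bigcap_\bD\bZ_q^+[\mathbf{A}_\bD^{\pm1}]$, i.e.\ they are quantum GS-universally positive Laurent polynomials. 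The genuinely hard part of the argument has already been absorbed into \cref{thm:elevation-preserving web}: namely the verification that the expansion of an elevation-preserving web can be performed while avoiding the sign-producing relation \eqref{rel:bigon}, so that only the manifestly positive $\mathfrak{sl}_3$-skein relations intervene. The remaining work here is purely formal bookkeeping, the only point requiring care being the identification of $\mathrm{mon}(C_\bD)$-elements with positive quantum-torus monomials and the stability of $\bZ_q^+$-positivity under monomial multiplication, both of which are immediate in the based quantum torus $T_\bD$.
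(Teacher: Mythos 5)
Your proposal is correct and follows exactly the route the paper intends: the paper presents this theorem as a direct rephrasing of \cref{cor:elevation-preserving web in cluster} and \cref{cor:simple web} via the dictionary of \cref{sect:correspondance}, and your write-up simply makes explicit the two formal steps the paper leaves implicit (that $\mathrm{mon}(C_\bD)$-elements are single positive quantum-torus monomials, and that multiplying by the inverse of such a monomial preserves $\bZ_q^+$-positivity of coefficients). No gaps.
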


\subsection{$\Skein{\Sigma}^q[\partial^{-1}]=\UCA^q_{\mathfrak{sl}_3,\Sigma}$ under the covering conjecture}\label{subsec:S=U} 
Fix a marked surface $\Sigma$, and write $\sfs_q:=\sfs_q(\mathfrak{sl}_3,\Sigma)$. 

For an ideal cell decomposition $(\Delta;E)$ of deficiency $1$, let $Q_E$ be the unique quadrilateral having $E$ as a diagonal. 
Let $\mathrm{mon}(\Delta;E) \subset \Skein{\Sigma}^q$ denote the multiplicatively closed set generated by the elementary webs along the edges of $\Delta$ except for those along the edge $E$ and $A^{1/2}$. Define $\Skein{\Sigma}^q[(\Delta;E)^{-1}]$ to be the Ore localization of $\Skein{\Sigma}^q$ by the Ore set $\mathrm{mon}(\Delta;E)$. By \cref{thm:localization}, we have an inclusion $\Skein{\Sigma}^q[\partial^{-1}] \subset \Skein{\Sigma}^q[(\Delta;E)^{-1}]$ for any $(\Delta;E)$. We are going to show that $\Skein{\Sigma}^q[\partial^{-1}]=\UCA^q_{\mathfrak{sl}_3,\Sigma}$ holds under the following conjecture on these localizations:

\begin{conj}[Covering conjecture]\label{conj:covering}
We have
\begin{align*}
    \Skein{\Sigma}^q[\partial^{-1}] = \bigcap_{E \in e_{\mathrm{int}}(\Delta)} \Skein{\Sigma}^q[(\Delta;E)^{-1}]
\end{align*}
for any ideal triangulation $\Delta$ of $\Sigma$.
\end{conj}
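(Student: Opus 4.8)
The plan is to detect membership in these localizations by a family of edge–valuations read off from the $\Delta$-filtration of \cref{thm:localization}, and to reduce the identity to a monomial computation in the associated graded quantum affine space. The inclusion $\Skein{\Sigma}^q[\partial^{-1}] \subset \bigcap_{E} \Skein{\Sigma}^q[(\Delta;E)^{-1}]$ is immediate, since $\mathrm{mon}(\Eweb{\partial^{\times}\Sigma}) \subset \mathrm{mon}(\Delta;E)$ for every interior edge $E$. Likewise $\mathrm{mon}(\Delta;E) \subset \mathrm{mon}(\Delta)$ gives $\Skein{\Sigma}^q[(\Delta;E)^{-1}] \subset \Skein{\Sigma}^q[\Delta^{-1}]$, so the whole intersection already lies inside the single localization $\Skein{\Sigma}^q[\Delta^{-1}]$. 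It therefore suffices to prove, for $x \in \Skein{\Sigma}^q[\Delta^{-1}]$, that $v_E$-positivity along every interior edge forces $x \in \Skein{\Sigma}^q[\partial^{-1}]$.

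First I would attach to each oriented edge-web $e_E^{\pm}$, for $E \in e_{\mathrm{int}}(\Delta)$, an integer valuation $v_E^{\pm}$ obtained by refining the $\Delta$-filtration to a single monomial (group) order. By \cref{thm:localization} the associated graded $\mathrm{gr}_\Delta\Skein{\Sigma}^q$ is a quantum affine space with monomial basis indexed by $(\bN\times\bN)^{e(\Delta)}\times\bN^{t(\Delta)}$, so every nonzero element has a well-defined leading monomial and each coordinate defines an additive valuation on $\mathrm{Frac}\,\Skein{\Sigma}^q$ (additivity uses that the graded is a domain). One checks $v_E^{\pm}(e_{E'}^{\pm})=0$ for $E'\ne E$, since a minimal cut-path transverse to $E$ can be chosen to meet no other edge-web, while $v_E^{\pm}(e_E^{\pm})=1$; hence inverting $\mathrm{mon}(\Delta;E)$ leaves every $v_E^{\pm}$ unchanged. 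At the level of leading monomials the target identity then becomes the transparent fact that $\bigcap_{E}\{a_E^{\pm}\ge 0\}$ cuts out exactly the monomials with nonnegative exponent along every interior edge, which is precisely the associated graded of $\Skein{\Sigma}^q[\partial^{-1}]$.

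The passage from this graded identity to the filtered statement I would carry out by successive approximation. Given $x\in\bigcap_{E}\Skein{\Sigma}^q[(\Delta;E)^{-1}]$, write $x=f J^{-1}$ with $f\in\Skein{\Sigma}^q$ and $J\in\mathrm{mon}(\Delta)$; this bounds the support of $x$ below (nonnegative triangle-exponents, edge-exponents $\ge -\deg J$), so all leading monomials encountered lie in a region on which the chosen order is a well-order. The leading monomial of $x$ lies in $\bigcap_{E}\{a_E^{\pm}\ge 0\}$, hence is realized by a single cluster monomial $y_0\in\Skein{\Sigma}^q[\partial^{-1}]$: a Weyl-ordered product of interior edge-webs with nonnegative exponents, triangle webs, and inverse boundary webs, which by \cref{lem:str const T} and \cref{prop:Eweb T} is a basis web times a boundary monomial up to a power of $q$, and thus contributes a single graded monomial. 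Since $y_0$ already lies in the intersection, $x-y_0$ remains in the intersection with strictly smaller leading degree; as strictly descending chains in a well-order are finite, iterating terminates and exhibits $x$ as a finite sum of such cluster monomials, proving $x\in\Skein{\Sigma}^q[\partial^{-1}]$.

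The crux, and the reason this is stated as a conjecture, is the compatibility of the Ore localizations with the filtration: I must establish $\mathrm{gr}_\Delta(\Skein{\Sigma}^q[S^{-1}])=(\mathrm{gr}_\Delta\Skein{\Sigma}^q)[\bar S^{-1}]$ for $S=\mathrm{mon}(\Delta;E)$ and $S=\mathrm{mon}(\Eweb{\partial^{\times}\Sigma})$, and in particular that membership in $\Skein{\Sigma}^q[(\Delta;E)^{-1}]$ is genuinely \emph{detected} by $v_E^{\pm}\ge 0$ rather than merely implied by it. The delicate point is that the edge-webs are normal nonzerodivisors only in the associated graded, not in $\Skein{\Sigma}^q$ itself, where sliding $e_E$ past a web produces lower-order skein corrections; controlling these corrections through the localization, so that no spurious interior pole can hide in subleading terms, is the technical heart of the argument. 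This is exactly the quantum filtered analogue of the codimension-two covering of \cite{Shen20}, whose commutative shadow makes the statement plausible but does not by itself supply the transfer lemma. A secondary bookkeeping complication is that each interior edge carries two points, hence two independent edge-directions and an $\bN\times\bN$-valued weight, so the monomial computation and the valuations must be run per orientation; this is routine but must be tracked with care.
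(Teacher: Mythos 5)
You should first be aware that the paper does \emph{not} prove this statement: \cref{conj:covering} is genuinely left open there, is used only as a hypothesis in \cref{thm:S=U}, and the authors offer in its support just the classical ($q=1$) analogue, which they attribute to the geometry of the moduli space $\A_{SL_3,\Sigma}$ (a codimension-two covering argument in the style of \cite{Shen20}). So the bar for your proposal is whether it actually closes the conjecture, and it does not. You candidly flag the decisive step yourself: the transfer lemma $\mathrm{gr}_\Delta\bigl(\Skein{\Sigma}^q[S^{-1}]\bigr)=(\mathrm{gr}_\Delta\Skein{\Sigma}^q)[\bar S^{-1}]$ for $S=\mathrm{mon}(\Delta;E)$ and $S=\mathrm{mon}(\Eweb{\partial^\times\Sigma})$, i.e.\ that interior poles cannot hide in subleading skein corrections, is asserted as ``the technical heart'' but never established. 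Since everything else in your argument (the easy inclusions, the additivity of the valuations $v_E^{\pm}$ on the fraction field, the half-space description at the graded level) is formal once that lemma is granted, your proposal is a reduction of the conjecture to an unproven statement of essentially equal strength, not a proof. Note also that in the classical case the equality is \emph{not} obtained by a filtration argument but from normality/Serre's $S_2$ condition for the moduli space, which is exactly the kind of input your leading-term scheme has no access to; this is a warning sign that the filtered mechanism alone may be insufficient.

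There is a second, concrete gap in the ``successive approximation'' step. You claim that any leading monomial with nonnegative exponents along all interior edges ``is realized by a single cluster monomial $y_0\in\Skein{\Sigma}^q[\partial^{-1}]$,'' citing \cref{lem:str const T} and \cref{prop:Eweb T}. Those results concern a single triangle, and the realization claim fails globally as stated: the coordinate monoid of $\mathrm{gr}_\Delta\Skein{\Sigma}^q$ actually attained by basis webs is a \emph{proper} submonoid of $(\bN\times\bN)^{e(\Delta)}\times\bN^{t(\Delta)}$ — for instance, no basis web has vanishing weight on every edge but nonzero triangle coordinate, since the triangle coordinate counts $H$-webs and corner arcs of the web inside each triangle rather than exponents of $t^{\pm}_{123}$, and a nonelliptic web cannot be confined to a triangle's interior. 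So the region $\bigcap_E\{a_E^{\pm}\ge 0\}$ is not ``transparently'' the associated graded of $\Skein{\Sigma}^q[\partial^{-1}]$; you would need to describe the realized monoid precisely, prove it is saturated in the interior-edge directions, and show each admissible point lifts (with its $\bZ_A$-coefficient) to an actual element of $\Skein{\Sigma}^q[\partial^{-1}]$ whose full expansion, not just its leading term, stays in the intersection. Until both this lifting statement and the localization–filtration compatibility are proved, the argument does not go through — which is consistent with the authors' decision to state the covering property as a conjecture.
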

Indeed, its classical analogue holds true from the geometry of the moduli space of decorated $SL_3$-local systems (for instance, see \cite{Shen20} for its dual counterpart). 

On the side of cluster algebra, we similarly define $\CA_{\sfs_q}[(\Delta;E)^{-1}] \subset \mathrm{Frac}\Skein{\Sigma}^q$ to be the Ore localization of the quantum cluster algebra $\CA_{\sfs_q}$ by $\mathrm{mon}(\Delta;E)$. Note that the elementary webs in $\mathrm{mon}(\Delta;E)$ are identified with some cluster variables. Let $\sfs_q[(\Delta;E)^{-1}]$ denote the mutation class of quantum seeds obtained from $\sfs_q$ by \emph{freezing} the cluster variables in $\mathrm{mon}(\Delta;E)$ at any decorated cell decomposition $\omega$ over $(\Delta;E)$. The corresponding quantum cluster algebra is generated by iterated mutations from the quantum cluster associated with $\omega$, where the mutations of the cluster variables in $\mathrm{mon}(\Delta;E)$ are prohibited. Since two decorated cell decompositions over $(\Delta;E)$ can be connected by mutations for other directions, it does not depend on $\omega$.

\begin{lem}
We have $\CA_{\sfs_q[(\Delta;E)^{-1}]}=\CA_{\sfs_q}[(\Delta;E)^{-1}]=\UCA_{\sfs_q}[(\Delta;E)^{-1}]=\UCA_{\sfs_q[(\Delta;E)^{-1}]}$. 
\end{lem}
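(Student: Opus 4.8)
The plan is to prove the chain of four equalities by combining the localization philosophy with the quantum upper bound theorem, treating the freezing operation as commuting with both the passage to the upper cluster algebra and the Ore localization. First I would recall the general principle that freezing a collection of cluster variables does not change the upper cluster algebra up to localization: if $\sfs_q[(\Delta;E)^{-1}]$ is obtained from $\sfs_q$ by declaring the variables in $\mathrm{mon}(\Delta;E)$ frozen (hence invertible), then its exchange graph is the subgraph $\Exch_{\sfs_q[(\Delta;E)^{-1}]} \subset \Exch_{\sfs_q}$ consisting of those vertices reachable without mutating in the frozen directions, and every quantum seed of $\sfs_q[(\Delta;E)^{-1}]$ is literally a quantum seed of $\sfs_q$ with an enlarged frozen part. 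This immediately gives the middle identifications: the generating sets for $\CA_{\sfs_q[(\Delta;E)^{-1}]}$ and for $\CA_{\sfs_q}[(\Delta;E)^{-1}]$ coincide, since in both cases we adjoin the same quantum cluster variables together with the inverses of the elementary webs in $\mathrm{mon}(\Delta;E)$. Thus $\CA_{\sfs_q[(\Delta;E)^{-1}]}=\CA_{\sfs_q}[(\Delta;E)^{-1}]$, and dually the two upper cluster algebras are Ore-localizations of one another in the same way, giving $\UCA_{\sfs_q}[(\Delta;E)^{-1}]=\UCA_{\sfs_q[(\Delta;E)^{-1}]}$.

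The central equality to establish is the inner one, $\CA_{\sfs_q[(\Delta;E)^{-1}]}=\UCA_{\sfs_q[(\Delta;E)^{-1}]}$, i.e. that the frozen mutation class is its own upper cluster algebra. Here I would invoke \cref{thm:q-Laurent} (the quantum upper bound theorem) applied to the mutation class $\sfs_q[(\Delta;E)^{-1}]$: it guarantees that $\UCA_{\sfs_q[(\Delta;E)^{-1}]}$ equals the upper bound at any single vertex, so only the local (one-step) intersection of quantum tori is relevant. The key observation is that the frozen mutation class has a particularly simple combinatorial type. Once the edge variables in $\mathrm{mon}(\Delta;E)$ are frozen, the only unfrozen directions are those inside the single quadrilateral $Q_E$, and the mutations at those vertices are exactly the $D_4$-type mutations already analyzed in \cref{prop:comparison_T and Q}. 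In other words, $\sfs_q[(\Delta;E)^{-1}]$ restricts, away from the quadrilateral, to the product of acyclic pieces coming from the triangles (type $A_1$) and the quadrilateral $Q_E$ (type $D_4$), all of which are acyclic and hence satisfy $\CA=\UCA$ by the cited \cite[Proposition 8.17]{Muller16}. I would make this precise by checking that the restricted exchange matrix, after freezing, decomposes so that the standard acyclicity criterion applies, and then conclude $\CA_{\sfs_q[(\Delta;E)^{-1}]}=\UCA_{\sfs_q[(\Delta;E)^{-1}]}$.

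With the two outer equalities and the inner equality in hand, the middle two equalities $\CA_{\sfs_q}[(\Delta;E)^{-1}]=\UCA_{\sfs_q}[(\Delta;E)^{-1}]$ follow by applying the Ore localization functor to the identity $\CA_{\sfs_q[(\Delta;E)^{-1}]}=\UCA_{\sfs_q[(\Delta;E)^{-1}]}$, using that localization at the Ore set $\mathrm{mon}(\Delta;E)$ is exact and compatible with the inclusions of subalgebras of $\mathrm{Frac}\Skein{\Sigma}^q$ guaranteed by \cref{thm:localization}. Concretely, I would verify that $\mathrm{mon}(\Delta;E)$ is an Ore set inside both $\CA_{\sfs_q}$ and $\UCA_{\sfs_q}$ (the argument parallels the Ore-condition lemma proved for $\mathrm{mon}(\Delta)$ in \cref{subsec:localization}, using that the relevant elementary webs $q$-commute with the cluster monomials), and that localizing the inclusion $\CA_{\sfs_q}\subset\UCA_{\sfs_q}$ gives back the localized inclusion.

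The main obstacle I anticipate is the precise bookkeeping in the inner equality: one must argue carefully that, after freezing the variables of $\mathrm{mon}(\Delta;E)$, the resulting mutation class really does reduce to the acyclic (triangle/quadrilateral) local models rather than inheriting part of the infinite mutation type of $\sfs_q(\mathfrak{sl}_3,\Sigma)$. Since the ambient class is of infinite mutation type, one cannot simply assert acyclicity globally; the point is that freezing all the inter-triangle edge variables severs the surface into the basic pieces whose cluster structure was fully computed in \cref{prop:comparison_T and Q}. Making this reduction rigorous — identifying the frozen exchange matrix with the amalgamation-free local pieces and confirming the full-rank/acyclicity hypotheses needed for $\CA=\UCA$ — is the delicate step, and is where I would spend most of the effort.
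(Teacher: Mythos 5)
Your core idea is the same as the paper's: after freezing $\mathrm{mon}(\Delta;E)$ the mutation class has cluster type $A_1\times\cdots\times A_1\times D_4$ (isolated face vertices for the triangles away from $E$, and the $D_4$ quiver supported on the quadrilateral $Q_E$), hence is acyclic, so $\CA_{\sfs_q[(\Delta;E)^{-1}]}=\UCA_{\sfs_q[(\Delta;E)^{-1}]}$ by \cite[Proposition 8.17]{Muller16}. The paper then deduces the remaining equalities from \cite[Proposition 8.5]{Muller16}, which is exactly the ``freezing is a cluster localization iff $\CA=\UCA$ for the frozen class'' principle you are implicitly using.

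However, your first step is not sound as written. You claim that $\CA_{\sfs_q[(\Delta;E)^{-1}]}=\CA_{\sfs_q}[(\Delta;E)^{-1}]$ is immediate because ``the generating sets coincide.'' They do not: $\CA_{\sfs_q}$ is generated by \emph{all} cluster variables of $\sfs_q$, including those obtained by mutating in directions that are frozen in $\sfs_q[(\Delta;E)^{-1}]$, and these are not cluster variables of the frozen class. Asserting that they nevertheless lie in the localized frozen algebra is essentially the statement being proved, so this step is circular. The same objection applies to the ``dual'' claim $\UCA_{\sfs_q}[(\Delta;E)^{-1}]=\UCA_{\sfs_q[(\Delta;E)^{-1}]}$: the right-hand side is an intersection over \emph{fewer} quantum tori, so a priori it is only larger. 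The correct route is the one-directional chain
\begin{align*}
\CA_{\sfs_q[(\Delta;E)^{-1}]}\subset \CA_{\sfs_q}[(\Delta;E)^{-1}]\subset \UCA_{\sfs_q}[(\Delta;E)^{-1}]\subset \UCA_{\sfs_q[(\Delta;E)^{-1}]},
\end{align*}
where each inclusion is genuinely obvious, followed by the acyclicity equality of the two ends, which collapses the whole chain. This is what \cite[Proposition 8.5]{Muller16} packages; if you restructure your argument this way (and drop the claim that the generating sets coincide), your proof matches the paper's. Your final paragraph's worry about whether freezing really severs the surface into the local acyclic pieces is legitimate but easily resolved: every unfrozen vertex outside $Q_E$ is a face vertex all of whose neighbours are frozen, so it contributes an isolated $A_1$ factor.
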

Namely, the quantum cluster algebra $\CA_{\sfs_q[(\Delta;E)^{-1}]}$ is a \emph{cluster localization} in the sense of \cite[Section 8.3]{Muller16}. 

\begin{proof}
The cluster type of the mutation class $\sfs_q[(\Delta;E)^{-1}]$ is $A_1 \times \cdots \times A_1 \times D_4$. In particular it has acyclic exchange type, and hence $\CA_{\sfs_q[(\Delta;E)^{-1}]}=\UCA_{\sfs_q[(\Delta;E)^{-1}]}$. See, for instance, \cite[Proposition 8.17]{Muller16}. Then the assertion follows from \cite[Proposition 8.5]{Muller16}.  
\end{proof}

\begin{lem}
We have $\CA_{\sfs_q[(\Delta;E)^{-1}]}=\Skein{\Sigma}^q[(\Delta;E)^{-1}]=\UCA_{\sfs_q[(\Delta;E)^{-1}]}$.
\end{lem}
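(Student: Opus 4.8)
The plan is to prove the chain of equalities
\[
    \CA_{\sfs_q[(\Delta;E)^{-1}]}=\Skein{\Sigma}^q[(\Delta;E)^{-1}]=\UCA_{\sfs_q[(\Delta;E)^{-1}]}
\]
by sandwiching the localized skein algebra between the two cluster algebras. Since the previous lemma already gives $\CA_{\sfs_q[(\Delta;E)^{-1}]}=\UCA_{\sfs_q[(\Delta;E)^{-1}]}$ (as the exchange type $A_1\times\cdots\times A_1\times D_4$ is acyclic), it suffices to establish the two inclusions
\[
    \CA_{\sfs_q[(\Delta;E)^{-1}]}\subset\Skein{\Sigma}^q[(\Delta;E)^{-1}]\subset\UCA_{\sfs_q[(\Delta;E)^{-1}]}.
\]

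For the first inclusion, I would argue exactly as in the proof of \cref{thm:S in A}. The localized algebra $\CA_{\sfs_q[(\Delta;E)^{-1}]}$ is generated over $\bZ_q$ by the quantum cluster variables obtained by the allowed mutations (those not freezing the webs in $\mathrm{mon}(\Delta;E)$) starting from a decorated cell decomposition $\omega$ over $(\Delta;E)$, together with the inverses of the frozen variables. Each such cluster variable is an elementary web realized inside $\Skein{\Sigma}^q$ by \cref{thm:q-mutation-equivalence}, and the inverted variables lie in $\mathrm{mon}(\Delta;E)$ and hence become invertible in the Ore localization $\Skein{\Sigma}^q[(\Delta;E)^{-1}]$. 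This gives the first inclusion.

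For the second inclusion $\Skein{\Sigma}^q[(\Delta;E)^{-1}]\subset\UCA_{\sfs_q[(\Delta;E)^{-1}]}$, I would mimic the proof of \cref{thm:comparison_skein_cluster}, using the acyclicity to reduce to an upper-bound check at the single vertex $\omega$ over $(\Delta;E)$. By the quantum upper bound theorem (\cref{thm:q-Laurent}) applied to the frozen mutation class, $\UCA_{\sfs_q[(\Delta;E)^{-1}]}$ equals the upper bound at $\omega$, i.e.\ the intersection of the quantum torus $T_{(\omega)}$ with those of its neighbors in the restricted exchange graph. Thus it suffices to show that every $x\in\Skein{\Sigma}^q$ admits, in the web cluster $C_{(\omega)}$ and in each of its neighbors, a quantum Laurent expression; this is precisely the content of \cref{cor:web-cluster-expansion-T} (when $\omega=\bD$ is a decorated triangulation) and \cref{prop:adjacent-web-cluster-expansion-Q} (for a deficiency-$1$ decomposition, handling the quadrilateral $Q_E$). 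Multiplying $x$ by an appropriate monomial $J_{(\omega)}\in\mathrm{mon}(C_{(\omega)})$ lands it in $\langle C_{(\omega)}\rangle_\mathrm{alg}$, and since the inverted webs are among the cluster variables, $x$ itself becomes a quantum Laurent polynomial in the localization.

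The main obstacle I anticipate is the careful bookkeeping in the second inclusion: one must verify that the Ore localization $\Skein{\Sigma}^q[(\Delta;E)^{-1}]$ is genuinely compatible with the quantum upper cluster structure, namely that inverting exactly the webs in $\mathrm{mon}(\Delta;E)$ corresponds to freezing exactly the right set of cluster variables and no more. The subtlety is that the quadrilateral $Q_E$ supports a full $D_4$-subpattern whose internal cluster variables (the four ``$H$-web'' variables of \cref{prop:comparison_T and Q}) must remain mutable, and one needs to confirm that the expansion over the neighboring vertices of $\omega$ stays within $T_{(\omega)}$ after localization. Once \cref{prop:adjacent-web-cluster-expansion-Q} is in hand this is routine, but aligning the skein-side Ore set with the cluster-side freezing is the step requiring the most attention.
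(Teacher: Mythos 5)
Your argument is correct and follows essentially the same route as the paper: identify the cluster variables of the frozen mutation class with elementary webs to get $\CA_{\sfs_q[(\Delta;E)^{-1}]}\subset\Skein{\Sigma}^q[(\Delta;E)^{-1}]$, deduce $\Skein{\Sigma}^q[(\Delta;E)^{-1}]\subset\UCA_{\sfs_q[(\Delta;E)^{-1}]}$ from the Laurent-expansion results (\cref{cor:web-cluster-expansion-T} and \cref{prop:adjacent-web-cluster-expansion-Q}, i.e.\ the content of \cref{thm:comparison_skein_cluster} restricted to the relevant upper bound), and close the sandwich with the preceding lemma. The one imprecision is citing \cref{thm:q-mutation-equivalence} for the realization of \emph{all} cluster variables of the $A_1\times\cdots\times A_1\times D_4$ pattern as webs: the four $D_4$-internal ($H$-web) variables are not attached to any decorated cell decomposition, so they require the exchange-relation computation from the proof of \cref{prop:comparison_T and Q} — the reference the paper itself uses, and which you correctly invoke in your closing discussion.
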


\begin{proof}
In the same way as the proof of \cref{prop:comparison_T and Q}, we can identify all the cluster variables in the mutation class $\sfs_q[(\Delta;E)^{-1}]$ with elementary webs. Hence $\CA_{\sfs_q[(\Delta;E)^{-1}]} \subset \Skein{\Sigma}^q[(\Delta;E)^{-1}]$. We also have 
\begin{align*}
    \Skein{\Sigma}^q[(\Delta;E)^{-1}] \subset \UCA_{\sfs_q}[(\Delta;E)^{-1}]=\UCA_{\sfs_q[(\Delta;E)^{-1}]}
\end{align*}
by the second inclusion in \cref{thm:comparison_skein_cluster}. Thus the assertion follows from the previous lemma.
\end{proof}




\begin{lem}
We have $\UCA_{\sfs_q}= \bigcap_{E \in e_{\mathrm{int}}(\Delta)}\UCA_{\sfs_q[(\Delta;E)^{-1}]}$ for any ideal triangulation $\Delta$ of $\Sigma$.
\end{lem}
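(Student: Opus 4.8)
The plan is to express everything in terms of the upper bound sets $T_{(\omega)}$ from \cref{subsub:q-CA} and reduce the claim to a purely quantum-torus intersection identity. First I would fix an ideal triangulation $\Delta$ and the two families of decorated cell decompositions it governs: the decorated triangulations $\bD$ over $\Delta$ (vertices of $\Exch'_{\sfs(\mathfrak{sl}_3,\Sigma)}$), and for each interior edge $E \in e_{\mathrm{int}}(\Delta)$ the decorated cell decompositions $\omega$ of deficiency $1$ over $(\Delta;E)$. Using the definition $\UCA_{\sfs_q}=\bigcap_{\omega'\in\Exch_{\sfs}}T_{(\omega')}$ together with the quantum upper bound theorem (\cref{thm:q-Laurent}), the left-hand side can be localized and computed as an intersection over the vertices adjacent to a chosen $\bD$. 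The key point is that freezing the cluster variables in $\mathrm{mon}(\Delta;E)$ at a decorated cell decomposition over $(\Delta;E)$ realizes $\UCA_{\sfs_q[(\Delta;E)^{-1}]}$ as an intersection of exactly those based quantum tori $T_{(\omega')}$ whose vertices $\omega'$ are reachable without mutating along the frozen directions — i.e., the decorated cell decompositions over $(\Delta;E)$ — localized at $\mathrm{mon}(\Delta;E)$.

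The main steps, in order, would be as follows. First, record that by \cref{thm:q-Laurent} we have $\UCA_{\sfs_q}=\UCA_{\sfs_q}(\bD)=T_{(\bD)}\cap\bigcap_{\omega'}T_{(\omega')}$, where $\omega'$ ranges over the decorated cell decompositions adjacent to $\bD$; each such $\omega'$ lies over some $(\Delta;E)$ with $E\in e_{\mathrm{int}}(\Delta)$. Second, for each fixed $E$, apply the preceding lemma's description $\UCA_{\sfs_q[(\Delta;E)^{-1}]}=\CA_{\sfs_q}[(\Delta;E)^{-1}]$ and the acyclicity of the type $A_1\times\cdots\times A_1\times D_4$ exchange pattern to present $\UCA_{\sfs_q[(\Delta;E)^{-1}]}$ as the localized intersection $\bigl(T_{(\bD)}\cap\bigcap_{\omega'\text{ over }(\Delta;E)}T_{(\omega')}\bigr)$ after inverting $\mathrm{mon}(\Delta;E)$. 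Third, intersect over all $E\in e_{\mathrm{int}}(\Delta)$: since $\UCA_{\sfs_q}$ is already contained in $T_{(\bD)}$ and every neighbor $\omega'$ of $\bD$ appears in exactly one family over some $(\Delta;E)$, the intersection $\bigcap_E\UCA_{\sfs_q[(\Delta;E)^{-1}]}$ recovers $T_{(\bD)}\cap\bigcap_{\omega'}T_{(\omega')}=\UCA_{\sfs_q}(\bD)=\UCA_{\sfs_q}$. One inclusion $\UCA_{\sfs_q}\subset\bigcap_E\UCA_{\sfs_q[(\Delta;E)^{-1}]}$ is immediate since localization only enlarges, and $\UCA_{\sfs_q}\subset\UCA_{\sfs_q}[(\Delta;E)^{-1}]=\UCA_{\sfs_q[(\Delta;E)^{-1}]}$ for each $E$.

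The content is in the reverse inclusion $\bigcap_E\UCA_{\sfs_q[(\Delta;E)^{-1}]}\subset\UCA_{\sfs_q}$. Here I would take an element $x$ lying in every localized upper cluster algebra and show it already lies in $\UCA_{\sfs_q}=\UCA_{\sfs_q}(\bD)$, i.e. that it is a genuine (non-localized) quantum Laurent polynomial in the cluster $C_{\bD}$ and in each neighboring cluster $C_{(\omega')}$. Membership in $T_{(\bD)}$ localized at $\mathrm{mon}(\Delta;E)$ for every $E$ forces, after intersecting the relevant quantum tori, that $x\in T_{(\bD)}$ with no denominators along the edge-webs $e_i^{\bD}$, $i\in I^{\mathrm{edge}}(\Delta)\setminus\{\text{the }E\text{ directions}\}$; combining the conditions across all $E$ eliminates the possibility of a pole along any single edge direction, placing $x$ in the unlocalized $T_{(\bD)}$. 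The compatibility of the quantum tori here is where I must be careful: the toric frames $M^{(\omega')}$ for different neighbors $\omega'$ share the frozen coordinates, so their localized intersections glue consistently.

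The step I expect to be the main obstacle is the third one — verifying that freezing along each $(\Delta;E)$ partitions the neighbors of $\bD$ cleanly, so that the family of localizations $\{\UCA_{\sfs_q[(\Delta;E)^{-1}]}\}_{E}$ genuinely covers the upper bound $\UCA_{\sfs_q}(\bD)$ without over- or under-counting directions. Concretely, I must confirm that each mutation direction away from $\bD$ (including the two directions $i, i^{\mathrm{op}}$ realizing a flip of a given interior edge $E$) is ``seen'' by exactly the localization at that $E$ and is left unfrozen there, while all other interior edge directions are frozen and hence contribute no pole constraint in $\UCA_{\sfs_q[(\Delta;E)^{-1}]}$. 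This is essentially a bookkeeping argument on the surface subgraph $\Exch'_{\sfs(\mathfrak{sl}_3,\Sigma)}$ combined with the full-rank/acyclicity input, but it is the place where a naive intersection could fail to be tight, and it is precisely the quantum analogue of the codimension-$2$ covering of the moduli space alluded to after \cref{conj:covering}.
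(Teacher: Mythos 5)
Your overall strategy is the paper's: apply the quantum upper bound theorem (\cref{thm:q-Laurent}) at a decorated triangulation $\bD$ over $\Delta$ to write both sides as intersections of based quantum tori, and then check that the union over $E$ of the relevant mutation directions exhausts $I_\uf(\Delta)$. However, your bookkeeping of which tori enter $\UCA_{\sfs_q[(\Delta;E)^{-1}]}$ has a genuine gap: you describe its upper bound at $\bD$ as $T_{\bD}\cap\bigcap_{\omega'}T_{(\omega')}$ with $\omega'$ ranging over the deficiency-$1$ decorated cell decompositions over $(\Delta;E)$, i.e.\ only the two mutations $\mu_i,\mu_{i^{\mathrm{op}}}$ at the vertices on $E$. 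But the unfrozen indices of $\sfs_q[(\Delta;E)^{-1}]$ are $I^{\mathrm{tri}}(\Delta)$ together with the two indices on $E$: the face (sign-change) directions remain unfrozen in every one of these frozen mutation classes, so the tori $T_{\mu_k(\bD)}$ for $k\in I^{\mathrm{tri}}(\Delta)$ must appear in every $\UCA_{\sfs_q[(\Delta;E)^{-1}]}$. Your claims that ``each such $\omega'$ lies over some $(\Delta;E)$'' and that each mutation direction is ``seen by exactly the localization at that $E$'' are false for $k\in I^{\mathrm{tri}}(\Delta)$: those neighbours of $\bD$ are decorated triangulations over $\Delta$ itself, not deficiency-$1$ decompositions, and they are seen by all of the localizations rather than exactly one. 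Taken literally, your description yields $\bigcap_E\UCA_{\sfs_q[(\Delta;E)^{-1}]}=T_{\bD}\cap\bigcap_k T_{\mu_k(\bD)}$ with $k$ running only over the interior edge indices, which is a priori strictly larger than $\UCA_{\sfs_q}(\bD)=T_{\bD}\cap\bigcap_{k\in I_\uf(\Delta)}T_{\mu_k(\bD)}$, and the claimed equality does not follow.

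The fix is exactly the paper's version of the count: the upper bound of $\sfs_q[(\Delta;E)^{-1}]$ at $\bD$ is $T_{\bD}\cap\bigcap_k T_{\mu_k(\bD)}$ with $k$ running over $I^{\mathrm{tri}}(\Delta)$ and the two indices on $E$; since intersections are insensitive to repetition, the only thing to verify is that every index of $I_\uf(\Delta)$ occurs in at least one of these sets as $E$ varies over $e_{\mathrm{int}}(\Delta)$, which is clear. Your worry about over-counting is therefore harmless, and your separate paragraph about eliminating poles along edge directions is unnecessary: all cluster variables are already invertible in each based quantum torus, so no additional localization or denominator argument is needed once both sides are written as torus intersections.
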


\begin{proof}
By the quantum upper bound theorem (\cref{thm:q-Laurent}), the upper cluster algebra $\UCA_{\sfs_q}$ coincides with the upper bound at any decorated triangulation $\bD$ over $\Delta$:
\begin{align}\label{eq:upper-bound}
    \UCA_{\sfs_q}= \UCA_{\sfs_q}(\bD) = T_{\bD}\cap \bigcap_{k \in I_\uf(\Delta)} T_{\mu_k(\bD)}.
\end{align}
Here $T_{\mu_k(\bD)}$ denotes the quantum torus associated with the quantum seed obtained from that for $\bD$ by the mutation directed to $k \in I_\uf(\Delta)$. Similarly, we have
\begin{align}\label{eq:upper-bound-local}
    \UCA_{\sfs_q[\Delta;E]^{-1}} = \UCA_{\sfs_q[\Delta;E]^{-1}}(\bD) = T_{\bD} \cap \bigcap_{k} T_{\mu_k(\bD)},
\end{align}
where $k$ runs over the set $I^\mathrm{tri}(\Delta)$ of face indices and the two indices on the edge $E$. Then the assertion can be verified by comparing the expressions \eqref{eq:upper-bound} and \eqref{eq:upper-bound-local}, since the quantum torus for each unfrozen edge index $k \in I^\mathrm{edge}(\Delta)$ appears at least once in \eqref{eq:upper-bound-local} when $E$ runs over all the interior edges as well as the face indices.  
\end{proof}

\begin{prop}\label{thm:S=U}
Assuming that \cref{conj:covering} holds true, we have 
\begin{align*}
    \Skein{\Sigma}^q[\partial^{-1}] = \UCA^q_{\mathfrak{sl}_3,\Sigma}.
\end{align*}
\end{prop}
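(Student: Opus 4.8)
The plan is to combine the three lemmas immediately preceding the statement into a short covering argument, using \cref{conj:covering} to translate the skein-theoretic covering into the cluster-theoretic one. First I would fix an arbitrary ideal triangulation $\Delta$ of $\Sigma$ and a decorated triangulation $\bD$ with underlying triangulation $\Delta$, so that all the localized algebras indexed by the deficiency-$1$ cell decompositions $(\Delta;E)$ are at our disposal. The key point is that each of the three objects in play — the localized skein algebra, the quantum cluster algebra, and the quantum upper cluster algebra — admits a ``covering'' description as an intersection over the interior edges $E \in e_\mathrm{int}(\Delta)$, and these descriptions match term by term.

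Concretely, I would chain the equalities as follows. By the last lemma in the excerpt we have
\begin{align*}
    \UCA_{\sfs_q(\mathfrak{sl}_3,\Sigma)} = \bigcap_{E \in e_\mathrm{int}(\Delta)} \UCA_{\sfs_q[(\Delta;E)^{-1}]}.
\end{align*}
By the preceding lemma, each factor on the right satisfies $\UCA_{\sfs_q[(\Delta;E)^{-1}]} = \Skein{\Sigma}^q[(\Delta;E)^{-1}]$, so that
\begin{align*}
    \UCA_{\sfs_q(\mathfrak{sl}_3,\Sigma)} = \bigcap_{E \in e_\mathrm{int}(\Delta)} \Skein{\Sigma}^q[(\Delta;E)^{-1}].
\end{align*}
Finally, \cref{conj:covering} asserts precisely that this intersection equals $\Skein{\Sigma}^q[\partial^{-1}]$, which yields the desired equality. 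All three identifications take place inside the common ambient skew-field $\mathrm{Frac}\Skein{\Sigma}^q$ (guaranteed by \cref{thm:localization}), so the intersections are literally intersections of subalgebras and there is no ambiguity in comparing them.

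I would take care to note that the middle lemma was already proved using the direct inclusion $\Skein{\Sigma}^q[\partial^{-1}] \subset \UCA_{\sfs_q}$ from \cref{thm:comparison_skein_cluster} together with the acyclicity ($A_1 \times \cdots \times A_1 \times D_4$) of the frozen mutation class, and that the last lemma is a purely formal consequence of the quantum upper bound theorem (\cref{thm:q-Laurent}); hence no genuinely new cluster-algebraic input is needed here beyond assembling these. The entire content of the proof, once the lemmas are in place, is the observation that the covering of the skein algebra and the covering of the upper cluster algebra are indexed by the same set $e_\mathrm{int}(\Delta)$ with matching factors.

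The main obstacle is not in this final assembly — which is essentially a one-line intersection argument — but in \cref{conj:covering} itself, which is assumed rather than proved; indeed the whole proposition is stated conditionally on it. The genuinely subtle point to double-check is that the covering index sets truly align: in the last lemma the quantum torus $T_{\mu_k(\bD)}$ for each unfrozen \emph{edge} index $k \in I^\mathrm{edge}(\Delta)$ must appear in the localized upper-bound expression \eqref{eq:upper-bound-local} for at least one choice of $E$, while the \emph{face} indices $k \in I^\mathrm{tri}(\Delta)$ appear for every $E$. I would verify this combinatorial bookkeeping carefully, since it is exactly the compatibility that makes the two coverings agree; once confirmed, the proof reduces to substituting \cref{conj:covering} into the already-established chain of equalities.
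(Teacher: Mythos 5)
Your proposal is correct and follows exactly the paper's own argument: chain the three preceding lemmas to write $\UCA_{\sfs_q(\mathfrak{sl}_3,\Sigma)}$ as the intersection $\bigcap_{E}\UCA_{\sfs_q[(\Delta;E)^{-1}]} = \bigcap_{E}\Skein{\Sigma}^q[(\Delta;E)^{-1}]$ and then invoke \cref{conj:covering} to identify this with $\Skein{\Sigma}^q[\partial^{-1}]$. Nothing is missing.
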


\begin{proof}
Combining the lemmas above, we get
\begin{align*}
    \UCA^q_{\mathfrak{sl}_3,\Sigma}&= \bigcap_{E \in e_{\mathrm{int}}(\Delta)}\UCA_{\sfs_q[(\Delta;E)^{-1}]} 
    = \bigcap_{E \in e_{\mathrm{int}}(\Delta)}\Skein{\Sigma}^q[(\Delta;E)^{-1}]
    = \Skein{\Sigma}^q[\partial^{-1}].
\end{align*}
Here we used \cref{conj:covering} in the last equality.
\end{proof}


\appendix
\section{The \texorpdfstring{$\mathfrak{sl}_3$}{sl3}-skein algebra for a quadrilateral}\label{subsec:quadrilateral_skein}
Let $Q$ be a quadrilateral with special points $p_1,p_2,p_3,p_4$ in this counter-clockwise order. 

In the same way as in the triangle case, we define boundary webs
\[
	\Bweb{\partial^{\times}Q}=\{e_{12},e_{21},e_{23},e_{32},e_{34},e_{43},e_{41},e_{14}\}
\]
and introduce the following 16 $\mathfrak{sl}_3$-webs:
\begin{align*}
	&\Quadweb{\dweb{0}}{$e_{31}$}, \Quadweb{\dweb{90}}{$e_{42}$}, \Quadweb{\dweb{180}}{$e_{13}$}, \Quadweb{\dweb{-90}}{$e_{24}$},
	&&\Quadweb{\twebplus{0}}{$t_{124}^{+}$}, \Quadweb{\twebplus{90}}{$t_{231}^{+}$}, \Quadweb{\twebplus{180}}{$t_{342}^{+}$}, \Quadweb{\twebplus{-90}}{$t_{413}^{+}$},\\
	&\Quadweb{\twebminus{0}}{$t_{124}^{-}$}, \Quadweb{\twebminus{90}}{$t_{231}^{-}$}, \Quadweb{\twebminus{180}}{$t_{342}^{-}$}, \Quadweb{\twebminus{-90}}{$t_{413}^{-}$},
	&&\Quadweb{\hweb{0}}{$h_{1}$}, \Quadweb{\hweb{90}}{$h_{2}$}, \Quadweb{\hweb{180}}{$h_{3}$}, \Quadweb{\hweb{-90}}{$h_{4}$}.
\end{align*}

Let us denote the set of these webs by
\[
	\Eweb{Q}:=\{e_{ij},t_{123}^{\epsilon},t_{231}^{\epsilon},t_{342}^{\epsilon},t_{413}^{\epsilon},h_{i}\mid i, j\in\{1,2,3,4\}, i\neq j ,\epsilon\in\{{+},{-}\}\}.
\]
We will show that it is exactly the set of elementary webs for $Q$ in \cref{prop:Eweb Q} soon below.

\begin{lem}\label{lem:str const Q}
The complete list of relations among $\Eweb{Q}$ is given as follows\footnote{Here the left-hand sides of the above equations mean the multiplication of two elementary webs,
	the first web being located right above the second one.
	A collection of elementary webs depicted in the same quadrilateral mean the Weyl ordering of these webs.}:
	\begin{align}
		\Quadweb{\twebplus{0}}{${t_{124}^{+}}$} \Quadweb{\twebplus{90}}{${t_{231}^{+}}$}
		&= A^{-1}\Quadweb{\twebplus{90}\twebplus{0}}{$[{t_{124}^{+}}{t_{231}^{+}}]$},\\
		\Quadweb{\twebplus{0}}{${t_{124}^{+}}$} \Quadweb{\twebplus{180}}{${t_{342}^{+}}$}
		&= \Quadweb{\twebplus{0}\twebplus{180}}{$[{t_{124}^{+}}{t_{342}^{+}}]$},\\
		\Quadweb{\twebplus{0}}{${t_{124}^{+}}$} \Quadweb{\twebplus{-90}}{${t_{413}^{+}}$}
		&= A \Quadweb{\twebplus{-90}\twebplus{0}}{$[{t_{124}^{+}}{t_{413}^{+}}]$},
	\end{align}
	\smallskip
	\begin{align}
		\Quadweb{\twebplus{0}}{${t_{124}^{+}}$} \Quadweb{\twebminus{0}}{${t_{124}^{-}}$}
		&= A^{-\frac{3}{2}}\Quadweb{\bwebpos{0}\bwebpos{-90}\dweb{-90}}{$[e_{12}e_{24}e_{41}]$}
		+ A^{\frac{3}{2}}\Quadweb{\bwebneg{0}\bwebneg{-90}\dweb{90}}{$[e_{21}e_{14}e_{42}]$},\label{eq:skein_sign_change}\\
		\Quadweb{\twebplus{0}}{${t_{124}^{+}}$} \Quadweb{\twebminus{90}}{${t_{231}^{-}}$}
		&= A^{-2}\Quadweb{\bwebpos{0}\bwebpos{-90}\bwebpos{90}}{$[e_{12}e_{23}e_{41}]$}
		+ A\Quadweb{\bwebneg{0}\hweb{180}}{$[e_{21}h_{3}]$},\\
		\Quadweb{\twebplus{0}}{${t_{124}^{+}}$} \Quadweb{\twebminus{180}}{${t_{342}^{-}}$}
		&= \Quadweb{\twebplus{0}\twebminus{180}}{$[{t_{124}^{+}}{t_{342}^{-}}]$},\\
		\Quadweb{\twebplus{0}}{${t_{124}^{+}}$} \Quadweb{\twebminus{-90}}{${t_{413}^{-}}$}
		&=A^{2} \Quadweb{\bwebneg{0}\bwebneg{180}\bwebneg{-90}}{$[e_{21}e_{14}e_{43}]$}
		+A \Quadweb{\bwebpos{-90}\hweb{90}}{$[e_{41}h_{2}]$},
	\end{align}
	\smallskip
	\begin{align}
		\Quadweb{\twebplus{0}}{${t_{124}^{+}}$} \Quadweb{\dweb{0}}{$e_{31}$}
		&= A^{-\frac{3}{2}}\Quadweb{\bwebpos{-90}\twebplus{90}}{$[e_{41}{t_{231}^{+}}]$} 
		+ A^{\frac{3}{2}}\Quadweb{\bwebneg{0}\twebplus{-90}}{$[e_{21}{t_{413}^{+}}]$},\label{eq:skein_1st_mutation}\\
		\Quadweb{\twebplus{0}}{${t_{124}^{+}}$} \Quadweb{\dweb{90}}{$e_{42}$}
		&= A^{\frac{1}{2}} \Quadweb{\twebplus{0}\dwebdown{90}}{$[{t_{124}^{+}}e_{42}]$},\\
		\Quadweb{\twebplus{0}}{${t_{124}^{+}}$} \Quadweb{\dweb{180}}{$e_{13}$}
		&= \Quadweb{\twebplus{0}\dwebup{180}}{$[{t_{124}^{+}}e_{13}]$},\\
		\Quadweb{\twebplus{0}}{${t_{124}^{+}}$} \Quadweb{\dweb{-90}}{$e_{24}$}
		&= A^{-\frac{1}{2}} \Quadweb{\twebplus{0}\dwebup{-90}}{$[{t_{124}^{+}}e_{24}]$},
	\end{align}
	\smallskip
	\begin{align}
		\Quadweb{\twebplus{0}}{${t_{124}^{+}}$} \Quadweb{\hweb{0}}{$h_{1}$}
		&= A\Quadweb{\bwebneg{0}\bwebneg{-90}\twebplus{180}}{$[e_{21}e_{14}{t_{342}^{+}}]$}
		+ A^{-2}\Quadweb{\bwebpos{-90}\dwebup{-90}\twebplus{90}}{$[e_{41}e_{24}{t_{231}^{+}}]$},\\
		\Quadweb{\twebplus{0}}{${t_{124}^{+}}$} \Quadweb{\hweb{90}}{$h_{2}$}
		&= A^{-1}\Quadweb{\bwebpos{0}\bwebpos{-90}\twebminus{-90}}{$[e_{12}e_{41}{t_{413}^{-}}]$}
		+ A^{2}\Quadweb{\bwebneg{0}\dwebup{90}\twebplus{-90}}{$[e_{21}e_{42}{t_{413}^{+}}]$},\\
		\Quadweb{\twebplus{0}}{${t_{124}^{+}}$} \Quadweb{\hweb{180}}{$h_{3}$}
		&= A^{\frac{1}{2}}\Quadweb{\twebplus{0}\hweb{180}}{$[{t_{124}^{+}}h_{3}]$},\\
		\Quadweb{\twebplus{0}}{${t_{124}^{+}}$} \Quadweb{\hweb{-90}}{$h_{4}$}
		&= A^{-\frac{1}{2}}\Quadweb{\twebplus{0}\hweb{-90}}{$[{t_{124}^{+}}h_{4}]$},
	\end{align}
	\smallskip
	\begin{align}
		\Quadweb{\dweb{0}}{$e_{31}$} \Quadweb{\dweb{90}}{$e_{42}$}
		&= A^{2}\Quadweb{\bwebneg{90}\bwebpos{-90}}{$[e_{32}e_{41}]$} + A^{-1} \Quadweb{\hweb{90}}{$h_{2}$},\\
		\Quadweb{\dweb{0}}{$e_{31}$} \Quadweb{\dweb{180}}{$e_{13}$}
		&= \Quadweb{\dwebup{0}\dwebup{180}}{$[e_{31}e_{13}]$},\\
		\Quadweb{\dweb{0}}{$e_{31}$} \Quadweb{\dweb{-90}}{$e_{24}$}
		&= A^{-2} \Quadweb{\bwebneg{0}\bwebpos{180}}{$[e_{21}e_{34}]$} + A \Quadweb{\hweb{0}}{$h_{1}$},
	\end{align}
	\smallskip
	\begin{align}
		\Quadweb{\dweb{0}}{$e_{31}$} \Quadweb{\hweb{0}}{$h_{1}$} &= A \Quadweb{\dweb{0}\hweb{0}}{$[e_{31}h_{1}]$},\\
		\Quadweb{\dweb{0}}{$e_{31}$} \Quadweb{\hweb{90}}{$h_{2}$} &= A^{-1} \Quadweb{\dweb{0}\hweb{90}}{$[e_{31}h_{2}]$}\\
		\Quadweb{\dweb{0}}{$e_{31}$} \Quadweb{\hweb{180}}{$h_{3}$}
		&= A^{-1} \Quadweb{\twebminus{90}\twebplus{-90}}{$[{t_{231}^{-}}{t_{413}^{+}}]$} + A^{2} \Quadweb{\bwebneg{90}\bwebpos{-90}\dweb{180}}{$[e_{32}e_{41}e_{13}]$},\\
		\Quadweb{\dweb{0}}{$e_{31}$} \Quadweb{\hweb{-90}}{$h_{4}$}
		&= A \Quadweb{\twebplus{90}\twebminus{-90}}{$[{t_{231}^{+}}{t_{413}^{-}}]$} + A^{-2} \Quadweb{\bwebneg{0}\bwebpos{180}\dweb{180}}{$[e_{21}e_{13}e_{34}]$}, \label{eq:H-web appears}
	\end{align}
	\smallskip
	\begin{align}
		\Quadweb{\hweb{0}}{$h_{1}$}\Quadweb{\hweb{90}}{$h_{2}$}
		&= A \Quadweb{\bwebneg{0}\bwebneg{90}\bwebpos{180}\bwebpos{-90}}{$[e_{21}e_{32}e_{34}e_{41}]$}
		+ A^{-2} \Quadweb{\dwebup{0}\twebminus{0}\twebplus{180}}{$[e_{31}t_{124}^{-}t_{342}^{+}]$},\\
		\Quadweb{\hweb{0}}{$h_{1}$}\Quadweb{\hweb{180}}{$h_{3}$}
		&= \Quadweb{\bwebpos{0}\bwebpos{90}\bwebpos{180}\bwebpos{-90}}{$[e_{12}e_{23}e_{34}e_{41}]$}
		+ \Quadweb{\bwebpos{90}\bwebpos{-90}\bwebneg{90}\bwebneg{-90}}{$[e_{32}e_{23}e_{14}e_{41}]$}
		+ \Quadweb{\bwebneg{0}\bwebneg{90}\bwebneg{180}\bwebneg{-90}}{$[e_{14}e_{43}e_{32}e_{21}]$}\notag\\
		&\qquad + A^{3}\Quadweb{\bwebpos{0}\bwebneg{180}\hweb{-90}}{$[e_{12}e_{43}h_{4}]$} + \Quadweb{\bwebneg{0}\bwebpos{180}\hweb{90}}{$[e_{21}e_{34}h_{2}]$},\\
		\Quadweb{\hweb{0}}{$h_{1}$}\Quadweb{\hweb{-90}}{$h_{4}$}
		&= A^{-1} \Quadweb{\bwebneg{0}\bwebneg{-90}\bwebpos{90}\bwebpos{180}}{$[e_{21}e_{14}e_{23}e_{34}]$}
		+ A \Quadweb{\dwebup{-90}\twebplus{90}\twebminus{-90}}{$[e_{24}{t_{231}^{+}}{t_{413}^{-}}]$}.
	\end{align}
\end{lem}
Indeed, the relations between $\Bweb{\partial^{\times}Q}$ and the other webs follow from the boundary skein relations~\labelcref{rel:parallel}, \labelcref{rel:antiparallel}.
The remaining relations are obtained by applying the Dynkin involution and rotations of the quadrilateral to the above relations.

\begin{rem}
	The Weyl ordering of a $\mathfrak{sl}_{3}$-web appearing in the above relations can be represented by a flat trivalent graph obtained by the following operations:
	\begin{align*}
		&\mathord{
			\ \tikz[baseline=-.6ex, scale=.1, yshift=-4cm]{
				\coordinate (P) at (0,0);
				\coordinate (B) at (90:10);
				\coordinate (A) at (135:10);
				\coordinate (C) at (45:10);
				\coordinate (A1) at (120:5);
				\coordinate (B1) at (90:5);
				\coordinate (C1) at (60:5);
				\draw[very thick, red, ->-={.8}{}] (P) -- (B);
				\draw[very thick, red, ->-={.8}{}] (P) -- (A1);
				\draw[very thick, red, ->-] (A) -- (A1);
				\draw[very thick, red, ->-] (C) -- (A1);
				\draw[dashed] (10,0) arc (0:180:10cm);
				\draw[gray, line width=2pt] (-10,0) -- (10,0);
				\draw[fill=black] (P) circle [radius=20pt];
			}
		}
		\mathord{
			\ \tikz[baseline=-.6ex, scale=.05]{
			\draw[->, decorate, decoration={zigzag,segment length=1.5mm, post length=1.5mm}] (-5,0) -- (5,0);
			}
		}
		\mathord{
			\ \tikz[baseline=-.6ex, scale=.1, yshift=-4cm]{
				\coordinate (P) at (0,0);
				\coordinate (B) at (90:10);
				\coordinate (A) at (135:10);
				\coordinate (C) at (45:10);
				\coordinate (A1) at (120:5);
				\coordinate (B1) at (90:7);
				\coordinate (C1) at (60:5);
				\draw[very thick, red, ->-={.8}{}] (P) -- (C1);
				\draw[very thick, red, ->-] (C) -- (C1);
				\draw[very thick, red, ->-={.8}{}] (B1) -- (C1);
				\draw[very thick, red, ->-={.8}{}] (B1) -- (A1);
				\draw[very thick, red, ->-={.8}{}] (B1) -- (B);
				\draw[very thick, red, ->-={.8}{}] (P) -- (A1);
				\draw[very thick, red, ->-] (A) -- (A1);
				\draw[dashed] (10,0) arc (0:180:10cm);
				\draw[gray, line width=2pt] (-10,0) -- (10,0);
				\draw[fill=black] (P) circle [radius=20pt];
			}
		}
		\mathord{
			\ \tikz[baseline=-.6ex, scale=.05]{
			\draw[->, decorate, decoration={zigzag,segment length=1.5mm, post length=1.5mm}] (5,0) -- (-5,0);
			}
		}
		\mathord{
			\ \tikz[baseline=-.6ex, scale=.1, yshift=-4cm]{
				\coordinate (P) at (0,0);
				\coordinate (B) at (90:10);
				\coordinate (C) at (135:10);
				\coordinate (A) at (45:10);
				\coordinate (C1) at (120:5);
				\coordinate (B1) at (90:5);
				\coordinate (A1) at (60:5);
				\draw[very thick, red, ->-={.8}{}] (P) -- (B);
				\draw[very thick, red, ->-={.8}{}] (P) -- (A1);
				\draw[very thick, red, ->-] (A) -- (A1);
				\draw[very thick, red, ->-] (C) -- (A1);
				\draw[dashed] (10,0) arc (0:180:10cm);
				\draw[gray, line width=2pt] (-10,0) -- (10,0);
				\draw[fill=black] (P) circle [radius=20pt];
			}
		},\\
		&\mathord{
			\ \tikz[baseline=-.6ex, scale=.1, yshift=-4cm]{
				\coordinate (P) at (0,0);
				\coordinate (B) at (90:10);
				\coordinate (A) at (135:10);
				\coordinate (C) at (45:10);
				\coordinate (A1) at (120:5);
				\coordinate (B1) at (90:5);
				\coordinate (C1) at (60:5);
				\draw[very thick, red, -<-={.8}{}] (P) -- (B);
				\draw[very thick, red, -<-={.8}{}] (P) -- (A1);
				\draw[very thick, red, -<-] (A) -- (A1);
				\draw[very thick, red, -<-] (C) -- (A1);
				\draw[dashed] (10,0) arc (0:180:10cm);
				\draw[gray, line width=2pt] (-10,0) -- (10,0);
				\draw[fill=black] (P) circle [radius=20pt];
			}
		}
		\mathord{
			\ \tikz[baseline=-.6ex, scale=.05]{
			\draw[->, decorate, decoration={zigzag,segment length=1.5mm, post length=1.5mm}] (-5,0) -- (5,0);
			}
		}
		\mathord{
			\ \tikz[baseline=-.6ex, scale=.1, yshift=-4cm]{
				\coordinate (P) at (0,0);
				\coordinate (B) at (90:10);
				\coordinate (A) at (135:10);
				\coordinate (C) at (45:10);
				\coordinate (A1) at (120:5);
				\coordinate (B1) at (90:7);
				\coordinate (C1) at (60:5);
				\draw[very thick, red, -<-] (P) -- (C1);
				\draw[very thick, red, -<-] (C) -- (C1);
				\draw[very thick, red, -<-] (B1) -- (C1);
				\draw[very thick, red, -<-] (B1) -- (A1);
				\draw[very thick, red, -<-] (B1) -- (B);
				\draw[very thick, red, -<-] (P) -- (A1);
				\draw[very thick, red, -<-] (A) -- (A1);
				\draw[dashed] (10,0) arc (0:180:10cm);
				\draw[gray, line width=2pt] (-10,0) -- (10,0);
				\draw[fill=black] (P) circle [radius=20pt];
			}
		}
		\mathord{
			\ \tikz[baseline=-.6ex, scale=.05]{
			\draw[->, decorate, decoration={zigzag,segment length=1.5mm, post length=1.5mm}] (5,0) -- (-5,0);
			}
		}
		\mathord{
			\ \tikz[baseline=-.6ex, scale=.1, yshift=-4cm]{
				\coordinate (P) at (0,0);
				\coordinate (B) at (90:10);
				\coordinate (C) at (135:10);
				\coordinate (A) at (45:10);
				\coordinate (C1) at (120:5);
				\coordinate (B1) at (90:5);
				\coordinate (A1) at (60:5);
				\draw[very thick, red, -<-={.8}{}] (P) -- (B);
				\draw[very thick, red, -<-={.8}{}] (P) -- (A1);
				\draw[very thick, red, -<-] (A) -- (A1);
				\draw[very thick, red, -<-] (C) -- (A1);
				\draw[dashed] (10,0) arc (0:180:10cm);
				\draw[gray, line width=2pt] (-10,0) -- (10,0);
				\draw[fill=black] (P) circle [radius=20pt];
			}
		}.
	\end{align*}
	This operation is an inverse operation of the \emph{arborization} in~\cite{FP16}.
\end{rem}

\begin{prop}\label{prop:generators Q}
	The skein algebra $\SK{Q}$ is generated by $\Eweb{Q}$ as a $\bZ_{A}$-algebra.
\end{prop}
\begin{proof}
	We take a point $p$ on an edge of $Q$ and a point $q$ on the opposite side.
	For any non-elliptic flat trivalent graph $G$ representing a basis web in $\Bweb{Q}$, fix a minimal cut-path $\alpha$ of $G$ from $p$ to $q$.
	We remark that the minimal cut-path is non-convex to the left and right sides by \cref{def:cut-path}, thus there exists left and right cores $\alpha_{L}$ and $\alpha_{R}$ \cref{lem:Kuperberg-lemma}~(2).
	An induction on $|\mathrm{wt}_{\alpha}(G)|$ will prove the proposition.
	The cases $\mathrm{wt}_{\alpha}(G)=0,1$ are easy.
	Assume $|\mathrm{wt}_{\alpha}(G)|=n$, by \cref{lem:Kuperberg-lemma}~(1), there is no cut-paths related to $\alpha_{L}$ (resp. $\alpha_{R}$) by $H$-moves in the left (resp. right) side, and $\alpha_{L}$ is related to $\alpha_{R}$ by $H$-moves and identity moves.
	The explicit description of the basis webs on $T$ in \cref{fig:Bweb T} and the proof of \cref{prop:generators T} imply that there exists a trivalent graph $G'$ of the form in the left of \cref{fig:generators Q} such that $G=A^kG'$ for some $k$.
	Here the web $B$ in the biangle bounded by $\alpha_L$ and $\alpha_R$ is constructed by a concatenation of $H$-webs, as shown in the center of \cref{fig:generators Q}. 
	By applying the skein relations~\labelcref{rel:parallel} and \labelcref{rel:antiparallel}, these $H$-webs can be replaced by internal crossings up to multiplication by $A$ and modulo webs with weights lower than $n$.
	Then the web $B$ can be replaced with $A^{\bullet}\sigma+\sum_{x}\lambda_{x} x$, where $x$ is a trivalent graph whose minimal cut-path between $p$ and $q$ has intersection points less than $n$, and $\sigma$ is a positive permutation braid between $\alpha_{L}$ and $\alpha_{R}$, as shown in the right of \cref{fig:generators Q}.
	By substituting $B=A^{\bullet}\sigma+\sum_{x}\lambda_{x} x$ into $G'$, we obtain an expression $G'=G'_{\sigma}+\sum_{x}\lambda_{x} G'_{x}$ and see that $G'_{\sigma}$ is described as a product of webs in $\Eweb{Q}$.
	The proof is finished by applying the induction hypothesis to $G'_{x}$.
	By the proof, notice that the webs $h_1$ and $h_3$ are not needed for the generating set.
\end{proof}

\begin{figure}
	\centering
	\begin{tikzpicture}[scale=.1]
		\node[draw, fill=black, circle, inner sep=1.5] (A) at (135:30) {};
		\node[draw, fill=black, circle, inner sep=1.5] (B) at (225:30) {};
		\node[draw, fill=black, circle, inner sep=1.5] (C) at (315:30) {};
		\node[draw, fill=black, circle, inner sep=1.5] (D) at (45:30) {};
		\coordinate (P) at ($(B)!0.5!(C)$);
		\coordinate (Q) at ($(A)!0.5!(D)$);
		\draw[blue] (A) -- (B);
		\draw[blue] (B) -- (C);
		\draw[blue] (C) -- (D);
		\draw[blue] (D) -- (A);
		\draw[very thick, red] (A) -- (-10,0); 
		\draw[very thick, red] (B) -- (-10,0); 
		\draw[very thick, red] (0,0) -- (-10,0);
		\draw[xshift=-10cm, very thick, red, fill=white] (60:3) -- (180:3) -- (300:3) -- cycle;
		\draw[very thick, red] (C) -- (10,0); 
		\draw[very thick, red] (D) -- (10,0); 
		\draw[very thick, red] (0,0) -- (10,0);
		\draw[xshift=10cm, very thick, red, fill=white] (0:3) -- (120:3) -- (240:3) -- cycle;
		\draw[very thick, red, rounded corners] (A) -- (-16,0) -- (B);
		\draw[very thick, red, rounded corners] (A) -- (-19,0) -- (B);
		\draw[very thick, red, rounded corners] (D) -- (16,0) -- (C);
		\draw[very thick, red, rounded corners] (D) -- (19,0) -- (C);
		\draw[very thick, red, rounded corners] (A) -- (0,16) -- (D);
		\draw[very thick, red, rounded corners] (A) -- (0,10) -- (D);
		\draw[very thick, red, rounded corners] (B) -- (0,-16) -- (C);
		\draw[very thick, red, rounded corners] (B) -- (0,-10) -- (C);
		\fill[pink!60] (P) to[out=north west, in=south] (-5,0) to[out=north, in=south west] (Q) to[out=south east, in=north] (5,0) to[out=south, in=north east] (P) -- cycle;
		\draw[->-={.7}{}] (P) to[out=north west, in=south] (-5,0) to[out=north, in=south west] (Q);
		\draw[->-={.7}{}] (P) to[out=north east, in=south] (5,0) to[out=north, in=south east] (Q);
		\fill (P) circle [radius=20pt];
		\fill (Q) circle [radius=20pt];
		\node at (0,0) {$B$};
		\node at (A) [above] {$p_1$};
		\node at (B) [left] {$p_2$};
		\node at (C) [below] {$p_3$};
		\node at (D) [right] {$p_4$};
		\node at (P) [below] {$p$};
		\node at (Q) [above] {$q$};
		\node at (P) [below=20pt] {$G'$};
	\end{tikzpicture}
	\begin{tikzpicture}[scale=.1]
		\coordinate (A) at (135:30);
		\coordinate (B) at (225:30);
		\coordinate (C) at (315:30);
		\coordinate (D) at (45:30);
		\coordinate (X) at (-15,20);
		\coordinate (Y) at (-15,-20);
		\coordinate (Z) at (15,-20);
		\coordinate (W) at (15,20);
		\foreach \i in {1,2,3,4,5}{
			\coordinate (L\i) at ($(X)!0.14*\i!(Y)$);
		}
		\foreach \i in {1,2,3,4,5}{
			\coordinate (R\i) at ($(W)!0.14*\i!(Z)$);
		}
		\foreach \i in {1,2,3,4,5}{
			\draw[red, very thick] (L\i) -- (R\i);
		}
		\draw[red, very thick, ->-] ($(L1)!.75!(R1)$) -- ($(L2)!.75!(R2)$);
		\draw[red, very thick, ->-] ($(L3)!.75!(R3)$) -- ($(L4)!.75!(R4)$);
		\draw[red, very thick, ->-] ($(L2)!.5!(R2)$) -- ($(L3)!.5!(R3)$);
		\draw[red, very thick, ->-] ($(L4)!.5!(R4)$) -- ($(L5)!.5!(R5)$);
		\draw[red, very thick, ->-] ($(L3)!.25!(R3)$) -- ($(L4)!.25!(R4)$);
		\coordinate (P) at ($(B)!0.5!(C)$);
		\coordinate (Q) at ($(A)!0.5!(D)$);
		\draw[->-, rounded corners] (P) -- (-15,-20) -- (-15,20) -- (Q);
		\draw[->-, rounded corners] (P) -- (15,-20) -- (15,20) -- (Q);
		\fill (P) circle [radius=20pt];
		\fill (Q) circle [radius=20pt];
		\node at (P) [rotate=90,above=25pt, red]{$\cdots$};
		\node at (L3) [left=3pt] {$\alpha_{L}$};
		\node at (R3) [right=3pt] {$\alpha_{R}$};
		\node at (P) [below] {$p$};
		\node at (Q) [above] {$q$};
		\node at (P) [below=20pt] {$B$};
	\end{tikzpicture}
	\begin{tikzpicture}[scale=.1]
		\coordinate (A) at (135:30);
		\coordinate (B) at (225:30);
		\coordinate (C) at (315:30);
		\coordinate (D) at (45:30);
		\coordinate (X) at (-15,20);
		\coordinate (Y) at (-15,-20);
		\coordinate (Z) at (15,-20);
		\coordinate (W) at (15,20);
		\foreach \i in {1,2,3,4,5}{
			\coordinate (L\i) at ($(X)!0.14*\i!(Y)$);
		}
		\foreach \i in {1,2,3,4,5}{
			\coordinate (R\i) at ($(W)!0.14*\i!(Z)$);
		}
		\draw[red, very thick, -<-={.1}{}, rounded corners] (L3) -- ($(L3)!.2!(R3)$) -- ($(L4)!.3!(R4)$) -- ($(L4)!.45!(R4)$) -- ($(L5)!.55!(R5)$) -- (R5);
		\draw[red, very thick, -<-={.1}{}, rounded corners] (L2) -- ($(L2)!.45!(R2)$) -- ($(L3)!.55!(R3)$) -- ($(L3)!.7!(R3)$) -- ($(L4)!.8!(R4)$) -- (R4);
		\draw[red, very thick, -<-={.1}{}, rounded corners] (L1) -- ($(L1)!.7!(R1)$) -- ($(L2)!.8!(R2)$) -- (R2);
		\draw[red, very thick, ->-={.9}{red}, overarc, rounded corners] (L5) -- ($(L5)!.45!(R5)$) -- ($(L4)!.55!(R4)$) -- ($(L4)!.7!(R4)$) -- ($(L3)!.8!(R3)$) -- (R3);
		\draw[red, very thick, ->-={.9}{red}, overarc, rounded corners] (L4) -- ($(L4)!.2!(R4)$) -- ($(L3)!.3!(R3)$) -- ($(L3)!.45!(R3)$) -- ($(L2)!.55!(R2)$) -- ($(L2)!.7!(R2)$) -- ($(L1)!.8!(R1)$)-- (R1);
		\coordinate (P) at ($(B)!0.5!(C)$);
		\coordinate (Q) at ($(A)!0.5!(D)$);
		\draw[->-, rounded corners] (P) -- (-15,-20) -- (-15,20) -- (Q);
		\draw[->-, rounded corners] (P) -- (15,-20) -- (15,20) -- (Q);
		\fill (P) circle [radius=20pt];
		\fill (Q) circle [radius=20pt];
		\node at (P) [rotate=90,above=25pt, red]{$\cdots$};
		\node at (L3) [left=3pt] {$\alpha_{L}$};
		\node at (R3) [right=3pt] {$\alpha_{R}$};
		\node at (P) [below] {$p$};
		\node at (Q) [above] {$q$};
		\node at (P) [below=20pt] {$\sigma$};
	\end{tikzpicture}
	\caption{A web in $G'$}
	\label{fig:generators Q}
\end{figure}

Using the above proposition, we give the set of elementary webs and the collection of web clusters for the quadrilateral.

\begin{prop}\label{prop:Eweb Q}
	$\Eweb{Q}=\{e_{ij},t_{123}^{\epsilon},t_{231}^{\epsilon},t_{342}^{\epsilon},t_{413}^{\epsilon},h_{i}\mid i,j\in\{1,2,3,4\},i\neq j,\epsilon\in\{{+},{-}\}\}$ is the set of elementary webs for $Q$.
\end{prop}
\begin{proof}
	We use the same argument as in the proof of \cref{prop:Eweb T}.
	Recall that if a basis web $G$ is decomposed into a product $G=G_1G_2$, then $\vec{\gr}(G)=\vec{\gr}(G_1)+\vec{\gr}(G_2)$. On the other hand, we know the explicit generators of $\SK{Q}$ given in \cref{prop:generators Q}.
	Observe that 
	\[
		\vec{\gr}(t_{ijk}^{+})=(3,0),\quad \vec{\gr}(t_{ijk}^{-})=(0,3), \quad
		\vec{\gr}(e_{ij})=(1,1), \quad
		\vec{\gr}(h_j)=(2,2).
	\]
	Hence except for the last $h_j$'s, one can easily see that these webs are indecomposable. 
	Therefore we only have to care about the possibility that $h_j$ is decomposed into two webs in $\{e_{13},e_{31},e_{24},e_{42}\}$, but it is impossible by \cref{lem:str const Q}. Thus each web in $\Eweb{Q}$ is indecomposable. 
	With a notice that each basis web appearing on the right-hand side of expansions given in \cref{lem:str const Q} is described as a product of webs in $\Eweb{Q}$, 
	we conclude that this set is exactly the set of elementary webs.
\end{proof}

\begin{rem}
	We can also determine the web clusters by \cref{lem:str const Q}. In fact, $\Cweb{Q}$ consists $50$ web clusters, which are in a one-to-one correspondence with the quantum seeds in the cluster algebra $\CA^q_{\mathfrak{sl}_3,Q}$ of type $D_4$. 
\end{rem}

We will consider expansions of any webs in the five web clusters $C_\nu=C'_\nu\cup\Bweb{\partial^{\times}Q}$ in $\Cweb{Q}$ for $\nu=0,1,2,3,4$, where $C'_\nu$'s are given as follows:
\begin{align*}
	C'_{0}&=\Bigg\{\Quadweb{\twebplus{0}}{${t_{124}^{+}}$}, \Quadweb{\twebplus{180}}{${t_{342}^{+}}$}, \Quadweb{\dweb{90}}{${e_{42}}$}, \Quadweb{\dweb{-90}}{${e_{24}}$}\Bigg\},&
	C'_{1}&=\Bigg\{\Quadweb{\twebplus{0}}{${t_{124}^{+}}$}, \Quadweb{\twebminus{180}}{${t_{342}^{-}}$}, \Quadweb{\dweb{90}}{${e_{42}}$}, \Quadweb{\dweb{-90}}{${e_{24}}$}\Bigg\}\\
	C'_{2}&=\Bigg\{\Quadweb{\twebminus{0}}{${t_{124}^{-}}$}, \Quadweb{\twebplus{180}}{${t_{342}^{+}}$}, \Quadweb{\dweb{90}}{${e_{42}}$}, \Quadweb{\dweb{-90}}{${e_{24}}$}\Bigg\},&
	C'_{3}&=\Bigg\{\Quadweb{\twebplus{0}}{${t_{124}^{+}}$}, \Quadweb{\twebplus{180}}{${t_{342}^{+}}$}, \Quadweb{\twebplus{90}}{${t_{231}^{+}}$}, \Quadweb{\dweb{-90}}{${e_{24}}$}\Bigg\}\\
	C'_{4}&=\Bigg\{\Quadweb{\twebplus{0}}{${t_{124}^{-}}$}, \Quadweb{\twebminus{180}}{${t_{342}^{+}}$}, \Quadweb{\dweb{90}}{${e_{42}}$}, \Quadweb{\twebplus{-90}}{${t_{413}^{+}}$}\Bigg\}.&&
\end{align*}
As clarified later in \cref{sect:correspondance}, the web clusters $C_1$, $C_2$, $C_3$, and $C_4$ are ``adjacent'' to $C_0$ by a mutation. 
It will mean that by replacing $t_{342}^{+} \in C_{0}$ with $t_{342}^{-}$ we get $C_{1}$, and they satisfy the relation
\begin{align}
	\Quadweb{\twebplus{180}}{$t_{342}^{+}$}\Quadweb{\twebminus{180}}{$t_{342}^{-}$}=A^{3/2}\Quadweb{\bwebpos{90}\bwebpos{180}\dweb{90}}{$[e_{23}e_{34}e_{42}]$}+A^{-3/2}\Quadweb{\bwebneg{90}\bwebneg{180}\dweb{-90}}{$[e_{24}e_{43}e_{32}]$}\in\langle C_{0}\cap C_{1}\rangle_{\mathrm{alg}}.
\end{align}
Similarly,
\begin{align*}
	\Quadweb{\twebplus{0}}{$t_{124}^{+}$}\Quadweb{\twebminus{0}}{$t_{124}^{-}$}
	&=A^{3/2}\Quadweb{\bwebpos{-90}\bwebpos{0}\dweb{-90}}{$[e_{12}e_{24}e_{41}]$}+A^{-3/2}\Quadweb{\bwebneg{-90}\bwebneg{0}\dweb{90}}{$[e_{14}e_{42}e_{21}]$}\in\langle C_{0}\cap C_{2}\rangle_{\mathrm{alg}},\\
	\Quadweb{\dweb{90}}{$e_{42}$}\Quadweb{\twebplus{90}}{$t_{231}^{+}$}
	&=A^{3/2}\Quadweb{\bwebpos{0}\twebplus{180}}{$[e_{12}t_{342}^{+}]$}+A^{-3/2}\Quadweb{\bwebneg{90}\twebplus{0}}{$[e_{32}t_{124}^{+}]$}\in\langle C_{0}\cap C_{3}\rangle_{\mathrm{alg}},\\
	\Quadweb{\dweb{-90}}{$e_{24}$}\Quadweb{\twebplus{-90}}{$t_{413}^{+}$}
	&=A^{3/2}\Quadweb{\bwebpos{180}\twebplus{0}}{$[e_{34}t_{124}^{+}]$}+A^{-3/2}\Quadweb{\bwebneg{-90}\twebplus{180}}{$[e_{14}t_{342}^{+}]$}\in\langle C_{0}\cap C_{4}\rangle_{\mathrm{alg}}.
\end{align*}

The following lemma gives the ``cluster expansion'' of any web in $\SK{\Sigma}$ in the web clusters $C_\nu$ for $\nu=0,1,2,3,4$.

\begin{prop}\label{prop:adjacent-web-cluster-expansion-Q}
	For any web $x\in\SK{Q}$ and $\nu=0,1,2,3,4$, there exists $J_\nu\in\mathrm{mon}(C_\nu)$ such that $xJ_{\nu}\in\langle C_\nu\rangle_{\mathrm{alg}}$.
\end{prop}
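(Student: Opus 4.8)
The plan is to mimic the proof of \cref{thm:elementary-web-expansion-T} and \cref{cor:web-cluster-expansion-T}, adapting the cutting/resolving procedure to each of the five web clusters $C_\nu$. The essential point is that $C_\nu=C'_\nu\cup\Bweb{\partial^{\times}Q}$ always contains the full set of boundary webs, so the boundary-localizing step is uniform across all $\nu$; what changes is only which elementary webs in the interior ($t^{\pm}_{ijk}$, $e_{ij}$, $h_i$) are declared to be cluster variables. First I would recall from \cref{prop:generators Q} that $\SK{Q}$ is generated by $\Eweb{Q}$, so it suffices to show that each generator $g\in\Eweb{Q}$ can be turned into an element of $\langle C_\nu\rangle_{\mathrm{alg}}$ after multiplying by a suitable monomial $J_\nu\in\mathrm{mon}(C_\nu)$, and that this property is closed under products. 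Closure under products is handled exactly as in \cref{thm:Laurant of T}: if $xJ\in\langle C_\nu\rangle_{\mathrm{alg}}$ and $yJ'\in\langle C_\nu\rangle_{\mathrm{alg}}$, then since any element of $\mathrm{mon}(C_\nu)$ is $A$-commutative with the monomials in $\langle C_\nu\rangle_{\mathrm{alg}}$ (the web clusters are $A$-commutative by \cref{webcluster}), we can slide the localizing monomials past and obtain $(xy)J''\in\langle C_\nu\rangle_{\mathrm{alg}}$ for some $J''\in\mathrm{mon}(C_\nu)$.

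The core of the argument is therefore to express each $g\in\Eweb{Q}$ in the cluster. The boundary webs $e_{ij}$ lie in every $C_\nu$, so there is nothing to do for them. For the remaining generators I would read off the structure constants in \cref{lem:str const Q} together with the exchange relations displayed just before \cref{prop:adjacent-web-cluster-expansion-Q}. Concretely, for $\nu=0$ the cluster $C'_0=\{t^+_{124},t^+_{342},e_{42},e_{24}\}$ already contains two triads and the two diagonals; the opposite-sign triads $t^-_{124}$, $t^-_{342}$ are obtained from the exchange relations
\begin{align*}
    [t^+_{124}t^-_{124}]&=A^{3/2}[e_{12}e_{24}e_{41}]+A^{-3/2}[e_{14}e_{42}e_{21}],\\
    [t^+_{342}t^-_{342}]&=A^{3/2}[e_{23}e_{34}e_{42}]+A^{-3/2}[e_{24}e_{43}e_{32}],
\end{align*}
so that multiplying by the appropriate power of $t^+_{124}$ (resp.\ $t^+_{342}$), which is $A$-commutative with the boundary webs, replaces $t^-_{124}$ (resp.\ $t^-_{342}$) by a polynomial in $C_0$. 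The triads $t^\pm_{231}$, $t^\pm_{413}$ and the $H$-webs $h_i$ are then reached by further exchange relations from \cref{lem:str const Q}, for instance \eqref{eq:H-web appears} expresses $h_4$ (up to boundary factors) in terms of $t^+_{231}t^-_{413}$ and a boundary monomial, and the products $[t^+_{124}h_i]$, $[e_{31}h_i]$ appearing there let one resolve the $h_i$ in terms of triads once the triads are available. The same bookkeeping is carried out for $\nu=1,2,3,4$, where one of the four interior generators is swapped for its mutated partner; the relevant single mutation relation is precisely the one recorded in the displayed equations $\langle C_0\cap C_\nu\rangle_{\mathrm{alg}}$, and the remaining three generators of $C'_\nu$ coincide with those of $C'_0$.

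The main obstacle I anticipate is organizational rather than conceptual: I must verify that the directed graph on $\Eweb{Q}$, whose edges are ``expressible in $\langle C_\nu\rangle_{\mathrm{alg}}$ after multiplication by a $C_\nu$-monomial,'' actually reaches every generator starting from the four elements of $C'_\nu$, and that at no stage am I forced to invert an interior web that is not in $\mathrm{mon}(C_\nu)$. Because $\mathrm{mon}(C_\nu)$ only allows inverting the four interior cluster webs and the boundary webs, I need each intermediate exchange relation to have its ``denominator'' already lie in the current cluster — this is exactly the statement that $C_1,\dots,C_4$ are the clusters adjacent to $C_0$ by a single mutation, so a bounded number of mutation steps suffices. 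To keep the argument clean I would first prove the claim for $\nu=0$ in full, exploiting the symmetry of $Q$ under the dihedral rotations and the Dynkin involution $\ast$ (both of which act on $\Cweb{Q}$ and permute the relations of \cref{lem:str const Q}), and then deduce the cases $\nu=1,2,3,4$ by combining the $\nu=0$ result with the single explicit mutation relation for that $\nu$. Finally, as in \cref{cor:web-cluster-expansion-T}, I would assemble the per-generator monomials into a single $J_\nu\in\mathrm{mon}(C_\nu)$ using the $A$-commutativity of cluster monomials, completing the proof.
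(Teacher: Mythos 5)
Your overall strategy --- reduce to the generating set $\Eweb{Q}$ via \cref{prop:generators Q}, handle closure under products by $A$-commutativity of cluster monomials, and clear each non-cluster generator by right-multiplying by webs of $C'_\nu$ using the relations of \cref{lem:str const Q} --- is the same as the paper's. Two structural differences: the paper dispatches $C_0,C_1,C_2$ immediately by noting they are the web clusters of decorated triangulations of $Q$, so \cref{cor:web-cluster-expansion-T} already applies, and it reduces $C_4$ to $C_3$ by a rotation, so only $C_3$ needs new work. Also, your plan to deduce $\nu=1,2,3,4$ from the $\nu=0$ case ``plus one mutation relation'' is more delicate than you suggest: if $xJ_0\in\langle C_0\rangle_{\mathrm{alg}}$ with $J_0$ containing powers of $e_{42}$, then $J_0\notin\mathrm{mon}(C_3)$, and there is no direct way to trade those factors for elements of $\mathrm{mon}(C_3)$ while keeping an honest polynomial; the paper instead redoes the generator-by-generator analysis relative to $C_3$.

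The genuine gap is that the verification you defer as ``organizational'' is the entire content of the paper's proof for $C_3$, and the one concrete route you sketch for the $H$-webs does not work. One must show, for each of the twelve webs in $\Eweb{Q}\setminus C_3$, that right multiplication by an explicit monomial in $C'_3$ lands in $\langle C_3\rangle_{\mathrm{alg}}$; for the harder generators ($e_{31}$, $e_{13}$, $t^-_{231}$, $h_2$, $h_3$) this requires chaining several relations and monomials of length up to four, e.g. $t^{-}_{231}(t^{+}_{231}t^{+}_{124}t^{+}_{342}e_{24})$. Your proposal to resolve the $h_i$ via the relations for $[e_{31}h_i]$ (including \eqref{eq:H-web appears}) fails as stated: $e_{31}$ belongs to none of the clusters $C_0,\dots,C_4$, so it is not in $\mathrm{mon}(C_\nu)$, and those relations have $e_{31}$ on the wrong side --- extracting $h_i$ from them would require inverting $e_{31}$. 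The working route, as in the paper, multiplies $h_i$ only by cluster webs: for instance $h_1t^{+}_{124}$ is a positive combination of $[e_{21}e_{14}t^{+}_{342}]$ and $[e_{41}e_{24}t^{+}_{231}]$, after which the stray $t^{+}_{231}$ is either already a cluster variable (for $C_3$) or is cleared by a further factor of $e_{42}$ (for $C_0$). So the skeleton is right, but the proof is incomplete until these chains are exhibited and checked to use only denominators from $\mathrm{mon}(C_\nu)$.
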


\begin{proof}
	For the web clusters $C_0$, $C_1$, and $C_2$, the assertion is already proved in \cref{cor:web-cluster-expansion-T}.
	An expansion in $C_3$ gives an expansion on $C_4$ via an automorphism induced by a rotation of $Q$.
	Therefore we only need to obtain an expansion of elementary webs in $\SK{Q}$ in the web cluster $C_3$.
	For each elementary web in 
	\[
		\Eweb{Q}\setminus C_{3}=\{{e_{31}},{e_{42}},{e_{13}},{t_{413}^{+}},t_{123}^{-},t_{231}^{-},t_{342}^{-},t_{413}^{-},h_1,h_2,h_3,h_4\},
	\]
	we can expand it as a polynomial in $C_3$ by the right multiplication of webs in $C'_3$.
	Indeed, we have
	\begin{align}
		&\Quadweb{\dweb{90}}{$e_{42}$}\Quadweb{\twebplus{90}}{$t_{231}^{+}$}\in\Bigg\langle\Quadweb{\twebplus{0}}{$t_{124}^{+}$},\Quadweb{\twebplus{180}}{$t_{342}^{+}$}\Bigg\rangle_{\mathrm{alg}}\subset\langle C_3\rangle_{\mathrm{alg}}, \label{eq:C3expansion1} \\
		&\Quadweb{\twebplus{-90}}{$t_{413}^{+}$}\Quadweb{\dweb{-90}}{$e_{24}$}\in\Bigg\langle\Quadweb{\twebplus{0}}{$t_{124}^{+}$},\Quadweb{\twebplus{180}}{$t_{342}^{+}$}\Bigg\rangle_{\mathrm{alg}}\subset\langle C_3\rangle_{\mathrm{alg}}, \label{eq:C3expansion2} \\
		&\Quadweb{\hweb{0}}{$h_{1}$}\Quadweb{\twebplus{0}}{$t_{124}^{+}$}\in\Bigg\langle\Quadweb{\twebplus{0}}{$t_{124}^{+}$},\Quadweb{\twebplus{180}}{$t_{342}^{+}$}\Bigg\rangle_{\mathrm{alg}}\subset\langle C_3\rangle_{\mathrm{alg}},\notag\\
		&\Quadweb{\hweb{-90}}{$h_{4}$}\Quadweb{\twebplus{180}}{$t_{342}^{+}$}\in\Bigg\langle\Quadweb{\twebplus{0}}{$t_{124}^{+}$},\Quadweb{\twebplus{180}}{$t_{342}^{+}$}\Bigg\rangle_{\mathrm{alg}}\subset\langle C_3\rangle_{\mathrm{alg}}.\notag
	\end{align}
	Similarly, one can confirm the followings by a straightforward computation:
	\begin{align*}
		&\Quadweb{\dweb{180}}{$e_{31}$}\Bigg(\Quadweb{\twebplus{0}}{$t_{124}^{+}$}\Quadweb{\dweb{90}}{$e_{24}$}\Bigg)\in\langle C_3\rangle_{\mathrm{alg}},
		&&\Quadweb{\dweb{0}}{$e_{13}$}\Bigg(\Quadweb{\twebplus{180}}{$t_{342}^{+}$}\Quadweb{\dweb{90}}{$e_{24}$}\Bigg)\in\langle C_3\rangle_{\mathrm{alg}},\\
		&\Quadweb{\twebminus{0}}{$t_{124}^{-}$}\Bigg(\Quadweb{\twebplus{0}}{$t_{124}^{+}$}\Quadweb{\twebplus{90}}{$t_{231}^{+}$}\Bigg)\in\langle C_3\rangle_{\mathrm{alg}},
		&&\Quadweb{\twebminus{180}}{$t_{342}^{-}$}\Bigg(\Quadweb{\twebplus{180}}{$t_{342}^{+}$}\Quadweb{\twebplus{90}}{$t_{231}^{+}$}\Bigg)\in\langle C_3\rangle_{\mathrm{alg}},\\
		&\Quadweb{\twebminus{-90}}{$t_{413}^{-}$}\Bigg(\Quadweb{\twebplus{0}}{$t_{124}^{+}$}\Quadweb{\twebplus{180}}{$t_{342}^{+}$}\Bigg)\in\langle C_3\rangle_{\mathrm{alg}},&&
	\end{align*}
	\begin{align*}
		&\Quadweb{\hweb{90}}{$h_{2}$}\Bigg(\Quadweb{\dweb{90}}{$e_{24}$}\Quadweb{\twebplus{0}}{$t_{124}^{+}$}\Quadweb{\twebplus{0}}{$t_{124}^{+}$}\Bigg)\in\langle C_3\rangle_{\mathrm{alg}},\\
		&\Quadweb{\hweb{180}}{$h_{3}$}\Bigg(\Quadweb{\dweb{90}}{$e_{24}$}\Quadweb{\twebplus{180}}{$t_{342}^{+}$}\Quadweb{\twebplus{0}}{$t_{124}^{+}$}\Bigg)\in\langle C_3\rangle_{\mathrm{alg}},\\
		&\Quadweb{\twebminus{90}}{$t_{231}^{-}$}\Bigg(\Quadweb{\twebplus{90}}{$t_{231}^{+}$}\Quadweb{\twebplus{0}}{$t_{124}^{+}$}\Quadweb{\twebplus{180}}{$t_{342}^{+}$}\Quadweb{\dweb{90}}{$e_{24}$}\Bigg)\in\langle C_3\rangle_{\mathrm{alg}}.
	\end{align*}
	For example, the most complicated one will be an expansion of ${t_{231}^{-}}({t_{231}^{+}}{t_{124}^{+}}{t_{342}^{+}}{e_{24}})$.
	Firstly,
	${t_{413}^{+}}{e_{24}}\in\langle C_3\rangle_{\mathrm{alg}}$ by \cref{eq:C3expansion1}.
	Since $e_{31}{t_{124}^{+}}$ is a sum of $e_{e_{41}}{t_{231}^{+}}$ and $e_{21}{t_{413}^{+}}$, we get
	$e_{31}({t_{124}^{+}}{e_{24}})\in\langle C_3\rangle_{\mathrm{alg}}$ by \cref{eq:C3expansion2}.
	In the same way, $e_{13}({t_{342}^{+}}{e_{24}})\in\langle C_3\rangle_{\mathrm{alg}}$.
	By \cref{lem:str const T}, ${t_{231}^{-}}{t_{231}^{+}}$ is expanded as a sum of $e_{12}e_{23}{e_{31}}$ and $e_{21}e_{32}{e_{13}}$.
	We remark that $e_{12}e_{23}{e_{31}}$ and ${t_{342}^{+}}$ are $A$-commutative, so are $e_{21}e_{32}{e_{13}}$ and ${t_{124}^{+}}$.
	Thus we get ${t_{231}^{-}}({t_{231}^{+}}{t_{124}^{+}}{t_{342}^{+}}{e_{24}})\in\langle C_3\rangle_{\mathrm{alg}}$.
\end{proof}
\section{Relation to the cluster varieties}\label{sec:FG}
Here we recall some relations between the theory of cluster algebras \cite{FZ-CA1} and that of cluster varieties \cite{FG09}. Although we  mainly deal with (the quantum aspects of) the former in the body of the paper, we borrow some notations from the latter to indicate connections to relevant geometric notions. For a comparison of their quantizations given by \cite{BZ} and \cite{FG08}, see \cite[Section 18]{GS19}.

Let $\sfs$ be a mutation class of seeds in a field $\cF$, and $\bExch_\sfs$ the associated exchange graph (see \cref{subsub:CA}). 
For $v \in \bExch_\sfs$, consider a lattice $\Lambda^{(v)}=\bigoplus_{i \in I}\bZ \sfe_i^{(v)}$ with a fixed basis and its dual $\accentset{\circ}{\Lambda}^{(v)}=\bigoplus_{i \in I}\bZ \sff_i^{(v)}$. Let 
\begin{align*}
    \X_{(v)}:=\Hom(\Lambda^{(v)},\bG_m),\quad \A_{(v)}:=\Hom(\accentset{\circ}{\Lambda}^{(v)},\bG_m)
\end{align*}
denote the associated algebraic tori of dimension $N$, where $\bG_m:=\Spec \bZ[t,t^{-1}]$ denotes the multiplicative algebraic group\footnote{A reader not familiar with such a notion may substitute any field $k$ to get $\bG_m(k)=k^\ast$, and $\A_{(v)}(k)\cong (k^*)^I$. This amounts to consider schemes over $k$, making their function rings $k$-algebras in the sequel.}. 
The basis vectors $\sfe_i^{(v)}$ and $\sff_i^{(v)}$ give rise to characters $X_i^{(v)}:\X_{(v)} \to \bG_m$ and $A_i^{(v)}:\A_{(v)} \to \bG_m$ respectively, called the \emph{cluster coordinates}. 
The quiver exchange matrix $\varepsilon^{(v)}$ defines a $\frac{1}{2}\bZ$-valued bilinear form on $\Lambda^{(v)}$ by $(\sfe_i^{(v)},\sfe_j^{(v)}):=\varepsilon^{(v)}_{ij}$. 

Now let us focus on the $\A$-side, which is directly related to the (upper) cluster algebras. The exchange relation \eqref{eq:A-transf} can be regarded as a birational map $\mu_k^a: \A_{(v)} \to \A_{(v')}$, called the \emph{cluster $\A$-transformation} \cite{FG09}. Namely, $\mu_k^a$ is defined by
\begin{align*}
    (\mu_k^a)^*A^{(v')}_i&:=\begin{cases}
    \displaystyle{ (A^{(v)}_k)^{-1} \left(\prod_{j \in I}(A^{(v)}_j)^{[ \varepsilon^{(v)}_{kj}]_+} + \prod_{j \in I}(A^{(v)}_j)^{[-\varepsilon^{(v)}_{kj}]_+}\right)} & \mbox{if $i=k$}, \\
    A^{(v)}_i & \mbox{if $i \neq k$}. 
    \end{cases}
\end{align*}
in terms of the cluster coordinates. 
Then the \emph{cluster$\A$-variety} (or the \emph{cluster $K_2$-variety} is the scheme defined as
\begin{align*}
     \A_\sfs:= \bigcup_{v \in \bExch_\sfs} \A_{(v)}.
\end{align*}
Here the (open subsets of) tori $\A_{(v)},\A_{(v')}$ are identified via the cluster transformation $\mu_k^a$ if there is an edge of the form $v \overbar{k} v'$, or via the coordinate permutation \eqref{eq:classical_permutation} if there is an edge of the form $v \overbar{\sigma} v'$. Similarly, the \emph{cluster $\X$-variety} (or the \emph{cluster Poisson variety}) $\X_\sfs$ is defined by gluing the tori $\X_{(v)}$ by the cluster $\X$-transformations \cite[(13)]{FG09}. 

\bigskip
\paragraph{\textbf{The upper cluster algebra.}}
The ring $\cO(\A_\sfs)$ of regular functions on $\A_\sfs$ is naturally identified with the upper cluster algebra $\UCA_\sfs$, as follows. 

First note that the collection $\mathbf{A}^{(v)}:=(A^{(v)}_i)_{i \in I}$ of cluster $\A$-coordinates associated to $v \in \bExch_\sfs$ can be also regarded as a collection of rational functions on the cluster $\A$-variety. 
In particular, the pair $(B^{(v)},\mathbf{A}^{(v)})$ defines a seed in the field $\cF=\mathcal{K}(\A_\sfs)$ of rational functions on $\A_\sfs$. 
Since they are related by cluster $\A$-transformations, these seeds are mutation-equivalent to each other. 
Then the upper cluster algebra $\UCA_\sfs \subset \mathcal{K}(\A_\sfs)$ consists of functions on $\A_\sfs$ whose restriction to each torus $\A_{(v)}$ are regular functions (\emph{i.e.}, Laurent polynomials). This is exactly the the ring $\cO(\A_\sfs)$ of regular functions on $\A_\sfs$. 
We remark that the Laurent phenomenon theorem \cite[Theorem 3.1]{FZ-CA1} means in this geometric setting that each cluster coordinate is in fact extended to a regular function on $\A_\sfs$, and hence $\CA_\sfs \subset \UCA_\sfs=\cO(\A_\sfs)$. 

Let 
\begin{align}\label{eq:univ_Laurent}
    \mathbb{L}_+(\A_\sfs):=\bigcap_{v \in \bExch_\sfs} \bZ_+[(A_i^{(v)})^{\pm 1} \mid i \in I] \subset \cO(\A_\sfs)
\end{align}
denote the semiring of \emph{universally positive Laurent polynomials} on $\A_\sfs$, where $\bZ_+[(A_i^{(v)})^{\pm 1} \mid i \in I]$ is the semiring of Laurent polynomials in $A_i^{(v)}$'s with non-negative integral coefficients. Then the Fock--Goncharov duality conjecture \cite[Section 4]{FG09} asserts that $\mathbb{L}_+(\A_\sfs)$ is isomorphic to the abelian semigroup generated by the set $\X_{\sfs^\vee}(\bZ^t)$, satisfying certain axioms on the coordinate expressions and the structure constants. 

\bigskip
\paragraph{\textbf{Ensemble grading.}}
The upper cluster algebra $\cO(\A_\sfs)$ has a natural grading induced by an action of an algebraic torus $H_\A$ on $\A_\sfs$. 

For $v \in \bExch_\sfs$, the bilinear form $(\ ,\ )^{(v)}$ on $\Lambda^{(v)}$ induces the \emph{ensemble map}
\begin{align*}
    p_{(v)}^*: \Lambda_\uf^{(v)} \to \accentset{\circ}{\Lambda}^{(v)}, \quad \sfe_i^{(v)} \mapsto (\sfe_i^{(v)},\ )^{(v)}=\sum_{j \in I} \varepsilon_{ij}^{(v)}\sff_j^{(v)},
\end{align*}
where $\Lambda_\uf^{(v)}:=\bigoplus_{i \in I_\uf} \bZ \sfe_i^{(v)}$. It induces a monomial morphism $p_{(v)}:\A_{(v)} \to \X^\uf_{(v)}:=\Hom(\Lambda_\uf^{(v)},\bG_m)$, which is expressed as $p_{(v)}^*X^{(v)}_i = \prod_{j \in I} (A^{(v)}_j)^{\varepsilon^{(v)}_{ij}}$. It is known that these maps $\{p_{(v)}\}_v$ commute with the cluster transformations, and thus combine to define a morphism $p: \A_\sfs \to \X^\uf_\sfs$ between the cluster varieties. 

Let us recall the \emph{decomposition of mutations} \cite[Section 2.1.2]{FG09} and its signed version (see, for instance, \cite[Section 2]{GS16}). For an edge $v \overbar{k} v'$, the lattices $\Lambda^{(v)}$ and $\Lambda^{(v')}$ are related by two linear isomorphisms
\begin{align*}
    \mu_{k,\epsilon}^*: \Lambda^{(v')} \xrightarrow{\sim} \Lambda^{(v)}, \quad \sfe_i^{(v')} \mapsto \sum_{j \in I} \sfe_j^{(v)}(F_{k,\epsilon}^{(v)})_{ji}
\end{align*}
for $\epsilon \in \{+,-\}$, which we call the \emph{signed seed mutations}. The dual lattices $\accentset{\circ}{\Lambda}^{(v)}$ are related by their contragradients $\check{\mu}_{k,\epsilon}^*:=((\mu_{k,\epsilon}^*)^\mathsf{T})^{-1}: \accentset{\circ}{\Lambda}^{(v')} \xrightarrow{\sim} \accentset{\circ}{\Lambda}^{(v)}$. These linear isomorphisms induce monomial isomorphisms between the corresponding tori. It is known that the cluster transformation $\mu_k^a$ is decomposed as $\mu_k^a=\check{\mu}_{k,\epsilon}\circ \mu_{k,\epsilon}^{\#}$, where $\mu_{k,\epsilon}^{\#}$ is a certain birational automorphism on $\A_{(v)}$, and we have a similar decomposition of the cluster $\X$-transformations. The ensemble maps commute with these monomial parts of cluster transformations:
\begin{equation}\label{eq:ensemble_commutativity}
    \begin{tikzcd}
    \Lambda_\uf^{(v')} \ar[r,"p_{(v')}^*"] \ar[d,"\mu_{k,\epsilon}^*"'] & \accentset{\circ}{\Lambda}^{(v')} \ar[d,"\check{\mu}_{k,\epsilon}^*"] \\
    \Lambda_\uf^{(v)} \ar[r,"p^*_{(v)}"'] & \accentset{\circ}{\Lambda}^{(v)}. 
    \end{tikzcd}
\end{equation}
Now let us consider the exact sequence
\begin{align*}
    0 \to \ker p_{(v)}^* \to \Lambda_\uf^{(v)} \to \accentset{\circ}{\Lambda}^{(v)} \to \coker p_{(v)}^* \to 0
\end{align*}
induced by the ensemble map. One can check that the signed mutation $\check{\mu}_{k,\epsilon}^*$ induces an isomorphism $\coker p_{(v')}^* \to \coker p_{(v)}^*$, which does not depend on the sign $\epsilon$. Via these linear isomorphisms, we identify the lattices $\coker p_{(v)}^*$ for $v \in \bExch_\sfs$ and simply denote it by $\coker p^*$. Let $H_\A:=\Hom(\coker p^*,\bG_m)$ denote the corresponding algebraic torus. 
Then the projection $\alpha_{(v)}^*:\accentset{\circ}{\Lambda}^{(v)} \to \coker p_{(v)}^*$ induces a monomial morphism $H_\A \to \A_{(v)}$, which is also regarded as a monomial action of $H_\A$ on $\A_{(v)}$. From the commutative diagram \eqref{eq:ensemble_commutativity}, these actions 
combine to give an action \cite[Lemma 2.10(a)]{FG09} 
\begin{align}\label{eq:H_A-action}
   \alpha: H_\A \times \A_\sfs \to \A_\sfs.
\end{align}

In particular the torus $H_\A$ acts on the upper cluster algebra $\cO(\A_\sfs)$ from the right, and thus defines a grading valued in the lattice $\coker p^*$. Namely, a function $f \in \cO(\A_\sfs)$ is homogeneous if there exists a character $\chi_f:H_\A \to \bG_m$ such that $f.h=\chi_f(h)\cdot f$ for all $h \in H_\A$; the character $\chi_f \in \Hom(H_\A,\bG_m) = \coker p^*$ is the grading of $f$. In particular, the grading of the cluster coordinate $A^{(v)}_i$ is given by $\alpha_{(v)}^*(\sff^{(v)}_i) \in \coker p_{(v)}^*$.

\bigskip 
\paragraph{\textbf{The cluster modular group.}}
Let $\varepsilon^\bullet: V(\bExch_\sfs) \to \mathrm{Mat}$, $v \mapsto \varepsilon^{(v)}$ be the projection that extracts the quiver exchange matrices. 
The \emph{cluster modular group} is the subgroup $\Gamma_\sfs \subset \mathrm{Aut}(\bExch_\sfs)$ consisting of graph automorphisms which preserves the fibers of $\varepsilon^\bullet$ and the labels on the edges. Then it acts on the cluster varieties $\A_\sfs$, $\X_\sfs$ by permuting the coordinate systems associated to the vertices of $\bExch_\sfs$. 
In particular it acts on the upper cluster algebra $\cO(\A_\sfs)$ from the right. Since the action sends a cluster to another cluster, it preserves the cluster algebra $\CA_\sfs \subset \cO(\A_\sfs)$. 
It also acts on the torus $H_\A$ so that the grading is $\Gamma_\sfs$-equivariant, in the sense that  $\chi_{f.\phi}=\phi^*(\chi_f)$ for $H_\A$-homogeneous functions $f$ and $\phi \in \Gamma_\sfs$.

\if0
\section{Comparison of skein and cluster algebras of the quadrilateral}\label{sec:clusters_quadrilateral}
We will show that $\CA_{\sfs(\mathfrak{sl}_3,Q)} = \SK{Q}[\partial^{-1}] = \UCA_{\sfs(\mathfrak{sl}_3,Q)}$ holds with $q=A$ for a quadrilateral $Q$, which is needed in the proof of $\SK{\Sigma}[\partial^{-1}] = \UCA_{\mathfrak{sl}_3,\Sigma}$ for a general marked surface $\Sigma$. 
Since the cluster algebra $\CA_{\sfs(\mathfrak{sl}_3,Q)}$ has acyclic exchange type, we have $\CA_{\sfs(\mathfrak{sl}_3,Q)}=\UCA_{\sfs(\mathfrak{sl}_3,Q)}$. 
We are going to construct an explicit isomorphism $\CA_{\sfs(\mathfrak{sl}_3,Q)} \xrightarrow{\sim} \SK{Q}$. 

Recall from \cref{subsec:quadrilateral_skein} that we have $24$ elementary webs in this case, among which $8$ ones are boundary webs. Also recall that the group $\Gamma:=MC(Q) \times \mathrm{Out}(SL_3)=\bZ_4 \times \bZ_2$ acts on $\SK{Q}$ by the $\pi/4$ rotation and the Dynkin involution. Then the $\Gamma$-orbits of elementary webs are given by $[t_{124}]$, $[e_{13}]$, $[h_1]$. Then by inspection to the relations given in \cref{lem:str const Q}, one can verify:

\begin{lem}
We have the following ten $\Gamma$-orbits of web clusters in $\SK{Q}$:
\begin{align*}
    [\cC_1]=..., \quad [\cC_{10}]=...
\end{align*}
\end{lem}
Indeed, we have $50$ web clusters in total. 

The cluster algebra $\CA_{\sfs(\mathfrak{sl}_3,Q)}$ has the exchange type $D_4$. We have $24$ cluster variables, among which $8$ ones are frozen variables, and $50$ clusters. Take the decorated triangulation $\bD=(\Delta_Q^{(24)},(+,+))$ of $Q$, and consider the associated cluster $\{A_i\}_{i=1}^{12}$ as the initial cluster. Here the labeling of the variables, equivalently the vertices of the associated quiver $Q^{\bD}$, is fixed as in \cref{fig:labeling_initial_cluster_Q}. 
\begin{figure}
    \begin{tikzpicture}[scale=.075]
		\node[draw, fill=black, circle, inner sep=1.5] (A) at (135:30) {};
		\node[draw, fill=black, circle, inner sep=1.5] (B) at (225:30) {};
		\node[draw, fill=black, circle, inner sep=1.5] (C) at (315:30) {};
		\node[draw, fill=black, circle, inner sep=1.5] (D) at (45:30) {};
		{\color{mygreen}
		\quiversquare{A}{B}{C}{D}
		\draw(x121) node[left]{$1$};
		\draw(x122) node[left]{$2$};
		\draw(x231) node[below]{$3$};
		\draw(x232) node[below]{$4$};
		\draw(x341) node[right]{$5$};
		\draw(x342) node[right]{$6$};
		\draw(x411) node[above]{$7$};
		\draw(x412) node[above]{$8$};
		\draw(y131) node[above left]{$9$};
		\draw(y132) node[above left]{$11$};
		\draw(y241) node[above left]{$10$};
		\draw(y242) node[above left]{$12$};
		\quiverplus{A}{B}{D}
		\quiverplus{B}{C}{D}
		}
		\draw[blue] (A) -- (B);
		\draw[blue] (B) -- (C);
		\draw[blue] (C) -- (D);
		\draw[blue] (D) -- (A);
		\draw[blue] (B) -- (D);
		\node at (A) [left] {$p_1$};
		\node at (B) [left] {$p_2$};
		\node at (C) [right] {$p_3$};
		\node at (D) [right] {$p_4$};
	\end{tikzpicture}
    \caption{Labeling of the initial cluster of $\CA_{\sfs(\mathfrak{sl}_3,Q)}$. }
    \label{fig:labeling_initial_cluster_Q}
\end{figure}
Then the remaining $12$ unfrozen variables are given as follows:
\begin{align*}
    A_{13}&:=\mu_9 A_9, & A_{14}&:=\mu_{11} A_{11}, \\
    A_{15}&:=\mu_{10} A_{10}, & A_{16}&:=\mu_{12} A_{12}, 
\end{align*}

By forgetting the frozen variables, the cluster modular group $\Gamma_{\sfs(\mathfrak{sl}_3,Q)}$ can be embedded into the cluster modular group $\Gamma_{D_4}$ of the $D_4$-quiver, where the latter is given by $\Gamma_{D_4}=\bZ_4 \times \mathfrak{S}_3$ \cite[Table 1]{ASS11}. The subgroup $\Gamma_{\sfs(\mathfrak{sl}_3,Q)}$ consists of those also preserving the frozen arrows, and one can verify that $\Gamma_{\sfs(\mathfrak{sl}_3,Q)}=\Gamma = \bZ_4 \times \bZ_2 \subset \Gamma_{D_4}$. 
\fi

\end{document}